\DeclareMathAlphabet\mathbfcal{OMS}{cmsy}{b}{n}
\setlist{nosep}
\author{Martin Hils}
\address{Institut f\"{u}r Mathematische Logik und Grundlagenforschung, Westf\"{a}lische Wilhelms-Universit\"{a}t M\"{u}nster, Einsteinstr. 62, D-48149 M\"{u}nster, Germany}
\email{hils@uni-muenster.de}
\thanks{MH was partially supported by the German Research Foundation (DFG) via CRC 878, HI 2004/1-1 (part of the French-German ANR-DFG project GeoMod) and under Germany's Excellence Strategy EXC 2044-390685587, `Mathematics M\"unster: Dynamics-Geometry-Structure'.}
\author{Silvain Rideau-Kikuchi}
\address{Université de Paris Cité and Sorbonne Université, CNRS, IMJ-PRG, F-75006 Paris, France.}
\email{silvain.rideau@imj-prg.fr}  
\thanks{SRK was partially supported by GeoMod AAPG2019 (ANR-DFG), Geometric and Combinatorial Configurations in Model Theory.}
\thanks{Both authors were partially supported by ValCoMo (ANR-13-BS01-0006).  A substantial part of this work was carried out when both authors participated in the thematic trimester ``Model theory, combinatorics and valued fields'' at the ``Institut Henri Poincaré'' in Paris. We are grateful to the IHP and its staff for hosting this event and providing us with this opportunity.}
\title{Un principe d'Ax-Kochen-Ershov imaginaire}
\date{\today}
\keywords{Model Theory, Valued Fields, Classification of Imaginaries, Non-standard Frobenius Automorphism, Separated pairs, Linear structures}
\subjclass[2020]{Primary: 03C45; Secondary: 03C10, 03C60, 12J10, 12L12}
\numberwithin{equation}{section}
\newtheorem*{thmA}{Theorem~A}
\newtheorem*{thmB}{Theorem~B}
\newtheorem*{theorem*}{Theorem}
\newtheorem{theorem}{Theorem}[subsection]
\newtheorem{corollary}[theorem]{Corollary}
\newtheorem{variant}[theorem]{Variant}
\newtheorem{lemma}[theorem]{Lemma}
\newtheorem{proposition}[theorem]{Proposition}
\newtheorem{question}[theorem]{Question}
\newtheorem{claim}[theorem]{Claim}
\newtheorem{fact}[theorem]{Fact}
\theoremstyle{definition}
\newtheorem{definition}[theorem]{Definition}
\newtheorem{notation}{Notation}[section]
\newtheorem{convention}[notation]{Convention}
\newtheorem{remark}[theorem]{Remark}
\newcommand{\Cc}{\mathbb{C}}
\newcommand{\Ff}{\mathbb{F}}
\newcommand{\Nn}{\mathbb{N}}
\newcommand{\Qq}{\mathbb{Q}}
\newcommand{\Rr}{\mathbb{R}}
\newcommand{\Zz}{\mathbb{Z}}
\newcommand{\cC}{\mathcal{C}}
\newcommand{\cE}{\mathcal{E}}
\newcommand{\cL}{\mathcal{L}}
\newcommand{\cO}{\mathcal{O}}
\newcommand{\cP}{\mathcal{P}}
\newcommand{\cU}{\mathcal{U}}
\newcommand{\fL}{\mathfrak{L}}
\newcommand{\fm}{\mathfrak{m}}
\newcommand{\bA}{\mathbf{A}}
\newcommand{\bB}{\mathbf{B}}
\newcommand{\bC}{\mathbf{C}}
\newcommand{\bD}{\mathbf{D}}
\newcommand{\bE}{\mathbf{E}}
\newcommand{\bI}{\mathbf{I}}
\newcommand{\bP}{\mathbf{P}}
\newcommand{\bQ}{\mathbf{Q}}
\newcommand{\bR}{\mathbf{R}}
\newcommand{\bS}{\mathbf{S}}
\newcommand{\bT}{\mathbf{T}}
\newcommand{\bV}{\mathbf{V}}
\newcommand\hA{\widehat{A}}
\newcommand\hC{\widehat{C}}
\newcommand\plim{\varprojlim}
\newcommand{\substr}{\leq}
\newcommand{\subsel}{\preccurlyeq}
\newcommand{\supsel}{\succcurlyeq}
\newcommand{\restr}[2]{{\left.#1\right|_{#2}}}
\newcommand{\tp}[1][]{\mathrm{tp}\ifstrempty{#1}{}{_{#1}}}
\newcommand\ACVF{\mathrm{ACVF}}
\newcommand{\K}{\mathbf{K}}
\newcommand{\dcl}[1][]{\mathrm{dcl}\ifstrempty{#1}{}{_{#1}}}
\newcommand{\acl}[1][]{\mathrm{acl}\ifstrempty{#1}{}{_{#1}}}
\newcommand{\germ}[2]{[#2]_{#1}}
\newcommand{\aut}[1][]{\mathrm{Aut}_{#1}}
\newcommand{\Hom}[1][]{\mathrm{Hom}_{#1}}
\newcommand{\alg}[1]{#1^{\mathrm{a}}}
\newcommand{\val}{\mathrm{v}}
\newcommand{\sminus}{\smallsetminus}
\newcommand{\Val}{\cO}
\newcommand{\Mid}{\fm}
\newcommand{\resf}{\mathrm{res}}
\newcommand{\Geom}{\mathbfcal{G}}
\newcommand{\eq}[1]{#1^{\mathrm{eq}}}
\newcommand{\res}{\mathbf{k}}
\newcommand{\vg}{\mathbf{\Gamma}}
\newcommand{\ld}{\mathbf{ld}}
\newcommand{\card}[1]{|#1|}
\newcommand{\dcleq}{\eq{\dcl}}
\newcommand{\acleq}{\eq{\acl}}
\newcommand{\Lat}{\bS}
\newcommand{\Tor}{\bT}
\newcommand{\isom}{\cong}
\newcommand{\ACFA}{\mathrm{ACFA}}
\newcommand{\ACF}{\mathrm{ACF}}
\newcommand{\RCF}{\mathrm{RCF}}
\newcommand{\RCVF}{\mathrm{RCVF}}
\newcommand{\dual}{\check}
\newcommand{\Th}[1][]{\mathrm{Th}_{#1}}
\newcommand{\code}[1]{\ulcorner #1\urcorner}
\newcommand{\RV}{\mathbf{RV}}
\newcommand{\rv}{\mathrm{rv}}
\newcommand{\LRV}{\cL_{\RV}}
\newcommand{\LRVsig}{\cL_{\RV}^{\sigma}}
\newcommand{\balls}{\bB}
\newcommand{\fballs}[1]{\balls^{[#1]}}
\newcommand{\fres}[1]{\res^{[#1]}}
\newcommand{\pballs}[2]{\balls^{[#1]}_{#2}}
\newcommand{\fpoints}[1]{\K^{[#1]}}
\newcommand{\ppoints}[2]{\K^{[#1]}_{#2}}
\newcommand{\points}[1]{#1^{\cup}}
\newcommand{\qftp}{\tp_0}
\newcommand{\LP}{\cL_{\mathrm{P}}}
\newcommand{\rad}{\mathrm{rad}}
\newcommand{\Ldiv}{\cL_{\mathrm{div}}}
\newcommand{\TP}{\mathrm{S}}
\newcommand{\tree}{\triangleleft}
\newcommand{\treeq}{\trianglelefteqslant}
\newcommand{\treequiv}{\equiv}
\newcommand{\ntreequiv}{\nequiv}
\newcommand{\gen}[2]{\eta_{#1,#2}}
\newcommand{\Hen}[1][]{\mathrm{Hen}_{#1}}
\newcommand{\sHen}[1][]{\mathrm{Hen}^{\sigma}_{#1}}
\newcommand{\open}{\mathring}
\newcommand{\closed}{\overline}
\newcommand\UFresgloc[2]{\textnormal{\textbf{(\(\bE^\infty_{#1,\open{#2}}\))}}}
\newcommand\UFres{\textnormal{\textbf{(\(\bE^\infty_{\res})\)}}}
\newcommand\UFvg{\textnormal{\textbf{(\(\bE^\infty_{\vg})\)}}}
\newcommand\UFradgloc[2]{\textnormal{\textbf{(\(\bE^\infty_{#1,\closed{#2}}\))}}}
\newcommand\Infres{\textnormal{\textbf{(\(\bI_{\res}\))}}}
\newcommand\Ram[1][]{\textnormal{\textbf{(\(\mathbf{FR}_{#1}\))}}}
\newcommand\Prep[2]{\textnormal{\textbf{(\(\bP^{#2}_{#1}\))}}}
\newcommand\Cball{\textnormal{\textbf{(\(\bC_{\balls}\))}}}
\newcommand\CV{\textnormal{\textbf{(\(\bC_{V}\))}}}
\newcommand\Cvg{\textnormal{\textbf{(\(\bC_{\vg}\))}}}
\newcommand\Dens{\textnormal{\textbf{(\(\bD\))}}}
\newcommand\SE{\textnormal{\textbf{(\(\mathbf{SE}\))}}}
\newcommand{\EQ}{\textnormal{\textbf{(\(\bQ_{\K}\))}}}
\newcommand{\Cod}{\textnormal{\textbf{(\(\mathbf{Cb}_{\K}\))}}}
\newcommand{\fa}[1]{\forall_{\!#1}}
\newcommand{\ex}[1]{\exists_{\!#1}}
\newcommand{\Lin}{\mathbf{Lin}}
\newcommand{\V}{\bV}
\newcommand{\Lmod}{\cL_{\mathrm{mod}}}
\newcommand{\Lvs}{\Lmod}
\newcommand\hens[1]{#1^{\mathrm{h}}}
\newcommand\lball[2]{#1[#2]}
\newcommand\Res{\bR}
\newcommand\Ann{\bA}
\newcommand\cval{\val_{\infty}}
\newcommand\GL{\mathrm{GL}}
\newcommand\tensor{\otimes}
\newcommand\cMid{\Mid_\infty}
\newcommand\cres{\res_\infty}
\newcommand\cresf{\resf_\infty}
\newcommand\lin[1]{#1^{\mathrm{leq}}}
\newcommand\id{\mathrm{id}}
\newcommand\W{\mathrm{W}}
\newcommand\VFA{\mathrm{VFA}^{\mathrm{mult}}_{0,0}}
\newcommand\prol{\nabla}
\newcommand\car{\mathrm{char}}
\newcommand\lind{\ind^{\mathrm{ld}}}
\newcommand\nlind{\not\ind^{\mathrm{ld}}}
\newcommand\cRes{\Res_\infty}
\newcommand\LRVun{\cL_{\RV_1}}
\newcommand\LPRVun{\cL_{\RV_1,\bP}}
\newcommand\LRVhyb{\cL_{\RV,\bP}^{\mathrm{hyb}}}
\newcommand\PK{\bP\K}
\newcommand\Pres{\bP\res}
\newcommand\PRes{\bP\Res}
\newcommand\Pvg{\bP\vg}
\newcommand\PRV{\bP\RV}
\newcommand\RVhyb{\RV^{\mathrm{hyb}}}
\newcommand\ac{\mathrm{ac}}
\newcommand\Lac{\cL_\ac}
\renewcommand\leq{\leqslant}
\renewcommand\geq{\geqslant}
\let\phitmp\varphi
\global\let\varphi\phi
\global\let\phi\phitmp
\let\phitmp\relax
\newcommand\Frob{\varphi}
\newcommand{\ind}{\downfree}
\renewcommand\mid{:}
\begin{document}
\begin{abstract}
We study interpretable sets in henselian and \(\sigma\)-henselian valued fields with value group elementarily equivalent to \(\Qq\) or \(\Zz\). Our first result is an Ax-Kochen-Ershov type principle for weak elimination of imaginaries in finitely ramified characteristic zero henselian fields --- relative to value group imaginaries and residual linear imaginaries. We extend this result to the valued difference context and show, in particular, that existentially closed equicharacteristic zero multiplicative difference valued fields eliminate imaginaries in the geometric sorts; the $\omega$-increasing case corresponds to the theory of the non-standard Frobenius automorphism acting on an algebraically closed valued field.

On the way, we establish some auxiliary results on separated pairs of characteristic zero henselian fields and on imaginaries in linear structures which are also of independent interest.
\end{abstract}

\maketitle


\section{Introduction}

In his seminal work "Une théorie de Galois imaginaire" \cite{Poi-ImGal}, Poizat introduced the idea that the classification of certain abstract constructions of model theory --- namely interpretable sets or Shelah's imaginaries --- could play an important role in our comprehension of specific structures. The classification of definable sets, in the guise of quantifier elimination results, has historically been used as a central ingredient in many applications of model theory. But the development of more sophisticated model theoretic tools, in particular stability theory, naturally took place in the larger category of quotients of definable sets by definable equivalence relations, \emph{i.e.}, interpretable sets. Shelah concretised this idea with his \(\mathrm{eq}\) construction that formally makes every interpretable set definable.

However, these interpretable sets immediately escape the realm of well understood and classified objects, complicating the possibility of applying new tools from stability theory in specific examples, in particular coming from algebra. Poizat's idea was that these interpretable sets should also be classified, and he did so in algebraically closed fields and in differentially closed fields.  In both cases,  he showed that they are all definably isomorphic to definable sets, \emph{i.e.}, the categories of definable and interpretable sets are equivalent --- we say that these structures \emph{eliminate imaginaries}. This property  later became an essential feature in model-theoretic applications, \emph{e.g.}, to diophantine geometry and algebraic dynamics. 

The question of elimination of imaginaries also has a very geometric flavour: given a definable family of sets \(X\subseteq Y\times Z\), one wishes to find a definable function \(f : Z \to W\) such that for all \(z_1,z_2\in Z\), \(X_{z_1} := \{y\in Y\mid(y,z_1)\in X\} = X_{z_2}\) if and only if \(f(z_1) = f(z_2)\) --- in other words, one wishes to find a canonical parametrisation of this family where each set appears exactly once. We refer the reader to \cref{imaginary} and \cite[Section~8.4]{TenZie} for further details on these notions and constructions.

Elimination of imaginaries results were then established for numerous
structures, but it was not until work of Haskell, Hrushovski and Macpherson
\cite{HasHruMac-ACVF} that the first complete classification of interpretable
sets in a valued field was proved. In this case, however, the field itself does
not eliminate imaginaries, as both the value group and the residue field are
interpretable but not isomorphic to a definable set. Nevertheless, one can add
certain well understood interpretable sets, the \emph{geometric sorts}. These
sorts consist of the field \(\K\) and, for all \(n\in\Zz_{>0}\), of the space
\(\Lat_n := \GL_n(\K)/\GL_n(\Val)\) of free rank \(n\) \(\Val\)-submodules of
\(\K^n\), where \(\Val\) denotes the valuation ring, and of the space \(\Tor_n
:= \bigcup_{s\in \Lat_n} s/\Mid s\) where \(\Mid\subseteq\Val\) is the unique
maximal ideal --- \emph{cf.}\ \cref{Sub:LanVF} for a precise definition of the
geometric language. The main result of \cite{HasHruMac-ACVF} states that the
theory \(\ACVF\) of algebraically closed non-trivially valued fields eliminates
imaginaries in the geometric sorts: given a definable family of sets
\(X\subseteq Y\times Z\), there exists a definable function \(f : Z \to W\),
with \(W\) a product of geometric sorts, such that for all \(z_1,z_2\in Z\),
\(X_{z_1} = X_{z_2}\) if and only if \(f(z_1) = f(z_2)\) --- equivalently the
category of sets interpretable in an algebraically closed valued field is
equivalent to the category of sets definable in its geometric sorts. One cannot
overstate the impact of this result, as it opened the way for the development of
geometric model theory in the context of valued fields. A beautiful illustration
of the power of these new methods is the work by Hrushovski and Loeser on
topological tameness in non-archimedean geometry \cite{HruLoe}.

In  this paper we consider imaginaries in more general classes of henselian valued fields of characteristic 0, and also in certain 
valued difference fields, \emph{i.e.}, valued fields endowed with a distinguished automorphism compatible with the valuation. 

In the last 25 years, the model theory of existentially closed difference fields, largely developed by Chatzidakis and 
Hrushovski (see \cite{ChHr99}), has led to several spectacular applications --- among others in algebraic dynamics. Note that the corresponding theory \(\ACFA\) does eliminate imaginaries and this fact plays an essential role in later developments. A very deep result of Hrushovski \cite{Hru-Frob}, which takes the form of a Frobenius-twisted version of the Lang-Weil estimates, implies that \(\ACFA\) is in fact the asymptotic theory of Frobenius automorphisms \(\Frob_q\): any non-principal ultraproduct of \((\alg{\Ff_q},\Frob_q)\) is a model of \(\ACFA\). Key properties of algebraic difference varieties may thus be read off from specializations to the Frobenius automorphisms.

It is also natural to consider the non-standard Frobenius acting on an algebraically closed valued field, \emph{i.e.}, the limit theory of the valued difference fields 
$(\alg{\Ff_p(t)},v_t,\Frob_p)$, as the prime \(p\) grows, where $v_t$ is an extension of the $t$-adic valuation. By results of Hrushovski \cite{Hru02} and Durhan \cite{Azg-OmeInc}, this limit theory corresponds to the theory of existentially closed valued difference fields of equicharacteristic zero with $\omega$-increasing automorphism --- \emph{cf.} \cref{val diff} for a detailed discussion. In fact, these structures naturally arise, as early as in Hrushovski's proof of the twisted  Lang-Weil estimates, in the study of algebraic difference varieties, by way of transformal specializations. One may thus expect that the development of a geometric model theory of valued difference fields will  turn out useful in the future in geometric applications --- as it did in the case of \(\ACVF\).

\subsection*{Main results}
The classification of imaginaries in $\ACVF$ by the geometric sorts was later extended to other valued fields: real closed valued fields \cite{Mel-RCVF}, separably closed valued fields of finite imperfection degree \cite{HilKamRid}, and \(p\)-adic fields and their ultraproducts \cite{HruMarRid} --- which allowed to uniformise and extend Denef's result on the rationality of certain zeta functions to interpretable sets.  The question remained whether a general principle underlined all these results. Such a principle was conjectured in the early 2000's by Hrushovski. The present paper establishes it for a large class of henselian fields, which covers most of the examples considered in applications, and extends it to valued fields with operators.

At this level of generality, one cannot expect elimination in the geometric sorts. Indeed, the residue field and the value group can be arbitrary and might not themselves eliminate imaginaries as is the case in all the results cited above. However, a fundamental idea of the model theory of valued fields, the so-called Ax-Kochen-Ershov principle, is that the model theory of a henselian equicharactersitic zero field should be controlled by its value group and residue field. This principle takes its name from the result of Ax and Kochen \cite{AxKoc} and independently Ershov \cite{Ers-AKE} that this is indeed the case for elementary equivalence, but this phenomenon has also been observed with respect to numerous other  aspects of valued fields, from model theoretic tameness (starting with  \cite{Del-NIP}) to motivic integration \cite{HruKaz}.

It is thus tempting to conjecture that, beyond the geometric sorts, imaginaries
in equicharacteristic zero henselian fields only arise from the value group and
the residue field. However, non trivial torsors of the residue field give rise
to serious obstructions to this conjecture. One is thus naturally led to define
the \(\res\)-linear imaginaries. Consider the two sorted language \(\Lvs\) of
\(\Ann\)-modules \(\V\) with the ring structure on \(\Ann\), the group structure
on \(\V\) and scalar multiplication. Given a (unary) interpretable set \(X\)
--- more precisely a definable quotient of the vector space sort \(\V\) --- in the
\(\Lvs\)-theory of dimension \(n\) vector spaces over a field and given some
$\Val$-lattice $s\in \Lat_n$, we can consider the interpretation
\(X^{(\res,s/\Mid s)}\) of \(X\) in the structure \((\res,s/\Mid s)\). We then
define:
\[\Tor_{n,X} := \bigsqcup_{s\in\Lat_n} X^{(\res,s/\Mid s)}.\]

Note that if \(X = \bV\), \(\Tor_{n,X} = \Tor_n\) and if \(X\) is the one element quotient of \(\bV\),  \(\Tor_{n,X} \isom \Lat_n\). In general, the \(\Tor_{n,X}\) are essentially those interpretable sets that admit definable surjections \(\Tor_n \to \Tor_{n,X} \to \Lat_n\). We write \(\lin{\res} := \bigsqcup_{n,X} \Tor_{n,X}\).

Before we state our main results, let us address some technical points. The
first is that we prove a result not only in equicharacteristic 0, but also in
finitely ramified mixed characteristic. In the latter case one needs to also
consider the higher residue rings \(\Res_\ell := \Val/\ell\Mid\), for
\(\ell\in\Zz_{>0}\), where \(\ell\Mid:=\{\ell\cdot x\mid x\in\Mid\}\). These
rings often play a crucial role in this situation, and they also come with their
linear imaginaries and hence we define \(\Tor_{n,\ell,X} :=
\bigsqcup_{s\in\Lat_n} X^{(\Res_\ell,s/\ell\Mid s)}\) and \(\lin{\Res} :=
\bigsqcup_{n,\ell,X} \Tor_{n,\ell,X}\), where \(X\) is now interpretable in the
\(\Lvs\)-theory of free rank \(n\) modules. Moreover, if the residue field
\(\res\) comes with additional structure, in the definition of \(\lin{\res}\)
and \(\lin{\Res}\), we need to consider all \(X\) interpretable in the
corresponding enrichement of the theory of free rank \(n\) modules.

The second point is that eliminating imaginaries often splits in two distinct problems: describing quotients under the action of finite symmetric groups (in other words finding canonical parameters for finite sets) and classifying interpretable sets up to one-to-finite correspondences: given a definable family of sets \(X\subseteq Y\times Z\), one wishes to find a one-to-finite definable correspondence \(F : Z \to W\) such that for all \(z_1,z_2\in Z\), \(X_{z_1} = X_{z_2}\) if and only if \(F(z_1) = F(z_2)\). This latter property is usually referred to as weak elimination of imaginaries and will be the main focus of this paper.

Our first main result is the following Ax-Kochen-Ershov principle for weak elimination of imaginaries, where \(\eq{\vg}\) refers to the collection all sets interpretable in the (enriched) ordered abelian group \(\vg\). 

\begin{thmA}[\cref{AKE EI}]
Let \((K,v)\) be a characteristic zero henselian valued field, possibly with angular components and added structure on the value group \(\vg\) and, separately, the residue field \(\res\). Assume that:
\begin{itemize}[leftmargin=50pt]
\item[\Cvg]  The induced structure on \(\vg\) is definably complete;
\item[\Ram] For every \(\ell\in\Zz_{>0}\), the interval \([0,\val(\ell)]\) is finite and \(\res\) is perfect;
\item[\Infres] The residue field \(\res\) is infinite;
\item[\UFres] The induced theory on \(\res\) eliminates \(\exists^\infty\).
\end{itemize}
Then \(K\) weakly eliminates imaginaries in the sorts \(\K\cup\eq{\vg}\cup\lin{\Res}\).
\end{thmA}

All the results with angular component, even in the algebraically closed (equicharacteristic zero) case, are new. 
Angular components are (compatible) multiplicative morphisms \(\ac_n : \K^\times\to\Res_n^\times\) extending the residue map on \(\Val^\times\). They play a key role in the development of the model theoretic study of valued fields,  in particular in the Cluckers-Loeser treatment of motivic integration \cite{CluLoe-MI}, by providing uniform cell decomposition results.

Definable completeness of the value group --- that is, the fact that every
definable subset of \(\vg\) has a supremum --- is a necessary hypothesis for the
conclusion to hold, since otherwise additional definable cuts appear, inducing
more definable $\Val$-submodules and hence more complex imaginaries. It is worth
noting that \(\mathrm{PRES}=\Th(\Zz)\) and \(\mathrm{DOAG}=\Th(\Qq)\) are the
only complete theories of pure ordered abelian groups which are definably
complete. As both \(\mathrm{PRES}\) and \(\mathrm{DOAG}\) eliminate imaginaries,
we thus get $\vg=\eq{\vg}$ under the assumptions of Theorem~A in case $\vg$ is
not enriched.

As a corollary, Theorem~A yields that if $F$ is a field of characteristic 0 which eliminates $\exists^\infty$, the theories of the valued fields $F((t))$ and $F((t^{\Qq}))$ (with or without angular components) weakly eliminate imaginaries in the sorts $(\K\cup\lin{\res})$ --- noting that $\vg\cong\bS_1$ may be identified with a sort in $\lin{\res}$. In the particular case that $F$ is (of characteristic 0 and) algebraically closed, real closed or pseudofinite, using results of Hrushovski on linear imaginaries, we deduce that $F((t))$ and $F((t^{\Qq}))$ (with or without angular components) eliminate imaginaries in the geometric sorts, after naming some constants in the pseudofinite and in the real closed case (see \cref{ACF-RCF-Psf} for the precise statement), thus obtaining an absolute elimination result in these cases.

Without angular components, this provides  alternate proofs of Mellor's result \cite{Mel-RCVF} for \(\Rr((t^{\Qq}))\) and Hrushovski-Martin-Rideau's result \cite{HruMarRid} for \(F((t))\) where \(F\) is pseudofinite of characteristic 0. Independent work of Vicaria \cite{Vic-EILaur} also yields the case of \(\Cc((t))\), although her work also applies to more general value groups.

In mixed characteristic, the main example covered by Theorem~A is
$W(\alg{\Ff_p})$ --- where $\alg{F}$ denotes the (field-theoretic) algebraic
closure of $F$ --- the (fraction field of the) ring of Witt vectors with
coefficients in $\alg{\Ff_p}$, and more generally finite extensions of $W(F)$
for any perfect infinite field $F$ of characteristic $p$ which eliminates
$\exists^\infty$. However, in mixed characteristic, \(\lin{\Res}\) involves
modules over higher residue rings. We conjecture that, when \(F = \alg{F}\),
these linear structures also eliminate imaginaries. Thus, Theorem~A provides an
important step towards proving that the imaginaries of $W(\alg{\Ff_p})$ are
classified by the geometric sorts as well.\medskip

The second main result of this paper concerns valued difference  fields.
Quantifier elimination (and hence an Ax-Kochen-Ershov principle for elementary equivalence) has been proved for various classes. First for isometries in \cite{Sca-RelFrob,BelMacSca,AzgvdD}, then for \emph{\(\omega\)-increasing} automorphisms --- for every \(x\in\Mid\), \(\val(\sigma(x)) > \Zz\cdot \val(x)\) --- in \cite{Azg-OmeInc,Hru02}. Both of these contexts were subsumed in later work of Kushik \cite{Pal-Mul} on \emph{multiplicative} automorphisms where the automorphism acts as multiplication by some element of an ordered field  (\emph{cf.} \cref{VFA} for precise definitions). Finally Durhan and Onay \cite{DurOna} proved that these results hold without any hypothesis on the automorphism.

Our second result focuses on the multiplicative setting where we prove an absolute elimination of imaginaries result for the respective model-companions:

\begin{thmB}[\cref{VFA-EI}]
The theory \(\VFA\) eliminates imaginaries in the geometric sorts.
\end{thmB}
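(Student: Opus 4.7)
The strategy is to deduce Theorem~B from a $\sigma$-enriched version of Theorem~A, then eliminate the residual imaginaries in $\eq{\vg}$ and $\lin{\res}$ within the geometric sorts, and finally upgrade weak to full elimination of imaginaries.

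First, one applies the valued difference analogue of Theorem~A to $\VFA$. In equicharacteristic zero the higher residue rings collapse to $\res$, so $\lin{\Res} = \lin{\res}$ and \Ram\ is trivial. A model of $\VFA$ is $\sigma$-henselian, being existentially closed; its residue field is a model of $\ACFA_0$, hence infinite, perfect, and eliminating $\exists^\infty$; and its value group, being divisible and multiplicatively acted upon by $\sigma$, is definably complete. One thus obtains weak elimination of imaginaries in $\K \cup \eq{\vg} \cup \lin{\res}$, all sorts carrying the induced $\sigma$-action.

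Next, one handles the $\vg$- and $\lin{\res}$-imaginaries. The value group is an ordered divisible $\Qq(\rho)$-vector space, where $\rho$ is the multiplicative constant of $\sigma$; this structure eliminates imaginaries by a direct extension of EI in $\mathrm{DOAG}$, so $\eq{\vg} = \dcleq{\vg}$. For $\lin{\res}$, one uses that $\ACFA_0$ eliminates imaginaries in the field sort (Chatzidakis--Hrushovski), whence any interpretable $X$ in the two-sorted $(\res, \bV)$ is definably in bijection with a definable subset of $\res^n$; thus $\Tor_{n,X}$ embeds definably into a finite power of $\Tor_n$ fibered over $\Lat_n$, which already lies in the geometric sorts. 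Finally, weak EI is upgraded to EI by the coding of finite subsets of the geometric sorts from \cite{HasHruMac-ACVF}; this coding is $\aut{}$-equivariant and therefore remains canonical in the presence of the distinguished automorphism $\sigma$.

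The main obstacle is the first step: adapting the separated-pair and density arguments underlying Theorem~A to the $\sigma$-enriched setting. This calls for a thorough analysis of $\sigma$-henselian extensions in the multiplicative case, using the quantifier elimination for multiplicative difference valued fields to control types over $\sigma$-algebraic closures. The multiplicativity hypothesis on $\sigma$ is crucial: it keeps the induced structure on the value group tame (an ordered $\Qq(\rho)$-vector space) and makes the $\sigma$-action on lattices rigid enough for the reduction to the geometric sorts to go through. A secondary difficulty is verifying that the embedding of $\lin{\res}$-imaginaries into the geometric sorts is $\sigma$-equivariant, which again uses the fact that the $\sigma$-action on lattices commutes with the residue reduction.
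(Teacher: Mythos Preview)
Your overall shape is reasonable, but there are two concrete issues where your route diverges from the paper and where the first of these contains a genuine gap.

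\textbf{The linear imaginaries step is incomplete.} Your reduction of $\lin{\res}$ to the geometric sorts invokes ``$\ACFA_0$ eliminates imaginaries in the field sort (Chatzidakis--Hrushovski), whence any interpretable $X$ in the two-sorted $(\res,\bV)$ is definably in bijection with a definable subset of~$\res^n$''. This deduction does not go through. The imaginaries one must eliminate are those of the \emph{linear} structure $\Lin_A$ with its full $A$-induced structure in $\VFA$, which includes a \emph{twisted} $\sigma$-action permuting the sorts $s/\Mid s$ according to $s\mapsto\sigma(s)$. Elimination of imaginaries in $\ACFA_0$ concerns only the field sort and says nothing about quotients of the vector-space sorts by definable equivalence relations; moreover, any bijection $X\cong Y\subseteq\res^n$ obtained after fixing a basis of $\bV$ is not canonical over $\Lat_n$. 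The paper's argument here is substantially deeper: it shows that $\ACF_0$-linear structures with flags and roots have $3$-uniqueness (\cref{F:lin2}), develops a twisted version of the Chatzidakis--Pillay machinery so that the model-companion $T^{(\tau)}\!A$ of such a linear structure with a twisted automorphism eliminates imaginaries (\cref{F:ChPi-TA}(4), via \cref{C:per-3-unique}), and finally verifies that $\Lin_A(M)$ with its $\VFA$-induced structure is a model of this $T^{(\tau)}\!A$ (\cref{L:ec-twist}, \cref{EI Lin}). This is precisely the ``$\sigma$-equivariance'' difficulty you flag at the end, but it is not a secondary issue: it is the heart of the matter, and $\ACFA$ EI alone does not resolve it.

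\textbf{The density input is different from what you describe.} You identify ``adapting the separated-pair and density arguments'' as the main obstacle. In fact, for $\VFA$ the paper does \emph{not} go through the separated-pair route (\cref{inv dens}); that route is for $\vg$-$\Res$-enrichments of $\Hen[0]$ where $\exists^\infty$ may fail in $\vg$. In $\VFA$ the value group is an $o$-minimal ordered $\Zz[\sigma]$-module (\cref{EQ VFA}), so both \Cvg{} and \UFvg{} hold, and the paper applies the stronger density result \cref{def dens} directly. The passage from density of quantifier-free definable types to invariance of the full type then uses \cref{EI to RV}, with the hypothesis \EQ{} supplied by the $\sigma$-henselian field quantifier elimination \cref{EQ sHen} (via the prolongation map $a\mapsto(\sigma^n(a))_{n\geq0}$). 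No $\sigma$-analogue of Theorem~A is stated or needed; the modular pieces of Sections~\ref{gen}--\ref{sequence} are formulated in enough generality to apply to $\VFA$ as written.
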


Since the isometric case and the \(\omega\)-increasing case correspond, respectively, to the asymptotic theory of \(\Cc_p\) with an isometric lifting of the Frobenius and to \(\alg{\Ff_p(t)}\) with the Frobenius, an immediate corollary of these results is a uniform elimination of imaginaries for large \(p\) in these structures. By elimination of imaginaries in $\ACVF$, the result is even uniform for all $p$ in the latter case.


\subsection*{Overview of the paper}
The proofs of both theorems follow the same general strategy and many technical results are shared between the two. The proof consists of three largely independent steps (Sections 3 to 5). 

In stable theories, every type \(p\) --- a maximal consistent set of definable sets --- is definable, \emph{i.e.}, for every definable \(X\subseteq Y\times Z\), the set \(\{z\in Z\mid X_z \in p\}\) is definable. This was used in many proofs of weak elimination of imaginaries in the stable context to reduce the problem of finding canonical parameters for definable sets to finding canonical parameters for types; which, counter-intuitively maybe, is a simpler problem. In \cite{Hru-EIACVF}, Hrushovski formalised the idea that even in an unstable context, this reduction could also prove useful, provided definable types were dense: over any algebraically closed imaginary set of parameters, any definable set contains a definable type.

The first step of the proof consists in proving such density results. But the above statement cannot hold in the full generality of henselian equicharacteristic zero valued fields since it might already fail in the residue field. We prove however that, under certain hypotheses, \emph{quantifier free} definable types are dense, \emph{cf.} \cref{def dens}. This result does not apply to discrete valued fields since the family of intervals contains arbitrarily large finite sets. In \cref{inv dens}, we do however prove that the density of quantifier free invariant types holds in this context. 

The proof improves on similar results in \cite{Rid-VDF} and the general idea is the same. In arity one, we look for a minimal finite set of balls covering the given definable set. The general case proceeds by fibration in relative arity one and by considering germs of functions into the space of (finite sets of) balls instead of actual balls. This fibration process is where most of the technical assumptions of Theorem~A are used, in particular the elimination of \(\exists^\infty\) in the residue field.

In contexts with an absolute elimination of quantifiers (as, \emph{e.g.}, in \cite{Rid-VDF,HilKamRid}) this first density result (and the implicit computation of canonical bases) suffices to conclude that weak elimination of imaginaries holds. In equicharacteristic zero henselian (and \(\sigma\)-henselian) fields, types come with more information than quantifier free types; an information that mostly lives in the short exact sequence \(1\to \res^\times \to \RV^\times := \K^\times/(1+\Mid) \to \vg^\times \to 0\). The second step of our proof, \cref{inv}, consists in showing that quantifier free invariant types have invariant completions over \(\RV\) (and \(\res\)-vector spaces) --- this generalises to mixed characteristic by considering the higher residue rings.

By quantifier elimination relative to \(\RV\), this step reduces to, given an
invariant type, computing canonical generators of the structure generated by
(realisations of) the type in \(\RV\). Note that in the conclusion of \cref{inv}
the types considered are invariant over definable sets which are of the same
size as the model. We do however show various folklore results implying that
this is a well behaved notion when these sets are stably embedded.

The third step consists in studying imaginaries in \(\RV\), which is left as a black box in the previous steps. We show, in the spirit of \cite[Section~3.3]{HruKaz}, that the imaginaries in the short exact sequence \(1\to \res^\times \to \RV^\times \to \vg^\times \to 0\) come essentially from \(\res\) and \(\vg\). To establish the results we need, as in \cite[Section~3.3]{HruKaz}, we consider more generally structures given by (enriched) short exact sequences \(0\rightarrow \bA\rightarrow \bB\rightarrow \bC\rightarrow 0\) of \(R\)-modules for some ring \(R\). But our result is in a sense orthogonal to the one of Hrushovski and Kazhdan since we require \(\bB\) to be a pure (in the sense of model theory) extension of \(\bA\) and \(\bC\), which can be both arbitrarily enriched, whereas \cite[Lemma~3.21]{HruKaz} has strong hypotheses on \(\bA\) and \(\bC\) and no hypothesis on \(\bB\).

\Cref{T:EI-ShortExSequ} is the first version of a series of such reductions of increasing complexity so as to cover the various cases that we require, the ultimate version, \cref{V:EI-ShortExSequ-Complex-C} allowing controlled torsion in \(\vg\), auxiliary sorts on both the \(\res\) and \(\vg\) sides and considering not one but a projective system of short exact sequences.

These three steps put together allow us to prove relative results like Theorem~A. However, absolute results like Theorem~B or \cref{ACF-RCF-Psf} require one last ingredient: the classification of imaginaries in collections of vector spaces --- linear structures in the terminology of \cite{Hru-GpIm}. Our contribution consists in a twisted version, \emph{cf.} \cref{F:ChPi-TA}.(4), of Hrushovski's result on $\ACF_0$-linear structures with flags and roots endowed with an automorphism (the final step to prove Theorem~B) and a version, \cref{RCF lin EI}, for real closed fields.

The plan of the paper is as follows. In Section~2, we provide some preliminary results on imaginaries, separated pairs of valued fields (in the sense of Baur), on valued difference fields and on linear structures. Section~3 is devoted to the proof of the two density results for definable (resp. invariant) types mentioned above. The fact that invariant quantifier free types are invariant over $\RV$ (and $\Res_n$-modules), is established in Section~4. In Section~5, we prove the results about imaginaries in certain (enriched) short exact sequences of modules. Finally, in Section~6, we put everything together and prove our main results, in particular Theorem~A and Theorem~B.\medskip

\subsection*{Acknowledgement} The authors would like to thank the anonymous referees for their numerous comments and advice on an earlier version of this manuscript.


\section{Preliminaries}

\begin{convention}\label{A-points}
Throughout this paper, if \(M\) is an \(\cL\)-structure, \(X\) is \(\cL(M)\)-definable and \(A\subseteq M\), then \(X(A)\) denotes \(X\cap A\).
\end{convention}

We adopt this convention, as there are too many structures at play to not be explicit as to which definable (or algebraic) closures we want to consider. For this reason, setting  \(X(A):=X\cap\dcl(A)\) would lead to ambiguities.

\subsection{Imaginaries}\label{imaginary}

Let \(T\) be an \(\cL\)-structure. The language \(\eq{\cL}\) is the language containing \(\cL\) with one additional sort \(S_X\) for every \(\cL\)-definable set \(X \subseteq Y\times Z\), where \(Y\) and \(Z\) are product of sorts, and one additional symbol \(f_X : Z \to S_X\). The \(\eq{\cL}\)-theory \(\eq{T}\) is then obtained as the union of \(T\), the fact that the \(f_X\) are surjective and that their fibers are the classes of the equivalence relation defined by \(X_{z_1} = \{y\in Y: (y,z_1)\in X\} = X_{z_2}\).

Any \(M\models T\) has a unique expansion to a model of \(\eq{T}\) denoted \(\eq{M}\) --- whose points are called the \emph{imaginaries}. Throughout this paper, notations with exponent \(\mathrm{eq}\) --- like \(\dcleq\) or \(\acleq\) --- will refer to the \(\eq{\cL}\)-structure of some ambient \(\eq{M}\).

Given \(M\models T\) and an \(\cL(M)\)-definable set \(X\), we denote by \(\code{X} \subseteq \eq{M}\) the intersection of all \(A = \dcleq(A)\subseteq \eq{M}\) such that \(X\) is \(\eq{\cL}(A)\)-definable. By construction of \(\eq{T}\), \(X\) is \(\eq{\cL(\code{X})}\)-definable, so it is the smallest \(\dcleq\)-closed set of definition for \(X\). Any \(\dcleq\)-generating subset of \(\code{X}\) is called \emph{a code} of \(X\).

If \(\mathcal{D}\) is a collection of sorts of \(\eq{\cL}\) --- equivalently, a collection of \(\cL\)-inter\-pretable sets --- we say that \(X\) is \emph{coded} in \(\mathcal{D}\) if it is \(\eq{\cL}(\mathcal{D}(\code{X}))\)-definable --- \emph{i.e.}, it admits a code in \(\mathcal{D}\). The theory \(T\) is said to \emph{eliminate imaginaries} in \(\mathcal{D}\) if, for every \(M\models T\), every \(\cL(M)\)-definable set \(X\) is coded in \(\mathcal{D}\) --- equivalently, for every \(e\in\eq{M}\), there is some \(d\in \mathcal{D}(\dcleq(e))\) such that \(e\in\dcleq(d)\). By compactness, this is equivalent to the definition in the introduction, provided \(\dcl(\emptyset)\) contains two elements. Finally, we say that the theory \(T\) \emph{weakly eliminates imaginaries} in \(\mathcal{D}\) if for every \(e\in\eq{M}\), there is some \(d\in \mathcal{D}(\acleq(e))\) such that \(e\in\dcleq(d)\).

We refer the reader to \cite[Section~8.4]{TenZie} for a detailed exposition of these notions.

\subsection{The languages of valued fields}\label{Sub:LanVF}

Any valued field \((K,v)\) can be considered as a structure in the language \(\Ldiv\) with one sort \(\K\) for the valued field, the ring language and a binary relation \(x|y\) interpreted as \(v(x) \leq v(y)\). Note that in every language of valued fields that we will consider there is a sort \(\K\) for the valued field and hence, whenever \(M\) is a structure representing a valued field, \(\K(M)\) will denote the underlying valued field.

The language \(\Ldiv\) owes its widespread use to the following result, essentially due to Robinson \cite{Rob-ACVF}:

\begin{fact}
The \(\Ldiv\)-theory \(\ACVF\) of algebraically closed non trivially valued fields eliminates quantifiers.
\end{fact}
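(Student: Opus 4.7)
The plan is to use the embedding test for quantifier elimination: it suffices to show that whenever \(\cM, \cN \models \ACVF\) with \(\cN\) sufficiently saturated, and \(A \subseteq \cM\) is an \(\Ldiv\)-substructure (i.e.\ a subring on which the restricted valuation is induced by the divisibility predicate), every \(\Ldiv\)-embedding \(f : A \to \cN\) extends to an \(\Ldiv\)-embedding \(\cM \to \cN\). By a Zorn / transfinite induction argument, it is enough to extend \(f\) by one element at a time.

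First I would reduce to the case where \(A\) is a subfield of \(\cM\) by observing that the embedding extends uniquely to the fraction field (an \(\Ldiv\)-substructure is necessarily an integral domain since \(\cM\) is, and the valuation extends uniquely to \(\mathrm{Frac}(A)\)). Then I would extend \(f\) to the relative algebraic closure \(\alg{K}\cap \cM\) where \(K = \mathrm{Frac}(A)\): given \(a\in \alg{K}\cap \cM\) with minimal polynomial \(P\) over \(K\), I pick any root \(a'\) of \(f(P)\) in \(\cN\) (which exists since \(\cN\) is algebraically closed) such that the valuations \(v(a-b)\) and \(v'(a'-f(b))\) match for all \(b\in K\). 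The existence of such \(a'\) follows from the classical fact (due to Chevalley, or via the theory of henselisations) that any two extensions of a valuation to an algebraic extension are conjugate over the base, combined with saturation of \(\cN\) to realise the type of \(a\) over \(f(K)\) by a conjugate of any chosen extension.

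It remains to handle the case \(a \in \cM\) transcendental over \(K\), where \(K\) is now an algebraically closed valued subfield. There are three mutually exclusive possibilities for the extension \(K \subseteq K(a)\) of valued fields. If it is \emph{value-transcendental}, meaning \(\val(a-c) \notin \val(K^\times)\) for some \(c\in K\), I pick any \(\gamma \in \val(\cN^\times)\) realising the cut of \(\val(a-c)\) over \(\val(K^\times)\) (possible by saturation, since \(\val(\cN^\times)\) is divisible) and any \(a' \in \cN\) with \(\val(a' - f(c)) = \gamma\). If it is \emph{residue-transcendental}, meaning there exist \(b,c\in K\) with \(\resf((a-c)/b)\) transcendental over \(\resf(K)\), I take \(a' = f(c) + f(b) \cdot \tilde a\) where \(\tilde a \in \Val(\cN)\) has residue transcendental over \(\resf(f(K))\), again possible because \(\cN\) is algebraically closed and saturated. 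In the remaining \emph{immediate} case, where neither the value group nor the residue field grow, I appeal to Kaplansky's theorem: \(a\) is the pseudo-limit of a pseudo-Cauchy sequence \((a_\alpha)\) from \(K\) of transcendental type, and by saturation of \(\cN\) I find \(a' \in \cN\) that is a pseudo-limit of \((f(a_\alpha))\). Uniqueness of such pseudo-limits (in the transcendental case) up to valued field isomorphism over \(K\) ensures that \(a \mapsto a'\) extends \(f\) to an \(\Ldiv\)-embedding on \(K(a)\).

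The main obstacle, as often with Robinson-style QE for valued fields, is the immediate transcendental case, which rests on Kaplansky's classification of immediate extensions via pseudo-Cauchy sequences; the other cases are essentially routine once one has algebraic closedness of the value group and residue field in the ambient models. Quantifier elimination in \(\Ldiv\) then follows by the standard embedding criterion.
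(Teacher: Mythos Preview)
The paper does not give its own proof of this fact: it is stated as a known result, attributed to Robinson, and used without further justification. Your sketch is the standard back-and-forth (embedding-test) argument for quantifier elimination in \(\ACVF\) and is correct; in particular the trichotomy for a transcendental element over an algebraically closed valued subfield (value-transcendental, residue-transcendental, immediate via a pseudo-Cauchy sequence of transcendental type) and the appeal to Kaplansky for the immediate case are exactly the right ingredients. Two minor remarks: saturation is not needed in the algebraic step (conjugacy of valuation extensions to \(\alg{K}\) already gives the required valued-field isomorphism), and in the value-transcendental and residue-transcendental cases you are implicitly using that \(K\) is algebraically closed to factor polynomials into linear terms and thereby determine \(\val(P(a))\) from the single datum \(\val(a-c)\) or \(\resf((a-c)/b)\); this is fine but worth making explicit.
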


\begin{notation}
We will write \(\cL_0 := \Ldiv\) and throughout this paper, notations with an index \(0\) --- like \(\dcl_0\), \(\acl_0\) or \(\qftp\) --- will refer to the quantifier free \(\cL_0\)-structure; or equivalently the structure induced by any model of \(\ACVF\) containing the valued field under consideration.
\end{notation}

Given a valued field, seen as an \(\cL_0\)-structure, we will denote by \(\Val := \{x\in\K\mid v(x) \geq 0\}\) its valuation ring, \(\Mid := \{x\in\K\mid v(x) > 0\}\) its maximal ideal, \(\res := \Val/\Mid\) its residue field and \(\vg := \K/\Val^\times\) its value \emph{monoid}; it is the union of the value group \(\vg^\times = \K^\times/\Val^\times\) and the class of \(0\) that we usually denote \(\infty\). We also denote \(\resf : \Val \to \res\) and \(\val : \K \to \vg\) the canonical projections.
More generally, for every \(n\in\Zz_{>0}\), we write \(\Res_n := \Val/n \Mid\). Let also \(\resf_n : \Val \to \Res_n\) be the canonical projection, \(\cRes\) the (pro-definable) set \(\plim_n \Res_n\) and \(\cresf : \Val \to \cRes\) the natural map. Note that, working in a sufficiently saturated model, \(\cRes \isom \Val/\cMid\), where \(\cMid := \{x\in\K\mid \cval(x)>\Delta_\infty\}\) and  \(\Delta_{\infty} \leq \vg\) is the convex subgroup generated by \(\val(\car(\res))\), in mixed characteristic, and \(\Delta_\infty = 0\), otherwise. It is a valuation ring whose fraction field is naturally identified with the residue field \(\cres\) associated to the (equicharacteristic) valuation \(\cval : \K \to \vg \to \vg/\Delta_{\infty}\). We also define \(\Res := \bigsqcup_{n>0} \Res_n\).

Although most of the present paper is rather insensitive to the choice of
language for valued fields --- or, rather, we work in \(\eq{\cL_0}\) --- we will
at times need to work in certain languages tailored for specific elimination
results. The first of them is the Haskell-Hrushovski-Macpherson geometric
language. For every \(n\in\Zz_{>0}\), let \(\Lat_n \isom \GL_n(\K)/\GL_n(\Val)\)
be the (interpretable) set of rank \(n\) free sub-\(\Val\)-modules of \(\K^n\),
and \(\Tor_n := \bigcup_{s\in \Lat_n} s/\Mid s\). Let \(\Lat := \bigcup_n
\Lat_n\), \(\Tor := \bigcup_n \Tor_n\) and \(\Geom := \K \cup \Lat \cup \Tor\).
We also denote by \(s_n : \GL_n(\K) \to \Lat_n\), \(t_n : \GL_n(\K) \to \Tor_n\)
and \(\tau_n : \Tor_n \to \Lat_n\) the canonical projections\label{tau}. We will
identify \(\Lat_n\) with the zero section inside \(\Tor_n\). Note that
\(\GL_n(K)\) naturally acts transitively on \(\Lat_n\) and
\(\Tor_n\sminus\Lat_n\), and that the map \(\tau_n\) is compatible with these
actions.

These interpretable sets (and the \emph{geometric language} of which they are
the sorts, which also contains the maps \(s_n\), \(t_n\) and \(\tau_n \)) were
introduced to classify imaginaries in \(\ACVF\):

\begin{fact}[{\cite[Theorem\,1.0.1]{HasHruMac-ACVF}}]\label{EI ACVF}
The theory \(\ACVF\) eliminates imaginaries in the geometric sorts \(\Geom\).
\end{fact}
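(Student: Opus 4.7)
The standard approach to this result, due to Haskell, Hrushovski and Macpherson, proceeds in several stages, and I would follow the same architecture. First I would reduce the problem to two cleaner tasks: \emph{weak} elimination of imaginaries in \(\Geom\) (coding of infinite families up to finite correspondence) and coding of finite sets of tuples of \(\Geom\). For the former, I would invoke the criterion that a theory weakly eliminates imaginaries in a collection of sorts \(\mathcal{S}\) once one shows that, for every small model \(M\) (in the \(\mathcal{S}\)-reduct) and every \(M\)-definable set \(X\), there is a global \(M\)-definable type concentrating on \(X\). The strategy thus splits into (a) a density of definable types result and (b) showing that the canonical base of any definable type lives in \(\Geom\), and separately (c) coding of finite sets.

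For (a), density of definable types: in arity one this follows directly from quantifier elimination in \(\ACVF\), since every definable subset of \(\K\) is a finite Boolean combination of (open or closed) balls and points, a so-called swiss cheese, and any non-empty swiss cheese contains the generic type of a maximal open sub-ball, which is \(\ACVF\)-definable over any parameter set containing the ball. In arity \(n\) one fibres over the first \(n-1\) coordinates: a generic definable type in the base, extended by a generic definable type in the generic fibre, produces a global definable type, and one adjusts parameters so that each fibre remains \(M\)-definable. For (b), one analyses a unary definable type \(p\) through the filter of \(p\)-definable closed balls: its intersection is a point (coded in \(\K\)), a closed ball (coded by \(\Tor_1\)), an open ball (coded by \(\Lat_1\)) or a strictly descending chain (again handled through \(\Lat_1\)). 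For tuples one passes to germs of definable functions along already-coded types, and the germs of \(\Val\)-linear maps are precisely parametrised by \(\Lat_n\) and \(\Tor_n\); induction then gives a canonical base in \(\Geom\).

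The remaining task (c) is coding finite sets in \(\Geom\), which amounts to showing \(\acl = \dcl\) in the geometric sorts, \emph{i.e.}\ that \(\aut[\Uu/\Geom(A)]\) fixes finite sets pointwise for any \(A\subseteq\Geom\). For finite subsets of \(\K\) one uses elementary symmetric polynomials as in \(\ACF\). For finite subsets of \(\Lat_n\) one codes an unordered family of lattices \(\{s_1,\dots,s_k\}\) by the pair consisting of the sum \(\sum_i s_i\in \Lat_n\) together with the image of the family in the finite quotient \((\sum_i s_i)/(\bigcap_i s_i)\), which is a \(\Val/\fm\)-module and hence re-encoded inside \(\Tor_n\); recursion on the length of \(\sum s_i/\bigcap s_i\) terminates because each \(\Val\)-module of rank \(n\) inside \(\K^n\) is free, and this bookkeeping is carried out at the level of the geometric sorts without new symbols. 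The case of \(\Tor_n\) is reduced to that of \(\Lat_n\) fibrewise by first coding the underlying lattices.

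I expect the main obstacle to be part (c): the density and canonical-base computations are essentially formal once quantifier elimination and the structure of one-variable definable sets are in hand, but the coding of unordered families of lattices is exactly what forces the introduction of the sorts \(\Lat_n,\Tor_n\) in the first place --- a finite set of balls does \emph{not} admit a code in \(\K\cup\vg\) alone --- and the inductive coding scheme requires some care in tracking the \(\GL_n(\Val)\)-coset data. Once (a)-(c) are in place, weak elimination combined with coding of finite sets yields full elimination of imaginaries in \(\Geom\).
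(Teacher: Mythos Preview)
The paper does not prove this statement at all: it is stated as a \emph{Fact} with a citation to \cite[Theorem~1.0.1]{HasHruMac-ACVF}, and no argument is given. So there is no ``paper's own proof'' to compare against --- the result is imported wholesale from the literature.

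Your sketch is a reasonable outline of the strategy used in the original Haskell--Hrushovski--Macpherson proof and in Hrushovski's later streamlined account, so in that sense you are reconstructing the right argument. A couple of the details are slightly off, however. The criterion in (a) is not quite as you state it: one needs density of definable types over sets of the form \(\acleq(e)\) for an arbitrary imaginary \(e\), not merely over small \(\Geom\)-models, and the conclusion is only \emph{weak} elimination --- so the reduction to (a)+(b)+(c) is correct but the hypothesis in (a) should be formulated over \(\eq{}\)-algebraically closed parameter sets. Your description of (c) is in the right spirit but oversimplified: the actual coding of finite sets of lattices does not proceed by passing to \(\sum s_i/\bigcap s_i\) (which need not be a finite \(\res\)-module), and the argument in \cite{HasHruMac-ACVF} is rather more delicate, going through unary codes and the structure of the tree of closed balls. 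None of this is fatal to your outline, but if you were to actually write the proof you would need to be more careful at these two points.
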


The second language that we will use allows for a description of definable sets
in certain henselian fields. The exact language that we use was introduced  by
Flenner in \cite{Fle11}. For every \(n\in\Zz_{>0}\), let \(\RV_n\) be the
multiplicative monoid \(\K/(1 + n \Mid)\); it is the union of the group
\(\RV_n^\star = \K^\times/(1+n\Mid)\) and the class of \(0\), also denoted
\(0\). Let \(\rv_n : \K \to \RV_n\) and \(\rv_{n,m} : \RV_n \to \RV_m\), where
\(m\) divides \(n\), denote the canonical projections.
The valuation induces a map \(\RV_n\to\vg\) that we also denote \(\val\). This map induces a short exact sequence:
\[1\to \Res_n^\times \to \RV_n^\times \to \vg^\times \to 0.\]

\begin{remark}
If \(\val(n) = \val(m)\), then \(\rv_{n,m} : \RV_n \isom \RV_m\). In particular, in equicharacteristic zero all \(\RV_n\) are canonically isomorphic to \(\RV_1\). We allow this redundancy in order to have a uniform treatment of characteristic zero henselian valued fields.

Moreover, in positive characteristic \(p\), if \(n\) is prime to \(p\), \(\RV_n \isom \RV_1\) and othewise \(\RV_n \isom \K\). In that case, it makes more sense to only consider \(\RV_1\) --- see below.
\end{remark}

Moreover,  \(\RV_n\) is endowed with the trace of addition which we denote, in Krasner's hyperfield manner: \(\zeta \oplus \xi := \{\rv_n(x+y) \mid \rv_n(x) = \zeta \text{ and  }\rv_n(y) = \xi\} \subseteq \RV_n\). We say that \(\zeta \oplus \xi\) is well-defined when \(\zeta \oplus \xi = \{\chi\}\) is a singleton, and we often write \(\zeta \oplus \xi = \chi\) in that case.

\begin{remark}\label{rv ball}
Note that for any two disjoint balls \(b_1\) and \(b_2\), in some valued field \((K,v)\), and any \(a_i,c_i\in b_i\), \(\rv_1(a_1-a_2) = \rv_1(c_1-c_2)\). We will denote by \(\rv_1(b_1-b_2)\) this common value. If \(b_1\cap b_2\neq\emptyset\), by convention, \(\rv_1(b_1-b_2) = 0\).
\end{remark}

We denote by \(\RV_\infty\) the (pro-definable) set \(\plim_n \RV_n\), and \(\rv_\infty : \K\to\RV_\infty\) denotes the natural map. Note that \(\RV_\infty \isom \K/(1+\Mid_\infty)\), as pro-definable sets. We also denote \(\RV := \bigsqcup_n \RV_n\).

Let \(\LRV\) be the language with sorts \(\K\), \(\vg\) and \(\RV_n\), for all \(n\in\Zz_{>0}\), the ring structure on \(\K\), 
ordered (abelian) monoid structure with a constant for \(\infty\) on \(\vg\), multiplication, constants \(0\), \(1\) and a ternary predicate \(\oplus\) on each \(\RV_n\), the valuation map \(\val : \K\to\vg\) and the maps \(\rv_n : \K\to\RV_n\) and \(\rv_{n,m} : \RV_n\to\RV_m\). Let \(\LRVun\) be its restriction to the sorts \(\K\), \(\vg\) and \(\RV_1\). 

\begin{remark}\label{define-oplus}
If the interval 
\([0,\val(n)]\) is finite, then the predicate \(\oplus\) on \(\RV_n\) is definable (in general with parameters, and without parameters in case $\val(n)=0$) using addition on \(\Res_n\).
\end{remark}

\begin{proof}
If \(\val(\zeta)\leq\val(\xi)\), then \(\zeta \oplus\xi = \zeta\cdot r_n^{-1}(1 + (\zeta^{-1}\xi))\), where \(r_n\) is the map sending \(\rv_n(x)\) to \(\resf_n(x)\) whenever \(x\in\Val\). The remaining cases are dealt with by symmetry. Therefore, it suffices to show that the map \(r_n\) is definable. Let \(\pi\in\RV_n\) be an element of minimal positive valuation. Then for every \(\xi\in \RV_{n}\), if \(\val(\xi) = \ell\val(\pi) \in [0,\val(n)]\), then \(r_n(\xi) = r_n(\pi)^\ell \cdot (\rv_n(\pi)^{-\ell} \xi)\); and if \(\val(\xi)> \val(n)\), \(r_n(\xi) = 0\). So \(r_n\) is indeed definable (with parameters \(\pi\) and \(r_n(\pi)\), unless \(\val(n) = 0\)).
\end{proof}


\begin{definition}\label{benign}
We say that a valued field \((K,v)\) is:
\begin{itemize}
\item \emph{algebraically maximal} if it does not admit non trivial immediate algebraic extensions;
\item \emph{Kaplansky} if \(\vg^\times(K)\) is \(p\)-divisible and any finite extension of \(\res(K)\) has degree prime to \(p\), where \(p = \car(\res(K))\) if it is positive and \(p = 1\) otherwise;
\item \emph{finitely ramified} if for any $\ell\in \Zz_{>0}$ the interval \([0,\val(\ell)]\) in \(\vg(K)\) is finite.
\end{itemize}
\end{definition}

Note that a finitely ramified valued field is algebraically maximal if and only if it is henselian, \emph{cf.} \cite[Theorem~4.1.10]{EngPres}.

The following quantifier elimination results are due, respectively, to Basarab \cite[Theorem\,B]{Bas-EQHens} in characteristic zero and Delon \cite[Théorème~3.1]{Del-These} in positive characteristic (see also \cite[Corollary~2.2 and Theorem~2.6]{Kuh}):

\begin{fact}[{}]\label{EQ rv}
\begin{itemize}
\item Let \(\cL\) be an \(\RV\)-enrichment of \(\LRV\) and \(T\) an \(\cL\)-theory containing the theory \(\Hen[0]\) of henselian valued fields of characteristic zero. Then \(T\) eliminates field quantifiers.
\item Let \(\cL\) be an \(\RV_1\)-enrichment of \(\LRVun\) and \(T\) an \(\cL\)-theory containing the theory of equicharacteristic \(p\) algebraically maximal Kaplansky valued fields, for some fixed \(p>0\). Then \(T\) eliminates field quantifiers.
\end{itemize}
\end{fact}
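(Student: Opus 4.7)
The plan is a back-and-forth argument of Ax-Kochen-Ershov flavour, establishing that in sufficiently saturated models $M, N \models T$, any partial $\cL$-isomorphism $f : A \to B$ between small substructures that is already elementary on the $\RV$-sorts extends to any prescribed element of $\K(M)$. Before starting the extension step, I would replace $A$ by its $\cL$-hull in $M$ (the subfield of $\K(M)$ generated by $\K(A)$, closed under the maps $\rv_n$ and $\rv_{n,m}$) and then by its henselisation inside $\K(M)$; since henselisation is an immediate algebraic extension, unique up to unique isomorphism inside any henselian overfield, the $\RV$-structure is automatically preserved, and the corresponding enlargement on the $N$-side is canonical by henselianity of $N$.

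Fix now $a \in \K(M) \setminus A$ to be absorbed. The extension splits into three cases. \emph{(i)} If $\rv_n(a)$ falls outside the $\cL$-definable closure of the $\RV$-part of $A$ in $M$ for some $n$, then $\kappa$-saturation of $N$ produces $b \in \K(N)$ whose full $\RV$-type over $B$ matches the pushforward of that of $a$; a standard valuation-theoretic check shows $a \mapsto b$ extends $f$ to a field isomorphism compatible with every $\rv_n$. \emph{(ii)} If $a$ is algebraic over $A$, henselianity of $A$ ensures that $A\langle a\rangle / A$ is determined up to $A$-isomorphism by its ramification and residual invariants, which lie in the $\RV$-sorts; saturation of $N$ supplies the matching $b$. \emph{(iii)} Otherwise $a$ is transcendental over $A$ with $\rv_n(a) \in \rv_n(A)$ for all $n$ (respectively $\rv_1(a) \in \rv_1(A)$ in the positive-characteristic setting), so $A\langle a\rangle/A$ is immediate; then $a$ is a pseudo-limit of a pseudo-Cauchy sequence $(a_\lambda)_\lambda$ in $A$ without pseudo-limit in $A$, and the $\RV$-type of $a$ over $A$ records its width with respect to this sequence.

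The central difficulty is case (iii): one must show that any two pseudo-limits of $(a_\lambda)_\lambda$ with identical $\RV$-data over $A$ are $A$-conjugate, and that such a pseudo-limit can be found in $N$ by saturation. In characteristic zero, henselianity together with the tower of $\RV_n$-sorts (recording $\rv_n(a - a_\lambda)$ for all $\lambda$ and all $n$) pin down this pseudo-limit uniquely up to $A$-isomorphism, since two pseudo-limits with identical $\RV$-data differ by an element whose $\rv_n$-class is trivial for every $n$, whence by henselianity in characteristic zero they are isomorphic over $A$. In positive characteristic, pseudo-Cauchy sequences of algebraic type also occur, and here the combination of algebraic maximality with the Kaplansky hypothesis is essential: Kaplansky's theorem on the uniqueness of immediate maximal extensions of Kaplansky fields forces such a sequence to admit, up to $A$-isomorphism, a single pseudo-limit determined by its $\rv_1$-data, and the higher $\RV_n$ carry no new information in equicharacteristic. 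Iterating the extension step with saturation on both sides then yields the desired relative quantifier elimination.
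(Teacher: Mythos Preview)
The paper does not prove this fact; it is stated with attributions to Basarab (characteristic zero) and Delon (positive characteristic). Your outline follows the standard embedding argument from those sources, so the global strategy is sound.

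Your trichotomy, however, has a real gap. In case~(iii) you infer from $\rv_n(a)\in\rv_n(A)$ for all $n$ that $A(a)/A$ is immediate. This is false: the hypothesis only gives some $c\in A$ with $\val(a-c)>\val(a)$, and $\val(a-c)$ may well be a new value. For instance, in equicharacteristic zero, take $a=1+u$ with $\val(u)\in\val(M)_{>0}\setminus\val(A)$; then $\rv_1(a)=\rv_1(1)\in\rv_1(A)$ but $\val(a-1)=\val(u)\notin\val(A)$. The correct dichotomy hinges on $S=\{\val(a-c):c\in A\}$: if $S$ attains a maximum at $c_0$, replace $a$ by $a-c_0$, which then has new value or new residue; if $S$ has no maximum, $a$ is a pseudo-limit of a pseudo-Cauchy sequence in $A$ with no pseudo-limit in $A$. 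Your cases (i) and (iii) together do not exhaust the transcendental possibilities, since $\rv_n(a)$ can lie in $\rv_n(A)$ while some $\rv_n(P(a))$ does not.

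Your account of the pseudo-limit step is also misdirected. For sequences of transcendental type, any two pseudo-limits give $A$-isomorphic extensions by Kaplansky's theorem --- a purely valuation-theoretic fact requiring neither matching $\RV$-data nor henselianity nor the Kaplansky hypotheses. What henselianity buys in residue characteristic zero is that sequences of algebraic type already have a pseudo-limit in $A$ (henselian implies algebraically maximal there), so only the transcendental type remains. In mixed characteristic the tower $(\RV_n)_n$ is genuinely needed, but its role is to record $\rv_n(P(a))$ for $P\in A[x]$, controlling the algebraic and ramified steps rather than separating transcendental-type pseudo-limits. Likewise in equicharacteristic~$p$: algebraic maximality disposes of algebraic-type sequences, and the Kaplansky hypotheses are needed elsewhere in the argument, not for the uniqueness of transcendental-type pseudo-limits, which is unconditional.
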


\subsection{Separated pairs of valued fields}

In this section, we will gather some results about separated pairs of valued fields, in particular 
concerning pure stable embeddedness of the residue field and value group pairs in specific contexts. 
In equicharacteristic zero, most of the results below follow from work of Leloup \cite{Lel90}; and from work of Rioux \cite{Rioux-PhD}, in unramified mixed characteristic.

Recall that an extension $L/K$ of valued fields is called \emph{separated} if every finite-dimensional $K$-vector subspace of $L$ admits 
a $K$-\emph{valuation basis}, \emph{i.e.}, a $K$-basis $(b_1,\ldots,b_n)$ which is \emph{valuation independent} over $K$: for any $a_1,\ldots,a_n\in K$ one has 
$\val(\sum a_ib_i)=\min\val(a_ib_i)$. Also, for field extensions $K\subseteq L\subseteq U$ and $K\subseteq K'\subseteq U$, we write $L\lind_{K}K'$ if $L$ and $K'$ are linearly disjoint over $K$.

\begin{definition}
Let $K\subseteq L\subseteq U$ and $K\subseteq K'\subseteq U$ be valued field extensions.
\begin{itemize}
\item We say $L$ and $K'$ are \emph{$\vg\res$-independent} over $K$, denoted by $L\ind^{\vg\res}_KK'$, if $\res(L)\lind_{\res(K)}\res(K')$ and $\vg(L)\cap \vg(K')=\vg(K)$.
\item Assume that $L/K$ is separated. Then $L$ is said to be \emph{valuatively disjoint} from $K'$ over $K$, denoted by $L\ind^{vd}_KK'$, 
if whenever a tuple $(b_1,\ldots,b_n)$ from $L$ is valuation independent over $K$, it is valuation independent over $K'$.
\end{itemize}
\end{definition}

\begin{fact}\label{F:characterize-vd}
Let $K\subseteq L\subseteq U$ and $K\subseteq K'\subseteq U$ be valued field extensions, with $L/K$ separated and $L\ind^{\vg\res}_KK'$. Set $L':=LK'$. Then we have the following:
\begin{enumerate}
\item $L\ind^{vd}_KK'$ --- in particular, $L\lind_KK'$;
\item $L'/K'$ is separated;
\item $\res(L')=\res(L)\res(K')$ and $\vg(L')=\vg(L)+\vg(K')$;
\item If $L_1\subseteq U$ and $f:L\cong L_1$ is an isomorphism over $K\cup \res(L)\cup\vg(L)$, then $f$ extends (uniquely) to an isomorphism $f':L'\cong L_1K'$ over $K'$.
\end{enumerate}
\end{fact}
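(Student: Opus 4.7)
The entire statement rests on part (1), from which (2)--(4) follow by routine manipulations, so the plan is to establish (1) carefully and then deduce the rest. For (1), take a $K$-valuation independent tuple $(b_1,\ldots,b_n)$ from $L$ and $a_1',\ldots,a_n'\in K'$, set $\gamma:=\min_i\val(a_i'b_i)$ and $I_0:=\{i:\val(a_i'b_i)=\gamma\}$. For $i,j\in I_0$ one has $\val(b_i/b_j)=\val(a_j'/a_i')\in\val(L)\cap\val(K')=\val(K)$, so fixing $i_0\in I_0$ we may choose $d_i\in K^\times$ (with $d_{i_0}=1$) such that $v_i:=b_i/(d_ib_{i_0})\in L$ and $u_i:=a_i'd_i/a_{i_0}'\in K'$ are units, with residues $\bar v_i\in\resf(L)$ and $\bar u_i\in\resf(K')$. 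Rewriting
\[\sum_ia_i'b_i=a_{i_0}'b_{i_0}\sum_{i\in I_0}u_iv_i+(\text{terms of valuation}>\gamma),\]
it suffices to show $\sum_{i\in I_0}\bar u_i\bar v_i\neq 0$. A non-trivial $\resf(K)$-relation among the $\bar v_i$ would lift (using the $d_i$) to an element $\sum(\mu_i/d_i)b_i\in L$ of valuation strictly greater than $\min_i\val((\mu_i/d_i)b_i)=\val(b_{i_0})$, contradicting the $K$-valuation independence of $(b_i)$; hence $(\bar v_i)_{i\in I_0}$ is $\resf(K)$-linearly independent, and by $\resf(L)\lind_{\resf(K)}\resf(K')$ it is $\resf(K')$-linearly independent, so the relation is excluded (as $\bar u_{i_0}=1$). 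The consequence $L\lind_KK'$ is immediate: any $K$-linearly independent tuple of $L$ may be completed by separation of $L/K$ to a $K$-valuation basis, to which what we have just proven applies.

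For (2), any finite-dimensional $K'$-subspace $V\subseteq L'$ lies in some $WK'$ with $W\subseteq L$ finite-dimensional over $K$, and a $K$-valuation basis of $W$ (given by separation of $L/K$) is by (1) also a $K'$-valuation basis of $WK'$; since subspaces of separated finite-dimensional valued vector spaces are themselves separated, $V$ admits a $K'$-valuation basis. For (3), any $x\in(L')^\times$ can be written as $x=\sum_jb_jc_j'$ with $(b_j)$ a $K$-valuation basis of some finite-dimensional subspace of $L$ and $c_j'\in K'$; by (1), $\val(x)=\min_j(\val(b_j)+\val(c_j'))\in\val(L)+\val(K')$. If $\val(x)=0$, then for $j_0\in I_0:=\{j:\val(b_jc_j')=0\}$ we have $\val(b_{j_0})\in\val(L)\cap\val(K')=\val(K)$, so there is $e\in K^\times$ with $\val(e)=\val(b_{j_0})$; then $b_{j_0}/e$ and $c_{j_0}'e$ are units of $L$ and $K'$ respectively, and the same normalization as in (1) gives
\[\resf(x)=\resf(b_{j_0}/e)\cdot\resf(c_{j_0}'e)\cdot\sum_{j\in I_0}\bar u_j\bar v_j\in\resf(L)\resf(K').\]

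For (4), since $f$ fixes $k_L$ and $\Gamma_L$ pointwise and preserves the valuation, $L_1$ satisfies $L_1\ind^{\vg\res}_KK'$, so by (1) both $L\otimes_KK'\to L'$ and $L_1\otimes_KK'\to L_1K'$ are injective ring maps from domains whose fraction fields are $L'$ and $L_1K'$ respectively. The $K'$-algebra isomorphism $f\otimes\mathrm{id}_{K'}$ therefore extends uniquely to a field isomorphism $f':L'\to L_1K'$ over $K'$ extending $f$, uniqueness being immediate since $f'$ is determined on the generating set $L\cup K'$. Preservation of the valuation then follows from (3): writing $x=\sum b_jc_j'$ as above, $(f(b_j))$ is $K$-valuation independent in $L_1$ with the same valuations as $(b_j)$, whence $\val(f'(x))=\min(\val(f(b_j))+\val(c_j'))=\val(x)$. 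The main obstacle throughout is the residue-level argument of (1): guaranteeing that the valuation-basis property survives extension of scalars from $K$ to $K'$, which is precisely where both halves of the $\vg\res$-independence hypothesis are needed; everything else is bookkeeping or a standard fact about valued vector spaces.
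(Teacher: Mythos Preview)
Your proof is correct and gives precisely the standard argument. The paper itself does not spell out a proof but only refers to adapting \cite[Proposition~12.11]{HasHruMac-Book}; your direct computation for (1)---reducing to a residue-field linear independence statement via the two halves of the $\vg\res$-independence hypothesis---is exactly that adaptation, and your deductions of (2)--(4) are the expected ones.
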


\begin{proof}
This is shown by adapting the proof of the corresponding result for $K$
maximally valued from \cite[Proposition~12.11]{HasHruMac-Book}.
\end{proof}

\subsubsection{Reduction to \(\RV\)} Most of this paper will be concerned with characteristic zero finitely ramified fields, however, for future reference, we will state and prove certain results, mostly regarding pairs, in all characteristics, as the arguments are essentially identical. 

\begin{notation}
Given a multisorted language $\cL$, we let $\cL_{\bP}$ be the associated language of pairs, \emph{i.e.}, for every sort $\mathbf{S}$ from $\cL$ we add a unary predicate $\bP\mathbf{S}$ of sort $\mathbf{S}$ to the language. If $N$ is an $\cL$-substructure of $M$, we will consider the pair of $\cL$-structures $\widetilde{M}=(M,N)$  as an $\cL_{\bP}$-structure in the natural way, \emph{i.e.}, $\bP\mathbf{S}(\widetilde{M})=\mathbf{S}(N)$ for each sort $\mathbf{S}$. We denote $\bP(\widetilde{M})=N$  the whole $\cL$-substructure singled out by the $\bP\mathbf{S}$'s. Instead of $\widetilde{M}=(M,\bP(\widetilde{M}))$, we will often write 
$(M,\bP(M))$. Given a quantifier free \(\cL\)-definable set \(X\), we extend the above notation and write \(\bP X\) for the \(\LP\)-definable set whose points in \((M,\bP(M))\) are the \(\bP(M)\)-points of \(X\).
\end{notation}

In \(\LPRVun\), the class of separated pairs of valued fields may be axiomatized. For technical reasons, we will consider such pairs in a hybrid language, adding higher $\RV$ sorts for the small valued field. Formally, we let \(\LRVhyb\) be the language consisting of \(\LPRVun\) together with additional sorts \(\bP\RV_n\) for all $n\geq2$ and all symbols of $\LRV$, where for $n\geq2$ we use $\bP\RV_n$ instead of $\RV_n$. \emph{E.g.}, in \(\LRVhyb\), for $n\geq2$ we have a function symbol $\rv_n:\K\rightarrow\bP\RV_n$ and a ternary relation symbol $\oplus$ on $\bP\RV_n$. 

Let $T^\star$ be a theory of separated pairs $(M,\bP(M))$ of henselian valued fields in the language \(\LRVhyb\). Here, $M$ is the \(\LRVun\)-structure associated to a valued field, \(\bP(M)\) the $\LRV$-structure (interpreted on the respective $\bP \mathbf{S}$'s) of the corresponding valued subfield. (For $n\geq2$, we extend $\rv_n$ from $\bP\K(M)$ to $\K(M)$ trivially, setting $\rv_n(a):=0\in\bP\RV_n(M)$ for any $a\in\K(M)\setminus\bP\K(M)$.) We assume that \(M\) eliminates field quantifiers in \(\LRVun\) and \(\bP(M)\) eliminates fields quantifiers in \(\LRV\). Note that we do not assume that \(\PRV_1\) is stably embedded in \(\RV_1\).

\begin{remark}
In positive characteristic \(p\), since \(\RV_p \isom \K\), eliminating quantifiers from the sort \(\K\) in \(\LRV\) is an empty assumption and it makes more sense to consider pairs of \(\LPRVun\)-structures instead, as in \cref{LPRVun}.
\end{remark}

By a hybrid \(\RV\)-structure, we mean a structure (elementarily equivalent to) \((\RV_1(M),\PRV(M))\), where \(M\models T^\star\) --- with the restriction of the \(\LRVhyb\)-structure. We also denote \(\RVhyb\) the set of sorts \(\{\RV_1,\vg\} \cup \{\PRV_n\mid n\geq2\}\).

\begin{lemma}\label{L:elem-ind}
Let $M_0\preccurlyeq N_0$ be hybrid \(\RV\)-structures. Then $\res(M_0)\lind_{\Pres(M_0)}\Pres(N_0)$ and $\vg(M_0)\cap \Pvg(N_0)=\Pvg(M_0)$.
\end{lemma}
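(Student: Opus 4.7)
The plan is to observe that both assertions are expressible by first-order $\LRVhyb$-formulas and hence follow from $M\preccurlyeq N$ alone; the structure of separated pairs plays no role at this level, so \cref{F:characterize-vd} is not needed. The key observation is that, in the pair language $\LPRVun\subseteq\LRVhyb$, the named substructure $\bP$ induces definable subsets $\Pres\subseteq\res$ and $\Pvg\subseteq\vg$, via the canonical inclusions of the residue field and value group of $\bP$ into those of the ambient field.

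For the second equality, $\Pvg$ is cut out from $\vg$ by the predicate $\bP$ restricted to sort $\vg$, so for $\gamma\in\vg(M)$ elementarity yields
\[
\gamma\in\Pvg(M)\iff M\models\bP(\gamma)\iff N\models\bP(\gamma)\iff\gamma\in\Pvg(N).
\]
This gives $\vg(M)\cap\Pvg(N)\subseteq\Pvg(M)$, and the reverse inclusion is trivial.

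For the first claim, I would express $\Pres$-linear dependence by an $\LRVhyb$-formula: for every $n\geq 1$, set
\[
\varphi_n(a_1,\dots,a_n)\ :=\ \exists b_1,\dots,b_n\in\Pres\;\Bigl(\bigvee_{i=1}^{n} b_i\neq 0\ \wedge\ \sum_{i=1}^{n} b_i a_i = 0\Bigr),
\]
where the products $b_ia_i$ make sense via the definable inclusion $\Pres\hookrightarrow\res$ and the ring structure of $\res\subseteq\RV_1$. For $\bar a\in\res(M)^n$, elementarity gives $M\models\varphi_n(\bar a)\iff N\models\varphi_n(\bar a)$, so $\bar a$ is $\Pres(M)$-linearly dependent iff it is $\Pres(N)$-linearly dependent. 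This is precisely the assertion $\res(M)\lind_{\Pres(M)}\Pres(N)$.

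I do not expect any substantive obstacle: once it is granted that $\bP$ (and hence $\Pres$, $\Pvg$, together with the inclusions $\Pres\hookrightarrow\res$ and $\Pvg\hookrightarrow\vg$) is part of the language $\LRVhyb$, the proof is a pure exercise in elementary equivalence. The only delicate point is checking this definability, which is immediate from the way the pair language is set up.
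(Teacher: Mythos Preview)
Your proof is correct and is essentially the paper's own argument spelled out in detail: the paper's proof reads, in its entirety, ``Immediate from the elementarity of the extension.'' You have made explicit precisely why elementarity suffices, namely that membership in $\Pvg$ and linear dependence over $\Pres$ are expressible by $\LRVhyb$-formulas, which is exactly the content the paper leaves implicit.
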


\begin{proof}
Immediate from the elementarity of the extension.
\end{proof}

Let \(M_0\) be a hybrid \(\RV\)-structure, say (elementarily equivalent to) a structure of the form \((\RV_1(M),\PRV(M))\), where \(M\models T^\star\). We say that \(M_0\) is finitely ramified if \(\bP(M)\) is --- \emph{i.e}, \([0,v(\ell)] \cap \Pvg\) is finite for every \(\ell\in\Zz_>0\). In that case, we also assume that \(\Pres(M)\) is perfect. In mixed characteristic $(0,p)$, \(R_\infty := \plim_n \PRes_n(M_0)\) is then a \(p\)-ring  with perfect residue field \(\Pres(M_0)\) --- \emph{cf.} \cite[Ch. II \S 5]{Ser-CL} --- and \(p\) is not a divisor of zero. So it is a complete mixed characteristic discrete valuation ring and a finite extension of \(\W(\Pres(M_0))\) of degree \(\val(p)\), where $\W(k)$ denotes the ring of Witt vectors over $k$. Let \(\pi\) be a uniformizer of \(R_\infty\) --- \emph{i.e}, a generator of the maximal ideal --- and \(P\) its minimal polynomial over \(W(\Pres(M_0))\). 

\begin{definition}\label{ram cst}
Ramification constants refer to the (infinite) tuple, in \(\Pres(M_0)\), of Witt coordinates of the coefficients of a polynomial \(P\) as above.
\end{definition}

\begin{lemma}\label{compatible-section}
Let \(M_0\) be a finitely ramified hybrid \(\RV\)-structure. Assume that $\res^\times(M_0)$ is divisible, or that $\Pvg(M_0)$ is a pure subgroup of $\vg(M_0)$. Then, the following hold:
\begin{enumerate}
\item $\res$ and $\vg$ are purely stably embedded and orthogonal.
\item The theory of \(M\) is determined by the theories of the $\res$-pair (with
a choice of ramification constants), the $\vg$-pair and ramification data ---
\emph{i.e}, the theory stating that, for every \(n\), \(\PRes_n\) has a
uniformizer which is a zero of the polynomial whose Witt coefficients are the
ramification constants.
\end{enumerate}
Moreover, the statements (1) and (2) hold in any $\res$-$\vg$-enrichment of
$M_0$, \emph{i.e}, a \(\res\)-enrichment of a \(\vg\)-enrichment of \(M_0\). 
\end{lemma}

Here, when we say that a definable set is purely stably embedded, we mean that
its induced structure is given by (a definable expansion of) the restriction of
the language to that set. For example, the structure on \(\res\) is that of a
pair of fields and the structure on \(\vg\) is that of a pair of ordered groups.

\begin{proof}
We may assume that \(M_0\) is of the form \((\RV_1(M),\PRV(M))\) for some $\aleph_1$-saturated \(M\models T^\star\). Then, as $\aleph_1$-saturated modules are pure-injective, there is a section of the valuation map restricted to the small valued field $\bP(M)$, inducing 
coherent splittings of the sequences \[1\rightarrow\PRes_n^\times(M_0)\rightarrow\PRV_n^\times(M_0)\rightarrow\Pvg^\times(M_0)\rightarrow0\] for all $n\geq1$. In mixed characteristic, we may assume that the splitting is \emph{normalized}: the chosen uniformizer \(\pi\) is in the image of the splitting, equivalently, \(\ac_n(\pi) = 1\) if \(\ac_n\) is the angular component map induced by the splitting. Indeed, the group \(\Delta\) generated by \(\val(\pi)\) is convex, so the quotient is also ordered and hence torsion free. Since \(\PK^\times(M)/\Val^\times(M)\cdot \pi^\Zz \isom \Pvg(M_0)/\Delta\), the extension \(\bP\Val^\times(M)\cdot \pi^\Zz \leq \PK^\times(M)\) is also pure. Pure-injectivity of \(\bP\Val^\times(M)\) (which is an $\aleph_1$-saturated abelian group) then allows to extend the retraction \(\bP\Val^\times(M)\cdot  \pi^\Zz \to \bP\Val^\times(M)\) sending \(\pi\) to \(1\) to the whole of \(\PK^\times(M)\).

It further follows from the assumptions that the splitting of $1\rightarrow\Pres^\times(M_0)\rightarrow\PRV_1^\times(M_0)\rightarrow\Pvg^\times(M_0)\rightarrow0$ extends to a splitting of the sequence $1\rightarrow\res^\times(M_0)\rightarrow \RV_1^\times(M_0)\rightarrow\vg^\times(M_0)\rightarrow0$. To see this, note first that if $h:\PRV_1^\times(M_0)\rightarrow\Pres^\times(M_0)$ is a retraction of the inclusion map, \emph{i.e.}, $\restr{h}{\Pres^\times(M_0)}=\id_{\Pres^\times(M_0)}$, $h$ extends (uniquely) to a homomorphism $\widetilde{h}:\res^\times(M_0)\cdot \PRV_1^\times(M_0)\rightarrow \res^\times(M_0)$ which is the identity on $\res^\times(M_0)$, since $\res^\times(M_0)\cap \PRV_1^\times(M_0)=\Pres^\times(M_0)$. It is enough to show that $\widetilde{h}$ may be extended to a homomorphism 
$h':\RV_1^\times(M_0)\rightarrow\res^\times(M_0)$. In case $\res^\times(M_0)$ is divisible, this is clear, since divisible abelian groups are injective. In case $\Pvg(M_0)$ is a pure subgroup of $\vg(M_0)$, we conclude as above.

Note that the additional structure on \(\RVhyb\), beyond the abelian structure, is given by \(\oplus\) and some $\res$-$\vg$-enrichment. As explained in \cref{define-oplus}, \(\oplus\) can be defined using the ring structure on \(\res\) and the \(\PRes_n\) (using the splitting, no further constants are required).
Moreover, \(\PRes_n\) is a finite extension, generated by the zero of a polynomial with coefficients the ramification constants, and, as such, is \(\emptyset\)-interpretable in \(W_n(\Pres)\), which is itself \(\emptyset\)-interpretable in \(\Pres\). So, if we add the splittings, \(\RVhyb\) is (identified to) a $\res$-$\vg$-enrichment of the product of \(\res\) and \(\vg\). In the product structure, (1) and (2) are clear, even for $\res$-$\vg$-enrichments. The result follows, as (1) and (2) are preserved in any reduct of the product structure that carries the whole structure on $\res$ and $\vg$.
\end{proof}

\begin{remark}
\label{PRV st emb}
If \(\Pres\) is (purely) stably embedded in \(\res\) and \(\Pvg\) is (purely)
stably embedded in \(\vg\), then \(\PRV\) is (purely) stably embedded in
\(\RVhyb\). Indeed, this is true with a splitting as in the proof above since
\(\RV\) and \(\PRV\) can be identified to products, and it remains true after
removing the splitting
\end{remark}
    
We now get back to the \(\LRVhyb\)-theory $T^\star$ of separated pairs of valued
fields. Let \(M,N\models T^\star\), where we suppose that  $N$ is \(\lvert
M\rvert^+\)-saturated, and let \(A\leq M\) and \(f : A \to N\) be some
embedding.

\begin{definition}
We say that:
\begin{enumerate}
\item \(A\) is \emph{good} if $\PK(A)\leq \K(A)$ is a separated extension of
valued fields with $$\K(A)\ind^{\vg\res}_{\PK(A)}\PK(M),$$
\item \(f\) is \emph{good} if \(A\leq M\) and \(f(A)\leq N\) are good and
\(f_{\RVhyb}\) is elementary --- for the \(\restr{\LRVhyb}{\RVhyb}\)-structure.
\end{enumerate}
\end{definition}

\begin{proposition}\label{emb sep P}
Assume \(f\) is a good embedding. Then \(f\) extends to a good embedding \(g : M \to N\).
\end{proposition}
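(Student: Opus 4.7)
The plan is a back-and-forth argument organised around Zorn's lemma: it suffices to prove that any good embedding $f : A \to N$ with $A \subsetneq M$ extends properly to a good embedding $f' : A' \to N$, and to iterate. Given such an $f$, pick $x \in M \setminus A$; the one-step extension splits into three cases according to the sort of $x$, the delicate one being when $x \in \K(M) \setminus \PK(M)$.

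If $x$ lives in an $\RVhyb$ sort, then, using that $f_{\RVhyb}$ is elementary and that $N$ is $\lvert M\rvert^+$-saturated, we find $x' \in \RVhyb(N)$ realising the full $\restr{\LRVhyb}{\RVhyb}$-type of $x$ over $f(\RVhyb(A))$. The field parts $\K(A)$ and $\PK(A)$ are untouched, so separation and $\vg\res$-independence are preserved trivially. If $x \in \PK(M) \setminus A$, then Fact \ref{EQ rv} applied to $\bP(M)$ reduces the $\LRV$-type of $x$ over $\PK(A)$ to its quantifier-free part and its induced $\PRV$-type; the elementarity of $f_{\RVhyb}$ on the $\PRV$ sorts, together with saturation of $\bP(N)$, yields $x' \in \PK(N)$ with matching type. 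Separation of $\PK(A\langle x\rangle) \leq \K(A\langle x\rangle)$ is inherited from $\PK(M) \leq \K(M)$, and $\vg\res$-independence only tightens when the $\PK$-base is enlarged inside $\PK(M)$, so goodness on the source side is automatic; on the target side the analogous properties hold by the matching types inside $\bP(N) \leq \K(N)$.

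The main case is $x \in \K(M) \setminus (\K(A) \cup \PK(M))$. Using Fact \ref{EQ rv} for $\LRVun$, together with elementarity of $f_{\RVhyb}$ on $\RV_1$ and saturation of $N$, we select $x' \in \K(N)$ whose $\LRVun$-type over $\K(A)$ (translated by $f$) is consistent and ensures $x' \notin \PK(N)$; this produces an isomorphism $\sigma : L := \K(A)\langle x\rangle \cong L_1 := \K(f(A))\langle x'\rangle$ over the matched residue fields and value groups. Parts (1)--(3) of Fact \ref{F:characterize-vd} guarantee that $L / \PK(A)$ is separated and that the $\vg\res$-independence of $L$ over $\PK(A)$ relative to $\PK(M)$ persists (and likewise on the $N$-side). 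Part (4) of the same fact then gives a unique extension of $\sigma$ to an isomorphism $L \cdot \PK(M) \cong L_1 \cdot \PK(N)$ respecting the $\PK$-parts, from which we extract the required good embedding $f'$ by restricting to $A' := A\langle x\rangle$ and simultaneously updating the induced $\RVhyb$-data.

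The main obstacle is ensuring the goodness condition on the target: we must pick $x'$ so that the extended $\K(f(A))\langle x'\rangle$ is $\vg\res$-independent from $\PK(N)$ over $\PK(f(A))$, not merely over $\PK(f(A'))$, a condition that is not visible in any single finite-type of $x'$. Fact \ref{F:characterize-vd} is designed precisely to make this coordination possible: valuation-independence and $\vg\res$-independence propagate in a functorial way under separated extensions by one element, so the correct choice of $x'$ — dictated by the $\RVhyb$-type together with the quantifier-free $\LRVun$-type — suffices. Iterating the one-step extension through a Zorn chain then produces $g : M \to N$ as required.
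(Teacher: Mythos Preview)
Your element-by-element Zorn scheme differs from the paper's proof, which proceeds in three monolithic steps: first extend $f$ so that $\RVhyb(A)=\RVhyb(M)$; then so that $\PK(A)=\PK(M)$, using $\LRV$-field-quantifier elimination on $\bP$ together with Fact~\ref{F:characterize-vd} to amalgamate the extension on $\bP$ with the existing $f$ on $\K(A)$; and finally to all of $\K(M)$ in one shot via $\LRVun$-field-quantifier elimination, with target-side goodness supplied by Lemma~\ref{L:elem-ind}. The order is essential: each later step relies on the earlier ones being complete.

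Your case~3 has a genuine gap. With $x\in\K(M)\setminus(\K(A)\cup\PK(M))$ and no assumption that $\PK(A)=\PK(M)$, you claim that Fact~\ref{F:characterize-vd}(1)--(3) give separation of $L=\K(A)(x)$ over $\PK(A)$ and preservation of $\vg\res$-independence. But that fact controls $LK'/K'$ starting from $L/K$ separated and $L\ind^{\vg\res}_K K'$; there is no $K'$ here with $\K(A)\cdot K'=\K(A)(x)$, and nothing in general forces $\K(A)(x)/\PK(A)$ to be separated, or even forces $\PK(A\langle x\rangle)=\K(A)(x)\cap\PK(M)$ to equal $\PK(A)$. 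Your appeal to part~(4) is also misapplied: that statement lives in a single ambient $U$ and extends an isomorphism $L\cong L_1$ to $LK'\cong L_1K'$ over a common $K'$; you are instead trying to glue across $M$ and $N$ using an isomorphism $\PK(M)\cong\PK(N)$ you have not constructed. Finally, target-side goodness --- that $\K(f(A))(x')\ind^{\vg\res}_{\PK(f(A))}\PK(N)$ --- is not visible in the $\LRVun$-type of $x'$ over $\K(f(A))$; in the paper this is obtained only after $\RVhyb(A)=\RVhyb(M)$ and $\PK(A)=\PK(M)$, via Lemma~\ref{L:elem-ind}, which turns elementarity of the $\RVhyb$-map into the required independence from $\PK(N)$.

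The clean repair is to impose the paper's ordering inside your Zorn iteration: if $\RVhyb(A)\subsetneq\RVhyb(M)$ add an $\RVhyb$-element, else if $\PK(A)\subsetneq\PK(M)$ add a $\PK$-element, and only otherwise add a $\K$-element. Once $\RVhyb(A)=\RVhyb(M)$ and $\PK(A)=\PK(M)$, case~3 becomes straightforward: $\K(A)(x)/\PK(M)$ is separated as a subextension of the separated $\K(M)/\PK(M)$, source-side $\vg\res$-independence over $\PK(M)$ is trivial, and Lemma~\ref{L:elem-ind} gives it on the target.
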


\begin{proof} We proceed step by step.

{\bf Step~1.}  \emph{We may extend $f$ to a good map defined on $A\cup\RVhyb(M)$ --- and thus assume that $\RVhyb(A)=\RVhyb(M)$.}

Indeed, this follows from saturation, the fact that $f_{\RVhyb}$ is elementary and that the only symbols in the language involving both \(\K\) and \(\RVhyb\) are maps from \(\K\) to \(\RVhyb\).
\smallskip

{\bf Step~2.}  \emph{We may extend $f$ to a good map defined on (the substructure generated by) $A\cup\bP(M)$ --- and thus assume that $\PK(A)=\PK(M)$.}

Indeed, by $\K$-quantifier elimination in the $\LRV$-theory of the small valued field $\bP(M)$, the map $\restr{f}{\bP(A)}$ extends to an (elementary) $\LRV$-embedding $g:\bP(M)\rightarrow\bP(N)$. As $\K(A)\ind^{\vg\res}_{\PK(A)}\PK(M)$ and $\RVhyb(A)=\RVhyb(M)$, by \cref{F:characterize-vd}, $f\cup g$ induces a good embedding of $A\cup\bP(M)$ into $N$.

\smallskip

{\bf Step~3.}  \emph{We may extend $f$ to a good embedding of $M$ into $N$.}

Indeed, by $\K$-quantifier elimination in the $\LRVun$-theory of the valued field $M$, the map $f$ extends to an (elementary) $\LRVun$-embedding $\tilde{f}:M\rightarrow N$. By \cref{L:elem-ind}, we get $\tilde{f}(\K(M))\ind^{\vg\res}_{f(\PK(M))}\PK(N)$, so in particular $\tilde{f}(\K(M))\lind_{f(\PK(M))}\PK(N)$ (and thus $\tilde{f}(\K(M))\cap\PK(N)=f(\PK(M))$) by \cref{F:characterize-vd}(1),  showing that 
$\tilde{f}$ is an $\LPRVun$-embedding, with image a good substructure of $N$. Thus $\tilde{f}$ is a good embedding, since $f$ was already defined on the whole of $\RVhyb(M)$.
\end{proof}

\begin{corollary}
\label{RV-pair-red}
The theory $T^\star$ is complete relative to \(\RVhyb\), and $\RVhyb$ is purely
stably embedded in $T^\star$, \emph{i.e.}, the induced structure is that of a
hybrid \(\RV\)-structure.
\end{corollary}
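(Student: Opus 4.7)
The plan is to deduce both relative completeness over $\RVhyb$ and pure stable embeddedness of $\RVhyb$ from \cref{emb sep P} by a single back-and-forth argument. Resplendence will come for free, because the proof of \cref{emb sep P} treats $\RVhyb$ only as a black box on which the embedding is required to be elementary.

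The core step I would establish is the following claim: if $M_1, M_2 \models T^\star$ are sufficiently saturated of the same cardinality $\kappa$ and $\sigma : \RVhyb(M_1) \to \RVhyb(M_2)$ is an $\LRVhyb$-isomorphism, then $M_1 \cong_{\LPRVun} M_2$. The back-and-forth will be seeded by the good embedding $f_0$ extending $\sigma$ by the canonical identification of the prime subfields (which sit in $\PK$); this is trivially good since $\K(A_0) = \PK(A_0)$ makes separatedness and the $\vg\res$-independence vacuous. I will then build an increasing chain of good embeddings $f_\alpha : A_\alpha \to M_2$ for $\alpha < \kappa$, alternating between extending the domain to contain a prescribed element of $M_1$ and extending the image to contain a prescribed element of $M_2$. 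Each domain-extension is a direct application of \cref{emb sep P}; each image-extension is an application of \cref{emb sep P} to the inverse map $f_\alpha^{-1}$. Unions of good embeddings at limit stages remain good, and the colimit is the desired $\LPRVun$-isomorphism.

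From this key claim I can then read off both conclusions. For relative completeness: given $M_1, M_2 \models T^\star$ with $\RVhyb(M_1) \equiv_{\LRVhyb} \RVhyb(M_2)$, I pass to $\LPRVun$-saturated elementary extensions of equal large cardinality; their $\RVhyb$-parts are automatically $\LRVhyb$-saturated and remain $\LRVhyb$-equivalent, hence $\LRVhyb$-isomorphic by uniqueness of saturated models, and the claim delivers $M_1 \equiv_{\LPRVun} M_2$. For pure stable embeddedness: applied with $M_1 = M_2 = M$ saturated, the claim shows that every $\LRVhyb$-automorphism of $\RVhyb(M)$ lifts to an $\LPRVun$-automorphism of $M$, which is the standard saturation criterion for purity of the stable embedding.

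For resplendence, the proof of \cref{emb sep P} invokes the $\LRVhyb$-structure only through the elementarity of $f_{\RVhyb}$ in its Step~1; Steps~2 and~3 rely solely on $\K$-quantifier elimination in $\LRV$ and $\LRVun$, which is unaffected by any further $\RVhyb$-enrichment. Hence the back-and-forth transfers verbatim to any enrichment on the $\RVhyb$-sorts. The main step requiring care will be the verification that the goodness condition is symmetric under inversion, so that $f_\alpha^{-1}$ is itself a good embedding; this is immediate, since the separatedness of $\PK(A) \leq \K(A)$ and the independence $\K(A) \ind^{\vg\res}_{\PK(A)} \PK(M)$ are intrinsic properties of the substructure $A \leq M$ inside its ambient pair, while the elementarity of $f_{\RVhyb}$ is visibly preserved under inversion. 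I do not anticipate any serious obstacle beyond this routine symmetry check.
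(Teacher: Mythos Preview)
Your approach is correct and essentially the same as the paper's: both hinge on the back-and-forth system of good embeddings furnished by \cref{emb sep P}, with the only difference being the final packaging --- you extract pure stable embeddedness in one stroke via automorphism lifting and the Chatzidakis--Hrushovski criterion, whereas the paper checks stable embeddedness and purity by two separate direct observations (that an elementary $f:M\to N$ fixing $\RVhyb(M)$ stays good when extended by $\id_{\RVhyb(N)}$, and that any $\LRVhyb$-elementary map on $\RVhyb$ is good). One minor point to tidy: your passage to saturated elementary extensions of equal cardinality needs the usual care (special models, or simply $\kappa$-saturated strongly $\kappa$-homogeneous models), but this is routine.
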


This also holds for any $\RVhyb$-enrichment of the pair of valued fields. (This
is folklore. See, \emph{e.g.}, \cite[Proposition~2.7]{CoElHaJiRi22} for a
proof.)

\begin{proof}
Assume that $M,N\models T^\star$ are models with  \(\RVhyb(M)\equiv \RVhyb(N)\).
The isomorphism between the prime substructures, \emph{i.e.}, the substructures
of $M$ an $N$ generated by $\emptyset$, is easily seen to be a good embedding.
It follows by \cref{emb sep P}, and a back and forth argument, that it is, in
fact, elementary --- \emph{i.e.}, \(M\equiv N\).

Similarly, if \(M\subsel N\) --- in particular a good substructure --- and \(f
: M \to N\) is an elementary embedding --- in particular a good embedding ---
inducing the identity on \(\RVhyb(M)\), then it remains a good embedding --- and
hence an elementary one --- when extended by the identity on \(\RVhyb(N)\).
Thus, $\tp(M/\RVhyb(M))\vdash\tp(M/\RVhyb(N))$; in other words, \(\RVhyb\) is
stably embedded. Finally any \(\restr{\LRVhyb}{\RVhyb}\)-elementary map on
\(\RVhyb\) is good and hence \(\LRVhyb\)-elementary, so \(\RVhyb\) is pure.
\end{proof}


\begin{remark}
\label{PK-stably-embedded}
With a proof similar to the argument above, we also see that if \(\PRV\) is
(purely) stably embedded in \(\RVhyb\), then \(\PK\) is (purely) stably
embedded. Indeed, if \(f : M \to N\) is an elementary embedding which is the
identity on \(\PK\), if \(\PRV\) is stably embedded, we can extend it to a good
embedding by the identity on \(\PRV(N)\). Since \(\K(M) \ind^\ld_\PK(M)
\PK(N)\), we can then further extend this good embedding by the identity on
\(\PK(N)\). This extension can be seen to preserve \(\rv\) by using that
\(\PK(N) \leq \K(N)\) is separated. This proves that \(\PK\) is stably embedded.

Moreover, if \(\PRV\) is purely stably embedded in \(\RVhyb\), any automorphism
\(f : \PK(N) \to \PK(N)\) is good and hence \(\LRVhyb\)-elementary, proving that
\(\PK\) is a pure valued field.
\end{remark}

Combining our work so far, in particular,
\cref{RV-pair-red,compatible-section,PK-stably-embedded,PRV st emb}, we obtain:

\begin{corollary}\label{res-vg pure P}
Let $M\models T^\star$ be such that \(\bP(M)\) is finitely ramified with perfect residue field.
Assume that $\res^\times(M)$ is divisible (which is the case for example if
$M\models\ACVF$) or that $\Pvg(M)$ is a pure subgroup of $\vg(M)$. Then the
theory of \(M\) is determined by ramification data and the theories of \(\res\)
(with ramification constants) and \(\vg\). Moreover $\res$ and $\vg$ are purely
stably embedded and orthogonal and \(\RVhyb\) is purely stably embedded.

Furthermore, if \(\Pres\) is purely stably embedded in \(\res\) and \(\Pvg\) is
purely stably embedded in \(\vg\), then \(\PRV\) and \(\PK\) are also purely
stably embedded.
\end{corollary}

This remains true in any $\res$-$\vg$-enrichment of $M$ and with angular
components.

\begin{remark}\label{LPRVun}
If we further assume that \(\bP(M)\) eliminates
field quantifiers in \(\LRVun\) --- \emph{e.g.}, if it is algebraically closed
or algebraically maximal Kaplansky of equicharacteristic --- then all the above
results can easily be adapted to pairs of \(\LRVun\)-structures (with no need
for the rather exotic hybrid \(\RV\)-structures).
\end{remark}

\subsubsection{Characteristic 0 Laurent series fields}
Let $F$ be a  field of characteristic 0, and let $K:=F((t))$. In what follows, we are interested in the pair of valued fields $(\alg{K},K)$. Let us first  deal with the pair of value groups. Let \(\cL_\mathrm{og}\) be the language of ordered groups and DOAG be the theory of non-trivial divisible ordered abelian groups. Let also \(\cL_{\mathrm{Pres}}\) be the language \(\cL_{\mathrm{og}}\) enriched with a constant \(1\) and unary predicates for divisibility by integers. Let PRES be the \(\cL_{\mathrm{Pres}}\)-theory of \(\Zz\).

\begin{notation}
    Let $T_{\Qq,\Zz}$ be the theory of all structures $(\Gamma,\Delta)$ with
$\Gamma\models \mathrm{DOAG}$, $\Delta\models \mathrm{PRES}$ and such  for any $\gamma\in\Gamma$ there is a largest $\delta =: \lfloor\gamma\rfloor\in\Delta$ with $\delta\leq\gamma$, considered in the language $\cL_{\Qq,\Zz}$ given by $\cL_{\mathrm{og},\bP}$ together with $\cL_{\mathrm{Pres}}$ on the predicate $\bP$ and the function $\lfloor\cdot\rfloor$.
\end{notation}

The quantifier elimination result we state in part (3) of the following lemma has already been obtained by Weispfenning (\cite{Wei99}). (We thank Matthias Aschenbrenner for having brought this to our attention.) We decided to include our proof for convenience of the reader.

\begin{lemma}\label{lem:discrete-floor}\mbox{}
\begin{enumerate}
    \item Let $M=(\Gamma,\Delta)\models T_{\Qq,\Zz}$. Then the map $\gamma\mapsto (\lfloor\gamma\rfloor,\gamma-\lfloor\gamma\rfloor)$ is an $\emptyset$-definable bijection between $\Gamma$ and $\Delta\times[0,1)$, which identifies the $\emptyset$-definable sets in $M$ with the $\emptyset$-definable sets in the product structure $(\Delta,0,+,\leq)\times([0,1),0,\tilde{+},<)$, where $a\tilde{+}b:= a + b - \lfloor a+b\rfloor$ is the group law on $[0,1)$ induced by the natural bijection between $[0,1)$ and $\Gamma/\Delta$.
    \item In $T_{\Qq,\Zz}$, the predicate $\bP$ is stably embedded with induced structure a pure model of $\mathrm{PRES}$, and $[0,1)$ is stably embedded, with induced structure given by \(\cL_{\mathrm{og}}\), so in particular $o$-minimal.
    \item $T_{\Qq,\Zz}$ eliminates quantifiers and is complete.
    
\end{enumerate}
 
\end{lemma}

\begin{proof}
    Let $f:\Gamma\rightarrow \Delta\times[0,1)$ be the bijection given in (1). Clearly, $f$ is $\emptyset$-definable, and the product structure $(\Delta,0,+,<)\times([0,1),\tilde{+},<)$ is $\emptyset$-definable in $M$. Conversely, under this identification $\bP$ corresponds to $f^{-1}(0)$, $<$ on $\Gamma$ corresponds to the lexicographic ordering on $\Delta\times[0,1)$, and the addition on $\Gamma$ may also be recovered, since if $f(\gamma)=(z,a)$ and $f(\gamma')=(z',a')$, then 
    $$\label{eqn-carry}
        f(\gamma+\delta)=\begin{cases}
        (z+z',a\tilde{+}b) & \text{ if $a\leq a\tilde{+}b$}\\
        (z+z'+1,a\tilde{+}b) & \text{ otherwise}.
    \end{cases}
   $$

This proves (1). Part (2) follows directly from (1). 

Let us now show (3). Completeness follows from quantifier elimination, since $(\Zz,\Zz)$ embeds into every model of $T_{\Qq,\Zz}$ a a substructure. To prove quantifier elimination, we first note that the \(\cL_{\mathrm{og}}\)-theory of $([0,1),0,\tilde{+},<)$ has quantifier elimination. (This is well known, and we leave the easy proof to the reader.) Moreover, $\mathrm{PRES}$ has quantifier elimination in  $\cL_{\mathrm{Pres}}$. It is thus enough to establish the following claim:

\begin{claim}
    If $D \subseteq (\Delta\times[0,1))^n$ is defined by an atomic formula in the product structure $\Delta\times[0,1)$, with $\cL_{\mathrm{Pres}}$ on $\Delta$ and \(\cL_{\mathrm{og}}\) on $[0,1)$, then $f^{-1}(D)\subseteq M^n$ is defined by a quantifier free formula (without parameters).
\end{claim} 

To prove the claim, let us denote the projection onto $\Delta$ by $\pi_1$, that onto $[0,1)$ by $\pi_2$. If $\phi$ is of the form $\psi(\pi_1(x_1),\ldots,\pi_n(x_1))$ for some atomic $\cL_{\mathrm{Pres}}$-formula $\psi(\overline{y})$, the statement is clear, as then $f^{-1}(D)$ is defined by the quantifier free formula $\psi(\lfloor x_1\rfloor,\ldots,\lfloor x_n\rfloor)$. Else, $\phi$ is (equivalent to) a formula of the form $z_1\pi_2(x_1)\tilde{+}\cdots \tilde{+}z_n\pi_2(x_n)=0$, with $z_1,\ldots,z_n\in\Zz$, in which case $f^{-1}(D)$ is defined by $\bP(\sum_{i=1}^n z_ix_i)$; or $\phi$ is (equivalent to) a formula of the form $$z_1\pi_2(x_1)\tilde{+}\cdots \tilde{+}z_n\pi_2(x_n)<z'_1\pi_2(x_1)\tilde{+}\cdots \tilde{+}z'_n\pi_2(x_n),$$ with $z_1,z_1',\ldots,z_n,z_n'\in\Zz$, in which case $f^{-1}(D)$ is defined by the quantifier free formula $\sum_{i=1}^n z_ix_i-\lfloor \sum_{i=1}^n z_ix_i\rfloor<\sum_{i=1}^n z'_ix_i-\lfloor \sum_{i=1}^n z'_ix_i\rfloor$.
\end{proof}

In fact, the proof of \cref{lem:discrete-floor} yields the following more general result.

\begin{remark}
\label{rem:QZ-resplendent}
Let $\cL^+\supseteq\cL_{\mathrm{Pres}}$ and $T^+\supseteq \mathrm{PRES}$ be a
complete $\cL^+$-theory with quantifier elimination. Then the corresponding
expansion $T_{\Qq,\Zz}^+$ of $T_{\Qq,\Zz}$ is complete, eliminates quantifiers,
and $\bP$ is purely stably embedded with induced structure given by $\cL^+$.
\end{remark}

Since $T_{\Qq,\Zz}$ admits the complete model $(\Rr,\Zz)$, it is definably
complete. Actually, this also holds for definable complete expansions of
\(\mathrm{PRES}\), as shows the following corollary.

\begin{corollary}\label{Cvg pair}
Assume that the expansion $T^+\supseteq\rm{PRES}$ is definably complete. Then  \(T_{\Qq,\Zz}^+\) is definably complete.
\end{corollary}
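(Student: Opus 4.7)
Let $M = (\Gamma, \Delta) \models T^+_{\Qq,\Zz}$ and let $X \subseteq \Gamma$ be definable, nonempty and bounded above. The plan is to combine the definable completeness of $T^+$ on $\Delta$ with that of $\mathrm{DOAG}$ on $\Gamma$, using \cref{rem:QZ-resplendent} as a bridge --- after Morleyizing $T^+$ if necessary, which preserves both the hypothesis and the conclusion as it does not alter the collection of definable sets.

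First, $\lfloor X \rfloor \subseteq \Delta$ is definable in $M$ and bounded above in $\Delta$. By the pure stable embeddedness of $\bP$ provided by \cref{rem:QZ-resplendent}, $\lfloor X \rfloor$ is $\cL^+$-definable over $\Delta$, so by the definable completeness of $T^+$ it admits a maximum $z^* \in \Delta$. Every $x \in X$ satisfies $x < z^* + 1$ and $X$ meets $[z^*, z^* + 1)$, so $\sup X = \sup X'$ whenever the latter exists, where $X' := X \cap [z^*, z^* + 1)$.

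The key step is to show that $X'$ is $\mathrm{DOAG}$-definable over parameters in $\Gamma$. By \cref{rem:QZ-resplendent}, $X$ is defined by a quantifier-free $\cL^+_{\Qq,\Zz}$-formula $\phi(x)$ with parameters in $M$. By mutual induction on the complexity of the $\Gamma$- and $\Delta$-valued subterms of $\phi$, one refines $[z^*, z^* + 1)$ into a finite partition of $\mathrm{DOAG}$-definable pieces (over $\Gamma$-parameters) such that, on each piece, every $\Gamma$-subterm of $\phi$ equals a linear polynomial in $x$ with coefficients in $\Gamma$ and every $\Delta$-subterm is constant. The only non-trivial case is a term $\lfloor q x + s \rfloor$ with $q \in \Qq^\times$: the induction reduces $s$, on a given piece, to a linear expression $r x + c$ with $r \in \Qq$ and $c \in \Gamma$, so $q x + s = (q + r) x + c$ varies over a $\Gamma$-interval of length $|q + r|$, which meets $\Delta$ in at most $\lceil |q + r| \rceil + 1$ elements since $\Delta$ is discrete with step $1$; subdividing the piece along the $\mathrm{DOAG}$-definable conditions $v \leq (q + r) x + c < v + 1$ for these finitely many $v \in \Delta$ makes the subterm constant on each sub-piece. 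On each piece of the final partition, every $\Delta$-atomic subformula of $\phi$ is a Boolean constant and every $\Gamma$-atomic subformula a linear inequality over $\Gamma$, so $X'$ is $\mathrm{DOAG}$-definable.

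Since $X'$ is nonempty and bounded above by $z^* + 1$, the definable completeness of $\mathrm{DOAG}$ produces $\sup X' \in \Gamma$, which equals $\sup X$. The main obstacle is the inductive term analysis in the previous paragraph; once that is in hand, the remaining ingredients --- pure stable embeddedness from \cref{rem:QZ-resplendent} and the two instances of definable completeness --- combine formally.
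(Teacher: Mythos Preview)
Your proof is correct and follows essentially the same strategy as the paper: reduce to a unit interval $[z^*,z^*+1)$ via the floor and definable completeness of $T^+$, then argue that the restriction $X'$ is governed by the $o$-minimal DOAG structure. The paper compresses your inductive term analysis into the single sentence ``quantifier elimination in $T_{\Qq,\Zz}^+$ entails that the structure induced on $[s,s+1]$ is $o$-minimal''; your write-up simply unpacks what that sentence means. One minor imprecision: in the language $\cL_{\Qq,\Zz}^+$ the $\Gamma$-terms in $x$ have integer (not rational) leading coefficient, so your $q$ should lie in $\Zz$ rather than $\Qq^\times$---but the counting argument is unchanged.
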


\begin{proof}
Let $(\Gamma,\Delta)\models T_{\Qq,\Zz}^+$ and let $D\subseteq \Gamma$ be a definable subset which is bounded and non-empty. Then $\lfloor D \rfloor$ is a definable subset of $\Delta$ which is non-empty and bounded. 
By assumption, it admits a supremum $s$ in $\Delta$, which is then the maximum of $\lfloor D \rfloor$ as the order on $\Delta$ is discrete. 



As the induced structure on $[0,1)$ is $o$-minimal by \cref{lem:discrete-floor}, the induced structure on $[s,s+1]$ is $o$-minimal as well, and so $\sup (D)=\sup (D\cap[s,s+1])$ exists in $[s,s+1]$, proving definable completeness.
\end{proof}

Let us now consider the residue field. By a classical result of Keisler \cite{Kei64}, if $F$ and $F'$ are fields such that $F\equiv F'$, then $(\alg{F},F)\equiv(\alg{F'},F')$. If $F=\alg{F}$ or $\alg{F}$ is real closed, then the axiomatization, in $\cL_{ring,\bP}$, of $(\alg{F},F)$ is clear, and $\bP$ is stably embedded with induced structure that of a ring. In case $T_{f}$ is a complete theory of fields 
whose models are neither algebraically nor real closed, and $F\models T_f$, then the models of the $\cL_{ring,\bP}$-theory $T_{f,a}$ of $(\alg{F},F)$ are precisely the pairs $(M,\bP(M))$ of fields such that $M=\alg{M}$ and $\bP(M)\models T_f$. By \cite[Theorem~4.7]{HilKamRid}, if one  definably expands the theory, adding relation symbols $\ld_n$ and function symbols $\ell_{n,i}$, the theory $T_{f,a}$ eliminates quantifiers relative to $\bP$
. This yields in particular the following.

\begin{fact}
\label{F:stembrespair}
For $T_f=\Th(F)$ a complete theory of fields (in arbitrary characteristic), the
predicate $\bP$ is stably embedded in the $\cL_{ring,\bP}$-theory
$T_{f,a}=\Th(\alg{F},F)$, with induced structure given by $T_f$. 

This also holds for any $\cL$-expansion of $F$, where $\cL\supseteq\cL_{ring}$.
\end{fact}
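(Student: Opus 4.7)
The plan is to split into three cases according to the position of $F$ inside its algebraic closure, reducing the general case to a relative quantifier elimination theorem of Hils--Kamensky--Rideau. If $F = \alg{F}$, then $\bP(M) = M$ in every model of $T_{f,a}$, so stable embeddedness is immediate. If $F$ is real closed (necessarily in characteristic $0$), then $\alg{F} = F(i)$ with $i^2 = -1$, and each element of $\alg{F}$ is uniquely of the form $a + bi$ with $a,b\in F$; the pair $(\alg{F},F)$ is then bi-interpretable with $F$, uniformly in any $\cL$-expansion of $F$, so stable embeddedness with induced structure $T_f$ (resp.\ its $\cL$-expansion) follows at once.

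In the remaining case --- $F$ neither algebraically nor real closed, equivalently, by Artin--Schreier, $[\alg{F}:F] = \infty$ --- I would invoke \cite[Theorem~4.7]{HilKamRid} directly. That theorem asserts that, in the definable expansion $\widetilde{\cL}_{ring,\bP}$ of $\cL_{ring,\bP}$ obtained by adjoining, for each $n \geq 1$, the $\bP$-linear independence predicates $\ld_n$ and the coordinate functions $\ell_{n,i}$ (extracting the $\bP$-coordinates of a field element with respect to a maximal $\bP$-linearly independent subtuple), the theory $T_{f,a}$ eliminates quantifiers relative to $\bP$, and does so resplendently on $\bP$: any $\widetilde{\cL}_{ring,\bP}$-formula, even in the presence of an arbitrary expansion of $F$ by some $\cL \supseteq \cL_{ring}$, becomes modulo $T_{f,a}$ a Boolean combination of quantifier-free $\widetilde{\cL}_{ring,\bP}$-formulas and of $\cL$-formulas all of whose quantifiers range over $\bP$.

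From this, stable embeddedness is a formal consequence. Given an $\cL_{ring,\bP}(M)$-definable set $X \subseteq \bP(M)^k$ defined from parameters $\bar{a} \in M$, apply the relative QE to its defining formula: the terms $\ell_{n,i}$ and predicates $\ld_n$ evaluated on tuples from $\bP$ only take values in $\bP$ (or return trivial values), so the dependence on $\bar{a}$ collapses into a dependence on finitely many elements of $\bP$ canonically associated to $\bar{a}$; hence $X$ is $\cL$-definable over $\bP(M)$, and the induced structure on $\bP$ is precisely $T_f$ (resp.\ its $\cL$-expansion). The main obstacle is, of course, the relative QE of \cite{HilKamRid} itself, which rests on a delicate analysis of linear disjointness and coordinate choice over $\bP$; here it is imported as a black box, and the remainder of the argument is routine bookkeeping of parameters.
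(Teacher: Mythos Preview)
Your proof is correct and follows essentially the same approach as the paper: a case split into $F=\alg{F}$, $F$ real closed, and the generic case, with the latter handled by the resplendent relative quantifier elimination of \cite[Theorem~4.7]{HilKamRid}. The paper in fact gives this argument in the paragraph preceding the statement rather than as a separate proof, and your write-up is somewhat more detailed in deriving stable embeddedness from the relative QE, but the content is the same.
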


The following lemma will be used in Section~\ref{gen} (proof of \cref{fin dens H}).

\begin{lemma}\label{UFres pair}
Let \(F\) be some (enriched) field which eliminates \(\exists^\infty\). Then the pair \((\alg{F},F)\) also eliminates \(\exists^\infty\).
\end{lemma}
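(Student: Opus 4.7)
The strategy is to leverage two inputs: the resplendent quantifier elimination relative to \(\bP\) from \cite[Theorem~4.7]{HilKamRid} (obtained after expanding the pair by the $\emptyset$-definable relations \(\ld_n\) and functions \(\ell_{n,i}\) that encode \(F\)-linear dependence in \(\alg{F}\)), together with the resplendent stable embeddedness of \(\bP\) given by \cref{F:stembrespair}. Together these reduce $\exists^\infty$-elimination in the pair to the hypothesis on \(F\) and to strong minimality of the pure field \(\alg{F}\).

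Fix a pair-formula \(\varphi(x,\bar y)\) with \(x\) a single variable of sort \(\alg{F}\). By the QE, I may write \(\varphi(x,\bar y)\) as \(\bigvee_i \bigl(A_i(x,\bar y)\wedge B_i(\bar y)\bigr)\), where each \(A_i\) is quantifier-free in \(\cL_{\mathrm{ring}}\cup\{\ld_n,\ell_{n,i}\}\) (so in particular may involve \(\ell\)-terms in which \(x\) appears as an argument), and each \(B_i\) is an \(\cL\)-formula in \(\bP\)-variables, evaluated at tuples extracted from \(\bar y\) by \(\ell\)-terms not involving \(x\). Evaluating at \(\bar a\), one has \(\varphi(\alg{F},\bar a)=\bigcup_{i:\,B_i(\bar a)} A_i(\alg{F},\bar a)\). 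Since each \(B_i(\bar a)\) is pair-definable in \(\bar a\) (it is a \(\bP\)-formula composed with \(\ell\)-functions), it suffices to show that for each \(i\) the condition \(|A_i(\alg{F},\bar a)|=\infty\) is pair-definable in \(\bar a\), uniformly in \(\bar a\).

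To handle a single \(A_i\), I case-split according to the \(\ld_n\)-type of \(x\) over the finitely many subtuples of \(\bar y\) actually occurring in the \(\ell\)-terms inside \(A_i\); this is a finite, $\emptyset$-definable partition of \(\alg{F}\) depending only on the syntax of \(\varphi\). In those cases where \(x\) is forced to lie in some \(F\)-affine subspace \(V_{\bar a}\subseteq\alg{F}\) spanned over \(F\) by a subtuple of \(\bar a\), a definable change of variable identifies \(x\) with its \(F\)-coordinates \(\bar c\in F^n\): the formula \(A_i(x,\bar a)\) becomes an \(\cL\)-formula in \(\bar c\) with \(F\)-parameters extracted from \(\bar a\), and the $\exists^\infty$-elimination in \(F\) (applied resplendently) yields a pair-definable condition on \(\bar a\) with the correct uniform bound. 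In the remaining cases, all \(\ell\)-terms involving \(x\) collapse to their default value, so \(A_i(x,\bar a)\) reduces to a pure \(\cL_{\mathrm{ring}}\)-formula in \(x\) over parameters from \(\bar a\) (and from auxiliary \(\bP\)-parameters viewed as ring elements); by strong minimality of the pure field \(\alg{F}\), such a set is finite (with uniform bound depending only on the formula) or cofinite (hence infinite), and this dichotomy is ring-definable in the parameters, hence a fortiori pair-definable.

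\textbf{The main obstacle} is the bookkeeping around the \(\ld_n\)-partition: one has to verify that the partition is finite and depends only on the syntax of \(\varphi\), and that in each piece the \(\ell\)-terms involving \(x\) do indeed either encode honest \(F\)-coordinates of \(x\) (on the affine piece) or trivialise to their default value (on the complement), so that each case cleanly reduces to one of the two standard settings. Once that is in place, the conclusion is a straightforward combination of the hypothesis on \(F\), strong minimality of \(\alg{F}\), and the resplendent stable embeddedness of \(\bP\).
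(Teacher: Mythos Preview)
There is a real gap in your ``remaining cases'' branch. The relative QE from \cite[Theorem~4.7]{HilKamRid} produces $\ld$-atoms and $\ell$-terms whose arguments are arbitrary ring terms in $x,\bar y$, not just the variables themselves; knowing the $\ld_n$-type of $x$ over subtuples of $\bar y$ (even augmented by constants) therefore does \emph{not} force such atoms to collapse. A minimal instance: the atomic formula $\ld_2(x^2,1)$ (equivalently $\bP(x^2)$, the relative-QF form of $\exists z\in\bP\,(x^2=z)$) already defeats the scheme. Take $F$ pseudofinite of characteristic $0$, so $F$ eliminates $\exists^\infty$ and $[\alg F:F]=\infty$. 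On your piece $\neg\ld_2(x,1)$ (i.e.\ $x\notin F$), this atom cuts out the nonzero trace-zero elements of the unique quadratic extension of $F$: an infinite set which is neither finite nor cofinite in $\alg F$, hence not equivalent to any $\cL_{\mathrm{ring}}$-formula in $x$ over any parameters. The same obstruction recurs whenever $\ld$ or $\ell$ is applied to a nonlinear term in $x$, so in general neither your ``$F$-coordinates'' reduction nor your ``strong minimality'' reduction is reached.

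The paper bypasses syntactic normal forms entirely. Working in a monster $(\cU,\bP(\cU))\succcurlyeq(M,\bP(M))$, relative QE yields a unique type $p_{\mathrm{gen}}$ over $M$ realised by every $a\in\cU\setminus\alg{(M\,\bP(\cU))}$; since any element of $\cU$ is a sum of two realisations of $p_{\mathrm{gen}}$, the parameter-definable condition $D+D=M$ detects whether $p_{\mathrm{gen}}\vdash x\in D$. If $D=\phi(M,c)$ is infinite but fails this test, any $a\in D(\cU)\setminus M$ lies in $\alg{(M\,\bP(\cU))}$, so $\dcl(Ma)$ meets $\bP(\cU)\setminus\bP(M)$; this gives an $M$-definable map $D\to\bP$ with infinite image, and one concludes via stable embeddedness of $\bP$ and $\exists^\infty$-elimination in $F$.
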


\begin{proof}
We may suppose that $F$ is neither algebraically nor real closed, as otherwise the result is clear. By the relative quantifier elimination result \cite[Theorem~4.7]{HilKamRid} already mentioned above, if $(M,\bP(M))\equiv(\alg{F},F)$, and $(M,\bP(M))\subsel(\cU,\bP(\cU))$ then for any $a,b\in\cU\setminus\alg{(M\bP(\cU))}$ we have $tp(a/M)=tp(b/M)=:p_{gen}(x)$, so, assuming that $(\cU,\bP(\cU))$ is sufficiently saturated, any element of $\cU$ is the sum of two realizations of $p_{gen}$. By compactness, for any $M$-definable set $D=\psi(M)\subseteq M$ we then have 
\begin{equation}\label{generic-in-pair}
\begin{split}
     D+D=M&\Leftrightarrow \psi(\cU)+\psi(\cU)=\cU\\
     &\Leftrightarrow \psi(\cU)\not\subseteq\alg{(M\bP(\cU))}\Leftrightarrow p_{gen}(x)\vdash \psi(x).
\end{split}
\end{equation}

We will now assume that $M$ is $\aleph_0$-saturated. Let $\phi(x,y)$ be a formula with $x$ a single variable.  Assume that $c$ is a tuple from $M$ such that $\phi(M,c)$ is infinite. By compactness, it is enough to find a formula $\chi(y)\in\tp(c)$ such that for any $c'$ from $M$ satisfying $\chi$ the set $\phi(M,c')$ is infinite. 

If $p_{gen}(x) \vdash \phi(x,c)$, we may find such a $\chi(y)$ using \cref{generic-in-pair}. So assume now that $p_{gen}(x)\not \vdash \phi(x,c)$, and choose $a\not\in M$ realizing $\phi(x,c)$. Then $a\in\alg{(M\bP(\cU))}\setminus \alg{M}$, so in particular $M(a)\nlind_{\bP(M)}\bP(\cU)$. Set $A:=\dcl(Ma)$. If \(\bP(A)=\bP(M)\), by \cite[Lemma~4.1]{HilKamRid}, we have $A\lind_{\bP(M)}\bP(\cU)$, contradicting that $M(a)\nlind_{\bP(M)}\bP(\cU)$. It follows that that there is $a'\in \bP(A)\setminus\bP(M)$,  so we find an $M$-definable function $g:D\rightarrow\bP(M)$ with infinite image. Using \cref{F:stembrespair} and the assumption 
that the theory of $F$ eliminates \(\exists^\infty\), we may thus find a formula $\chi(y)$ as required, stipulating explicitly the existence of a definable function with infinite image in $\bP$.
\end{proof}

Fix some characteristic zero field \(F\) and let $T^\star_{\mathrm{Laur}}$ be
the theory of separated pairs of henselian valued fields \((M,\bP(M))\) with
$(\res(M),\Pres(M))\equiv(\alg{F},F)$ and $(\vg(M),\Pvg(M))\models T_{\Qq,\Zz}$.
Combining \cref{F:stembrespair,lem:discrete-floor} with \cref{res-vg pure P}, we obtain:

\begin{proposition}
\label{prop:Laurent-acl-pair}
The theory $T^\star_{\mathrm{Laur}}$ is complete, the definable sets $\res$,
$\vg$, $\RV$, $\PK$, $\Pres$, $\Pvg$ and $\PRV$ are all purely stably embedded,
and $\res$ and $\vg$ are orthogonal. All these results also hold for
\(\Pres\)-\(\Pvg\)-enrichments, and also if one adds an angular component.
\end{proposition}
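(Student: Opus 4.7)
The strategy is to reduce $T^\star_{\mathrm{Laur}}$ to the abstract setting of $T^\star$, then split the $\RVhyb$ layer into its residue and value group components, and finally invoke the already-established results on each of them. First I would verify that any model $(M,\bP(M))\models T^\star_{\mathrm{Laur}}$ is a separated pair of equicharacteristic-$0$ henselian valued fields, so both $M$ and $\bP(M)$ eliminate field quantifiers in $\LRVun$ and $\LRV$ respectively by \cref{EQ rv}, placing $T^\star_{\mathrm{Laur}}$ within the framework of $T^\star$. \Cref{RV-pair-red} then yields that $T^\star_{\mathrm{Laur}}$ is complete relative to $\RVhyb$ and that $\RVhyb$ is purely stably embedded, resplendently.

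Second, since $\res(M)\equiv\alg{F}$ is algebraically closed of characteristic zero and $\vg(M)$ is divisible, both $\res^\times(M)$ and $\vg^\times(M)$ are divisible groups, forcing $\RV_1(M)$ to be divisible. I can then apply \cref{res-vg pure P} to conclude that the theory of $M$ is determined by the theories of the residue field pair $(\res,\Pres)$ and the value group pair $(\vg,\Pvg)$, and that $\res$ and $\vg$ are purely stably embedded and orthogonal in $M$. Completeness of $T^\star_{\mathrm{Laur}}$ follows at once, since $\Th(\alg{F},F)$ is complete (by Keisler's theorem, cf.\ the discussion preceding \cref{F:stembrespair}) and $T_{\Qq,\Zz}$ is complete by \cref{lem:discrete-floor}. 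Pure stable embedding of $\Pres$ in $M$ then follows by combining \cref{F:stembrespair} with the pure stable embedding of $\res$ in $M$; analogously $\Pvg$ is purely stably embedded using \cref{lem:discrete-floor}. \Cref{P stemb split} then yields pure stable embedding of $\PK$, while $\RV$ and $\PRV$ are absorbed as sorts of $\RVhyb$.

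For resplendency, every building block (\cref{RV-pair-red}, \cref{rem:QZ-resplendent}, \cref{F:stembrespair}) is stated resplendently on the relevant $\bP$-predicate, which immediately gives resplendency for $\Pres$-$\Pvg$-enrichments. Finally, to deal with a coherent system of angular components $\ac_n:\K^\times\to\Res_n^\times$, I would observe that each $\ac_n$ factors through $\rv_n$ (since $1+n\Mid\subseteq\Val^\times$ maps to $1$ in $\Res_n$), so the whole system amounts to a compatible family of splittings of $1\to\Res_n^\times\to\RV_n^\times\to\vg^\times\to 0$; this is an enrichment of $\RVhyb$, and its compatibility with the $\bP$-predicate is likewise a condition internal to $\RVhyb$. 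Hence the resplendency of the $\RVhyb$-reduction absorbs angular components as well. The main obstacle is essentially bookkeeping — tracking the resplendency hypotheses carefully through each reduction and verifying that the angular-component enrichment is genuinely $\RVhyb$-internal rather than making essential use of the $\K$-structure.
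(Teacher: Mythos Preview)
Your proposal is correct and follows essentially the same route as the paper's proof, which simply reads ``It suffices to combine \cref{res-vg pure P}, \cref{P stemb split} with \cref{F:stembrespair} and \cref{lem:discrete-floor}.'' You have unpacked exactly this combination, additionally making explicit the intermediate use of \cref{RV-pair-red} (which underlies \cref{res-vg pure P}) and correctly identifying that the divisibility hypothesis of \cref{res-vg pure P} is the one that applies here, since $\Pvg\equiv\Zz$ is \emph{not} a pure subgroup of $\vg\equiv\Qq$.
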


\subsubsection{Finitely ramified fields}
We will now prove analogous statements in mixed characteristic. Let \(K\) be a
complete mixed characteristic \(\Zz\)-valued field with perfect residue field
\(F\). We are interested in the pair of valued fields $(\alg{K},K)$. Let
$T^\star_{\mathrm{Witt}}$ be a theory of separated pairs of henselian valued
fields with fixed ramification data such that
$(\res(M),\Pres(M))\equiv(\alg{F},F)$ (with ramification constants) and
$(\vg(M),\Pvg(M))\models T_{\Qq,\Zz}$. In that case, as a consequence of
\cref{res-vg pure P}, we obtain:

\begin{proposition}\label{prop:mixed-section-pair}
The theory $T^\star_{\mathrm{Witt}}$ is complete, the definable sets $\res$, $\vg$, $\RVhyb$, $\PK$, \(\PRV\), $\Pres$ and $\Pvg$ are all purely stably embedded, and $\res$ and $\vg$ are orthogonal. All these results also hold for \(\Pres\)-\(\Pvg\)-enrichments, and also if one adds a coherent system of normalized angular components.
\end{proposition}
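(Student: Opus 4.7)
The strategy mirrors that of \cref{prop:Laurent-acl-pair}: reduce everything to known pure stable embeddedness statements for the residue field pair and the value group pair, then propagate them through the general pair machinery of the previous subsection. First I would verify the hypotheses of \cref{res-vg pure P}. By definition, models of $T^\star_{\mathrm{Witt}}$ are separated pairs of mixed characteristic henselian valued fields with $\bP(M)$ finitely ramified and perfect residue field (with the relevant Witt-coefficient constants named), and the assumption $(\vg(M),\Pvg(M))\models T_{\Qq,\Zz}$ ensures that $\Pvg(M)$ is a pure subgroup of $\vg(M)$, as the quotient is torsion-free and divisible. Thus \cref{res-vg pure P} applies, yielding completeness of $T^\star_{\mathrm{Witt}}$ relative to the theories of the $\res$-pair and of the $\vg$-pair, orthogonality of $\res$ and $\vg$, and their pure stable embeddedness.

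Next I would transfer pure stable embeddedness down to the substructures of $\bP$. \cref{F:stembrespair} gives that $\Pres$ is purely stably embedded in the residue field pair $(\res,\Pres)$, and \cref{lem:discrete-floor} gives the same for $\Pvg$ in the value group pair. Combined with the (pure) stable embeddedness of $\res$ and $\vg$ already obtained, and with their orthogonality, these propagate to pure stable embeddedness of $\Pres$ and $\Pvg$ in the ambient theory. Then \cref{P stemb split} yields pure stable embeddedness of $\PK$. Pure stable embeddedness of $\RVhyb$ is part of \cref{RV-pair-red}; and once $\PRes$ and $\Pvg$ are known to be purely stably embedded inside $\RVhyb$, the remark following that corollary delivers pure stable embeddedness of $\PRV$ as well.

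Finally, the resplendent version comes for free, as each ingredient is itself resplendent: \cref{F:stembrespair} is stated resplendently on the field side, \cref{rem:QZ-resplendent} gives the resplendent form of \cref{lem:discrete-floor} on the value group side (with \cref{Cvg pair} ensuring that definable completeness of the enriched $\vg$-pair is preserved, so that \cref{res-vg pure P} still applies), and \cref{res-vg pure P} together with \cref{RV-pair-red} are themselves resplendent. A coherent system of angular components $\ac_n:\K^\times\to\Res_n^\times$ is nothing more than a choice of compatible splittings of the short exact sequences of \cref{compatible-section}, hence extra $\RVhyb$-structure absorbed by the resplendent version of \cref{RV-pair-red}. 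I do not expect a genuine obstacle here; the only point requiring care is the bookkeeping of the Witt-coefficient constants in mixed characteristic, which is exactly why they were built into the definition of $T^\star_{\mathrm{Witt}}$ to match the hypotheses of \cref{res-vg pure P}.
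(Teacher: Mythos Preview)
Your proposal is correct and follows essentially the same approach as the paper, which simply says to argue as in \cref{prop:Laurent-acl-pair} by combining \cref{res-vg pure P}, \cref{P stemb split}, \cref{F:stembrespair}, and \cref{lem:discrete-floor}. Two minor quibbles: the remark following \cref{RV-pair-red} goes the other direction (from $\PRV$ to $\PK$), so pure stable embeddedness of $\PRV$ should instead be read off from the splitting in \cref{compatible-section} together with the pure stable embeddedness of $\Pres$ and $\Pvg$; and your invocation of \cref{Cvg pair} is unnecessary, since \cref{res-vg pure P} does not require definable completeness of the value group.
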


\subsubsection{Divisible value group}
Let $F$ be a  field. If $\rm{char}(F)=p>0$, assume that $F$ does not admit a finite extension of degree divisible by $p$  (in particular $F$ is perfect). Let $K:=F((t^{\Qq}))$. In what follows, we are interested in the pair of valued fields $(\alg{K},K)$.

Let $T^\star_{\mathrm{div}}$ be the theory of separated pairs of equicharacteristic algebraically maximal valued fields $(M,\bP(M))$ such that \((\res(M),\Pres(M)) \equiv (\alg{F},F)\) and $\Pvg(M)=\vg(M)\models\mathrm{DOAG}$. Note that the Kaplansky conditions (\emph{cf.} \cref{benign}) are satisfied in this case. Once again, as a consequence of \cref{res-vg pure P}, we have:

\begin{proposition}\label{prop:divisible-pair}
The theory $T^\star_{\mathrm{div}}$ is complete. The definable sets $\res$, $\vg$, $\RV$, \(\PK\), $\Pres$ and \(\PRV\) are all purely stably embedded, and $\res$ and $\vg$ are orthogonal. All these results also hold for \(\Pres\)-\(\vg\)-enrichments, and also if one adds angular components.
\end{proposition}

\subsection{Valued difference fields}\label{val diff}

Let \((K,v,\sigma)\) be an equicharacteristic zero valued field with an automorphism.

\begin{definition}
\begin{enumerate}
\item For every \(P\in K[x_0,\ldots,x_n]\), \(a\in K\), \(d\in K^{n+1}\) and \(\gamma\in \vg(K)^\times\), we say that \((P,a,d,\gamma)\) is in \emph{\(\sigma\)-Hensel configuration} if
\[\val(P(\prol a)) > \min_i \val(d_i) + \sigma^i(\gamma)\]
 and, for all \(x,y\in K\) with \(v(x-a),v(y-a)>\gamma\),
\[\val(P(\prol y) - P(\prol x) - d\cdot \prol (y- x)) > \min_i \val(d_i \sigma^i(y-x)).\]
Here, \(\prol a := (\sigma^i(a))_{0\leq i \leq n}\).
\item We say that \((K,v,\sigma)\) is \emph{\(\sigma\)-henselian} if for every \((P,a,d,\gamma)\) in \(\sigma\)-Hensel configuration, there exists \(c \in K(M)\) such that \(P(\prol c) = 0\) and \[\val(c - a)\geq \max_{i,d_i\neq0} \val(\sigma^{-i}(P(\prol a)) d_i^{-1}).\]
\item A difference field $(k,\sigma)$ is called \emph{linearly closed} if for every linear non constant \(L\in k[x_0,\ldots,x_n]\) and \(c\in k\), there exists \(a\in k\) such that \(L(\nabla(a)) = c\)
\end{enumerate}
\end{definition}

\begin{fact}
Assume that \(\res(K)\) is linearly closed and either:
\begin{itemize}
\item \((K,v)\) is maximally complete;
\item \((K,v)\) is complete and rank one, i.e., the value group is archimedean.
\end{itemize}
Then \((K,v,\sigma)\) is \(\sigma\)-henselian.
\end{fact}

This follows from Newton approximation, \emph{cf.}\ \cite[Proposition\,4.14]{Rid-ADF}. Note that at each step the approximation to the root of a \(\sigma\)-Hensel configuration \((P,a,d,\gamma)\) improves by at least \(\gamma\), \emph{cf.}\ \cite[Lemma\,4.16]{Rid-ADF}, and hence, in rank one, completeness suffices.

Let \(\LRVsig\) be the language \(\LRV\) with three new unary functions \(\sigma_\K : \K \to \K\), \(\sigma_\RV : \RV \to \RV\) and \(\sigma_\vg:\vg\rightarrow\vg\). The expected quantifier elimination result also holds in characteristic zero \(\sigma\)-henselian fields, by \cite[Theorem\,7.3]{DurOna} (see also \cite[p. 41, Theorem\,A]{Rid-ADF}):

\begin{fact}\label{EQ sHen}
Let \(\cL\) be an \(\RV\)-enrichment of \(\LRVsig\) and \(T\) an \(\cL\)-theory containing the theory \(\sHen[0,0]\) of equicharacteristic zero \(\sigma\)-henselian valued fields. Then \(T\) eliminates field quantifiers.
\end{fact}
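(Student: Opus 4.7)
The plan is a standard back-and-forth argument relative to $\RV$, following the scheme of \cite{DurOna} in equicharacteristic and \cite{Rid-ADF} in mixed characteristic. Fix sufficiently saturated models $M,N \models T$ and a partial $\cL$-isomorphism $f \colon A \to N$ between small $\cL$-substructures with $A \leq M$, whose restriction to $\RV(A)$ extends to an $\cL|_{\RV}$-elementary map $\RV(M)\to\RV(N)$. We must show $f$ extends to an $\cL$-embedding $M\hookrightarrow N$. Using saturation of $N$ and the $\RV$-elementarity, we may assume $\RV(A)=\RV(M)$ and that $\K(A)$ is a $\sigma$-difference subfield, closed under $\sigma^{-1}$ (as far as it acts) and under the maps $\rv_n$. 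We then extend $f$ by one element of $\K(M)$ at a time, distinguishing three kinds of one-generator extensions.

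First, a \emph{ramified/residual} step: if some $a\in\K(M)\setminus\K(A)$ has $\rv$-data $\big(\rv_n(\sigma^i(a)-c)\big)_{i,n,c\in A}$ strictly richer than what is already realised in $\RV(A)$ — equivalently, some value of some $\rv_n$ along the orbit $\prol a$ lies outside $\rv_n(\K(A))$ — then the isomorphism type of $\K(A)\langle a\rangle$ as a $\sigma$-difference field is entirely determined by this $\RV$-data, so saturation of $N$ produces a matching $a'\in\K(N)$. Second, a \emph{transcendental immediate} step: if $\prol a$ is a pseudo-limit over $A$ of transcendental type (no non-zero difference polynomial in $\K(A)[x_0,\ldots,x_n]$ vanishes at $\prol a$), then the associated pseudo-Cauchy data over $A$ determines the isomorphism type, and saturation of $N$ again suffices to realise a matching element.

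The central case is the \emph{algebraic immediate} step, where $\sigma$-henselianity is used in an essential way. Here $a\in\K(M)\setminus\K(A)$ is a pseudo-limit over $A$ and some $P\in\K(A)[x_0,\ldots,x_n]$ vanishes at $\prol a$. Choose such a $P$ of minimal complexity and $a_0\in A$ approximating $a$ closely enough that the difference $P(\prol a)-P(\prol a_0)-d\cdot\prol(a-a_0)$ has strictly greater valuation than the leading terms, with $d\in\K(A)^{n+1}$ built from the $\sigma$-partial derivatives of $P$ at $\prol a_0$. One shows that $(P,a_0,d,\gamma)$ is in $\sigma$-Hensel configuration for a suitable $\gamma\in\vg(A)^\times$. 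Transporting this configuration by $f$ to $N$ and applying the $\sigma$-henselianity of $N$ yields a root $a'\in\K(N)$ of $f(P)$ whose distance to $f(a_0)$ satisfies $\rv_n(a'-f(a_0))=f(\rv_n(a-a_0))$ for all relevant $n$; sending $a\mapsto a'$ extends $f$. Iterating the three steps, combined with Zorn, exhausts $\K(M)$.

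The main obstacle is to carry out the algebraic immediate step uniformly: one must verify that the minimal-complexity witness $P$ can always be put in $\sigma$-Hensel configuration with parameters in $A$, that the $\rv$-data of the produced root $a'$ exactly matches that of $a$ so the extended map remains an $\LRVsig$-morphism, and that the analysis is compatible with the action of $\sigma^{-1}$. In mixed characteristic, the whole system of rings $\Res_n$ must be tracked through the $\sigma$-Newton iteration, which is precisely where the argument of \cite{Rid-ADF} departs from the equicharacteristic one of \cite{DurOna}, but the conclusion is the same: three extension types suffice, and any $\cL$-theory extending $\sHen[0]$ eliminates field quantifiers.
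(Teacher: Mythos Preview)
The paper does not give its own proof of this statement: it is recorded as a \emph{Fact}, with the equicharacteristic zero case attributed to \cite[Theorem~7.3]{DurOna} and the mixed characteristic case to \cite[p.~41, Theorem~A]{Rid-ADF}. Your sketch follows the back-and-forth scheme of those references and is along the right lines.

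A couple of points of imprecision are worth flagging. First, once you have arranged $\RV(A)=\RV(M)$, there is no ``strictly richer'' $\RV$-data to find; what your ramified/residual step is really doing is lifting elements of $\RV(A)$ into $\K(A)$ so that eventually $\rv_n(\K(A))=\RV_n(A)$, after which all remaining one-generator extensions are immediate. Second, in the difference-algebraic immediate step, the pseudo-limit $a$ need not itself satisfy $P(\prol a)=0$; what one actually does is use $\sigma$-henselianity (in $M$) to replace $a$ by a genuine root $b$ of $P$ lying in the same pseudo-convergence class, check that $b$ has the same $\RV$-type over $A$ as $a$, and only then transport the configuration to $N$ and solve there. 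Your formulation ``some $P\in\K(A)[x_0,\ldots,x_n]$ vanishes at $\prol a$'' and the subsequent matching of $\rv_n(a'-f(a_0))$ elide this replacement step, which is where the real work is (and where the minimal-complexity choice of $P$ is used). These are refinements rather than gaps in the overall strategy; the architecture of your argument is the one used in the cited sources.
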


\begin{remark}\label{max complete}
In \cite{Azg-OmeInc,Pal-Mul,DurOna} an \emph{a priori} weaker notion of
\(\sigma\)-henselian fields is considered. However, both notions hold in
maximally complete fields (with linearly closed residue field) and both allow
proving \cref{EQ sHen}. In equicharacteristic zero, the automorphism extends to
any maximal completion. Moreover, that maximal completion has the same \(\RV\)
as the original field. Thus, it follows from \cref{EQ sHen} that any
equicharacteristic zero field satisfying either notion is elementarily
equivalent to any maximal completion where both notions hold. So these two
notions of \(\sigma\)-henselianity coincide.
\end{remark}

It follows from field quantifier elimination that \(\RV\) is purely stably
embedded. Its induced structure is the expansion of the short exact sequence of
\(\Zz[\sigma]\)-modules \(1\to \res^\times  \to \RV^\times \to \vg^\times\to 0\)
by \(\oplus\) and \(<\) which, by \cref{define-oplus}, can be defined (with
parameters) in a $\vg$-$\res$-enrichment.

In order to obtain model complete theories, one often restricts the behaviour of the automorphism on the value group, \emph{e.g.}, the class of (existentially closed) multiplicative difference valued fields introduced in \cite{Pal-Mul}:

\begin{definition}\label{VFA}
Let \(\VFA\) be the theory of \(\sigma\)-henselian non-trivially valued fields such that:
\begin{enumerate}
\item For every \(P\in\Zz[\sigma]\), either \(P(\vg_{>0}) = \vg_{>0}\), \(P(\vg_{<0}) = \vg_{>0}\) or \(P(\vg) = 0\);
\item We have \((\res,\sigma_\res) \models\ACFA_0\)
\item The embedding of \(\Zz[\sigma]\)-modules \(\res^\times \to \RV\) is pure.
\end{enumerate}
\end{definition}

\begin{remark}
Two multiplicative behaviours of \(\sigma\) are of particular interest.
\begin{enumerate}
\item The \(\omega\)-increasing case --- \emph{i.e.}, for all \(x\in\Val\) and \(n\in\Zz_{>0}\), \(\val(\sigma(x)) \geq n\val(x)\) --- studied in \cite{Azg-OmeInc,DorHru}. One then gets the asymptotic theory of \((\alg{\Ff_p(t)},\val_t,\Frob_p)\), where \(\Frob_p\) is the Frobenius automorphism.
\item The isometric case, studied in \cite{BelMacSca}. In that case, one gets the asymptotic theory of \((\Cc_p,\val_p,\sigma_p)\), where \(\sigma_p\) is an isometric lift of the Frobenius automorphism on \(\res(\Cc_p) = \alg{\Ff_p}\).
\end{enumerate}

Both characterisations follow (see, \emph{e.g.},  \cite{ChHi14}) from the Ax-Kochen-Ershov principle for \(\sigma\)-henselian valued fields and Hrushovski's deep result that \(\ACFA_0\) is the asymptotic theory of \((\alg{\Ff_p},\Frob_p)\), \emph{cf.} \cite{Hru-Frob}. 
\end{remark}

\begin{fact}\label{EQ VFA}
In \(\VFA\),  \(\res\) is a stably embedded pure difference field and \(\vg\) is a stably embedded \(o\)-minimal pure ordered \(\Zz[\sigma]\)-module, and \(\res\) and \(\vg\) are orthogonal. These results also hold if one adds a \(\sigma\)-equivariant angular component.
\end{fact}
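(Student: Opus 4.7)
The plan is to reduce everything to the structure of \(\RV\) via \cref{EQ sHen}, and then exploit the purity hypothesis (3) in \cref{VFA} to split the short exact sequence. By field quantifier elimination, \(\RV\) is stably embedded in \(\VFA\), and its induced structure is that of a \(\vg\)-\(\res\)-enrichment of the short exact sequence \(1\to\res^\times\to\RV^\times\to\vg^\times\to0\) of \(\Zz[\sigma]\)-modules. So it suffices to prove that, inside this enriched sequence, \(\res\) and \(\vg\) are orthogonal and purely stably embedded with the claimed induced structures.

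Working in a sufficiently saturated model and using condition (3) of \cref{VFA} --- that \(\res^\times\hookrightarrow\RV^\times\) is pure as \(\Zz[\sigma]\)-modules --- together with the divisibility of \(\res^\times\) (which holds because \(\res\) is algebraically closed, by \(\ACFA_0\)), I would argue that the sequence splits: a pure divisible submodule of an abelian group is a direct summand, and the \(\Zz[\sigma]\)-action extends to the splitting because \(\sigma\) preserves \(\res^\times\) and the relevant systems of linear equations over \(\Zz[\sigma]\) are solved within \(\res^\times\) by purity. Fix a \(\Zz[\sigma]\)-equivariant section \(s:\vg^\times\to\RV^\times\); this yields an isomorphism \(\RV^\times\isom\res^\times\times\vg^\times\) of \(\Zz[\sigma]\)-modules. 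Since the only algebraic structure on \(\RV\) (beyond what the \(\vg\)-\(\res\)-enrichment provides separately) is the abelian group operation, this exhibits \(\RV\) as essentially the orthogonal product of \(\res\) and \(\vg\), from which orthogonality and pure stable embeddedness both follow.

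It remains to identify the induced structures. On the \(\res\)-side, \(\ACFA_0\) is already a well-understood complete theory of difference fields with its own relative quantifier elimination, so \(\res\) is a pure difference field. On the \(\vg\)-side, condition (1) of \cref{VFA} says that every \(P\in\Zz[\sigma]\) acts on \(\vg\) either trivially or as a bijection preserving or reversing sign; combined with the divisibility and torsion-freeness of \(\vg\) (forced by the splitting and by \(\RV_n^\times/\res^\times\isom\vg^\times\) for all \(n\)), this yields quantifier elimination in an ordered \(\Zz[\sigma]\)-module language, hence \(o\)-minimality as a pure ordered \(\Zz[\sigma]\)-module. Finally, for the variant with a \(\sigma\)-equivariant angular component \(\ac:\K^\times\to\res^\times\), the map \(\ac\) provides an explicit equivariant splitting of \(1\to\res^\times\to\RV_1^\times\to\vg^\times\to0\) at the field level (with coherent lifts to each \(\RV_n\) when a compatible system \(\ac_n\) is given), so the same product decomposition argument applies verbatim.

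The main technical obstacle is the first step of the second paragraph: turning the abstract purity hypothesis into an actual \(\Zz[\sigma]\)-equivariant splitting in a way that is uniform enough to give \emph{pure} stable embeddedness and not merely stable embeddedness. The standard trick is to work in a sufficiently saturated/resplendent model and verify that any \(\Zz[\sigma]\)-linear system over \(\vg^\times\) that has a solution in \(\RV^\times\) already has one in the image of \(s\), which reduces to the purity hypothesis applied to the coset data; once this is in place, the rest of the argument is bookkeeping.
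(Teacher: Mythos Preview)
The paper states this result as a \emph{Fact} without proof, treating it as known from the literature on multiplicative difference valued fields (the references to \cite{Pal-Mul}, \cite{Azg-OmeInc}, \cite{BelMacSca} and \cite{DurOna} in the surrounding discussion). So there is no ``paper's own proof'' to compare against; your proposal is an attempt to supply one.

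Your strategy is the right one and mirrors exactly what the paper does in the analogous situation without automorphism (see \cref{compatible-section}): reduce to \(\RV\) via field quantifier elimination, split the short exact sequence in a saturated model, and read off orthogonality and pure stable embeddedness from the product decomposition. The one point you flag as the obstacle --- obtaining a \(\Zz[\sigma]\)-equivariant splitting, not merely a \(\Zz\)-linear one --- is handled cleanly by the following observation: in an \(\aleph_1\)-saturated model, \(\res^\times\) is algebraically compact (pure-injective) as a module over the countable ring \(\Zz[\sigma]\), and a pure-injective pure submodule is always a direct summand. So the purity hypothesis (3) of \cref{VFA} gives the equivariant section directly, without the ad hoc verification you sketch. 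Your identification of the induced structure on \(\vg\) is also correct: condition (1) of \cref{VFA} makes \(\vg\) an ordered vector space over the fraction field of \(\Zz[\sigma]/I\) (divisibility follows by taking \(P=n\) in condition (1)), and ordered vector spaces over ordered fields are o-minimal with quantifier elimination. The angular-component variant is, as you say, even easier since the splitting is given explicitly.
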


\begin{proof}
Condition (3) in \cref{VFA} ensures that in any $\aleph_1$-saturated model of  \(\VFA\) there is a $\sigma$-equivariant angular component map $\ac$. \cite[Theorem~11.8]{Pal-Mul} yields the results if we add such an $\ac$ map to the language, and they obviously go down to the reduct without $\ac$.
\end{proof}


\subsection{Linear structures}

Let us now recall the results of \cite{Hru-GpIm} on linear structures. In the case of valued difference fields, we will need "twisted" versions of these results. As it took us a while to get the arguments clear, we decided to spell them out in detail.

\subsubsection{Independent amalgamation}
We will first recall some material from  \cite[Section\,4]{Hru-GpIm}. We fix a complete stable theory $T$ in some language $\cL$, and we assume that $T$ eliminates quantifiers and imaginaries. Let $\cC_T$ be the category consisting of those $\cL$-structures that are  algebraically closed substructures of a model of $T$, with $\cL$-embeddings (which are $\cL$-elementary in $T$ by assumption) as morphisms. Let $\mathbf{n}=\{0,1,\ldots,n-1\}$, and set $\cP(\mathbf{n})^-:=\cP(\mathbf{n})\setminus\{\mathbf{n}\}$. We consider $\cP(\mathbf{n})^-$ and $\cP(\mathbf{n})$ as categories, with inclusion maps as morphisms. 

Given a functor $A:P\rightarrow\cC_T$, where  $P$ equals $\cP(\mathbf{n})^-$ or $\cP(\mathbf{n})$, if $\iota:w_1\rightarrow w_2$ denotes the inclusion map for two sets $w_1\subseteq w_2\in P$, in what follows we will write $A(w_1)$ for the subset $A(\iota)(A(w_1))$ of $A(w_2)$, thus omitting the map $A(\iota)$ in our notation. This slight abuse of notation should not lead to any confusion.

\begin{definition}
Let $P$ equal $\cP(\mathbf{n})^-$ or $\cP(\mathbf{n})$.
\begin{enumerate}
\item A functor $A:P\rightarrow\cC_T$ is called \emph{independence preserving} if for any $w,w'\in P$ with $w\cup w'\in P$ one has 
$A(w)\ind_{A(w\cap w')}A(w')$ (inside $A(w\cup w')$).
\item A functor $A:P\rightarrow\cC_T$ is called \emph{bounded} if for any $\emptyset\neq w\in P$ one has $A(w)=\acl(\bigcup_{i\in w}A(\{i\}))$.
\item An \emph{$n$-amalgamation problem} in $T$ is a bounded independence preserving functor $A^-:\cP(\mathbf{n})^-\rightarrow\cC_T$. A \emph{solution} of $A^-$ is a bounded independence preserving functor $A:\cP(\mathbf{n})\rightarrow\cC_T$ extending $A^-$.
\end{enumerate}
\end{definition}

\begin{definition}
The theory $T$ is said to have  
\begin{itemize}
\item \emph{$n$-existence} if every $n$-amalgamation problem in $T$ has a solution;
\item \emph{$n$-uniqueness} if whenever $A$ and $A'$ are solutions of the same $n$-amalgamation problem $A^-$ in $T$, then $A$ and $A'$ are isomorphic over $A^-$, \emph{i.e.}, there is an $\cL$-isomorphism $f:A(\mathbf{n})\cong A'(\mathbf{n})$ fixing $A^-(w)$ pointwise for every $w\in\cP(\mathbf{n})^-$.
\end{itemize}
\end{definition}

\begin{remark}
In the terminology of \cite{Hru-GpIm}, these notions corresponds to $n$-existence / $n$-uniqueness of $T$ over every parameter set.
\end{remark}

\begin{remark}\label{Stable-Amal}
It follows from stability and elimination of imaginaries in $T$ (as then types over algebraically closed sets are stationary) that $T$ has 2-existence, 2 uniqueness and 3-existence.
\end{remark}

Let $\sigma$ be a new unary function symbol, $\cL_\sigma:=\cL\cup\{\sigma\}$. Consider the category $\widetilde{\cC}_T$ of $\cL_\sigma$-structures of the form $(A,\sigma)$, where $A\in\cC_T$ and $\sigma\in\aut[\cL](A)$, with $\cL_\sigma$-embeddings as morphisms.

\begin{definition}\label{D:AMAL-sigma}
Let $P$ equal $\cP(\mathbf{n})^-$ or $\cP(\mathbf{n})$.
\begin{enumerate}
\item A functor $A:P\rightarrow\widetilde{\cC}_T$ is called \emph{independence preserving} (\emph{bounded}, respectively), if it is so when composed with the forgetful functor from $\widetilde{\cC}_T$ to $\cC_T$.
\item We say that $\widetilde{\cC}_T$ has \emph{$n$-existence} if every bounded independence preserving functor $A^-:\cP(\mathbf{n})^-\rightarrow\widetilde{\cC}_T$ extends to a bounded independence preserving functor $A:\cP(\mathbf{n})\rightarrow\widetilde{\cC}_T$.
\end{enumerate}
\end{definition} 

It follows from 2-uniqueness and 2-existence in $T$ that $\widetilde{\cC}_T$ has 2-existence.

Let $T_\sigma$ be the $\cL_\sigma$-theory of all $(M,\sigma)\in\widetilde{\cC}_T$ such that $M\models T$. Recall that if $T_\sigma$ admits a model-companion, it is denoted by $T\!A$. If this is the case, we will say that "$T\!A$ exists". (We refer to \cite{ChPi98} for fundamental facts about  $T\!A$.)

\begin{fact}[\mbox{\cite[Proposition~4.7 and Corollary~4.10]{Hru-GpIm}}]
For $T$ as above, the following are equivalent:

\smallskip
\begin{enumerate}
\item[(1)] $T$ has 3-uniqueness.
\item[(2)] $\widetilde{\cC}_T$ has 3-existence.
\end{enumerate}

\smallskip
Moreover, assuming in addition that $T\!A$ exists, the above conditions imply:

\smallskip
\begin{enumerate}
\item[(3)] $T\!A$ eliminates imaginaries.\qed
\end{enumerate}
\end{fact}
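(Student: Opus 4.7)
The equivalence (1)$\Leftrightarrow$(2) is a formal translation between lifting problems through the forgetful functor $\widetilde{\cC}_T\to\cC_T$: 3-existence in $\widetilde{\cC}_T$ asks us to extend a bounded independence-preserving diagram of algebraically closed sets \emph{equipped with automorphisms} from $\cP(\mathbf{3})^-$ to $\cP(\mathbf{3})$, while 3-uniqueness in $T$ asserts that, on the underlying algebraically closed sets, any two such extensions are isomorphic over the boundary. The point is that the former isolates exactly the additional datum, namely a compatible automorphism on the top face, that 3-uniqueness provides.

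For (1)$\Rightarrow$(2), I would start from a bounded independence-preserving $A^-\colon\cP(\mathbf{3})^-\to\widetilde{\cC}_T$ and forget the automorphisms to obtain a 3-amalgamation problem in $T$. By 3-existence in $T$ (which holds in any stable theory with elimination of imaginaries), this admits a solution $B\colon\cP(\mathbf{3})\to\cC_T$. The remaining task is to produce a global $\tau\in\aut[\cL](B(\mathbf{3}))$ whose restriction to $A^-(w)=B(w)$ is the prescribed $\sigma_w$, for every $w\in\cP(\mathbf{3})^-$. The family $(\sigma_w)_{w\in\cP(\mathbf{3})^-}$ is a natural automorphism of $A^-$ viewed as a functor; pushing $B$ forward through this family produces a second bounded independence-preserving solution $B'$ of the same 3-amalgamation problem in $T$. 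By 3-uniqueness, $B$ and $B'$ are isomorphic over $A^-$, and composing such an isomorphism with the tautological identification of the underlying sets of $B(\mathbf{3})$ and $B'(\mathbf{3})$ yields the desired $\tau$.

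For (2)$\Rightarrow$(1), I argue contrapositively. Given two solutions $B,B'$ of the same 3-amalgamation problem $A^-$ in $T$ that are not isomorphic over $A^-$, I would build a 3-amalgamation problem in $\widetilde{\cC}_T$ with no solution: place $A^-$ on the boundary equipped with the identity automorphisms, then on each 2-face assemble two disjoint copies corresponding to $B$ and $B'$ and swap them by an automorphism extending the identity on the 1-faces. Any solution in $\widetilde{\cC}_T$ would in particular furnish the missing $A^-$-isomorphism between $B$ and $B'$, contradicting the hypothesis.

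For (2)$\Rightarrow$(3), assuming $T\!A$ exists, the key observation is that 3-existence in $\widetilde{\cC}_T$ is precisely the \emph{independence theorem over algebraically closed sets} for $T\!A$: pairwise independent complete $T\!A$-types over $A^-(\{0\})$, $A^-(\{1\})$, $A^-(\{2\})$ that are coherent on the 2-intersections amalgamate to a complete $T\!A$-type. Combined with $\acl_{T\!A}(A)=\acl_T(A)$ (equipped with the induced $\sigma$) and elimination of imaginaries in $T$, a standard canonical-base argument then assigns canonical parameters in the $T$-sorts to every $T\!A$-type over an algebraically closed set, and hence to every imaginary. The main obstacle I anticipate is the bookkeeping in (1)$\Rightarrow$(2): one must check carefully that the pushed-forward $B'$ is again a bounded independence-preserving extension of $A^-$ so that 3-uniqueness can legitimately be invoked, and that the resulting isomorphism restricts to the prescribed $\sigma_w$ on each face of the boundary.
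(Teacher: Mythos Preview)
The paper does not prove this statement: it is recorded as a \emph{Fact} with a citation to \cite{Hru-GpIm} and a \qed, so there is no in-paper proof to compare against. That said, let me comment on your sketch on its own merits.

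Your argument for (1)$\Rightarrow$(2) is correct and is exactly the standard one: forget the automorphisms, solve by 3-existence in $T$, then use the coherent family $(\sigma_w)_w$ to produce a second solution of the same underlying problem, and invoke 3-uniqueness to obtain the global automorphism. Your caveat about checking that the pushed-forward functor is again bounded and independence-preserving is well placed but routine.

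Your sketch for (2)$\Rightarrow$(1), however, does not work as written. Objects of $\widetilde{\cC}_T$ are pairs $(A,\sigma)$ with $A$ a single algebraically closed subset of the monster and $\sigma\in\aut[\cL](A)$; there is no way to ``assemble two disjoint copies'' of anything on a $2$-face, and in any case $B$ and $B'$ agree with $A^-$ on all of $\cP(\mathbf{3})^-$, so the difference between them lives only at the top. The correct construction is more delicate: one first passes through the characterisation of failure of 3-uniqueness by a witness $d\in\acl(ab)\cap\dcl(\acl(ac),\acl(bc))\setminus\dcl(\acl(a),\acl(b))$ (this is the content of \cite[Proposition~4.5]{Hru-GpIm}), then chooses $\sigma\in\aut(\acl(ab)/\acl(a)\cup\acl(b))$ moving $d$, and sets up the problem in $\widetilde{\cC}_T$ with $(\acl(ab),\sigma)$ on the $\{0,1\}$-face and identity automorphisms elsewhere. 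Any solution $(\acl(abc),\tau)$ would force $\tau(d)=\sigma(d)\neq d$ from the $\{0,1\}$-face and $\tau(d)=d$ from the other two faces, a contradiction. Your intuition (encode the non-uniqueness as an unsolvable lifting problem) is right, but the construction you describe is not well-formed.

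Your outline for (2)$\Rightarrow$(3) is in the right spirit: 3-existence in $\widetilde{\cC}_T$ is the independence theorem over $\acl_\sigma$-closed sets in $T\!A$, and combined with the description of $\acl$ and forking in $T\!A$ this yields elimination of imaginaries via a canonical-base argument. The paper itself later isolates this step as \cref{F:ChPi-TA}(4), citing \cite[Proposition~1.17]{MonRid-PpC} for a clean formalisation.
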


It is easy to see that if $T\!A$ exists and it eliminates bounded hyperimaginaries (\emph{e.g.}, when $T$ is superstable, by \cite{Supersimple} combined with \cite[Corollary~3.8]{ChPi98}), then (3) is actually equivalent to (1) and (2). We will not use this in our paper.

\subsubsection{Twisted independent amalgamation}
Let $\tau:\cL\cong\cL'$ be a bijection between two first order languages (sending sorts to sorts, function symbols to function symbols consistently with their arity, similarly for constants and relations). Then $\tau$ extends naturally to a bijection between the set of $\cL$-formulas and the set of $\cL'$-formulas. Given an $\cL$-formula $\phi$, we denote by $\phi^\tau$ its image under this map. If $T$ is an $\cL$-theory, $T^\tau:=\{\phi^\tau\mid\phi\in T\}$ is an $\cL'$-theory. Of course, up to changing the names of the symbols using $\tau$, $T^\tau$ is the "same" theory as $T$.

If $M$ is an $\cL$-structure, we denote by $M^\tau$ the $\cL'$-structure with base set $M$ and interpretations $\left(\Sigma^\tau\right)^{M^\tau}=\Sigma^M$, for any symbol $\Sigma\in\cL$. If $N'$ is an $\cL'$-structure, we call an $\cL'$-isomorphism $\sigma:M^\tau\cong N'$ a \emph{$\tau$-twisted isomorphism} between $M$ and $N'$. Similarly, one defines the notion of a \emph{$\tau$-twisted elementary map} $\sigma:A\rightarrow A'$, where $A\subseteq M$ and $A'\subseteq N'$, \emph{i.e.}, one requires that for any $\cL$-formula $\phi(x)$ and any tuple $a$ from $A$ of the right length, one has $M\models\phi(a)$ if and only if $N'\models\phi^\tau(\sigma(a))$.

\begin{lemma}\label{L:twisted-3-unique}
Let $T$ be a complete stable $\cL$-theory eliminating quantifiers and imaginaries. Assume that $T$ has $n$-uniqueness. Let $A:\cP(\bf{n})\rightarrow\cC_T$ and $A':\cP(\bf{n})\rightarrow\cC_{T^\tau}$ be bounded independence preserving functors. 

Then for any coherent system  $(\sigma_w)_{w\in\cP(\bf{n})^-}$ of $\tau$-twisted elementary bijections $\sigma_w:A(w)\rightarrow A'(w)$ there exists a $\tau$-twisted elementary bijection $\sigma_{\bf{n}}:A({\bf{n}})\rightarrow A'(\bf{n})$ extending $\sigma_w$ for every $w\in\cP(\bf{n})^-$.
\end{lemma}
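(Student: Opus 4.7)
The plan is to reduce the twisted statement to the untwisted 3-uniqueness hypothesis on $T$, by relabelling the target side along $\tau^{-1}$ and then transporting the resulting isomorphism back along $\tau$. Observe first that $T^\tau$ enjoys the same abstract properties as $T$ (completeness, stability, quantifier and imaginary elimination, 3-uniqueness), since these are invariant under the renaming of symbols, and that relabelling induces a bijection between $\cC_T$ and $\cC_{T^\tau}$ at the level of underlying sets and embeddings.

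Concretely, I would define a functor $B : \cP(\mathbf{3}) \to \cC_T$ by $B(w) := A'(w)^{\tau^{-1}}$, with morphisms the same underlying maps as in $A'$, now reinterpreted as $\cL$-embeddings. Since algebraic closure and forking independence are theory-intrinsic and transfer through $\tau$, this $B$ is again bounded and independence preserving. Under this identification, each $\sigma_w$ becomes an $\cL$-elementary isomorphism $A(w) \to B(w)$, and coherence of the system says precisely that $(\sigma_w)_{w \in \cP(\mathbf{3})^-}$ is a natural isomorphism between $A|_{\cP(\mathbf{3})^-}$ and $B|_{\cP(\mathbf{3})^-}$.

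Next I would pull back $B$ along the $\sigma_w$ to a functor $\tilde{B} : \cP(\mathbf{3}) \to \cC_T$ with $\tilde{B}|_{\cP(\mathbf{3})^-} = A|_{\cP(\mathbf{3})^-}$ strictly: set $\tilde{B}(\mathbf{3}) := B(\mathbf{3})$, and, for each $w \subsetneq \mathbf{3}$, declare the embedding $A(w) \hookrightarrow B(\mathbf{3})$ to be the composite of $\sigma_w : A(w) \to B(w)$ with the inclusion $B(w) \hookrightarrow B(\mathbf{3})$. Because each $\sigma_w$ is an isomorphism in $\cC_T$, the functor $\tilde{B}$ inherits boundedness and the independence-preserving property from $B$. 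Thus $A$ and $\tilde{B}$ are two solutions of the common 3-amalgamation problem $A|_{\cP(\mathbf{3})^-}$, and by 3-uniqueness of $T$ there is an $\cL$-elementary isomorphism $\phi : A(\mathbf{3}) \to B(\mathbf{3}) = \tilde{B}(\mathbf{3})$ fixing each $A(w)$ pointwise through the new embeddings. Unwinding the definition of those embeddings gives $\phi|_{A(w)} = \sigma_w$ for every $w \in \cP(\mathbf{3})^-$. Reading $\phi$ back through the $\tau$-twisted identification $B(\mathbf{3}) = A'(\mathbf{3})^{\tau^{-1}}$ then yields the desired $\tau$-twisted elementary bijection $\sigma_{\mathbf{3}} : A(\mathbf{3}) \to A'(\mathbf{3})$.

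The only real work in this plan is the bookkeeping in the second and third paragraphs: verifying that the relabelled functor $B$ and its pullback $\tilde{B}$ really are bounded and independence preserving objects of $\cC_T$. This poses no genuine difficulty, as the relevant notions depend only on theoretic content that transfers verbatim through the renaming $\tau$ and through isomorphisms in $\cC_T$; I do not anticipate any deeper obstacle.
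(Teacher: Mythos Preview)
Your proposal is correct and follows essentially the same idea as the paper's proof: reduce the twisted statement to ordinary 3-uniqueness by renaming one side along $\tau^{\pm 1}$. The only cosmetic difference is direction --- the paper twists $A$ into $\cC_{T^\tau}$ and invokes 3-uniqueness of $T^\tau$, whereas you untwist $A'$ into $\cC_T$ and invoke 3-uniqueness of $T$ --- but this is the same argument, and your more explicit bookkeeping with $\tilde{B}$ is sound.
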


\begin{proof}
The result follows from $n$-uniqueness of $T^\tau$, since we may consider $A$ as a functor to $\cC_{T^\tau}$, replacing $A(w)$ by $A(w)^\tau$.
\end{proof}

We now consider the special case where $\cL'=\cL$, $\tau$ is a permutation of $\cL$ and  $T$ is a complete $\cL$-theory such that $T=T^\tau$. Let $\widetilde{C}^{(\tau)}_T$ be the category of $\cL_\sigma$-structures 
$(B,\sigma)$ with $B\in\cC_T$ and $\sigma:B\rightarrow B$ a $\tau$-twisted elementary bijection. When $T$ is stable, we use the same terminology as in \cref{D:AMAL-sigma}, for functors $A:P\rightarrow\widetilde{\cC}^{(\tau)}_T$. \Cref{L:twisted-3-unique} then yields:

\begin{corollary}\label{C:per-3-unique}
Let $T$ be a complete stable $\cL$-theory eliminating quantifiers and imaginaries, and let $\tau:\cL\rightarrow\cL$ be a bijection such that $T^\tau=T$. Then $\widetilde{\cC}^{(\tau)}_T$ has 2-existence. If in addition we assume that $T$ has 3-uniqueness, then $\widetilde{\cC}^{(\tau)}_T$ has 3-existence.
\end{corollary}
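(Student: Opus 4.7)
Given a bounded independence preserving functor $A^-:\cP(\bf{3})^-\rightarrow\widetilde{\cC}^{(\tau)}_T$, the goal is to extend it to a functor on the full power set $\cP(\bf{3})$ without leaving $\widetilde{\cC}^{(\tau)}_T$. The natural strategy is to forget the twisted automorphisms first, solve the amalgamation problem in the stable base category $\cC_T$, and then transport the automorphisms up to the top vertex using the previously established twisted 3-uniqueness.

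\textbf{Step 1 (Amalgamation of the underlying structures).} Composing $A^-$ with the forgetful functor $\widetilde{\cC}^{(\tau)}_T\to\cC_T$ yields a 3-amalgamation problem in $T$ in the sense of the preceding subsection. Since $T$ is complete, stable and eliminates imaginaries, $T$ has 3-existence, so there exists a bounded independence preserving functor $A:\cP(\bf{3})\to\cC_T$ extending the underlying functor of $A^-$. In particular, $A(w)=A^-(w)$ for $w\in\cP(\bf{3})^-$, and $A(\bf{3})$ is the algebraic closure of the union $\bigcup_{i\in\bf{3}}A(\{i\})$ inside some ambient realisation of the amalgam.

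\textbf{Step 2 (Lifting the twisted automorphisms).} On each $w\in\cP(\bf{3})^-$, the object $A^-(w)$ carries by definition a $\tau$-twisted elementary bijection $\sigma_w:A(w)\to A(w)$, and these form a coherent system since $A^-$ is a functor to $\widetilde{\cC}^{(\tau)}_T$ whose morphisms commute with the twisted automorphisms. Using the hypothesis $T^\tau=T$, we may view $A$ simultaneously as a functor to $\cC_T$ and to $\cC_{T^\tau}$; applying \cref{L:twisted-3-unique} with $A'=A$, the coherent system $(\sigma_w)_{w\in\cP(\bf{3})^-}$ extends to a $\tau$-twisted elementary bijection $\sigma_{\bf{3}}:A(\bf{3})\to A(\bf{3})$.

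\textbf{Step 3 (Conclusion).} Setting $\widetilde{A}(w):=(A(w),\sigma_w)$ for $w\in\cP(\bf{3})^-$ and $\widetilde{A}(\bf{3}):=(A(\bf{3}),\sigma_{\bf{3}})$ now defines a bounded independence preserving functor $\widetilde{A}:\cP(\bf{3})\to\widetilde{\cC}^{(\tau)}_T$ extending $A^-$: boundedness and independence preservation are inherited from $A$, and the coherence of $\sigma_{\bf{3}}$ with the $\sigma_w$ ensures that the inclusion maps $A(w)\hookrightarrow A(\bf{3})$ are morphisms in $\widetilde{\cC}^{(\tau)}_T$. The only subtle point, and thus the place where the argument genuinely uses the full strength of the hypotheses, is \cref{L:twisted-3-unique}, where stability of $T$ and 3-uniqueness are essential; everything else is formal packaging.
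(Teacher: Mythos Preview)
Your proof is correct and follows exactly the intended approach: the paper marks the corollary with \qed and simply says ``\cref{L:twisted-3-unique} then yields'', and you have spelled out precisely the implicit argument --- forget the automorphisms, use 3-existence in $T$ to solve the underlying amalgamation, then apply \cref{L:twisted-3-unique} with $A'=A$ (legitimate since $T^\tau=T$) to extend the coherent system $(\sigma_w)_{w\in\cP(\mathbf{3})^-}$ to $\sigma_{\mathbf{3}}$.
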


\begin{proof}
\cref{L:twisted-3-unique} yields that if $T$ has $n$-uniqueness and $n$-existence, then $\widetilde{\cC}^{(\tau)}_T$ has $n$-existence. We may thus conclude by \cref{Stable-Amal}
\end{proof}

Given a complete $\cL$-theory $T$ and a permutation $\tau$ of $\cL$ such that $T^\tau=T$, we let $T^{(\tau)}_\sigma$ be the $\cL_\sigma$-theory whose models are of the form $(M,\sigma)$, where $M\models T$ and where $\sigma$ is a $\tau$-twisted automorphism of $M$.

Assume now in addition that $T$ is stable and eliminates quantifiers and imaginaries. It follows from quantifier elimination in $T$ that  $T^{(\tau)}_\sigma$ is then a $\forall\exists$-theory, and so it has a model-companion  if and only if the e.c.\ models of $T^{(\tau)}_\sigma$ form an elementary class. If this is the case, denote by $T^{(\tau)}\!A$ the model-companion of $T^{(\tau)}_\sigma$. Then the models of $T^{(\tau)}\!A$ are precisely the e.c.\ models of of $T^{(\tau)}_\sigma$. The basic results on $T\!A$, due to Chatzidakis and Pillay (\cite{ChPi98}), generalize to this context in a straightforward manner. We will only state some facts which we will need.

\begin{lemma}\label{F:ChPi-TA}
Let $T$ and $\tau$ be as above, and assume that $T^{(\tau)}\!A$ exists. Then the following holds:
\begin{enumerate}
\item If $(M,\sigma)\models T^{(\tau)}\!A$ and $B\subseteq M$ then 
$$\acl_{(M,\sigma)}(B)=\acl_\sigma(B):=\acl_M(\bigcup_{z\in\Zz}\sigma^z(B)).$$
\item Quantifier reduction: If $(M_i,\sigma_i)\models T^{(\tau)}\!A$ and $B_i\subseteq M_i$ for $i=1,2$, then $B_1\equiv_{\cL_\sigma} B_2$ if and only if there is an $\cL_\sigma$-isomorphism from $\acl_\sigma(B_1)$ to $\acl_\sigma(B_2)$ sending $B_1$ to $B_2$.
\item $T^{(\tau)}\!A$ is simple and 
$$A\ind^{T^{(\tau)}\!A}_E B\text{ if and only if }\acl_\sigma(EA)\ind^{T}_{\acl_\sigma(E)}\acl_\sigma(EB).$$
If $T$ is superstable, then $T^{(\tau)}\!A$ is supersimple.
\item Assume that $\widetilde{\cC}^{(\tau)}$ has 3-existence. (Equivalently, in $T^{(\tau)}\!A$, the independence theorem holds over  $\acl_\sigma$-closed sets.) Then $T^{(\tau)}\!A$ eliminates imaginaries.
\end{enumerate}
\end{lemma}

\begin{proof}
To prove (2), assume that $B=\acl_\sigma(B)$ is a common substructure of two models $(M,\sigma)$ and $(N,\sigma)$ of $T^{(\tau)}\!A$, 
such that $(N,\sigma)$ is $|M|^+$-saturated. We need to show that $(M,\sigma)$ $\cL_\sigma(B)$-embeds into $(N,\sigma)$. As $\widetilde{\cC}^{(\tau)}_T$ has 2-existence, there is an amalgam $(A,\sigma)\in\widetilde{\cC}^{(\tau)}_T$ of $N$ and $M$ over $B$. Enlarging $(A,\sigma)$ if necessary, we may assume that $(A,\sigma)\models T^{(\tau)}\!A$, hence $(A,\sigma)\supsel (N,\sigma)$. In particular, $\tp_{\cL_\sigma}(M/B)$ is finitely satisfiable in $(N,\sigma)$, so this type is realized in $(N,\sigma)$ by saturation, yielding  an $\cL_\sigma(B)$-embedding of $(M,\sigma)$  into $(N,\sigma)$. 

We now prove (1). Let $(M,\sigma)\models T^{(\tau)}\!A$ and let $B\subseteq M$. Clearly, $\acl_{(M,\sigma)}(B)\supseteq\acl_\sigma(B)$. To prove the other inclusion, it suffices to show that if $B=acl_\sigma(B)$, then $B$ is algebraically closed in $(M,\sigma)$. Let $a\in M\setminus B$, and set $A:=\acl_\sigma(Ba)$. Then $(B,\sigma)\subseteq(A,\sigma)$ is an extension in $\widetilde{\cC}^{(\tau)}_T$. 

For $n\in\Zz_{>0}$, using 2-existence in $\widetilde{\cC}^{(\tau)}_T$ and induction, we may construct an extension $ (B,\sigma)\subseteq (C_n,\sigma)\in \widetilde{\cC}^{(\tau)}_T$ such that $C_n$ contains $n$ isomorphic copies $(A_1,\sigma),\ldots,(A_n,\sigma)$ of $(A,\sigma)$ over $B$, which are $\cL$-independent over $B$. Replacing $(M,\sigma)$ by an elementary extension if necessary, using (2) we may assume that $C_n\subseteq M$. Since $B=\acl_{\cL}(B)\cL$, $A_i\cap A_j=B$ for any $i\neq j$, so $(M,\sigma)$ contains $n$ distinct realizations of $\tp_{\cL_\sigma}(a/B)$, by part (2). As $n$ was arbitrary, $a\not\in\acl_{(M,\sigma)}(B)$.

To show (3), we proceed exactly as in the proof of the corresponding result for $T\!A$ (\cite[Corollary~3.8]{ChPi98}). If $A,B,E$ are subsets of a model of $T^{(\tau)}\!A$, we say that $A$ and $B$ are \emph{independent} over $E$ if $\acl_\sigma(EA)\ind^{T}_{\acl_\sigma(E)}\acl_\sigma(EB)$, where $\ind^T$ denotes forking independence in $T$. This relation satisfies the all abstract properties of an independence notion that guarantee, by the Theorem of Kim-Pillay (see, \emph{e.g.}, \cite[Theorem~2.6.1]{Wag-Simple}), that $T^{(\tau)}\!A$ is simple and that non-foking is given by the independence notion in question. This is clear for all properties except the independence theorem (over a model). To establish the latter, one shows that every 3-amalgamation problem $A^-:\cP(\mathbf{3})^-\rightarrow\widetilde{\cC}^{(\tau)}_T$ with $A^-(\emptyset)\models T^{(\tau)}\!A
$ 
has a solution; equivalently, the independence theorem even holds over models of $T^{(\tau)}_\sigma$. The proof of this is identical to the proof of \cite[Theorem~3.7]{ChPi98}. 

The statement about supersimplicity follows as in the proof of \cite[Corollary~3.8]{ChPi98}.

Part (4) is the analog of \cite[Proposition~4.7]{Hru-GpIm}. Weak elimination of imaginaries in $T^{(\tau)}\!A$ follows directly from a formalization of Hrushovski's argument by Montenegro and the second author (see \cite[Proposition~1.17]{MonRid-PpC}). But finite sets are coded in models of $T$, as $T$ eliminates imaginaries by assumption. so they are also coded in the expansion $T^{(\tau)}\!A$ of $T$. Hence $T^{(\tau)}\!A$ eliminates imaginaries.
\end{proof}

\subsubsection{Linear imaginaries} Let us now recall some notions from \cite[Section\,5]{Hru-GpIm}.

\begin{definition}
Let $\mathfrak{t}$ be a theory of fields (possibly with additional structure).  A \emph{$\mathfrak{t}$-linear structure} $M$ is an \(\cL\)-structure with a sort  \(\res\) for a model of $\mathfrak{t}$, and additional sorts $V_i$ ($i\in I$)
denoting finite-dimensional  $\res$-vector spaces, such that the family $\left(V_i\right)_{i\in I}$ is closed under tensor products and duals. Each $V_i$ has (at least) the $\res$-vector space structure. One assumes that $\res$ is stably embedded in $M$ with induced structure given by $\mathfrak{t}$. 

We now fix such a $\mathfrak{t}$-linear structure $M$.

\begin{enumerate}
\item $M$ is said to \emph{have flags} if for any $i$ with $\dim(V_i) > 1$, for some $j, k$ with $\dim(V_j) = \dim(V_i) - 1$, there exists a $\emptyset$-definable exact sequence $0 \longrightarrow V_k  \longrightarrow V_i \longrightarrow V_j \longrightarrow 0$. We will call such a short exact sequence a \emph{flag}.

\item $M$ is said to \emph{have roots} if for any one-dimensional $V = V_i$, and any $m \geq 2$, there
exists a (one-dimensional) $W = V_j$ and a $\emptyset$-definable $\res$-linear isomorphism $f: W^{\otimes m} \cong V$.
\end{enumerate}
\end{definition}

Let us now mention two results from \cite{Hru-GpIm}. The proof of the first one is rather elementary, whereas that of the second one is quite involved.

\begin{fact}[{\cite[Lemma~5.6]{Hru-GpIm}}]\label{F:lin1}
The theory of an $\ACF$-linear structure with flags (in any characteristic) eliminates imaginaries.
\end{fact}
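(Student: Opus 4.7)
I would argue by induction on the total vector-space dimension $d(Y):=\sum_\ell \dim V_{i_\ell}$ of the ambient product of sorts $Y$ containing an interpretable set $X\subseteq Y$, with the goal of producing a canonical code of $X$ in $\res\cup\bigsqcup_j V_j$. The base case $d(Y)=0$ is the \(\ACF\) case: $X\subseteq\res^n$ is coded in $\res$ by stable embeddedness of $\res$ with its full \(\ACF\) structure combined with \(\ACF\)-EI.

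For the inductive step, I would pick some factor $V_i$ of $Y$ with $\dim V_i>1$ and invoke the $\emptyset$-definable flag $0\to V_k\to V_i\to V_j\to 0$, noting that $\dim V_j,\dim V_k<\dim V_i$. Letting $\pi:V_i\twoheadrightarrow V_j$ (extended to $\pi:Y\twoheadrightarrow Y'$ by identity on the remaining factors), the image $\pi(X)\subseteq Y'$ has strictly smaller total dimension $d(Y')<d(Y)$ and so is coded in the real sorts by induction. The remaining data, encoding $X$ over $\pi(X)$, is a definable family $\bar y\mapsto X_{\bar y}:=\pi^{-1}(\bar y)\cap X$ of subsets of $V_k$-torsors; I would analyse this family via the difference sets $D_{\bar y}:=X_{\bar y}-X_{\bar y}\subseteq V_k$, which, thanks to the $\emptyset$-definable inclusion $V_k\hookrightarrow V_i$, form a definable family in the lower-dimensional sort $V_k$ parametrised by $Y'$ and are therefore coded by induction, together with an additional piece of data pinning each $X_{\bar y}$ down inside its ambient torsor.

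The principal obstacle is supplying this additional datum canonically: the $V_k$-torsors $\pi^{-1}(\bar y)$ carry no $\emptyset$-definable basepoint, so there is no $\emptyset$-definable splitting $V_i\simeq V_j\oplus V_k$ with which to reduce $X_{\bar y}$ to a subset of the additive group $V_k$ directly. My approach is to iterate the flag reduction all the way down to the case $\dim V_i=1$, where fibers of $\pi$ are affine lines over $\res$, and to extract a canonical basepoint there using coding of finite subsets of $\res$ (itself a consequence of \(\ACF\)-EI via symmetric functions), propagated back up through the iterated flag. A final amalgamation of the codes of $\pi(X)$, of the difference-set family, and of the basepoint datum produces a canonical code for $X$ in $\res\cup\bigsqcup_j V_j$, thereby establishing elimination of imaginaries.
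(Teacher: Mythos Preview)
The paper does not prove this Fact; it is cited from \cite[Lemma~5.6]{Hru-GpIm}. Evaluating your proposal on its own merits, there are two genuine gaps.

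First, the set $D_{\bar y}:=X_{\bar y}-X_{\bar y}$ does not determine $X_{\bar y}$ up to translation: already for finite subsets of a line, $\{0,1,3\}$ and $\{0,2,3\}$ have the same difference set but are not translates of one another. So ``difference set plus basepoint'' cannot reconstruct the fiber; what you actually need is the \emph{translate} $X_{\bar y}-b(\bar y)\subseteq V_k$ for a chosen basepoint $b(\bar y)$, which is a different object.

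Second, and more seriously, your induction on the ambient total dimension $d(Y)$ does not close. Even granting a canonical basepoint $b(\bar y)\in\pi^{-1}(\bar y)\subseteq V_i$, the basepoint family $\bar y\mapsto b(\bar y)$ is (the graph of) a map whose natural ambient product is $Y$ itself (or $Y'\times V_i$), of total dimension at least $d(Y)$; the inductive hypothesis does not apply to it. Iterating the flags down to products of $1$-dimensional sorts does not help either: the symmetric functions of a finite subset of a $1$-dimensional sort $V$ take values in the tensor powers $V^{\otimes j}$, not in $\res$, and these are again non-$\res$ sorts, so no relevant invariant decreases. (In positive characteristic there is the further issue that the average of a set of size divisible by $p$ is unavailable.)

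Hrushovski's argument is different in kind. One names flag-compatible bases of all the $V_i$, reducing the whole structure to a definitional expansion of $\ACF$ on $\res$; any definable $X$ then acquires a field code $c$. The group of flag-compatible base changes over $\res$ is a connected (solvable) algebraic group, so the orbit of $c$ under it is an irreducible variety, which $\ACF$ codes canonically; that basis-independent orbit code is the code of $X$. The connectedness --- which is precisely what the flag hypothesis guarantees --- is the idea your outline is missing.
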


The following fact follows from \cite[Proposition~5.7]{Hru-GpIm} in combination with  \cite[Proposition~4.3 and Corollary~4.10]{Hru-GpIm}.

\begin{fact}\label{F:lin2}
Let $T$ be the theory of an $\ACF_0$-linear structure with flags and roots. Then $T$ has 3-uniqueness.
\end{fact}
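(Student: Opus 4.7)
The plan is to reduce 3-uniqueness to a cohomological triviality statement and then kill that obstruction using the linear hypotheses. This is essentially the strategy of \cite[Section~5]{Hru-GpIm}, which I would trace through as follows.

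First, by the general machinery developed in \cite[Section~4]{Hru-GpIm}, for a stable theory $T$ eliminating quantifiers and imaginaries, failure of 3-uniqueness over a 3-amalgamation problem $A^-:\cP(\mathbf{3})^-\rightarrow\cC_T$ is controlled by a "higher automorphism group" attached to $A^-$. Specifically, the set of solutions up to isomorphism over $A^-$ is a torsor under an abelian group $H(A^-)$ that one can read off from the $2$-automorphism groupoid of the partial amalgam, and 3-uniqueness amounts to the vanishing of $H(A^-)$ for all $A^-$. Hence the goal reduces to showing this group is trivial for every choice of $A^-$ in our linear structure $T$.

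Second, I would use the flags and the stable embeddedness of $\res$ to fibre the calculation. For a sort $V_i$ of dimension $d$, an iterated use of the exact sequences $0\to V_k\to V_i\to V_j\to 0$ with $\dim V_k=1$ lets one induct on $d$ and reduce the analysis of $H(A^-)$ to the analysis on $\res$ and on the family of one-dimensional sorts. For $\res$ alone, the theory is $\ACF_0$, which is strongly minimal and admits 3-uniqueness (this is classical and follows from elimination of imaginaries in $\ACF$ together with the triviality of Galois-type obstructions over algebraically closed bases); this handles the residue field layer of the fibration.

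Third, on the layer of one-dimensional sorts, a 1-dimensional $\res$-vector space is a $\Gm(\res)$-torsor, and the contribution to $H(A^-)$ arising from a single such $V$ lives in a Kummer-type group built out of $\res^\times/(\res^\times)^m$ for varying $m$. Since $\res\models \ACF_0$, the group $\res^\times$ is divisible, so each $\res^\times/(\res^\times)^m$ is trivial; but by itself this only kills the "torsion part" of the obstruction. The existence of roots — namely $\emptyset$-definable isomorphisms $W^{\otimes m}\cong V$ — is then used to show that the non-torsion, "choice-of-$m$-th-root" part of the obstruction is trivialised by naming those isomorphisms. In characteristic zero this is clean because the tensor power map $W\mapsto W^{\otimes m}$ has no inseparability issues, so the compatibility between roots at different levels $m\mid m'$ is automatic once divisibility is granted.

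The main obstacle, and what makes \cite[Proposition~5.7]{Hru-GpIm} nontrivial, is verifying compatibility of all these trivialisations across the three vertices and three edges of $\cP(\mathbf{3})^-$: one must check that the cocycle assembled from flag extensions and chosen roots is actually a coboundary, not merely that each factor individually vanishes. This is precisely the point where \cite[Proposition~4.3]{Hru-GpIm}, identifying $H(A^-)$ with a concrete $2$-cocycle group on the binding groupoid, and \cite[Corollary~4.10]{Hru-GpIm}, giving a criterion for such $2$-cocycles to be trivial in terms of the underlying linear data, are invoked to finish the argument.
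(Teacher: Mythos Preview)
The paper does not give its own proof of this statement: it records it as a \emph{Fact} and simply cites \cite[Proposition~5.7]{Hru-GpIm} together with \cite[Proposition~4.3 and Corollary~4.10]{Hru-GpIm}. Your proposal is not in conflict with this; you are sketching the content behind exactly those citations, and you land on the same three ingredients the paper points to. In that sense your write-up is more detailed than the paper, not different from it.

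That said, as a sketch of the Hrushovski argument, a couple of points are imprecise. The obstruction to 3-uniqueness in \cite{Hru-GpIm} is packaged not as an abstract abelian group $H(A^-)$ attached to each amalgamation problem, but via finite internal covers and the associated definable groupoids: \cite[Proposition~4.3]{Hru-GpIm} identifies the failure of 3-uniqueness with the existence of a nontrivial finite internal cover, and \cite[Corollary~4.10]{Hru-GpIm} gives the criterion for such covers to split. Your description in terms of a ``$2$-cocycle group on the binding groupoid'' is morally in the right direction but does not match the actual statements you cite. Likewise, the role of roots is not quite to ``trivialise the non-torsion part'' of a Kummer obstruction; in \cite[Proposition~5.7]{Hru-GpIm} roots are used to ensure that the relevant one-dimensional torsors admit compatible $m$-th roots at the level of the linear structure itself, which is what forces the finite cover to split. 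If you want to present this as a proof rather than a citation, you should align the language with the groupoid/internal-cover formulation actually used in \cite{Hru-GpIm}.
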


Our main interest in linear structures stems from the fact that the
\(\res\)-internal sets in a given model of \(\ACVF\) give rise to such a
structure. For every \(M\models \Hen[0]\) and \(A \subseteq \Geom(M)\), we
define \[\Lin_A := \bigsqcup_{\substack{s\in\Lat(\dcl_0(A))\\ \ell\in\Zz_{>0}}}
s/\ell\Mid s.\]

In equicharacteristic zero, i.e., if \(M\models\Hen[0,0]\), this corresponds
exactly to the collection of vector spaces \(\mathrm{VS_{\res,A}} =
\bigsqcup_{s\in\Lat(\dcl_0(A))} s/\Mid s\) introduced in \cite{HasHruMac-Book}.
In mixed characteristic, however, this is a more complicated structure since it
also consists of (free) \(\Res_\ell\)-modules --- and this more complicated
structure is actually needed in \cref{invariant}. Note that, by \cref{A-points}
and our choice of representation of the geometric sorts, \(\Lin_A(M)\) is the
set of cosets \(c + \ell\Mid s\) where \(s\in\Lat(\dcl_0(A))\) has a basis in
\(M\) and \(c\in s(M)\). 

\begin{lemma}\label{F:lin-St}
Let \(M\models\Hen[0,0]\) and \(A\subseteq\Geom(M)\). Then \(\Lin_A(M)\), with its \(\cL_0(A)\)-induced structure, is an $\Th(\res(M))$-linear structure with flags. Moreover, if \(\vg(M)\) is divisible, then  \(\Lin_A(M)\) has roots.
\end{lemma}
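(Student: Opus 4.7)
The plan is to verify, in order, the three requirements of a $\mathfrak{t}$-linear structure --- stable embeddedness of $\res$ with induced structure given by $\mathfrak{t}$, closure of the family of sorts under tensor products and duals, and the flag axiom --- and then, under the divisibility hypothesis, the root axiom. Since $M$ has equicharacteristic zero, every positive integer $\ell$ is a unit in $\Val$, so $\ell\Mid=\Mid$ and $\Lin_A(M)$ collapses to the family of sorts $V_s:=s/\Mid s$ indexed by $s\in\Lat(\dcl_0(A))$, each being an $\res$-vector space of dimension $\mathrm{rk}(s)$. Stable embeddedness of $\res$ with induced theory $\mathfrak{t}=\Th(\res(M))$ is immediate from field quantifier elimination relative to $\RV$ in $\Hen[0]$ (\cref{EQ rv}), which forces the structure induced on $\res$ to be its pure ring structure.

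For closure under tensor and dual, I would use that for $s_1,s_2\in\Lat(\dcl_0(A))$ of respective ranks $n_1,n_2$, the $\Val$-modules $s_1\otimes_{\Val}s_2\hookrightarrow\K^{n_1}\otimes_\K\K^{n_2}\cong\K^{n_1n_2}$ and $\mathrm{Hom}_{\Val}(s_1,\Val)\hookrightarrow\mathrm{Hom}_\K(\K^{n_1},\K)\cong\K^{n_1}$ are themselves lattices, $\emptyset$-definably produced from $s_1$ and $s_2$, and hence lie again in $\Lat(\dcl_0(A))$. Flatness of the $s_i$ then gives canonical $A$-definable identifications $(s_1\otimes_{\Val}s_2)/\Mid(s_1\otimes_{\Val}s_2)\cong V_{s_1}\otimes_\res V_{s_2}$ and $\mathrm{Hom}_{\Val}(s_1,\Val)/\Mid\,\mathrm{Hom}_{\Val}(s_1,\Val)\cong V_{s_1}^\vee$.

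For the flag axiom, I would fix $s\in\Lat_n(\dcl_0(A))$ with $n\geq 2$ and consider the $\emptyset$-definable coordinate projection $\pi:\K^n\to\K$ onto the last factor. The image $\pi(s)\subseteq\K$ is a finitely generated $\Val$-submodule, necessarily nonzero because the $\K$-span of $s$ is all of $\K^n$; since $\Val$ is a valuation (hence B\'ezout) ring, every finitely generated fractional ideal is principal, so $\pi(s)$ is free of rank one. Hence the exact sequence
\[0\longrightarrow s\cap\ker(\pi)\longrightarrow s\stackrel{\pi}{\longrightarrow}\pi(s)\longrightarrow 0\]
splits, exhibiting $s\cap\ker(\pi)$ as a free rank $n-1$ direct summand of $s$, thus an element of $\Lat_{n-1}(\dcl_0(A))$, while $\pi(s)\in\Lat_1(\dcl_0(A))$. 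Reducing modulo $\Mid$, flatness of $\pi(s)$ preserves exactness and yields the required $A$-definable flag $0\to V_{s\cap\ker(\pi)}\to V_s\to V_{\pi(s)}\to 0$. I expect this step to be the main point of the argument: without the B\'ezout property of $\Val$, one would only obtain a rank $n-1$ torsion-free submodule rather than a bona fide sub-lattice, and the flag would fail.

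For the root axiom, assume $\vg(M)$ is divisible. Given $s=a\Val\in\Lat_1(\dcl_0(A))$ and $m\geq 2$, the element $\gamma':=\val(a)/m$ exists and is unique in the torsion-free divisible group $\vg$, hence lies in $\dcl_0(A)$. Picking any $b\in\K$ with $\val(b)=\gamma'$, set $s':=b\Val\in\Lat_1(\dcl_0(A))$; this depends only on $\gamma'$. The canonical multiplication map $(s')^{\otimes m}\to s$ identifies both sides with the principal fractional ideal of valuation $\gamma:=\val(a)$, hence is a $\Val$-module isomorphism, and its reduction modulo $\Mid$ provides the desired $\res$-linear isomorphism $V_{s'}^{\otimes m}\cong V_s$.
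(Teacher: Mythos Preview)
Your proof is correct and follows essentially the same strategy as the paper. The only cosmetic differences are that for flags the paper projects onto the last $n-1$ coordinates (so the kernel is rank one and the image rank $n-1$), whereas you project onto a single coordinate, and for roots the paper writes down the inverse map $V\to V_n^{\otimes n}$ via $n$-th roots rather than your multiplication map $(s')^{\otimes m}\to s$; neither change affects the argument.
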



\begin{proof}
We may assume that \(\dcl_0(A)\cap\Geom(M)\subseteq A\). The fact that the residue field $\res$ is stably embedded in \(\Hen[0,0]\), with induced structure that of a pure field, is well known --- and follows from the existence of splittings as in \cref{compatible-section}.

Now let $V,W$ be two sorts from $\Lin_A(M)$, \emph{i.e.}, vector spaces over $\res$ of the form $V=a/\Mid a$, $W=b/\Mid b$ for some $a\in \Lat_m(A)$ and $b\in \Lat_n(A)$ --- with bases in \(M\).
Then $a\otimes_{\Val} b$ is canonically isomorphic to an element $c$ from $\Lat_{m\cdot n}(A)$, so we may identify $V\otimes_{\res}W$ with 
$c/\Mid c$, which is a sort from $\Lin_A(M)$. Similarly, $\dual{a}=\Hom[\Val](a,\Val)$ can be identified with $\{z\in \K^n\mid \forall v\in a,\,\sum z_iv_i\in\Val\}\in \Lat_m(A)$, so $\dual{V}=\Hom[\res](V,\res) \isom \dual{a}/\Mid \dual{a}$ is a sort from $\Lin_A$ as well. 

\smallskip

\emph{\textbf{Flags:}} For $a\in \Lat_n(A)$ define $a_1:=a\cap (\K\times \{0\}^{n-1})$. Then the projection onto 
the first coordinate identifies $a_1$ with an element of $\Lat_1(A)$. Let $\pi:a\rightarrow \K^{n-1}$ be induced from the projection on the last $n-1$ coordinates. Then 
\[0\rightarrow\ker(\pi)=a_1\rightarrow a\rightarrow\pi(a)\rightarrow 0\] is an $A$-definable 
exact sequence of free $\Val$-modules, and $\pi(a)\in \Lat_{n-1}(A)$ --- this follows from the fact 
that $\pi(a)$ is a finitely generated torsion free $\Val$-submodule of $\K^{n-1}$ of rank $n-1$.
Reducing modulo $\Mid$, we conclude.

\smallskip

\emph{\textbf{Roots:}} Assume \(\vg(M)\) is divisible. Let $n\geq 1$, and let $V$ be a one-dimensional sort from $\Lin_A$. 
Then $V=\gamma\Val/\gamma\Mid$ for some $\gamma\in\Gamma(\dcl_0(A))$. 
Consider $V_n:=\delta\Val/\delta\Mid$, for $\delta=\frac{\gamma}{n}$. The map 
\[x/\gamma\Mid\mapsto y/\delta\Mid \otimes\ldots\otimes y/\delta\Mid : V\rightarrow V_n^{\otimes n},\]
where $y^n=x$, is well-defined and an $A$-definable isomorphism of $\res$-vector spaces defined over $A$. In particular, $\Lin_A$ has roots.
\end{proof}

The result above actually holds of the \emph{stable part} \(\bigsqcup_{s\in\Lat(\dcl_0(A))} s/\Mid s\) in all characteristic (provided \(\res\) is stably embedded).

\begin{corollary}\label{cor:StA-3unique}
Let \(M\models\ACVF_{0,0}\) and \(A\subseteq\Geom(M)\). Then $\Lin_A$ satisfies 3-uniqueness.\qed
\end{corollary}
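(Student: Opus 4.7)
The plan is to deduce this immediately from \cref{F:lin-St} and \cref{F:lin2}. Since $M \models \ACVF_{0,0}$, the residue field $\res(M)$ is algebraically closed of characteristic $0$, so $\Th(\res(M)) = \ACF_0$; moreover, the value group $\vg(M)$ is divisible. Hence \cref{F:lin-St} applies in its full strength and yields that $\Lin_A(M)$, with its $\cL_0(A)$-induced structure, is an $\ACF_0$-linear structure with flags and with roots.

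Once this is in place, the conclusion is a direct invocation of \cref{F:lin2}, which asserts precisely that any such theory has 3-uniqueness. There is no additional work to do: the $\res$-internal structure on $\Lin_A$ inherited from $\ACVF_{0,0}$ is exactly the kind of linear structure covered by Hrushovski's result, and 3-uniqueness is a property of the theory of this structure alone.

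The only conceptual point worth spelling out, if one wishes to be fully explicit, is that \cref{F:lin2} applies to the \emph{theory} of an $\ACF_0$-linear structure with flags and roots, so one should observe that 3-uniqueness for $\Lin_A$ in our sense depends only on its complete theory (with parameters from $A$), which matches the hypothesis of \cref{F:lin2}. No step here is a real obstacle; the substance of the corollary has been absorbed into \cref{F:lin-St} (on the algebraic side) and \cref{F:lin2} (on the stability-theoretic side).
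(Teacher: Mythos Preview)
Your proposal is correct and matches the paper's intended argument: the corollary is stated with an immediate \qed, signalling that it follows directly from \cref{F:lin-St} (giving that $\Lin_A$ is an $\ACF_0$-linear structure with flags and roots, since $\res(M)\models\ACF_0$ and $\vg(M)$ is divisible) together with \cref{F:lin2}. There is nothing to add.
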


\subsubsection{Twisted linear imaginaries}
\begin{lemma}\label{L:ec-twist}
Let $\mathfrak{t}$ be a stable theory of fields, and let $T$ be the theory of a $\mathfrak{t}$-linear structure such that $T$ eliminates quantifiers. Let $\tau$ be a permutation of the language with $T=T^\tau$ such that $\tau$ fixes all the formulas on the sort $\res$. Suppose $\mathfrak{t}A$ exists. Then $T^{(\tau)}_\sigma\cup\mathfrak{t}A$ is the model-companion of $T^{(\tau)}_\sigma$. In particular, this holds for $\mathfrak{t} = \ACF$.
\end{lemma}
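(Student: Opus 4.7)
The plan is to verify the two defining properties of a model companion: every model of $T^{(\tau)}_\sigma$ embeds into a model of $T^{(\tau)}_\sigma\cup\mathfrak{t}A$ (cofinality), and any inclusion $(M,\sigma)\subseteq(N,\sigma_N)$ of models of $T^{(\tau)}_\sigma\cup\mathfrak{t}A$ is elementary (model-completeness). The existential closure of any model of $T^{(\tau)}_\sigma\cup\mathfrak{t}A$ inside any $T^{(\tau)}_\sigma$-extension then follows from the standard sandwich argument.

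For cofinality, starting with $(M,\sigma)\models T^{(\tau)}_\sigma$, the hypothesis that $\tau$ fixes all formulas on $\res$ implies that $\sigma$ restricts to an honest $\mathfrak{t}$-automorphism $\sigma_{\res}$ of $\res(M)$. I would embed $(\res(M),\sigma_{\res})$ into some $(K,\sigma_K)\models\mathfrak{t}A$, available by the existence assumption on $\mathfrak{t}A$, and then base-change $M$ over $\res(M)$ by setting $V_i(M'):=V_i(M)\otimes_{\res(M)}K$ for every vector-space sort, equipping $M'$ with the canonical $\cL$-structure. That $M'\models T$ is routine from quantifier elimination in $T$ and the naturality of base change for linear structures (the axioms of $T$ restricted to $\res$ are those of $\mathfrak{t}$, satisfied by $K$, while the ``linear algebra'' axioms are preserved under scalar extension). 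The map $\sigma':=\sigma\otimes\sigma_K$ is a $\tau$-twisted automorphism of $M'$, because the triviality of $\tau$ on $\res$-formulas makes the semilinear tensor product compatible with the twisting. Thus $(M',\sigma')\models T^{(\tau)}_\sigma\cup\mathfrak{t}A$ extends $(M,\sigma)$.

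For model-completeness, I would take $(M,\sigma)\subseteq(N,\sigma_N)$ both models of $T^{(\tau)}_\sigma\cup\mathfrak{t}A$. Stable embeddedness of $\res$ in $T$ together with the fact that each $V_i$ has fixed dimension over $\res$ forces $V_i(N)=V_i(M)\otimes_{\res(M)}\res(N)$ for every $i$. Fixing $\res(M)$-bases of the $V_i(M)$, the action of $\sigma$ on $V_i(M)$ is recorded by its matrix in $\res(M)$, and the same matrix records the action of $\sigma_N$ on $V_i(N)$. Any $\cL_\sigma$-formula over $M$ then translates, via these bases, into an $\cL_\sigma$-formula in $\res$-variables with parameters in $\res(M)$. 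By model-completeness of $\mathfrak{t}A$ one has $\res(M)\preccurlyeq\res(N)$, and this elementarity lifts through the coordinate reduction to yield $(M,\sigma)\preccurlyeq(N,\sigma_N)$.

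The main obstacle is handling the $\tau$-twist carefully during the reduction-to-$\res$ step. One must verify that after unpacking an $\cL_\sigma$-formula in coordinates, the resulting formula on $\res$ is controlled by the \emph{untwisted} theory $\mathfrak{t}A$, not by some twisted variant. The assumption that $\tau$ fixes all $\res$-formulas is precisely what ensures this, as it guarantees $\sigma_{\res}$ is a genuine $\mathfrak{t}$-automorphism and that the usual $\mathfrak{t}A$ suffices on the residue field.
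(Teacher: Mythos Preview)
Your argument is correct, and the key observation is the same as the paper's: once a model $M$ is fixed, the whole linear structure over any $\cL$-extension $N$ is controlled by $\res(N)$, which you phrase via base change $V_i(N)=V_i(M)\otimes_{\res(M)}\res(N)$ and coordinate reduction. The paper packages this as the single line $N=\dcl_{\cL}(M\,\res(N))$ for any $N\succcurlyeq_{\cL}M$, from which it deduces directly that an extension of $\sigma$ to $N$ is determined by its restriction to $\res(N)$, and hence that $(M,\sigma)$ is existentially closed in $T^{(\tau)}_\sigma$ if and only if $(\res(M),\sigma_{\res})$ is existentially closed in $\mathfrak{t}_\sigma$; the model companion statement then falls out immediately from the existence of $\mathfrak{t}A$.

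So the difference is organizational rather than mathematical: you verify cofinality and model-completeness separately, while the paper short-circuits both by characterizing the e.c.\ models in one stroke. Your route has the virtue of making the base-change construction and the coordinate reduction explicit (and in particular highlights where the hypothesis that $\tau$ fixes $\res$-formulas is used); the paper's route is shorter and avoids choosing bases altogether. Both arguments rest on exactly the same structural fact about $\mathfrak{t}$-linear structures.
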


\begin{proof}
Let $(M,\sigma)\models T^{(\tau)}_\sigma$. Then, as $M\models T$, for any sort $V$ from $\cL$ there is an $M$-definable surjection 
$f:\res(M)\rightarrow V(M)$. For any \(N \supsel_{\cL} M\), $f$ then also defines a surjection from $\res(N)$ onto $V(N)$, hence $N=\dcl_{\cL}(M\res(N))$. Thus, any extension of \(\sigma\) to a $\tau$-twisted automorphism on \(N\) is uniquely determined by its restriction to \(\res(N)\). It follows that $(M,\sigma)$ is an e.c.\ model of $T^{(\tau)}_\sigma$ if and only if $(\res(M),\restr{\sigma}{\res(M)})$ is an e.c.\ model of $\mathfrak{t}_\sigma$. This yields the statement of the lemma.
\end{proof}

As a special case of \cref{L:ec-twist}, we get the following.

\begin{remark}
Let $\mathfrak{t}$ be a stable theory of fields, and let $T$ be the theory of a $\mathfrak{t}$-linear structure, such that $T$ eliminates quantifiers. Suppose $\mathfrak{t}A$ exists. Then $T\!A$ exists and is given by $T_\sigma\cup\mathfrak{t}A$. In particular, this holds for $\mathfrak{t} = \ACF$.
\end{remark}

\begin{definition}
Let $\res$ be a stably embedded sort in a theory $T$. An $A_0$-definable set $D$ is said to be \emph{internally $\res$-internal} 
 (over $A_0$), if there is a tuple $d\in D$ and an $A_0d$-definable surjection $f:Y\rightarrow D$, where $Y\subseteq \res^n$ for some $n$.
\end{definition}

\begin{lemma}\label{L:InternallyInternal}
Let $\res$ be a stably embedded sort in a theory $T$, and let $D$ be $A_0$-definable and internally $\res$-internal (over $A_0$). Then $\res\cup D$ is stably embedded (over $A_0$).
\end{lemma}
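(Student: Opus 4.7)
The plan is to reduce, via the internal $\res$-internality, every externally definable subset of $(\res\cup D)^k$ to an externally definable subset of some power of $\res$, then invoke the stable embeddedness of $\res$ and push back.

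More precisely, I first observe that any $\cU$-definable subset $S\subseteq(\res\cup D)^k$ can be partitioned according to which coordinates are in $\res$ and which are in $D$. Thus, after reindexing, it suffices to show the following: given a formula $\phi(\bar x,\bar y;b)$ where $\bar x$ ranges over $\res^m$, $\bar y$ over $D^r$ and $b\in\cU$ is arbitrary, the set defined is definable over parameters in $\res\cup D\cup A_0$.

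Next I bring in the hypothesis. Fix a witness $d\in D$ of the internal $\res$-internality together with an $A_0d$-definable surjection $f:Y\twoheadrightarrow D$, where $Y\subseteq \res^n$ is $A_0d$-definable. Consider the pulled-back set
\[
S'\ :=\ \bigl\{(\bar x,\bar z)\in\res^m\times Y^r\ :\ \phi(\bar x,f(z_1),\dots,f(z_r);b)\bigr\}.
\]
This is definable in the parameter $A_0db$, and it is a subset of a power of $\res$. By the stable embeddedness of $\res$ (over $A_0$), there exists a formula $\psi(\bar x,\bar z;\bar c)$ with $\bar c$ a tuple from $\res$ (together with $A_0$) such that $S'$ is defined by $\psi(\bar x,\bar z;\bar c)$ on $\res^m\times\res^{nr}$ (restricting to $Y^r$).

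Finally, because $f$ is surjective onto $D$, the original formula can be rewritten on $\res^m\times D^r$ as
\[
\exists \bar z\in Y^r\ \bigl(\,\bigwedge_{j=1}^r y_j=f(z_j)\ \wedge\ \psi(\bar x,\bar z;\bar c)\bigr),
\]
which uses only the parameters $\bar c\in \res$, $d\in D$ and $A_0$. Hence $S$ is defined over $\res\cup D\cup A_0$, proving the stable embeddedness of $\res\cup D$ over $A_0$.

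There is no real obstacle here; the only point to keep in mind is the ``internally'' part of the hypothesis, namely that the defining parameter $d$ for $f$ lies in $D$ itself rather than in some auxiliary sort. This is exactly what ensures that when we invoke $f$ to change variables and then push the $\res$-definable formula back to $D$, all the parameters we pick up remain inside $\res\cup D\cup A_0$.
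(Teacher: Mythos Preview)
Your proof is correct and follows essentially the same route as the paper's: pull back along the $A_0d$-definable surjection $f$ to a subset of a power of $\res$, invoke stable embeddedness of $\res$, and push forward again, noting that the only extra parameter picked up is $d\in D$. The paper compresses this into two sentences and omits the explicit coordinate-partition step, but the argument is identical.
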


\begin{proof} Set $D':=\res\cup D$. It follows from the assumptions that $D'$ is $A_0$-definable and internally $\res$-internal over $A_0$. Let $f$ be an $A_0d$-definable surjection as in the definition, with $d\in D'$. Taking the preimage under $f\times\cdots\times f$, one sees that  any $\cU$-definable subset $X$ of $D'^m$ is $\res(\cU)A_0d$-definable, by stable embeddedness of $\res$. In particular, $X$ is 
$A_0D'(\cU)$-definable, proving stable embeddedness of $D'$ (over $A_0$).
\end{proof}

\begin{proposition}\label{EI Lin}
Let \(M\models\VFA\) and \(A\subseteq\Geom(M)\). Then $\Lin_A$ is stably embedded in $\VFA$ and its \(A\)-induced structure eliminates imaginaries.
\end{proposition}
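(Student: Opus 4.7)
The plan is to identify the $A$-induced structure on $\Lin_A$ with a model of $T^{(\tau)}\!A$, where $T$ is the $\ACF_0$-linear structure with flags and roots carried by $\Lin_A$ via Lemma~\ref{F:lin-St}, and $\tau$ encodes the permutation of sorts induced by $\sigma$. Fact~\ref{F:ChPi-TA}(4) will then deliver elimination of imaginaries, once the $3$-uniqueness of $T$ (Fact~\ref{F:lin2}) is upgraded to $3$-existence of $\widetilde{\cC}^{(\tau)}_T$ via Corollary~\ref{C:per-3-unique}.

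Assume without loss of generality that $A$ is closed under $\sigma$, so that $\sigma$ restricts to a bijection on $\Lin_A$ sending the sort $s/\Mid s$ onto the sort $\sigma(s)/\Mid \sigma(s)$. By Fact~\ref{EQ VFA} together with Definition~\ref{VFA}(1) applied to the constant $\sigma$-polynomial $P = n$ (which forces $n\vg_{>0} = \vg_{>0}$ and hence $\vg(M)$ divisible), Lemma~\ref{F:lin-St} shows that the $\cL_0(A)$-induced reduct on $\Lin_A$ is an $\ACF_0$-linear structure with flags and roots; call this $\cL$-theory $T$. The permutation $\tau$ of $\cL$ permuting sort symbols according to the action of $\sigma$ on $\Lat(\dcl_0(A))$ (and fixing the ring language on $\res$) satisfies $T^\tau = T$. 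By Lemma~\ref{L:ec-twist} applied to $\mathfrak{t} = \ACF_0$, the model-companion $T^{(\tau)}\!A$ is axiomatized by $T^{(\tau)}_\sigma \cup \ACFA_0$; since $(\res(M),\sigma) \models \ACFA_0$ by Definition~\ref{VFA}(2), the $A$-induced difference structure on $\Lin_A$ is indeed a model of $T^{(\tau)}\!A$. Combining Fact~\ref{F:lin2} with Corollary~\ref{C:per-3-unique} yields $3$-existence of $\widetilde{\cC}^{(\tau)}_T$, and Fact~\ref{F:ChPi-TA}(4) then gives EI for $T^{(\tau)}\!A$, hence for the $A$-induced theory on $\Lin_A$.

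For stable embeddedness, each sort $s/\Mid s$ of $\Lin_A$ is internally $\res$-internal over $A$: choosing a basis $(e_1,\ldots,e_n)$ of $s$ in $M$ yields an $Ae$-definable bijection $\res^n \to s/\Mid s$, $(x_1,\ldots,x_n) \mapsto \sum_i x_i e_i + \Mid s$. By Lemma~\ref{L:InternallyInternal} together with stable embeddedness of $\res$ in $\VFA$ (Fact~\ref{EQ VFA}), $\res \cup \Lin_A$ is stably embedded over $A$ in $\VFA$; \emph{a fortiori}, so is $\Lin_A$ itself.

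The main technical hurdle is the precise identification of the full $A$-induced structure on $\Lin_A$ with the $\tau$-twisted $\ACF_0$-linear structure with flags and roots: one must verify that the ambient valued difference geometry of $\VFA$ contributes no new $A$-definable relations on $\Lin_A$ beyond those coming from $T$ together with $\sigma$. This relies on Lemma~\ref{F:lin-St}, the stable embeddedness of $\res$, and $\sigma$-equivariance: pulling back any definable relation along a basis of each lattice reduces it to a relation on a cartesian power of $\res$, which by stable embeddedness of $\res$ is already captured by the pure difference field structure on $\res$ and the twist $\tau$.
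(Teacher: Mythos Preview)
Your proof is correct and follows essentially the same route as the paper's: establish stable embeddedness via internal $\res$-internality, identify the $\cL_0(A)$-reduct of $\Lin_A$ as an $\ACF_0$-linear structure with flags and roots, use $3$-uniqueness (Fact~\ref{F:lin2}) together with Corollary~\ref{C:per-3-unique} and Fact~\ref{F:ChPi-TA}(4) to get EI for $T^{(\tau)}\!A$, and then argue that the full $A$-induced structure is only a definable enrichment. The paper packages the $3$-uniqueness step via Corollary~\ref{cor:StA-3unique} and is slightly less explicit about Lemma~\ref{L:ec-twist} and Lemma~\ref{L:InternallyInternal}, but the argument is the same; note only that the hypotheses of Fact~\ref{F:ChPi-TA} also require EI of the base theory $T$, which is Fact~\ref{F:lin1} (you use it implicitly but do not cite it), and that the reduction to $\sigma$-closed $A$ is likewise made tacitly in the paper.
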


\begin{proof}
Stable embeddedness follows from stable embeddedness of \(\res\) --- \emph{cf.}  \cref{EQ VFA} --- and the fact that \(\Lin_A\) is internally $\res$-internal (by naming a basis for every sort).

Now, let $T$ be the theory of $\Lin_A(M)$, with its \(\cL_0(A)\)-induced structure. By \cref{F:lin1} and \cref{F:lin-St}, $T$ eliminates imaginaries. Let $\tau$ be the permutation of $\cL_0(A)$ induced by $\sigma$. Then $\tau$ fixes all the formulas on the sort $\res$, and we have $T^\tau=T$. It follows from \cref{cor:StA-3unique} and \cref{F:ChPi-TA}(4) that $T^{(\tau)}\!A$ eliminates imaginaries.

Also, by \cref{EQ VFA} \((\res(M),\sigma_\res)\) is a stably embedded pure model of \(\ACFA\), and hence \(\Lin_A(M) \models T^{(\tau)}\!A\) by \cref{L:ec-twist}. Since the \(A\)-induced structure on \(\Lin_A\) is a definable expansion of its \(\ACF\)-linear structure with a twisted automorphism, elimination of imaginaries follows --- \emph{e.g.}, by \cite[Lemma~5.4]{Hru-GpIm}.
\end{proof}

\subsubsection{Real linear imaginaries}

We conclude these preliminaries with a study of \(\RCF\)-linear structures.

\begin{definition}
An \(\RCF\)-linear structure with flags is said to be \emph{oriented} if for
every sort \(V\) of dimension one, each of the two half lines are
\(\emptyset\)-definable.
\end{definition}

\begin{proposition}\label{RCF lin EI}
Any oriented \(\RCF\)-linear structure with flags eliminates imaginaries.
\end{proposition}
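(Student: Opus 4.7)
The plan is to follow the outline of the proof of \cref{F:lin1}, substituting the role of algebraic closedness of \(\res\) (and the availability of roots in that setting) by the \(\emptyset\)-definable orientation on one-dimensional sorts, and using \(o\)-minimality of \(\RCF\) in place of stability-theoretic arguments in the \(\ACF\) case.

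Work by induction on a complexity measure of the sorts involved in the imaginary --- for instance, the multiset of dimensions of the vector space sorts appearing. The base case, concerning imaginaries supported purely on \(\res\), is handled by elimination of imaginaries in \(\RCF\) together with the stable embeddedness of \(\res\). For the inductive step, given an imaginary involving a sort \(V\) of dimension \(n\geq 2\), use a flag \(0\to V'\to V\to V''\to 0\) with \(\dim V' = 1\) and \(\dim V'' = n-1\). A tuple \(a_V\in V\) decomposes as its image \(\pi(a_V)\in V''\) --- which has strictly lower complexity --- together with a point in the \(V'\)-torsor fibre above \(\pi(a_V)\). Iterating reduces the problem to imaginaries involving only \(\res\) and one-dimensional sorts.

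For the one-dimensional case, each sort \(W\) decomposes as \(W = W^+\sqcup\{0\}\sqcup W^-\) with \(W^{\pm}\) a pair of \(\emptyset\)-definable half-lines, each a principal \(\res^+\)-torsor (where \(\res^+ := \{r\in\res\mid r>0\}\)). The orientations propagate coherently through the tensor and dual operations: the tensor of two positive rays is the positive ray of the tensor, and the positive part of \(\dual W\) is given by the functionals that are positive on \(W^+\). This coherence ensures that, across all one-dimensional sorts, the positive parts form a compatible family. Combining this with elimination of imaginaries in \(\RCF\) and the observation that the \(\res^+\)-orbit partition of any one-dimensional sort is \(\emptyset\)-definable with three orbits (\(\{0\}\), \(W^+\), \(W^-\)), one codes any imaginary arising from these sorts inside the linear sorts themselves.

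The main obstacle, as in the \(\ACF\) case, is handling the interaction between several one-dimensional sorts linked by tensor and dual morphisms: one must verify that the coding of an imaginary involving several such sorts can be carried out inside the linear sorts, without appealing to auxiliary sorts. The orientation provides precisely the rigidity required --- it canonically singles out the compatible system of positive cones --- and so plays the role that the presence of roots plays in Hrushovski's proof of \cref{F:lin1}.
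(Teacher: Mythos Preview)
Your proposal identifies the right difficulty --- coding imaginaries that mix several one-dimensional sorts --- but does not actually resolve it. Saying ``orientation provides precisely the rigidity required'' is a hope, not an argument: you still owe a concrete mechanism for turning a definable set in a product of one-dimensional torsors (and their tensors/duals) into a real tuple of canonical parameters. Moreover, the reduction step is not as clean as you present it: decomposing \(V\) via a flag \(0\to V'\to V\to V''\to 0\) does not turn a definable set in \(V^n\) into one in \((V')^n\times(V'')^n\); the fibres are genuine \(V'\)-torsors, not copies of \(V'\), so you have not reduced complexity in the way you claim.

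The paper takes a different and more efficient route. It first isolates two elementary consequences of orientation: (i) rigidity, \(\acl=\dcl\), proved by induction on \(\dim V\) using that each fibre of a flag is totally ordered; and (ii) any definable subset of a single fibre is coded, by \(o\)-minimality. It then \emph{complexifies}: setting \(K=\alg{\res}\), the structure \(K\otimes M\) is an \(\ACF\)-linear structure with flags, so \cref{F:lin1} applies there. Given a definable function \(f:V\to S\), one takes the Zariski closure \(F\) of its graph in \(K\otimes M\); by \cref{F:lin1}, \(F\) is coded over \(\code{f}\cap M\). Rigidity then converts the finite fibres \(F_a\) into finitely many \(\code{f}\)-definable functions \(f_i\), and the sets \(\{x\mid f(x)=f_i(x)\}\) are coded by (ii). An outer induction on \(\dim V\) finishes. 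The point is that the hard combinatorics of interacting one-dimensional sorts is delegated to the already-proved \(\ACF\) case via \(K\otimes M\), rather than redone from scratch in \(\RCF\).
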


\begin{proof}
Let us first prove a few preliminary results. Let \(M\) be a sufficiently saturated and homogeneous oriented \(\RCF\)-linear structure with flags. First, note that if \(0\to W\to V\to U\to 0\) is an \(\emptyset\)-definable flag,  then any translate of \(W\) in \(V\) is ordered by \(a < b\) if \(a-b\) is in a fixed half line of \(W\).

\begin{claim}\label{RCF lin rig}
\(M\) is rigid: for every \(A\subseteq M\), \(\acl(A) = \dcl(A)\).
\end{claim}

\begin{proof}
Let $X$ be a non-empty finite $A$-definable set such that all elements of $X$ have the same type over $A$. We need to show that $X$ is a singleton. Using tensors, we may assume that \(X\) is contained in some sort \(V\). We proceed by induction on \(\dim(V)\). Let  \(0\to W\to V\to U\to 0\)  be an \(\emptyset\)-definable flag for \(V\). By induction, we may assume that \(X\) projects to a singleton \(b\in U\), \emph{i.e.}, \(X\) is contained in a translate $a+W$ of \(W\) in \(V\) and we can conclude since $a+W$  inherits an  $\emptyset$-definable total order from the ordered group structure on $W$.
\end{proof}

\begin{claim}\label{RCF lin UEI}
Let \(X \subseteq c + W\subseteq V\) be definable for some \(\emptyset\)-definable flag \(0\to W\to V\to U\to 0\) and some \(c\in V\). Then \(X\) is coded.
\end{claim}

\begin{proof}
Since \(\res\) is $o$-minimal and there is a definable order preserving bijection between \(c+W\) and \(\res\), \(X\) is a finite union of points and intervals and hence it is coded by its (finite) border.
\end{proof}

Let \(K = \alg{\res(M)}\) and \(K\tensor M\) be the structure whose sorts are the sorts \(V\) of \(M\) interpreted as \(K\tensor_{\res(M)} V(M)\), with the field structure on \(\res\), the \(\res\)-vector space structure on each \(V\) and the tensor, dual and flag structure. Then \(K\tensor M\) is an \(\ACF\)-structure with flags and for every \(N \subsel M\), and tuple \(c\in M\), since all of the vector spaces have bases in \(N\), \(\dcl(Nc) \subseteq \acl_0(Nc)\) where \(\acl_0\) (respectively \(\dcl_0\)) denotes the algebraic (respectively definable) closure in \(K\tensor M\). Note also that in \(K\tensor M\), we have \(\res(\dcl_0(M)) = \res(M)\) and since each of the vector spaces has a basis in \(M\), \(\dcl_0(M) \subseteq M\). Since \(K\tensor M\) eliminates imaginaries by \cref{F:lin1}, it follows that any \(M\)-definable set in \(K\tensor M\) has a code consisting of elements in \(M\).

By \cite[Remark~3.2.2]{HasHruMac-ACVF}, to prove elimination of imaginaries in \(M\), it suffices to code every definable function \(f : V \to S\), where $S$ is a sort. We will proceed by induction on the dimension of \(V\).
Let \(0\to W\to V\to U\to0\) be an \(\emptyset\)-definable flag for \(V\). Let \(F\) be the Zariski closure of the graph of \(f\) in \(K\tensor M\) --- any choice of basis induces a Zariski topology on \(V\), but this topology is independent of the choice of coordinates. For every \(c\in V(M)\), since \(\dcl(Nc) \subseteq \acl_0(Nc)\), for every \(N\subsel M\), the fiber \(F_c\) of \(F\) above \(c\) is a finite set containing \(f(c)\). As was noted above, \(F\) has a code in \(M\). Note that any \(\sigma\in\aut(M/\code{f})\) can be extended to an automorphism of \(K\tensor M\) fixing \(F\). It follows that \(F(M)\) is defined over \(\code{f}\) and hence it has a code in \(M\cap \code{f}\). Moreover, by compactness and \cref{RCF lin rig}, in \(M\), we find \(\code{F}\)-definable maps \((h_i)_{i<n}\) such that for all \(c\in V(M)\), \(F_c(M) = \{h_i(a)\mid i<n\}\).

Now, fix \(a\in U(M)\) and \(c\in V\) above \(a\). Let \(f_a\) be the restriction of \(f\) to the fiber \(c + W\) above \(a\) in \(V\). Let \(X_{a,i} = \{x \in c+W \mid f_a(x) = h_i(x)\}\). By \cref{RCF lin UEI}, this set is coded in \(M\). Let \(g(a)\) denote the tuple consisting of codes of the \(X_{i,a}\). The function \(g\) is \(\code{f}\)-definable with domain \(U\), so, by induction, \(g\) is coded in \(M\). This concludes the proof since \(f\) is \((\code{F},\code{g})\)-definable.
\end{proof}

\begin{proposition}\label{oriented}
Let \((K,<,v)\) be an ordered field with a non-trivial convex valuation and \(A\subseteq\Geom(A)\). Then \(\Lin_A(K)\) is oriented.

In particular, if \(v\) is henselian and \(\res(K)\models\RCF\), then \(\Lin_A(K)\) is stably embedded and its \(A\)-induced structure eliminates imaginaries.
\end{proposition}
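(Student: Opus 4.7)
The plan is to exhibit the orientation directly from the order on $K$, then to combine it with the flag construction of \cref{F:lin-St} and invoke \cref{RCF lin EI}. For orientation, fix a one-dimensional sort of $\Lin_A(K)$. Since $\car K = 0$ we have $\ell\Mid = \Mid$ for every $\ell \in \Zz_{>0}$, so such a sort has the form $V = s/\Mid s$ with $s \in \Lat_1(\dcl_0(A))$. I will define $V^+ \subseteq V$ as the set of classes of positive elements of $s \subseteq K$, and $V^-$ analogously for negative elements. Both are $A$-definable using only the order on $K$ and $s$, hence $\emptyset$-definable in the $A$-induced structure on $\Lin_A(K)$. The only thing to check is well-definedness, and that $V^+$ and $V^-$ partition $V \setminus \{0\}$: a nonzero class admits a representative $a \in s$ with $\val(a) = \val(s)$, while any shift $\delta \in \Mid s$ has $\val(\delta) > \val(a)$, so convexity of $\Val$ forces $|\delta| < |a|$ and $a+\delta$ shares the sign of $a$.

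For the second statement, assume further that $v$ is henselian and $\res(K)\models\RCF$. Then $K$ is a real closed valued field, and $\res$ is stably embedded with induced structure that of pure $\RCF$, a standard fact. The flag construction in the proof of \cref{F:lin-St} is entirely $\Val$-module-theoretic and so applies verbatim to produce $\emptyset$-definable flags for every sort of $\Lin_A(K)$. Together with the orientation from the previous paragraph, $\Lin_A(K)$ is an oriented $\RCF$-linear structure with flags, so \cref{RCF lin EI} yields elimination of imaginaries in its $A$-induced structure.

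For stable embeddedness, observe that each sort $V = s/\Mid s$ is internally $\res$-internal over $A$: any basis of $s$ reduces to an $\res$-basis of $V$, producing an internal $\res$-linear isomorphism with $\res^n$. Combining \cref{L:InternallyInternal} with stable embeddedness of $\res$, each finite union of sorts of $\Lin_A$ together with $\res$ is stably embedded over $A$; since any definable set lives in finitely many sorts, $\Lin_A(K)$ is stably embedded as a whole. The only mildly delicate point is the well-definedness of the orientation, which reduces to the convexity argument above; everything else is an assembly of results already at hand.
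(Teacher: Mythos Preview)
Your argument follows essentially the same route as the paper --- orientation via the convex order, flags via \cref{F:lin-St}, elimination of imaginaries via \cref{RCF lin EI}, and stable embeddedness via internal $\res$-internality together with \cref{L:InternallyInternal} --- and is in fact more explicit than the paper about several of these steps.

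There is, however, one concrete error. You assert that under the hypotheses of the second part ``$K$ is a real closed valued field'', and use this to justify that $\res$ is stably embedded with pure $\RCF$ induced structure. The assertion is false in general: take $K=\Rr((t))$ with the $t$-adic valuation and either compatible order; this is an ordered henselian field with convex valuation and $\res=\Rr\models\RCF$, yet $t$ has no square root in $K$, so $K$ is not real closed. The conclusion you need is nonetheless correct and easy to recover. You already know $\car(K)=0$ (ordered field) and $\car(\res)=0$ (since $\res\models\RCF$), so $K\models\Hen[0,0]$ as a pure valued field; and your own convexity computation shows that the sign of $x\in K^\times$ depends only on $\rv_1(x)$, so the order on $K$ is an $\RV$-enrichment of the valued field structure. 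Field quantifier elimination (\cref{EQ rv}) then yields stable embeddedness of $\RV$, and the sign restricted to $\res^\times$ is the field-definable $\RCF$ order, so the induced structure on $\res$ is exactly $\RCF$. This is precisely what the paper's terse appeal to \cref{F:lin-St} is encoding. With that one sentence corrected, your proof is complete and matches the paper's approach.
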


\begin{proof} Dimension one sorts in \(\Lin_A\) are of the form \(\gamma\Val/\gamma\Mid\) for some \(\gamma\in\vg(K)\). But this quotient inherits the order on \(\gamma\Val\); so it is oriented. The rest of the proposition follows from \cref{L:InternallyInternal}, \cref{RCF lin EI} and \cref{F:lin-St}.
\end{proof}

\begin{remark}\label{def order}
\begin{itemize}
\item Any \(K\equiv \Rr((\Qq))\), being real closed, admits a unique field ordering which is definable (without parameters).
\item Any \(K\equiv \Rr((t))\), admits exactly two field orderings, depending on the sign of a choice of uniformizer \(\pi\). Both orders are definable (using an imaginary parameter for a half line in \(\RV_{1,\val(\pi)} := \{\xi \in\RV_1\mid \val(\xi) = \val(\pi)\}\), in particular any element of \(\RV_{1,\val(\pi)}\)).
\end{itemize}
\end{remark}

\section{\texorpdfstring{$C$}{C}-minimal definable generics}\label{gen}

We will now consider generalisations of \cite[Theorem~8.7]{Rid-VDF}. We fix the
following notation for \cref{gen,invariant}.

\begin{notation}\label{notation3}
Let  \(\cL_0 = \Ldiv\) and \(T_0\) be the \(\cL_0\)-theory \(\ACVF\). Let $\cL\supseteq \cL_0$ and \(T\) be a (complete) $\cL$-theory of valued fields. Let $M\models T$ be sufficiently saturated and homogeneous and \(M_0 = \alg{M}\models T_0\). Note that since \(\ACVF\) eliminates quantifiers, we will implicitly assume that every \(\cL_0\)-formula is quantifier free. We will denote by \(\TP^0_x(M)\) the set of (quantifier free) \(\cL_0(M)\)-types (in \(M_0\)) in variables \(x\) and whenever \(\Psi(x;t)\) is a set of \(\cL_0\)-formulas, \(\TP^{\Psi}_{x}(M)\) will denote the set of \(\Psi\)-types over \(M\) --- that is, maximal consistent sets (in \(M_0\)) of formulas \(\psi(x;a)\) and \(\neg\psi(x,a)\) with \(\psi\in\Psi\) and \(a\in M^t\).
\end{notation}

Note that, unless explicitly specified, we do not make any assumption on the
characteristic in \cref{gen}.

\subsection{Main results} In this section we prove the following two density results:

\begin{theorem}\label{def dens}
Assume
\begin{itemize}[leftmargin=50pt]
\item[\Cball] \(T\) is definably spherically complete;
\item[\Cvg] The full induced theory on \(\vg\) is definably complete;
\item[\UFres] The full induced theory on \(\res\) eliminates \(\exists^\infty\);
\item[\UFvg] The full induced theory on \(\vg\) eliminates \(\exists^\infty\).
\end{itemize}
Then, for every strict pro-\(\cL(A)\)-definable \(X\subseteq\K^x\), with \(x\) countable and \(A = \acleq(A) \subseteq \eq{M}\models\eq{T}\), there exists an \(\cL_0(\Geom(A))\)-definable \(p\in\TP^0_x(M)\) consistent with \(X\).
\end{theorem}
In other terms, there exists \(N\supsel M\) and \(a\in X(N)\) such that \(\qftp(a/M)\) is \(\cL_0(\Geom(A))\)-definable. Recall (see, \emph{e.g.}, \cite[Section~2.2]{HruLoe}) that a set is strict pro-definable if it is the limit of a small directed system of definable sets with surjective transition maps. In other terms, it is a \(\star\)-definable set whose projection on any finite set of variables is definable.

\begin{proof}
This is a particular case of \cref{str def dens}
\end{proof}

\begin{remark}
\begin{itemize}
\item Any (non zero) definably complete ordered abelian group \(\Gamma\) is
elementarily equivalent to either \(\Zz\) or \(\Qq\). Indeed, \(\Gamma\) cannot
have a proper non trivial definable convex subgroup and is therefore
elementarily equivalent to a subgroup \(H\) of \((\Rr,+,<)\). If it is not
elementarily equivalent to \(\Zz\) or \(\Qq\), \(H\) is a dense non divisible
subgroup of \(\Rr\). For any \(\gamma\in\Gamma\) non divisible by
\(n\in\Zz_{>0}\), the cut at \(\gamma/n\) yields a counter-example.
\item Hypotheses \Cball{} and \Cvg{} are necessary for the conclusion to hold.
Indeed, the conclusion implies that any \(\fL(M)\)-definable chain \(C\) of
balls is \(\cL_0(M)\)-definable : taking a generic translate, on can ensure that
\(\bigcap_{b\in C} b\) does not contain any \(\cL_0(M)\)-definable chain of
balls, hence any \(\cL_0(M)\)-definable type consistent with this translate of
\(\bigcap_{b\in C} b\) must be the generic of this intersection. Then
\(\bigcap_{b\in C} b\) is a ball, proving both \Cball{} and \Cvg{}.

\item Hypothesis \UFvg{} does not allow for discrete value groups. Note however
that the conclusion of the theorem fails in \(p\)-adic fields. So the hypotheses
\Cball{}, \Cvg{} and \UFres{} cannot be sufficient.
\item As \cref{inv dens} illustrates, by restricting to a mild class of
enrichments of \(\ACVF_\forall\), one can trade hypothesis \UFvg{} for purely
algebraic conditions and a weaker conclusion.
\end{itemize}
\end{remark}

Let \(\Hen[0]\) be the \(\cL_0\)-theory of characteristic zero henselian valued fields.

\begin{theorem}[\emph{cf.} \cref{inv dens H}]\label{inv dens}
Let \(T\) be a \(\res\)-\(\vg\)-enrichment of \(\Hen[0]\), such that:
\begin{itemize}[leftmargin=50pt]
\item[\Cvg] The (full) induced theory on \(\vg\) is definably complete;
\item[\Ram] For every \(n\in\Zz_{>0}\), the interval \([0,\val(n)]\) is finite and \(\res\) is perfect;
\item[\Infres] The residue field \(\res\) is infinite;
\item[\UFres] The (full) induced theory on \(\res\) eliminates
\(\exists^\infty\).
\end{itemize}
Then, for every strict pro-\(\cL(A)\)-definable \(X\subseteq\K^x\), with \(A = \acleq(A) \subseteq \eq{M}\models\eq{T}\), there exist an \(\aut(M/\Geom(A))\)-invariant \(p\in\TP^0_x(M)\) consistent with \(X\).
\end{theorem}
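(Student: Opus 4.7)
My plan is to deduce \cref{inv dens} from \cref{def dens} by passing to the separated pair \((\alg{M},M)\): inside \(\alg{M}\) the value group becomes divisible and the residue field algebraically closed, so the uniform finiteness hypotheses \UFres{} and \UFvg{} become available, and a definable type obtained in the larger structure restricts to an invariant type over \(M\).

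First I set up the pair. Since \(M\models\Hen[0]\), the valuation extends uniquely to \(\alg{M}\) and \(\alg{M}/M\) is a separated extension. Using \Ram{} (finite ramification and perfectness of \(\res\)) together with the \(\res\)-\(\vg\)-enrichment assumption, I view \((\alg{M},M)\) in a language \(\cL^\star\) extending \(\cL_{\bP}\) that fits the framework of \cref{prop:Laurent-acl-pair}, \cref{prop:mixed-section-pair}, or \cref{prop:divisible-pair}, with the \(\res\)- and \(\vg\)-enrichments of \(T\) placed on the \(\bP\)-side. The resulting theory \(T^\star\) of the pair is complete, the predicate \(\bP\) is purely stably embedded with induced \(\cL\)-structure equal to \(T\), and \(\res,\vg\) (of the ambient \(\alg{M}\)) are purely stably embedded.

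Next, let \(T^\circ\) be the theory of \(\alg{M}\) with its full \(\cL^\star\)-induced structure (remembering the \(\bP\)-predicate and the pair-enrichments). Then \(T^\circ\) is a \(\res\)-\(\vg\)-enrichment of \(\ACVF\) for which I verify the hypotheses of \cref{def dens}: \Cball{} is automatic in sufficiently saturated algebraically closed valued fields; \Cvg{} is the content of \cref{Cvg pair} applied to the value-group pair \((\vg(\alg{M}),\vg(M))\); \UFres{} follows from \cref{UFres pair} applied to the residue pair \((\alg{\res(M)},\res(M))\) (using \UFres{} for \(T\)); and \UFvg{} follows from the quantifier elimination of \cref{lem:discrete-floor} (resplendently, via \cref{rem:QZ-resplendent}), which makes the enriched value group of \(\alg{M}\) essentially o-minimal on each unit interval. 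Letting \(A^\star:=\acleq_{T^\star}(A)\), \cref{def dens} applied inside \(T^\circ\) to the pullback of \(X\) with parameters \(A^\star\) yields an \(\cL_0(\Geom(A^\star))\)-definable type \(p\in\TP^0_x(\alg{M})\) consistent with \(X\). Set \(q:=p|_M\in\TP^0_x(M)\); it is still consistent with \(X\).

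For invariance, any \(\sigma\in\aut(M/\Geom(A))\) lifts to an \(\cL^\star\)-automorphism \(\sigma^\star\) of the pair fixing \(A\): the field automorphism extends (non-uniquely, but pick any extension) to \(\alg{M}\), preserves the valuation by uniqueness of the henselian extension, and preserves the enrichments because they are \(\res\)-\(\vg\)-enrichments on both the base and the \(\bP\)-side. Hence \(\sigma^\star\) fixes \(A^\star\) and in particular \(\Geom(A^\star)\); as \(p\) is \(\cL_0(\Geom(A^\star))\)-definable, \(\sigma^\star p=p\), so \(\sigma q=q\). The main obstacle I expect is not the descent step, which is formal, but rather the verification that \UFres{} and \UFvg{} genuinely transfer to \(T^\circ\) in the presence of the pair-enrichment; this is exactly what the preliminary work on separated pairs — \cref{UFres pair}, \cref{Cvg pair}, and the pure stable embeddedness statements of Section~2.2 — is designed to deliver.
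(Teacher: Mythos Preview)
There is a genuine gap: your verification of \UFvg{} for the pair theory \(T^\circ\) fails precisely in the case that matters. When \(\vg(M)\equiv\Zz\), the value group of the pair is a model of \(T_{\Qq,\Zz}\), and in that structure the family \(Y_\gamma:=\{\delta\in\Pvg\mid 0\leq\delta\leq\gamma\}\) (definable via \(\lfloor\cdot\rfloor\) or simply via \(\bP\)) has \(|Y_\gamma|=\lfloor\gamma\rfloor+1\), which is finite but unbounded as \(\gamma\) ranges over \(\vg(\alg{M})\). So \UFvg{} does not hold in \(T^\circ\), and \cref{def dens} is inapplicable. The o-minimality of each unit interval (which is what \cref{lem:discrete-floor} gives you) is not enough: \UFvg{} is a global uniform finiteness statement, and the discrete predicate \(\Pvg\) destroys it. This is exactly the obstruction the paper flags in the remarks after \cref{def dens}, and it is the reason the paper does \emph{not} reduce \cref{inv dens} to \cref{def dens} but instead develops \cref{rel 1 H}, replacing the \UFradgloc{p}{F} bound (which ultimately needs \UFvg) by the preparation hypothesis \Prep{X}{F,n}, available in \(\Hen[0]\) by \cref{Hen0 prep}. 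The paper then obtains only a \emph{local} definable \(\Psi\)-type for each finite \(\Psi\) (\cref{fin dens H}) and concludes by a compactness argument over the closed set of invariant types (\cref{inv dens H}).

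There is also a second, smaller gap in your descent step. An \(\cL\)-automorphism \(\sigma\) of \(M\) fixing \(\Geom(A)\) does extend to an \(\cL^\star\)-automorphism \(\sigma^\star\) of the pair fixing \(A\), but \(\sigma^\star\) need only permute \(A^\star=\acleq_{T^\star}(A)\); it need not fix \(\Geom(A^\star)\) pointwise, so \(\sigma^\star(p)=p\) does not follow. The paper's proof of \cref{fin dens H} handles exactly this point: it first obtains a canonical base \(a\in\Geom(A_P)\), then uses the fact that \(a\in\acl_0(M)\) to code its finite \(\cL_0(M)\)-orbit inside \(\eq{M}\) and thereby descends the definability to \(\Geom(A)\) via \cref{compl alg}. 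Your ``formal'' descent skips this non-trivial step.
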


Note that in this setting \(\res\) is stably embedded so the full induced structure coincides with the \(\emptyset\)-induced structure.

\begin{remark}
\begin{itemize}
\item Contrary to \cref{def dens}, \cref{inv dens} requires finite ramification in mixed characteristic. Even if \cref{inv dens} does not apply to characteristic zero non-Archimedean local fields either, \emph{cf.} the stronger \cite[Remark\ 4.7]{HruMarRid}.
\item Under the hypotheses of \cref{inv dens}, locally, we do find definable types: for any \emph{finite} set \(\Psi(x;t)\) of \(\cL_0\)-formulas, we can find an  \(\cL_0(\Geom(A))\)-definable \(p\in\TP^\Psi_x(M)\) consistent with \(X\), cf. \cref{fin dens H}. This local statement does not hold in characteristic zero non-Archimedean local fields.
\item In both theorems, hypothesis \UFres{} is an artefact of our proof. It is necessary to prove certain intermediate results. However, we do not know if it is necessary to prove either theorems. Moreover, these theorems are the only reason hypothesis \UFres{} appears in the imaginary Ax-Kochen-Ershov principle, \emph{cf.} \cref{AKE EI}.
\end{itemize}
\end{remark}

Given these observations, the following questions are quite natural:

\begin{question}
\begin{enumerate}
\item Can the density of either invariant or definable types --- \emph{i.e.}, the conclusion of either theorem --- be proved without assuming \UFres?
\item Under the hypotheses of \cref{inv dens}, can we find an \(\cL_0(\Geom(A))\)-definable type \(p\)?
\item Can the hypotheses of \cref{inv dens} be weakened to also encompass characteristic zero non-Archimedean local fields?
\end{enumerate}
\end{question}

\subsection{The uniform arity one case}
We start by giving a succinct (and slightly more general) presentation of
terminology and results from \cite[Section 6 and 7]{Rid-VDF}.
The types in \cref{inv dens,def dens} are found by finding, for every unary set,
a close-fitting intersection of balls (uniformly over realizations of a type
found at an earlier step). To obtain anything definable, we need to localize to
definable families of (finite sets of) balls. The main technical issue is then
to find large enough families such that the approximation (and later induction
on arity) goes through while keeping it small enough that it stays definable;
this is achieved with the notion of good presentation (\cref{good pres}). 

Once these have been introduced, the main goal of this section is to give two
versions of the approximation process (\cref{rel 1,rel 1 H}). We invite a first
reader to assume in \cref{X tree} and onwards that \(p\) is an arity zero type
(equivalently a realized type) and that \(F_\lambda\) is the collection of all
balls (open and closed) to get an idea of the base arity one case with fixed
parameters.

\begin{definition}
\label{fballs}
We define \(\balls\) to be the (\(\cL_0\)-definable) set of balls (closed or
open) in models of \(T_0\) --- the field itself is the open ball of radius
\(-\infty\) and points are closed balls of radius \(+\infty\). For every
\(r\in\Zz_{> 0}\), \(\fballs{r}\) is the set of finite (potentially empty)
subsets of cardinality at most \(r+1\) of \(\balls\) of the same radius
and either all open or all closed --- in particular, there is no nesting among
the elements of some \(B\in{r}\). Let also \(\fballs{<\infty} := \bigcup_{r> 0}
\fballs{r}\).

Similarly, we denote \(\fpoints{r} \subseteq \fballs{r}\) the set of finite
subsets of cardinality at most \(r+1\) of \(\K\) and \(\fpoints{<\infty} :=
\bigcup_{r> 0} \fpoints{r} \subseteq \fballs{<\infty}\).
\end{definition}

For every finite set of balls \(B\), we define \(\points{B} := \bigcup_{b\in
B}b\) and for every finite sets of balls \(B_1\) and \(B_2\), we write \(B_1\leq
B_2\) if \(\points{B_1} \subseteq \points{B_2}\). For every
\(b_1,b_2\in\balls\), we also define \(d(b_1,b_2) := \inf\{\val(x_1-x_2)\mid
x_i\in b_i\}\). Note that this is not a metric on the space of balls since
\(d(b_1,b_1) = \rad(b_1)\), the radius of \(b_1\). Finally, for
\(B_1,B_2\in\fballs{r}\), we define \(D(B_1,B_2) := \{d(b_1,b_2) \mid b_i\in
B_i\}\). We enumerate \(D(B_1,B_2) \subseteq \vg\) in increasing order. Let
\(d_i(B_1,B_2)\) be the \(i\)-th element of this enumeration. So that
\(d_i(B_1,B_2)\) is defined for all \(i < r^2\), for every \(i\) above the
cardinality of \(D(B_1,B_2)\), we set \(d_i(B_1,B_2)\) to the the maximal
element. This choice of an enumeration (with repetitions) of \(D(B_1,B_2)\) does
not actually matter, as long as it is chosen uniformly.

Let us now fix a set \(\Psi(x;t)\) of \(\cL_0\)-formulas, an \(\cL_0\)-definable
set \(\Lambda\), an integer \(r\) and an \(\cL_0\)-definable family \(F =
(F_\lambda)_{\lambda\in\Lambda}\) of functions \(F_\lambda : \K^x \to
\fballs{r}\). We now wish to give sufficient conditions on \(\Psi\) and \(F\)
that will allow us to proceed with certain classical unary constructions in
valued fields, uniformly over realisations of \(\Psi\)-types. In particular, it
will allow is to describe (local) types in \(n+1\)-variables as generics of
balls parametrized by \(n\)-variables.

\begin{definition}
Let \(p \in \TP^{\Psi}_{x}(M)\).
\begin{enumerate}
\item We say that \(p\) is adapted to \(F\) if, for each of the following
statement, \(p\) implies either this statement or its negation:
\begin{itemize}
\item \(F_\lambda(x)\square \bigcup_{i < r} F_{\mu_i}(x)\) 
where \(\lambda,\mu_i \in\Lambda(M)\) and \(\square\in\{=, \subseteq, \subset, \leq, <\}\);
\item \(\points{F_\lambda}(x) = \points{F_{\mu_1}}(x) \cap \points{F_{\mu_2}}(x)\), where  \(\lambda,\mu_i \in\Lambda(M)\);
\item Every ball in \(F_\lambda(x)\) is closed;
\item \(\rad(F_\lambda(x))\square d_i(F_{\mu_1}(x),F_{\mu_2}(x))\), where  \(\lambda,\mu_i \in\Lambda(M)\), \(\square\in\{=, \leq\}\) and \(i < r^2\);
\end{itemize}
\item We say that \(F\) is closed under intersections over \(p\) if for every \(\lambda,\mu\in\Lambda(M)\), there exists \(\epsilon\in\Lambda(M)\) with \(p(x)\vdash \points{F_\lambda}(x)\cap\points{F_\mu}(x) = \points{F_\epsilon}(x)\) --- and we further assume that there exists \(\eta\in\Lambda(M)\) such that \(F_\eta(x) =\{\K\}\).
\item We say that \(F\) is closed under complement over \(p\) if for every \(\lambda,\mu\in\Lambda(M)\), with \(p(x)\vdash F_{\mu}(x)\subseteq F_{\lambda}(x)\), there exists \(\epsilon\in\Lambda(M)\) with \(p(x)\vdash F_\epsilon(x)=F_\lambda(x) \setminus F_\mu(x)\).
\end{enumerate}
\end{definition}

Note that, in the above definition, the \(\fL\)-structure on \(M\) does not matter.

\begin{remark}
The family of all constant functions to \(\balls\) (over any type), is an
important example of the above properties. This simple family suffices to prove
\cref{def dens,inv dens} for \(X\subseteq \K^1\). Dealing with higher arity
definable sets, however, requires non-constant functions.
\end{remark}

Let now \(p \in \TP^{\Psi}_{x}(M)\) be adapted to \(F\), and let us assume that
\(F\) is closed under intersections and complement over \(p\).

\begin{definition}
 Let \(\lambda\in\Lambda(M)\). We say that \(F_\lambda\) is \emph{irreducible over \(p\)}, if for every \(\mu\in\Lambda(M)\), \(p(x)\vdash F_\mu(x)\subseteq F_\lambda(x)\) implies \(p(x)\vdash F_\mu(x) = \emptyset \vee F_\mu(x) = F_\lambda(x)\).
\end{definition}

We define \(\Lambda_p(M) := \{\lambda\in\Lambda(M) \mid F_\lambda\text{ irreducible over }p\}\).

\begin{lemma}
For every \(\lambda\in\Lambda(M)\), there exists finitely many
\(\mu_i\in \Lambda_p(M)\) with \(p(x)\vdash F_{\lambda}(x)=\bigcup_i
F_{\mu_i}(x)\).
\end{lemma}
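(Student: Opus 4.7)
The natural approach is induction on the cardinality $k := |F_\lambda(x)|$, which is well-defined modulo $p$: indeed, condition~(1) of \cref{GP} ensures that $p$ decides all equalities and inclusions among the balls appearing in the values of $F$, and in particular the cardinality of $F_\lambda(x)$ (which lies in $\{0,1,\ldots,r\}$ since $F$ takes values in $\fballs{r}$). I will in fact prove the slightly stronger statement that $F_\lambda$ decomposes, modulo $p$, as a disjoint union of at most $k$ irreducible $F_{\mu_i}$'s; padding with the empty function from condition~(2) (which is vacuously irreducible since the only $F_\mu \subseteq \emptyset$ is the empty one) then yields a family of size exactly $r$.

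\emph{Base case} ($k=0$ or $F_\lambda$ irreducible over $p$). If $F_\lambda$ is already irreducible --- in particular if $p(x) \vdash F_\lambda(x) = \emptyset$ --- then taking $\mu_0 := \lambda$ and padding with empty functions does the job.

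\emph{Inductive step.} Suppose $F_\lambda$ is reducible over $p$ with $k \geq 1$. By definition of $\Lambda_p(M)$, there exists $\mu \in \Lambda(M)$ with $p(x) \vdash F_\mu(x) \subseteq F_\lambda(x)$ but $p(x) \not\vdash F_\mu(x) = \emptyset$ and $p(x) \not\vdash F_\mu(x) = F_\lambda(x)$. Applying closure under complements, condition~(4) of \cref{GP}, one obtains $\epsilon \in \Lambda(M)$ such that $p(x) \vdash F_\lambda(x) = F_\mu(x) \sqcup F_\epsilon(x)$. Since the union is disjoint in $\fballs{r}$ and both summands are non-empty modulo $p$, we have $1 \leq |F_\mu(x)|, |F_\epsilon(x)| < k$ modulo $p$. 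By the inductive hypothesis applied to $\mu$ and to $\epsilon$, we find families $(\mu_i)_{i < k_1}$ and $(\epsilon_j)_{j < k_2}$ in $\Lambda_p(M)$, with $k_1 \leq |F_\mu|$ and $k_2 \leq |F_\epsilon|$, such that $p(x) \vdash F_\mu(x) = \bigcup_i F_{\mu_i}(x)$ and $p(x) \vdash F_\epsilon(x) = \bigcup_j F_{\epsilon_j}(x)$. Concatenating produces a family of $k_1 + k_2 \leq k \leq r$ irreducibles whose union equals $F_\lambda$ modulo $p$, as required.

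\textbf{Expected obstacle.} There is really none beyond bookkeeping: the crux is simply to observe that the splitting produced by the closure under complements strictly decreases the cardinality in $\fballs{r}$, together with the fact that $p$ decides these cardinalities thanks to clause~(1) of a good presentation. The only minor subtlety is to verify that the empty function is irreducible so that one may pad up to exactly $r$ indices; this is immediate from the definition.
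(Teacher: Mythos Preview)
Your argument is correct and is the expected one; the paper does not give a proof and simply refers to \cite[Lemma~5.17]{Rid-VDF}, where this same splitting via closure under complements is carried out. One small quibble: condition~(1) of \cref{GP} guarantees that $p$ decides equalities and inclusions among the \emph{sets} $F_\lambda(x)$, not among the individual balls, and it is not immediately clear from your argument that this forces $|F_\lambda(x)|$ to be constant on realizations of $p$. This does not matter, however: since $p$ decides \emph{strict} inclusion, any $\subset$-decreasing chain modulo $p$ yields, for each realization $a$, a strictly decreasing chain of subsets of the $r$-element set $F_\lambda(a)$, so the splitting tree has depth at most $r$ and is finite; and fixing any realization $a$, the disjoint nonempty leaves partition $F_\lambda(a)$, so there are at most $|F_\lambda(a)|\leq r$ of them. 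Your termination and counting go through verbatim once phrased this way.
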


\begin{proof}
Since \(p\) is adapted to \(F\) it suffices to check that the lemma holds at one
realization \(a\) of \(p\). We can then proceed by induction on the cardinality
of \(F_\lambda(a)\), using closure under complement to conclude.
\end{proof}

\begin{lemma}
\label{ball tree}
For every \(\lambda,\mu\in\Lambda_p(M)\), we have
\[p(x)\vdash \points{F_{\lambda}}(x)\cap \points{F_{\mu}}(x)=\emptyset \vee F_{\lambda}(x)\leq F_{\mu}(x) \vee F_{\mu}(x)\leq F_{\lambda}(x).\]
\end{lemma}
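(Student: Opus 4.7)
The plan is to use closure under intersection (condition (3)) to witness $\points{F_\lambda}\cap\points{F_\mu}$ inside the family, then exploit the tree-structure of balls together with irreducibility of both $\lambda$ and $\mu$ to force the trichotomy. First I would apply condition (3) of the good presentation to pick $\epsilon\in\Lambda(M)$ with $p(x)\vdash\points{F_\epsilon(x)}=\points{F_\lambda(x)}\cap\points{F_\mu(x)}$. All of the rest of the proof consists in showing that, over $p$, this $F_\epsilon$ can be realised as a subset (in the set-of-balls sense) of $F_\lambda$ or of $F_\mu$, so that the irreducibility hypothesis applies.

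For this I would use that $p$ decides, by condition (1), the common type (open/closed) and radius of the balls occurring in $F_\lambda$ and in $F_\mu$, as well as the comparisons $\rad(F_\lambda)\square\rad(F_\mu)$ available via the distances $d_i(F_\lambda,F_\mu)$ together with conditions (2) and (5). The tree-structure of balls in a valued field then yields a case split, decided by $p$:

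\begin{enumerate}
\item[(a)] If the balls of $F_\lambda$ are strictly smaller than those of $F_\mu$ (strictly larger radius, or equal radius with $F_\lambda$ open and $F_\mu$ closed), then each ball of $F_\lambda$ is either disjoint from $\points{F_\mu}$ or entirely contained in one of its balls; hence $\points{F_\epsilon}$ is a subunion of $F_\lambda$, and over $p$ one may take $F_\epsilon\subseteq F_\lambda$ as a set of balls.
\item[(b)] Symmetrically, if $F_\mu$'s balls are smaller, one takes $F_\epsilon\subseteq F_\mu$.
\item[(c)] If the balls of $F_\lambda$ and of $F_\mu$ share type and radius, then two such balls are either equal or disjoint, so $F_\epsilon=F_\lambda\cap F_\mu$ as sets of balls, and both inclusions hold.
\end{enumerate}

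Once we are in a case, say $F_\epsilon\subseteq F_\lambda$, irreducibility of $\lambda\in\Lambda_p(M)$ gives $p\vdash F_\epsilon(x)=\emptyset$ or $p\vdash F_\epsilon(x)=F_\lambda(x)$. In the first instance $\points{F_\lambda(x)}\cap\points{F_\mu(x)}=\emptyset$ over $p$, i.e.\ $F_\lambda\cap F_\mu=\emptyset$; in the second $\points{F_\lambda(x)}\subseteq\points{F_\mu(x)}$, i.e.\ $F_\lambda\leq F_\mu$. The symmetric subcase $F_\epsilon\subseteq F_\mu$ gives disjointness or $F_\mu\leq F_\lambda$. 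This exhausts the trichotomy.

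The main obstacle, and the only place a bit of care is needed, is the reconciliation between pointset inclusion $\leq$ (which is all condition (3) a priori delivers) and the set-of-balls inclusion $\subseteq$ that is the hypothesis in the definition of irreducibility. This is precisely what the case analysis on the relative sizes of the balls achieves, using that $p$ fixes these radii and types by condition (1) of the good presentation.
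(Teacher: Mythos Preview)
The paper does not give its own proof of this lemma; it simply records it with a reference to \cite[Proposition~5.9]{Rid-VDF}. Your reconstruction is along the right lines and the trichotomy does follow from the strategy you outline: produce $\epsilon$ via closure under intersection (3), then feed a set-of-balls inclusion $F_\epsilon\subseteq F_\lambda$ (or $F_\mu$) into irreducibility.

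There is, however, a small gap precisely at the point you flag. You assert that after the case split ``over $p$ one may take $F_\epsilon\subseteq F_\lambda$ as a set of balls'', and you justify this only by saying that condition~(1) lets $p$ fix the radii and types. Condition~(1) tells you which case you are in, but it does not by itself manufacture an $\epsilon'\in\Lambda(M)$ with $F_{\epsilon'}(x)\subseteq F_\lambda(x)$. What actually forces this is an additional (elementary) observation about the ambient theory: since realisations live in a model of $\ACVF$, a ball cannot be written as a finite union of strictly smaller balls. In your case~(a), $\points{F_\epsilon(x)}$ equals $\bigcup S$ for a subset $S\subseteq F_\lambda(x)$; if the balls of $F_\epsilon$ were strictly larger or strictly smaller than those of $F_\lambda$, one side of the equality $\bigcup F_\epsilon=\bigcup S$ would exhibit a ball as a finite union of strictly smaller balls, which is impossible. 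Hence $\rad(F_\epsilon)=\rad(F_\lambda)$ with matching type, and then $F_\epsilon=S\subseteq F_\lambda$ as sets of balls; now condition~(1) lets $p$ record this inclusion and irreducibility applies. With this extra sentence your argument is complete and matches what the cited reference does. (Your appeals to conditions~(2) and~(5) for the radius comparison are unnecessary: condition~(1) already decides $\rad(F_\lambda)\ \square\ \rad(F_\mu)$ via $d_{r^2-1}(F_\mu,F_\mu)=\rad(F_\mu)$.)
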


\begin{proof}
Again, it suffices to check for one \(a\models p\). We may assume that the balls
in \(F_\lambda(a)\) have smaller (or equal) radius than those in \(F_\mu(a)\)
and if the radii are equal and the balls in \(F_\lambda(a)\) are open, so are
the balls in \(F_\mu(a)\). By closure under intersection, we find
\(\epsilon\in\Lambda(M)\) such that \(\points{F}_\lambda(a) \cap
\points{F_\mu}(a) = \points{F_{\epsilon}}(a)\). By hypothesis on the radii,
\(F_\epsilon(a) \subseteq F_\lambda(a)\) and hence, by irreducibility, either
\(F_\epsilon(a) = \emptyset\) or \(F_\epsilon(a) = F_\lambda(a)\).
\end{proof}

We will later need some further hypotheses (\emph{cf.} \cref{quant}) on \(\Psi\) and \(F\) leading to the following definition:

\begin{definition}\label{good pres}
Let \(\Psi(x;t)\) be a set of \(\cL_0\)-formulas, and \(F = (F_\lambda)_{\lambda\in\Lambda} : \K^x \to \fballs{r}\) be a definable family of functions --- for some \(\cL_0\)-definable \(\Lambda\) and integer \(r\). We say that \((\Psi,F)\) is a good presentation if, for any \(p\in\TP^{\Psi}_{x}(M)\)
\begin{enumerate}
\item \(p\) is adapted to \(F\);
\item \(F\) is closed under intersection and complement over \(p\);
\item \(F\) has large balls over \(p\), \emph{i.e.}, for every \(\lambda,\mu\in\Lambda(M)\) and \(i\in\Zz_{\geq 0}\):
\begin{itemize}
\item If \(p(x)\vdash F_{\lambda}(x)\neq \K\), there is \(\eta\in\Lambda(M)\) such that \(p(x)\vdash \rad(F_{\eta}(x)) = d_i(F_{\lambda}(x),F_{\mu}(x)) \wedge{}\)"\(F_{\eta}(x)\) is closed"\({}\wedge F_{\lambda}(x)\leq F_{\eta}(x)\);
\item If \(p(x)\vdash\) "\(F_{\lambda}(x)\) is open"\(\,\vee\,
\rad(F_{\lambda}(x)) < d_i(F_{\lambda}(x),F_{\mu}(x))\), there is
\(\eta\in\Lambda(M)\) such that, \(p(x)\vdash \rad(F_{\eta}(x)) =
d_i(F_{\lambda}(x),F_{\mu}(x)) \wedge{}\)"\(F_{\eta}(x)\) is open"\({}\wedge
F_{\lambda}(x)\leq F_{\eta}(x)\).
\end{itemize}
\end{enumerate}

Let \(\Delta(xy;s)\) with \(\card{y}=1\) be a set of \(\cL_0\)-formulas. We say
that \((\Psi,F)\) is a \emph{good presentation for \(\Delta\)} if \((\Psi,F)\)
is a good presentation and every \(M\)-instance of \(\Delta\) is a Boolean
combination of \(M\)-instances of \(\Psi\) and formulas
\(y\in\points{F_\lambda(x)}\) with \(\lambda\in\Lambda(M)\).

Let \(G := (G_{\omega})_{\omega\in\Omega} : \K^x\to \fballs{\ell}\) be an
\(\cL_0\)-definable family of functions. If, moreover, for every
\(\omega\in\Omega(M)\), there exists \(\lambda\in\Lambda(M)\) such that
\(G_\omega = F_\lambda\). we say that \((\Psi,F)\) is a \emph{good presentation
for \((\Delta,G)\)}.
\end{definition}

An important point is that \emph{finite} good presentations always exist:

\begin{proposition}\label{Ex GP}
Let \(\Delta(xy;s)\) be a finite set of \(\cL_0\)-formulas with \(|y| = 1\) and \((G_{\omega})_{\omega\in\Omega} : \K^x\to \fballs{\ell}\) be \(\cL_0\)-definable. Then there exists a finite set of \(\cL_0\)-formulas \(\Psi(x;t)\) and an \(\cL_0\)-definable \(F := (F_\lambda)_{\lambda\in\Lambda}: \K^ x \to \fballs{r}\) such that \((\Psi,F)\) is a good presentation for \((\Delta,G)\).
\end{proposition}

We only sketch the proof --- details of the precise encodings can be found in \cite[Proposition~6.14, 6.15, 6.18 and 7.12]{Rid-VDF}.

\begin{proof}
The existence of \(\Psi\) and \(F\) such that any instance of \(\Delta\) is an Boolean combination of instances of \(\Psi\) and \(y \in \points{F}_\lambda(x)\) follows by compactness from the Swiss cheese decomposition. Enlarging \(F\), we may assume it contains \(G\) and that condition (3) hold. At any point, enlarging \(\Psi\), we may assume that condition (1) holds. Since the intersection of two balls is either empty or one of these balls, \(F\) can be closed under intersection by considering the family of \(r+1\)-fold intersections of \(F\). Closure under complement can be obtained by considering the finite Boolean algebra generated by the subsets, appearing in \(F\), of any given \(F_\lambda\) (over some realisation of \(p\)). They are generated in (uniformly) finitely many steps and hence can be considered as the elements of one single family. This concludes the proof since the previous two steps preserve condition (3).
\end{proof}

\begin{remark}
A good presentation \((\Psi(x;t),F)\) remains a good presentation as \(\Psi\) grows. So, given a set \(\Psi(xy;t)\) of \(\cL_0\)-formulas and an \(\cL_0\)-definable \(F := (F_\lambda)_{\lambda\in\Lambda} : \K^x \to \fballs{r}\), we say that \((\Psi(xy;t),F)\) is a good presentation if there exists \(\Phi(x;t)\subseteq\Psi\) such that  \((\Phi(x;t),F)\) is a good presentation.
\end{remark}

We now fix a good presentation \((\Psi(xy;t),F)\), with \(\card{y} = 1\), and \(p\in\TP^{\Psi}_{xy}(M)\). Let \((\Psi,F)\) be the set of \(\cL_0\)-formulas \(\Psi\cup\{y\in\points{F_\lambda}(x)\}\) in variables \(xy\) and parameters \(t\lambda\). Let \(\TP^{\Psi,F}_{xy}(M)\) denote the space of \((\Psi,F)\)-types over \(M\) (in \(M_0\)).

\begin{definition}
For every \(\cL(M)\)-definable maps \(f,g : X \to Y \) and every partial
\(\cL(M)\)-type \(q\) concentrating on \(X\), we say that \(f\) and \(g\) have
the same \emph{\(q\)-germ}, and we write \(\germ{q}{f} = \germ{q}{g}\), if
\(q(x) \vdash f(x) = g(x)\).
\end{definition}

\begin{definition}
For every \(\lambda,\mu\in\Lambda(M)\), we write \(\lambda \leq_p \mu\) whenever
\[p(xy) \vdash F_\lambda(x) \leq F_\mu(x).\]
\end{definition}

Note that \(\leq_p\) is an \(\cL(M)\)-definable pre-order. Recall that elements
of any \(B\in \fballs{r}\) cannot be nested. It follows that, for any two
\(B_1,B_2\in\fballs{r}\), \(B_1 = B_2\) if and only if \(\points{B_1} =
\points{B_2}\). So the equivalence relation associated to \(\leq_p\) is equality
of \(p\)-germs. We therefore write \(\lambda <_p \mu\) whenever \(\lambda \leq_p
\mu\) and they have distinct \(p\)-germs.

Moreover, when restricted to \(\Lambda_p\), by \cref{ball tree},
there is a largest element, \(\K\), and the \(\leq_p\)-upwards closure of any
\(\lambda\in \Lambda_p \sminus [\emptyset]_p\) is totally ordered:
\((\Lambda_p\sminus \germ{p}{\emptyset},\leq_p)\) is a tree.

\begin{definition}
Let \(E\subseteq \Lambda_p(M)\). The \emph{generic type of \(E\) above \(p\)} is
\begin{align*}
\gen{E}{p}(xy) &:= p(xy)\\
&\cup\{y\in \points{F_\mu}(x)\mid \mu\in E\}\\
&\cup\{y\nin \points{F_\lambda}(x)\mid  \lambda\in\Lambda(M)\wedge \forall \mu\in E\ \lambda <_p \mu\}.
\end{align*}
\end{definition}

This is the partial type of realizations of \(p\) such that \(y\) is in
\(\bigcap_{\mu \in E}\points{F_\mu}(x)\), but in no strict subset of the form
\(\points{F_\lambda}(x)\). Provided \(\gen{E}{p}\) is consistent it generates a
complete \((\Psi,F)\)-type that we also denote
\(\gen{E}{p}\in\TP^{\Psi,F}_{xy}(M)\). Further assuming that \(p\) is
\(\cL(M)\)-definable (as a \(\Psi\)-type), then \(\Lambda_p(M)\) is a
\(\cL(M)\)-definable set, and if \(\cE\subseteq \Lambda_p\) is an
\(\cL(M)\)-definable then the type \(\gen{\cE(M)}{p}\) is an
\(\cL(M)\)-definable \((\Psi,F)\)-type, provided it is consistent. We denote it
\(\gen{\cE}{p}\).

\begin{definition}
Let \(\pi(x)\) be a partial \(\cL(M)\)-type and \(A\subseteq \eq{M}\). We say
that \(\pi\) is \emph{\(\cL(A)\)-quantifiable over \(\cL\)} if, for every
\(\cL\)-formula \(\phi(x;t)\), there exists an \(\cL(A)\)-formula \(\theta(t)\)
such that \(\{b\in M^t \mid \pi(x)\vdash \phi(x;b)\} = \theta(M)\). When it
exists, we write \(\fa{\pi} x\,\phi(x;t) := \theta(t)\) and \(\ex{\pi}x\,
\phi(x;t) = \neg(\fa{p}x\,\neg\phi(x;t))\).
\end{definition}

\begin{remark}
\begin{enumerate}
\item Such a type is often also a "definable partial type" in the literature.
There is however some ambiguity on the terminology, \emph{cf.}
\cite[Remark~7.2.(ii)]{Rid-VDF}, hence the present distinct choice of
terminology.
\item If, for some set of \(\cL_0\)-formulas \(\Delta(x;y)\), \(p(x)\) is a
complete \(\cL(A)\)-quantifiable \(\Delta\)-type over \(M\), then it is
\(\cL(A)\)-definable, as a \(\Delta\)-type --- that is \(\forall_p
x\,\phi(x;t)\) exists for every \(\phi(x;y)\in\Delta\). As we will see in
\cref{quant}, under certain hypotheses on \(T\) and \(\Delta\), the converse
also holds.
\end{enumerate}
\end{remark}





We can now prove the crucial step in proving \cref{def dens,inv dens}: the relative arity one case. Let us now assume that \(p\) is $\cL(A)$-quantifiable over \(\cL\), where \(A = \acleq(A) \subseteq \eq{M}\), and consistent with some \(\cL(A)\)-definable $X\subseteq \K^{xy}$.

\begin{definition}\label{X tree}
For every \(\lambda,\mu\in\Lambda_p\), let \(\lambda \treeq \mu\) hold whenever
\[\fa{p}{xy}\,y \in (X_x \cap\points{F_\lambda}(x)) \to y\in\points{F_\mu}(x),\]
where \(X_x = \{y\mid xy \in X\}\) denotes the fiber above \(x\).
\end{definition}

The relation \(\treeq\) is an \(\cL(A)\)-definable preorder on \(\Lambda_p\) and
we denote \(\treequiv\) the associated equivalence relation. Since \(\leq_p\)
refines \(\treeq\) on \(\Theta_p := \Lambda_p\sminus (\emptyset/{\treequiv})\),
this is also a tree with root \(\K/\treequiv\) and \(\treequiv\)-classes are
\(\leq_p\)-convex.

\begin{lemma}
\label{discrete}
For every \(\lambda\in\Theta_p\), if the generic
\(\gen{\lambda/{\treequiv}}{p}\) of \(\lambda/{\treequiv}\) over \(p\) is not
consistent with \(X\), then \(\lambda/{\treequiv}\) has finitely many
$\treeq$-daughters \((\mu_i/{\treequiv})_{0\leq i < n} \in
\acleq(A\code{\lambda/{\treequiv}})\) in \(\Theta_p/{\treequiv}\). Moreover,
\(n\geq 2\) and \(p(xy)\vdash y\in X_x \cap \points{F_\lambda}(x) \to
\bigvee_{i< n} y\in \points{F_{\mu_i}}(x)\).
\end{lemma}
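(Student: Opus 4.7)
Set $E := \lambda/{\treequiv}$.

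\textit{Step 1: Unpacking the inconsistency.} I would first extract from the inconsistency of $\gen{E}{p}$ with $X$, together with compactness applied to the explicit description of $\gen{E}{p}$, finitely many $\rho_1,\dots,\rho_k\in \Lambda(M)$ with each $F_{\rho_j}$ strictly below $E$ (no element of $E$ is $\leq_p \rho_j$), such that
\[
p(xy)\vdash y\in X_x\cap\points{F_\lambda}(x)\to\bigvee_{j=1}^{k} y\in\points{F_{\rho_j}}(x).
\]
After intersecting each $F_{\rho_j}$ with $F_\lambda$, using closure of $F$ under intersections (property (3) of a good presentation), one may assume $\rho_j\leq_p\lambda$; the $\leq_p$-convexity of $\treequiv$-classes then forces $\rho_j<_p$ every element of $E$, so $\rho_j/{\treequiv}$ lies strictly $\treeq$-below $E$.

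\textit{Step 2: Lifting each $\rho_j$ to a daughter.} For each $j$, the plan is to locate a $\treeq$-daughter $\mu_j/{\treequiv}$ of $E$ with $\rho_j\treeq\mu_j$, exploiting the tree structure on $\Theta_p/{\treequiv}$ (\cref{ball tree}), the $\cL(A)$-quantifiability of $p$, and the closure of $F$ under large balls and intersections (properties (3) and (5) of a good presentation). This will yield
\[
p(xy)\vdash y\in X_x\cap\points{F_\lambda}(x)\to\bigvee_{i} y\in\points{F_{\mu_i}}(x).
\]

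\textit{Step 3: All daughters appear, hence are finite.} I would then show that every $\treeq$-daughter of $E$ must occur among the $\mu_i/{\treequiv}$: if $\nu/{\treequiv}$ were a daughter distinct from each $\mu_i/{\treequiv}$, then $\nu\in\Theta_p$ produces a realisation $y\in X_x\cap\points{F_\nu}(x)$ of $p$, which by the cover lies in some $\points{F_{\mu_i}}(x)$; so $F_\nu$ and $F_{\mu_i}$ meet over $p$, and by \cref{ball tree} their $\treequiv$-classes are $\leq_p$-comparable, contradicting their common maximality below $E$. Thus the set of $\treeq$-daughters of $E$ is finite, and, being $\cL(A\code{E})$-definable by $\cL(A)$-quantifiability of $p$, each daughter belongs to $\acleq(A\code{E})$.

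\textit{Step 4: The bound $n\geq 2$.} Finally, $n\geq 1$ since $\lambda\in\Theta_p$ makes $X_x\cap\points{F_\lambda}(x)$ consistent with $p$, so the disjunctive cover is non-vacuous; and $n=1$ would force $\lambda\treeq\mu_1$, hence $\mu_1\treequiv\lambda$, contradicting that $\mu_1/{\treequiv}$ is a strict daughter.

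\textit{Main obstacle.} The delicate point is Step 2: in an abstract tree no maximal class strictly below a given node need exist. Producing, from each $\rho_j$ strictly $\treeq$-below $E$, a $\treequiv$-class that is $\treeq$-maximal in the interval $[\rho_j/{\treequiv},E)_{\treeq}$ will require carefully exploiting the large-ball closure of $F$ to navigate the radii inside $F_\lambda$ and $\cL(A)$-quantifiability to see that the resulting ``daughter'' is indeed a definable object.
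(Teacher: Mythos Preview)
Your Steps~1, 3 and~4 match the paper. The divergence is at Step~2, which you rightly flag as the crux, but the paper resolves it differently and without the large-ball property.

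Rather than lifting each $\rho_j$ to a daughter by searching for a $\treeq$-maximum in $[\rho_j/{\treequiv},\lambda/{\treequiv})$, the paper argues that the whole subtree $\{\mu/{\treequiv}\in\Theta_p/{\treequiv}:\nu_i\treeq\mu\treeq\lambda\text{ for some }i\}$ is \emph{finite}, by embedding it into the lattice of subsets of $\{0,\ldots,m-1\}$ via $\mu\mapsto\{i:\nu_i\treeq\mu\}$. After first discarding redundant $\nu_i$ so they are pairwise $\treeq$-incomparable (which preserves the cover), injectivity holds: if $\mu_1\tree\mu_2$ strictly had equal images, a witness $c\in X_a\cap\points{F_{\mu_2}}(a)\setminus\points{F_{\mu_1}}(a)$ lands in some $\points{F_{\nu_j}}(a)$, and chasing irreducibility one finds either that $j$ lies in the common image, forcing $\nu_j\treeq\mu_1$ and hence $c\in\points{F_{\mu_1}}(a)$ (contradiction), or that $\mu_2\tree\nu_j$ strictly, whence some $\nu_k\treeq\mu_2\tree\nu_j$ contradicts incomparability. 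Once this subtree is finite, its maximal elements below $\lambda/{\treequiv}$ are exactly the daughters, and existence, finiteness, and the covering all follow at once.

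Your proposed route through large balls is unlikely to work as stated: property~(5) of a good presentation controls $\leq_p$ and radii, but $\treeq$ is defined through $X$ and does not reduce to ball geometry, so large-ball closure gives no direct handle on $\treeq$-maxima. The combinatorial finiteness argument bypasses the obstacle entirely.
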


\begin{proof}
Let us assume that \(\gen{\lambda/{\treequiv}}{p}\) is not consistent with
\(X\), \emph{i.e.} \(p(xy)\cup \{(x,y)\in X\} \cup \{y\in F_\lambda(x)\} \cup
\{y\nin F_\nu(x) \mid \nu \tree \lambda\}\) is not consistent. By compactness,
there exist \((\nu_i)_{0\leq i < m} \in \Theta_p(M)\) such that \(\nu_i \tree
\lambda\) and \(p(xy)\vdash y\in X_x \cap \points{F_\lambda}(x) \to \bigvee_{i<
m} y\in \points{F_{\nu_i}}(x)\). The existence of the \(\mu_i\) now follows from
the facts that any \(\mu \treeq \lambda\) is \(\treeq\)-comparable to one of the
\(\nu_i\) and that since the \(F_{\nu_i}\) are irreducible, the subtree with
root \(\lambda\) and leaves \((\nu_i)_{i<m}\) embeds in the lattice of subsets
of \(\{0,\ldots,m-1\}\), which is finite --- we refer the reader to
\cite[Claim~8.4]{Rid-VDF} for details. Finally, if \(n =1\), we would have
\(\lambda \treeq \mu_0\), contradicting that \(\mu_0\) is a daughter of
\(\lambda\).
\end{proof}

Let \(\overline{\Theta}_p := \{\mu\in\Theta_p\mid \fa{p}xy\) "\(F_{\mu}(x)\) is
closed"\(\}\) and, for every \(\lambda\in\Lambda\), let \(Y_{\lambda} :=
\{\mu\in\overline{\Theta}_p \mid \fa{p}xy\,\rad(F_\mu(x)) =
\rad(F_\lambda(x))\}\).

\begin{lemma}\label{disj tree}
One of the following holds:
\begin{itemize}
\item There exists a \(\lambda\in\Theta_p\) such that \(\lambda/{\treequiv}\in A\) and \(\gen{\lambda/\equiv}{p}\) is consistent with \(X\).
\item There exists \(\lambda/{\treequiv}\in A\) with \(\germ{p}{Y_\lambda} :=
\{\germ{p}{F_\mu} \mid \mu\in Y_\lambda\}\) finite of arbitrarily large
cardinality.
\end{itemize}
\end{lemma}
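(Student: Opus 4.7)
The plan is to assume the first alternative fails and to derive the second. By \cref{discrete}, under this hypothesis every node \([\lambda] \in A \cap (\Theta_p/\treequiv)\) possesses a finite set \(D([\lambda]) \subseteq \acleq(A\code{[\lambda]})\) of at least two \(\treeq\)-daughters; since \(A = \acleq(A)\) and \([\lambda] \in A\), one has \(D([\lambda]) \subseteq A\). Iterating from the root \([\K] \in A\) yields an \(A\)-definable infinite subtree \(\mathcal{T} \subseteq \Theta_p/\treequiv\) in which every node branches into at least two daughters, all again in \(A\).

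For a fixed \(n\), I would descend a chain \([\K] = [\lambda_0] > [\lambda_1] > \cdots > [\lambda_{n+1}]\) inside \(\mathcal{T}\) and pick, at each step \(i \geq 1\), an alternative daughter \([\mu_i] \in D([\lambda_{i-1}]) \setminus \{[\lambda_i]\}\). Using the \(\leq_p\)-convexity of \(\treequiv\)-classes together with the closure properties (3)--(5) of the good presentation, one selects closed-ball representatives so that \(F_{\lambda_{i+1}} \leq_p F_{\lambda_i}\) strictly, \(F_{\mu_i} \leq_p F_{\lambda_{i-1}}\), and \(F_{\mu_i} \cap F_{\lambda_i} = \emptyset\); the balls \(F_{\mu_1}, \ldots, F_{\mu_{n+1}}\) are then pairwise disjoint closed sub-balls of \(F_\K\). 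Setting \(\gamma := \min_i \rad(F_{\mu_i}) \in \vg(A)\), condition~(5) of the good presentation, invoked with \(A\)-parameters, produces \(\eta_i \in \Lambda\) with \(F_{\eta_i}\) the (unique) closed ball of radius \(\gamma\) containing \(F_{\mu_i}\). The ultrametric inequality \(d(F_{\mu_i}, F_{\mu_j}) < \min(\rad F_{\mu_i}, \rad F_{\mu_j}) \leq \gamma\) for \(i \neq j\) ensures the \(F_{\eta_i}\) are pairwise distinct, while \(F_{\eta_i} \supseteq F_{\mu_i}\) places each \(\eta_i\) in \(\overline{\Theta}_p\). Taking \([\lambda] := [\eta_1] \in A\), one obtains \(\eta_1, \ldots, \eta_{n+1} \in Y_\lambda\) with pairwise distinct \(p\)-germs, so \(|\germ{p}{Y_\lambda}| > n\).

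The \emph{main obstacle} is that the second alternative additionally demands \(\germ{p}{Y_\lambda}\) to be \emph{finite}: the construction above guarantees cardinality at least \(n+1\) but nothing about finiteness. I would handle this via a dichotomy: either the constructed \(Y_\lambda\) is already finite (and we are done), or \(\germ{p}{Y_\lambda}\) is infinite, in which case saturation applied to the \(A\)-definable family \(Y_\lambda\) should produce a closed ball of radius \(\gamma\) in \(\Theta_p\) whose \(\treequiv\)-class is a leaf of \(\mathcal{T}\). Such a leaf, being algebraic over the \(A\)-definable data used so far, would again lie in \(A\); by the contrapositive of \cref{discrete} its generic would then be consistent with \(X\), contradicting the failure of the first alternative. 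Formulating this dichotomy precisely, and in particular choosing \(\gamma\) so as to make the saturation argument effective (perhaps by first restricting attention to radii at which \(Y_\lambda\) is known to be finite), is where the delicacy of the proof lies.
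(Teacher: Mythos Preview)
You correctly isolate finiteness of \(\germ{p}{Y_\lambda}\) as the crux, but your proposed dichotomy does not resolve it. An infinite \(\germ{p}{Y_\lambda}\) has no reason to produce a \emph{leaf} of \(\mathcal{T}\): the tree \(\mathcal{T}\) has no leaves, and a closed ball of radius \(\gamma\) in \(\overline{\Theta}_p\) obtained by saturation would simply be another interior node, to which \cref{discrete} applies just as well. There is no contradiction to extract. Moreover, your choice \(\gamma = \min_i \rad(F_{\mu_i})\) is a \(p\)-germ of an \(\cL_0(M)\)-definable function, not an element of \(\vg(A)\); and even granting that, the \(\treequiv\)-class of \(\eta_1\) is not obviously in \(A\), since the large-ball property only produces an element of \(\Lambda(M)\).

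The paper's argument avoids this entirely by never leaving the discrete initial tree \(\Xi_p\subseteq\overline{\Theta}_p/{\treequiv}\). It first observes that for \(\mu,\nu\in Y_\lambda\) one has \(\germ{p}{F_\mu}=\germ{p}{F_\nu}\) iff \(\mu\treequiv\nu\), so \(\germ{p}{Y_\lambda}\) and \(Y_\lambda/{\treequiv}\) are in bijection. It then builds \(\lambda_0,\lambda_1,\ldots\) inductively so that \(Y_{\lambda_i}/{\treequiv}\subseteq\Xi_p\) with strictly increasing finite cardinality. The key step: given \(\lambda_i\), enumerate all \(\treeq\)-daughters \((\mu_j)_{j<m}\) in \(\Xi_p\) of the (finitely many) elements of \(Y_{\lambda_i}/{\treequiv}\), and set \(\lambda_{i+1}:=\mu_{j_0}\) where \(j_0\) realises the \emph{smallest} \(p\)-radius among the \(\mu_j\). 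The large-ball property then forces every \(\nu\in Y_{\lambda_{i+1}}\) to satisfy \(\nu\leq_p\lambda\) for some \(\lambda\in Y_{\lambda_i}\); since \(\rad(F_\nu)=\rad(F_{\mu_{j_0}})\leq\rad(F_{\mu_j})\) for all \(j\), \(\nu\) cannot lie strictly below any \(\mu_j\), so \(\nu/{\treequiv}\) is either in \(Y_{\lambda_i}/{\treequiv}\) or equals some \(\mu_j/{\treequiv}\). This gives finiteness. Strict growth comes from the fact that every element of \(Y_{\lambda_i}/{\treequiv}\) either survives or is replaced by at least two daughters, and \(\mu_{j_0}\) itself has all its sisters present. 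Since \(\Xi_p\subseteq A\), one automatically gets \(\lambda_{i+1}/{\treequiv}\in A\). The choice of \emph{minimal} radius is what makes finiteness work; your choice of a chain with side-branches does not constrain \(Y_\lambda\) in the same way.
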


\begin{proof}
Assume that \(X\) is consistent with no \(\gen{\lambda/{\treequiv}}{p}\), where \(\lambda/{\treequiv}\in A\). Then, by \cref{discrete}, \(\Theta_p/\treequiv\) admits an initial finitely strictly branching discrete tree --- that is every, element has at least two daughters --- with every branch infinite. Note that, for every \(\lambda\in\Theta_p\) with \(\lambda <_p \K\), by the large ball property, there is \(\mu\in\Lambda\) with \(\lambda\leq_p\mu\) and \(\fa{p}{xy}\) "\(F_\mu(x)\) is closed"\({}\wedge\rad(F_\mu(x)) = \rad(F_\lambda(x))\). We may assume that \(F_\mu\) is irreducible over \(p\). Then \(\lambda = \mu\) or \(\lambda\) is the unique \(\leq_p\)-daugther of \(\mu\). Note also that, by the large ball property, \(\overline{\Theta}_p \cap \K/{\treequiv} \neq \emptyset\). It follows that \(\overline{\Theta}_p/\treequiv\) also admits an initial finitely branching discrete tree, denoted \(\Xi_p\), with every branch infinite.

Note that, for any two \(\mu,\nu\in Y_\lambda\), since \(\fa{p}xy\,\rad(F_\mu(x)) = \rad(F_\nu(x))\), we have that \(\germ{p}{F_\mu} = \germ{p}{F_\nu}\) implies \(\mu\treequiv\nu\), which implies that \(\fa{p}{xy}\,\points{F_\mu}(x) \cap \points{F_\nu}(x) \neq \emptyset\), which, by irreducibility, implies that \(\germ{p}{F_\mu} = \germ{p}{F_\nu}\), so these three statements are equivalent. In particular, the identity induces a bijection between \(\germ{p}{Y_\lambda}\) and \(Y_\lambda/{\treequiv}\).

We now build, by induction, \(\lambda_i\in\Lambda\) such that \(Y_{\lambda_i}/{\treequiv}\subseteq \Xi_p\) and \(\card{Y_{\lambda_i}/{\treequiv}} = \card{\germ{p}{Y_{\lambda_i}}}\) is finite and strictly increasing. Start with any \(\lambda_0 \in \overline{\Theta}_p\cap\K/{\treequiv}\). Then \(Y_{\lambda_0} = \germ{p}{F_\lambda}\) and \(Y_{\lambda_0}/{\treequiv} = \lambda_0/{\treequiv} = \K/{\treequiv}\). If \(\lambda_i\) is built, let \((\mu_j)_{j < m}\) enumerate all the \(\treeq\)-daughters --- in \(\Xi_p\) --- of the elements in \(Y_{\lambda_i}/{\treequiv}\). Let \(j_0\) be such that, for all \(j\), \(\fa{p}{xy}\,\rad(F_{\mu_{j_0}}(x)) \leq \rad(F_{\mu_{j}}(x))\). For every \(\nu\in Y_{\mu_{j_0}}\), by the large ball property, we find \(\lambda\in Y_{\lambda_i}\) such that \(\nu\leq_p\lambda\). Since \(\fa{p}{xy}\ \rad(F_\nu(x)) = \rad(F_{\mu_{j_0}}(x)) \leq \rad(F_{\mu_{j}}(x))\), we cannot have \(\nu\tree \mu_j\) and hence \(\nu/{\treequiv}\) is either in \(Y_{\lambda_i}/{\treequiv}\) or it is one of the \(\mu_j/{\treequiv}\). So \(Y_{\mu_{j_0}}/{\treequiv}\subseteq\Xi_p\) is finite.

Furthermore, for every \(\mu_j\), by the large ball property, there exists \(\nu \in Y_{\mu_{j_0}}\) such that \(\mu_j\leq_p\nu\). It follows that, for every element of \(Y_{\lambda_i}/{\treequiv}\), either it or all of its (more than one) daughters appear in \(Y_{\mu_{j_0}}\). In particular, all the sisters of \(\mu_{j_0}/{\treequiv}\) appear, and hence \(\card{Y_{\mu_{j_0}}/{\treequiv}} > \card{Y_{\lambda_i}/{\treequiv}}\). Thus, we can choose \(\lambda_{i+1} = \mu_{j_0}\).
\end{proof}

We can now eliminate the second option in \cref{disj tree} by imposing a uniform bound on the size of finite instances of \((Y_\lambda)_{\lambda\in\Lambda}\):

\begin{corollary}\label{rel 1}
Assume:
\begin{itemize}[leftmargin=50pt]
\item[\UFradgloc{p}{F}] For every \(\cL(M)\)-definable family \((Y_z)_z\) of subsets of \(\germ{p}{F_{\Lambda_p}} := \{\germ{p}{F_\lambda}\mid\lambda\in\Lambda_p\}\) such that for all \(z\) and \(\germ{p}{F_\lambda}, \germ{p}{F_{\mu}} \in Y_z\), \(p(xy)\vdash\) "\(F_\mu(x)\) is closed"\({}\wedge \rad(F_\lambda(x)) = \rad(F_\mu(x))\), there exists \(n\in\Zz_{>0}\) such that, for all \(z\), \(\card{Y_z}<\infty\) implies \(\card{Y_z} \leq n\).
\end{itemize}

Then there exists an $\cL(A)$-definable \(\cE\subseteq \Lambda_p\) such that \(\gen{\cE}{p}\) is consistent with \(X\).\qed
\end{corollary}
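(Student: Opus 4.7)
The plan is to derive the corollary directly from \cref{disj tree} by using hypothesis \UFradgloc{p}{F} to eliminate its second alternative, leaving the first alternative to supply the desired set \(\cE\).

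First, I would verify that the family \((\germ{p}{Y_\lambda})_{\lambda\in\Lambda}\) has exactly the shape prescribed by \UFradgloc{p}{F}. By definition \(Y_\lambda\subseteq\overline{\Theta}_p\), so every germ \(\germ{p}{F_\mu}\) appearing in \(\germ{p}{Y_\lambda}\) satisfies \(p(xy)\vdash{}\)"\(F_\mu(x)\) is closed"; membership in \(Y_\lambda\) further imposes \(p(xy)\vdash\rad(F_\mu(x))=\rad(F_\lambda(x))\), so any two germs in \(\germ{p}{Y_\lambda}\) share the same \(p\)-radius. Thus \((\germ{p}{Y_\lambda})_{\lambda\in\Lambda}\) is an \(\cL(M)\)-definable family of subsets of \(\germ{p}{F_{\Lambda_p}}\) of the required form, and \UFradgloc{p}{F} yields an integer \(n\) such that whenever \(\germ{p}{Y_\lambda}\) is finite, its cardinality is at most \(n\).

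Next, I would apply \cref{disj tree}. Its second conclusion would produce, for every \(m\in\Zz_{>0}\), some \(\lambda/{\treequiv}\in A\) with \(\germ{p}{Y_\lambda}\) finite and of cardinality strictly greater than \(m\), which directly contradicts the uniform bound obtained in the previous step. Hence the first conclusion of \cref{disj tree} must hold: there exists \(\lambda\in\Theta_p\) with \(\lambda/{\treequiv}\in A\) such that \(\gen{\lambda/{\treequiv}}{p}\) is consistent with \(X\).

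Finally, I would set \(\cE := \lambda/{\treequiv}\subseteq\Lambda_p\). This \(\cE\) is \(\cL(A)\)-definable because its canonical parameter \(\lambda/{\treequiv}\) lies in \(A\), and \(\gen{\cE}{p}=\gen{\lambda/{\treequiv}}{p}\) is consistent with \(X\) by construction, giving exactly the conclusion sought. I do not anticipate any real obstacle here: the genuine tree-analytic work has already been carried out in \cref{disj tree}, and the sole function of \UFradgloc{p}{F} in this corollary is to rule out the pathological branching alternative of that lemma.
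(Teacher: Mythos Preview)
Your proposal is correct and matches the paper's intended argument: the corollary is marked with a \qed{} precisely because it follows immediately from \cref{disj tree} once hypothesis \UFradgloc{p}{F} is applied to the family \((\germ{p}{Y_\lambda})_{\lambda\in\Lambda}\) to rule out the second alternative, exactly as you describe.
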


However the family \(\germ{p}{Y_\Lambda}\) is not any definable family in
\(\germ{p}{F_{\Lambda_p}}\). It has certain geometric properties that reflect
that of \(X\). In particular, with further hypotheses on \(X\), we can dispense
with \(\UFradgloc{p}{F}\) altogether, as we will see in \cref{rel 1 H}.

We now wish to apply the construction above in the pair \((M_0,M)\) which is naturally an \(\cL_{0,P}\) structure enriched with the \(\cL\)-structure on \(M\). To be precise and avoid an unnecessary conflict of notation:
\begin{notation}
Let \(\cL_1\) be some expansion of \(\cL_0\) and \(T_1\) some \(\cL_1\)-theory of valued fields. In the following lemma, we apply the above with \(T\) the theory of the pair \(M := (\alg{M_1},M_1)\), where \(M_1\models T_1\) is sufficiently saturated and homogeneous, in the language \(\cL := \LP\) consisting of \(\cL_{0,P}\) structure enriched with the \(\cL_1\)-structure on \(\bP\) --- so \(M_0 = \alg{M_1}\).
 \end{notation}

Let us now introduce some useful terminology from \cite{CluHalRK}:

\begin{definition}\label{preparation}
Fix \(n\in\Zz_{> 0}\) invertible in
\(\K(M)\). For any ball \(b\), we define \(\lball{b}{n}:=\{a + n^{-1}(a-c) \mid
a,c \in b\}\). It is a ball of radius \(\rad(b)-\val(n)\) around \(b\), open if
\(b\) is, closed otherwise. For a set of balls $B$, we set $B[n]:=\{b[n]:b\in
B\}$.
\begin{enumerate}
\item An \(\cL(M_1)\)-definable set \(X\subseteq\K(M_1)\) is
\emph{\(n\)-prepared} by some finite set \(C\subseteq\K(M_0)\) if for every ball
\(b\in \balls(M_0)\) with \(\lball{b}{n}\cap C = \emptyset\), either \(b\cap
X(M_1) = \emptyset\) or \(b\cap X(M_1) = b(M_1)\).
\item We say that some \(\cL_0(M_1)\)-definable \(G : \K^n \to \fpoints{r}\)
\emph{\(n\)-prepares} \(X\subseteq\K^{n+1}(M_1)\) if, for every
\(x\in\K(M_1)^n\), \(G(x)\) \(n\)-prepares \(X_x\).
\item We say that \(X\subseteq\K^{n+1}(M_1)\) is \emph{\(n\)-prepared} by \(F\)
if there exists \(\lambda\in\Lambda(M_1)\) such that \(F_\lambda\) has values in
\(\fpoints{r}\) and \(n\)-prepares \(X\). 
\end{enumerate}
\end{definition}

\begin{remark}\label{Hen0 prep}
By field quantifier elimination (\emph{cf.} \cref{EQ rv}), if \(M_1\) is a pure henselian field of characteristic zero, any \(\cL(M_1)\)-definable \(X\subseteq \K\) is \(p^{\ell}\)-prepared, for some \(\ell\), by the finite set of roots of polynomials that appear in the (field quantifier free) definition of \(X\), where \(p\) is either \(1\) or the residue characteristic when it is positive.
\end{remark}

Let also \(A_1 = \acleq_1(A_1)\subseteq \eq{M_1}\), \(X\subseteq \K^{xy}(M_1)\)
be \(\cL_1(A_1)\)-definable, \(A = A_P = \acleq(A_1)\) and
\(p\in\TP^{\Psi}_{xy}(M_0)\) be \(\LP(A_P)\)-definable and consistent with
\(X\).

\begin{lemma}\label{rel 1 H}
Let \(\Phi(x;t)\subseteq\Psi\) be such that \((\Phi,F)\) is a good presentation for \(\Psi\). Assume that there is some \(n\in \Zz_{> 0} \cap \K^\times(M)\) such that:
\begin{itemize}[leftmargin=50pt]
\item[\Prep{X}{F,n}] The set \(X\) is \(n\)-prepared by \(F\);
\item[{\Ram[n]}] The interval \([0,\val(n)]\subseteq \vg(M_1)\) is finite and \(\res(M_1)\) is perfect;
\item[\Infres] The residue field \(\res(M_1)\) is infinite.
\end{itemize}
Then, there exists an $\LP(A_P)$-definable \(\cE\subseteq \Lambda_p(M_0)\) such that \(\restr{\gen{\cE}{p}}{M_0}\) is consistent with \(X\).
\end{lemma}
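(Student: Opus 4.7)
The plan is to apply \cref{disj tree} in the pair $M=(M_0,M_1)$, viewed as an $\LP$-structure with parameter set $A_P$, so that the dichotomy there gives two alternatives to handle. In the favorable alternative, there exists $\lambda\in\Theta_p$ with $\lambda/\treequiv\in A_P$ such that $\gen{\lambda/\treequiv}{p}$ is consistent with $X$. Taking $\cE\subseteq\Lambda_p(M_0)$ to be the $\LP(A_P)$-definable set obtained from the (finite) $\aut(M/A_P)$-orbit of $\lambda/\treequiv$ (restricted to those representatives whose associated generic remains consistent with $X$), the type $\restr{\gen{\cE}{p}}{M_0}$ stays consistent with $X$, and we are done.

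The main work lies in excluding the other alternative, namely that for every $k$ there is $\lambda_k$ with $\lambda_k/\treequiv\in A_P$ and $\card{\germ{p}{Y_{\lambda_k}}}$ finite but larger than $k$. I would fix such a $\lambda_k$ with $k$ large and look for a contradiction. The elements of $Y_{\lambda_k}$ form a pairwise disjoint family of closed balls of a common radius $\rho_k$ modulo $p$, each meeting $X$ non-trivially since they lie in $\Theta_p$. By \Prep{X}{F,n}, there is $\lambda_0\in\Lambda(M_1)$ such that, for any $(x,y)\models p$, $F_{\lambda_0}(x)$ is a set of at most $r$ balls $n$-preparing $X_x$. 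Each $b\in Y_{\lambda_k}$ then falls into one of two cases in $p$: either $\lball{b}{n}(x)\cap\points{F_{\lambda_0}}(x)=\emptyset$, in which case preparation forces $b\subseteq X_x$; or $\lball{b}{n}(x)$ meets $\points{F_{\lambda_0}}(x)$.

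For the second case, I expect the finite ramification hypothesis \Ram[n] --- finiteness of $[0,\val(n)]\subseteq\vg(M_1)$ --- to yield a uniform bound, depending only on $r$ and $\card{[0,\val(n)]}$, on the number of $\treequiv$-classes of balls whose $n$-inflation meets the preparation, essentially by pigeonhole over the finitely many possible radii between $\rho_k-\val(n)$ and the relevant preparation scale, combined with the bound $r$ on $\card{F_{\lambda_0}(x)}$. For the first case, \Infres{} should deliver the contradiction: if such a ball $b$ exists, then every closed sub-ball of radius $\rho_k$ inside $\lball{b}{n}(x)$ has the same $n$-inflation and thus also falls into this first case; iterating and using perfectness of $\res$ together with the infinity of $\res$, one obtains infinitely many disjoint closed balls of radius $\rho_k$ entirely contained in $X$ modulo $p$, forcing $\card{Y_{\lambda_k}}=\infty$, contrary to assumption.

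Combining these two bounds yields a uniform finite bound on $\card{Y_{\lambda_k}}$ whenever it is finite, contradicting its unbounded growth in the second alternative. The hard part will be making the iteration argument in the first case precise, carefully tracking how the preparation structure interacts with successive $n$-inflations so that one truly recovers a genuine infinite family of closed sub-balls of radius $\rho_k$ in $X$; this is the delicate point where all three hypotheses \Prep{X}{F,n}, \Ram[n], and \Infres{} enter in an essential way, and it is here that the argument departs from the general \cref{rel 1} by replacing the abstract hypothesis $\UFradgloc{p}{F}$ with the concrete preparation together with the algebraic data on $\res$ and $\vg$.
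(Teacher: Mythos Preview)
Your handling of the favorable alternative of \cref{disj tree} is fine (though the orbit manoeuvre is unnecessary: by hypothesis $\lambda/{\treequiv}\in A_P$ already, so $\cE=\lambda/{\treequiv}$ is $\LP(A_P)$-definable as it stands).

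The treatment of the second alternative, however, has a genuine gap. Your argument hinges on the claim that, once a ball $b$ with $b[n]$ disjoint from the preparation is found, the infinitely many closed sub-balls of radius $\rho_k$ inside $b[n]$ (all contained in $X$) force $\lvert Y_{\lambda_k}\rvert=\infty$. This does not follow: by definition $Y_{\lambda_k}\subseteq\overline{\Theta}_p\subseteq\Lambda_p$, i.e.\ its elements are \emph{germs of maps from the fixed family $F$}, not arbitrary balls of the right radius. An arbitrary closed ball of radius $\rho_k$ inside $b[n]$ need not be of the form $F_\mu(x)$ for any $\mu\in\Lambda_p$, so it contributes nothing to $Y_{\lambda_k}$. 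Your case~2 bound has a related problem: for a fixed preparation point $c$, infinitely many closed balls of radius $\rho_k$ can have their $n$-inflation contain $c$ (they all sit inside the single ball of radius $\rho_k-\val(n)$ around $c$), so pigeonhole on the preparation and on $[0,\val(n)]$ does not bound the number of such $b$.

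The paper's argument avoids both issues by a different mechanism. It only needs $\lvert\germ{p}{Y_\lambda}\rvert>r$ once (not unbounded growth): since the preparation $F_\rho(x)$ has at most $r$ points, pigeonhole gives some $F_\lambda\in Y_\lambda$ whose balls contain no preparation point. Then one descends in the discrete tree $\Xi_p$ by at least $\lvert[0,\val(n)]\rvert$ steps (this is where \Ram[n] enters) to arrange that even $F_\lambda(x)[n]$ avoids the preparation, so preparation gives $b_0(N_1)\subseteq X_a$ for the relevant ball $b_0$. The contradiction then comes not from $Y_\lambda$ at all, but from the \emph{daughter structure}: by \cref{discrete}, $X_a\cap\points{F_\lambda}(a)$ is covered by the finitely many daughter balls $F_{\mu_i}(a)$, whence $b_0(N_1)$ is covered by finitely many strictly smaller balls; passing to the smallest closed ball containing $b_0(N_1)$, this says a closed ball is covered by finitely many of its maximal open subballs, contradicting \Infres. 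The point you are missing is that the contradiction lives one level down in the tree $\Theta_p/{\treequiv}$, not at the level of $Y_\lambda$ itself.
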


\begin{proof}
Let \(\rho\) be such that \(F_\rho(x)\) \(n\)-prepares \(X_x\) for all \(x\). By \cref{disj tree}, applied in \(M = (\alg{M_1},M_1)\), either the conclusion of the \namecref{rel 1 H} holds, or we can find \(\lambda/{\treequiv}\in A\) with \(\card{\germ{p}{Y_\lambda}} > r\). Then some element of \(Y_\lambda\), say \(F_\lambda\), does not contain any point of \(F_\rho(x)\). Replacing \(\lambda\) with any \(\mu\treeq\lambda\) in \(\Xi_p\) such that \(\card{[\mu,\lambda]} \geq \card{[0,\val(n)]}\), we may further assume that \(F_\rho(x)\cap\points{\lball{F_\lambda(x)}{n}} = \emptyset\) --- where, for any \(B\in\fballs{<\infty}\), \(B[n] := \{b[n]\mid b\in B\}\) --- and that \(\lambda \tree \K\).

By \cref{discrete}, we have \(p(xy)\vdash y\in X_x \cap \points{F_{\lambda}}(x) \to \bigvee_{i<n} y\in \points{F_{\mu_i}}(x)\), where the \((\mu_i/{\treequiv})_{i<n}\) are the daughters of \(\lambda/{\treequiv}\). By compactness, there exists some \(\psi(xy)\in p\) such that  \(q := \restr{p}{\Phi} \vdash \forall y\ \psi(xy) \wedge y\in X_x \cap \points{F_{\lambda}}(x) \to \bigvee_{i<n} y\in \points{F_{\mu_i}}(x)\). Since \((\Phi,F)\) is a good presentation for \(\Psi\), there are \(\kappa\) and \((\mu_i)_{n\leq i < m} \in\Lambda_p\) with \(\models\psi(xy)\leftrightarrow y\in\points{F_{\kappa}}(x)\sminus(\bigcup_{n \leq i < m} \points{F_{\mu_i}}(x))\). In particular, \(p(xy)\vdash y\in\points{F_{\kappa}}(x)\). It follows that \(\kappa \treequiv \K\) and hence \(\lambda \treeq \kappa\). So, we have \[q \vdash \points{F_\lambda}(x) \cap X_x \subseteq \bigcup_i \points{F_{\mu_i}}(x).\]

Since \(\lambda\ntreequiv \emptyset\), there exists \(ac\models p\) in some \(N_1\supsel M_1\) such that \(c\in X_a \cap\points{F_\lambda}(a)\). Let \(b_0\in \balls(\alg{N_1})\) be the ball of \(F_\lambda(a)\) containing \(c\). Since \(F_\rho(a)\cap \lball{b_0}{n}= \emptyset\), we have \(b_0\cap X_a(N_1) = b_0(N_1)\). It follows that \(b_0(N_1) \subseteq \bigcup_i\points{F_{\mu_i}}(a)\). By construction, \(b_0(N_1)\) is not covered by any single ball in \(\bigcup_i F_{\mu_i}(a)\). So \(\{\val(x-y) \mid x,y\in b_0(N_1)\}\) has a minimal element (realised by some \(x, y\) in distinct balls of \(\bigcup_i F_{\mu_i}(a)\)). Let \(b\in \balls(N_1)\) be the smallest closed ball containing \(b_0(N_1)\). Then \(b(N_1) = b_0(N_1)\) is covered by finitely many of its maximal open subballs, contradicting hypothesis \Infres.
\end{proof}

\subsection{Quantifiable types}
To use the above constructions in an induction, we need a number of results on
quantifiable types. The first one is that \(\gen{\cE}{p}\) is itself
quantifiable when \(p\) is. Recall our general setup (\cref{notation3}) for this
section.

For any finite set \(B\) of balls, let \(\res_B\) be
the set of maximal open subballs of the balls \(b \in B\) and \(\resf_B :
\points{B} \to \res_B\) be the projection.

\begin{lemma}[cf. {\cite[Corollary~6.9]{Rid-VDF}}]\label{quant}
Let \((\Psi(xy;t),(F_\lambda(x))_{\lambda\in\Lambda})\) be a good presentation. Let $p\in\TP^{\Psi}_{xy}(M)$ be $\cL(A)$-quantifiable over \(\cL\), where \(A\subseteq \eq{M}\). Assume:
\begin{itemize}[leftmargin=50pt]
\item[\UFresgloc{p}{F}] For any \(\cL(M)\)-definable \((Y_z)_z \subseteq \germ{p}{F_\Lambda}\) and \(\lambda\in\Lambda(M)\) such that, for all \(z\) and \(\germ{p}{F_\mu} \in Y_z\), \(p(xy) \vdash F_\mu(x) \subseteq \res_{F_{\lambda}(x)}\), there exists \(n\in\Zz_{\geq0}\) such that, for all \(z\), \(\card{Y_z}<\infty\) implies \(\card{Y_z} \leq n\).
\end{itemize}
Then, any \(\cL(A)\)-definable \(q \in \TP_{xy}^{\Psi,F}(M)\) containing \(p\) is \(\cL(A)\)-quantifiable over \(\cL\).
\end{lemma}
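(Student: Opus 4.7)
The plan is to first identify $q$ with a generator $\gen{\cE(M)}{p}$ for some $\cL(A)$-definable $\cE \subseteq \Lambda_p$. Since $q$ is $\cL(A)$-definable, the set $\cE := \{\lambda \in \Lambda_p : (y \in \points{F_\lambda}(x)) \in q\}$ is $\cL(A)$-definable. By the properties of good presentations (closure under intersection and complement over $p$, the tree structure from \cref{ball tree}), every $(\Psi,F)$-type extending $p$ is of this form, so $q = \gen{\cE(M)}{p}$.

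Fix now an $\cL$-formula $\phi(xy;s)$; we want $\{b \in M^s : q(xy) \vdash \phi(xy;b)\}$ to be $\cL(A)$-definable. By compactness, $q \vdash \phi(xy;b)$ holds if and only if there exist a formula $\psi(xy;c) \in p$, finitely many positive ball conditions $y \in \points{F_{\lambda_i}}(x)$ with $\lambda_i \in \cE(M)$, and finitely many negative ball conditions $y \notin \points{F_{\mu_j}}(x)$ lying in $\gen{\cE}{p}$, such that the entailment
\[
\psi(xy;c) \wedge \bigwedge_i y \in \points{F_{\lambda_i}}(x) \wedge \bigwedge_j y \notin \points{F_{\mu_j}}(x) \vdash \phi(xy;b)
\]
is valid in $T$. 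Using closure of $F$ under intersection over $p$, the positive conditions can be replaced by a single condition $y \in \points{F_\lambda}(x)$, where $\lambda \in \cE(M)$ is the $\leq_p$-meet. By the large ball property, the negative conditions can then be arranged to be maximal open subballs of $F_\lambda(x)$ not implied by $\cE$, i.e.\ the $\germ{p}{F_{\mu_j}}$ form a finite definable family in $\Res_{F_\lambda}$.

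The main obstacle is that the number of negative conditions $\mu_j$ needed to exclude a priori depends on $b$, which would prevent expression as a single first-order formula. This is where hypothesis \UFresgloc{p}{F} is critical: applied to the definable family $Y_b := \{\germ{p}{F_\mu} : \germ{p}{F_\mu} \subseteq \Res_{F_\lambda}\text{ is a sibling required to force } \phi(xy;b)\}$, it yields a uniform bound $n \in \Zz_{\geq 0}$ on $|Y_b|$ whenever this family is finite. Thus the $\mu_j$ range over a bounded finite definable subset, and the existential "there exist $\lambda$ and $\mu_1, \dots, \mu_n$" becomes a first-order quantifier bounded uniformly in $b$.

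Putting this together, using $\cL(A)$-quantifiability of $p$ to translate the entailment "$p \vdash \psi \wedge \bigwedge \cdots \to \phi(xy;b)$" into an $\cL(A)$-formula in the parameters $(b,\lambda,\mu_1,\ldots,\mu_n,c)$, and using $\cL(A)$-definability of $\cE$ and of $\germ{p}{F_\mu} \subseteq \Res_{F_\lambda}$, the condition "$q \vdash \phi(xy;b)$" is expressed as an $\cL(A)$-formula $\theta(s)$ involving bounded existential quantification over $\lambda \in \cE$ and over the bounded family of sibling germs. This gives the desired $\cL(A)$-quantifiability of $q$.
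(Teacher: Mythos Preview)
Your overall strategy matches the paper's: identify $q=\gen{\cE}{p}$ with $\cE$ $\cL(A)$-definable, then express $q\vdash\phi(xy;b)$ via positive and negative ball conditions and use \UFresgloc{p}{F} to bound the number of negative ones. However, there is a genuine gap in how you invoke the hypothesis.

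The hypothesis \UFresgloc{p}{F} fixes a \emph{single} $\lambda\in\Lambda(M)$ and gives a bound on families $(Y_z)_z$ all of whose members are germs of maximal open subballs of that fixed $F_\lambda$. In your argument the positive condition $\lambda$ is the $\leq_p$-meet of finitely many elements of $\cE$ produced by compactness \emph{for a given $b$}; as $b$ varies so does $\lambda$, and your negative conditions then live in $\Res_{F_\lambda}$ for this varying $\lambda$. The hypothesis does not provide a bound uniform in $\lambda$, so your scheme ``$\exists\lambda\in\cE\ \exists\mu_1,\dots,\mu_n$ in $\Res_{F_\lambda}$'' with a single $n$ is not justified by \UFresgloc{p}{F} as stated.

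The paper closes this gap by a case distinction on the shape of $\cE$. If $\cE$ has no $\leq_p$-minimal element, or its minimal element is open or consists of points, then (citing \cite[Propositions~6.4 and~6.6]{Rid-VDF}) $q\vdash\phi(xy;b)$ admits a description that does not need \UFresgloc{p}{F} at all: one can push $\lambda$ arbitrarily far down the chain, absorbing the negative conditions. Only when $\cE$ has a $\leq_p$-minimal element $\lambda_0$ consisting of closed balls of finite radius is the hypothesis actually required, and then $\lambda=\lambda_0$ is fixed once and for all. Every negative condition can be enlarged (via the large ball property) to a maximal open subball of $F_{\lambda_0}$, and one applies \UFresgloc{p}{F} to the single family
\[
Y_s:=\{\mu\in\Lambda_p\mid \fa{p}xy\ F_\mu(x)\subseteq\Res_{F_{\lambda_0}(x)}\ \wedge\ \ex{p}xy\ \phi(xy;s)\wedge y\in\points{F_\mu}(x)\}.
\]
The paper then shows (as a separate claim) that $q$ is inconsistent with $\phi(xy;e)$ precisely when some $n$-tuple of such $\mu_i$ already covers $\phi\cap\points{F_{\lambda_0}}$ over $p$, and that when this happens $\germ{p}{Y_e}$ is finite, hence of size $\leq n$.

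A secondary point: your description of $Y_b$ as the set of siblings ``required to force $\phi(xy;b)$'' is not obviously $\cL(M)$-definable; making this precise is exactly what the paper's definition of $Y_s$ above does, and showing that finiteness of $Y_e$ suffices to bound the number of exclusions needed is the content of the claim in the paper's proof, not an immediate consequence of compactness.
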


\begin{proof}
Let \(\cE := \{\lambda\in\Lambda_p\mid q(xy)\vdash y\in \points{F_\lambda}(x)\}\). Then \(\cE\) is \(\cL(A)\)-definable and \(q = \gen{\cE}{p}\). If $\cE$ does not have an \(\leq_p\)-minimal element which is closed, for any \(\cL\)-formula \(\phi(xy;s)\) and \(e\in M^s\), \(q(xy) \vdash \phi(xy;e)\) if and only if there exists \(\lambda\in\cE(M)\) and \(\mu\in\Lambda_p(M)\) with \(\lambda <_p \cE\) and \(q(xy) \vdash y\in \points{F_\lambda}(x)\sminus\points{F_\mu}(x) \to \phi(xy;e)\) --- cf. \cite[Proposition~6.4]{Rid-VDF}. So let us assume that $\cE$ has an \(\leq_p\)-minimal element \(\lambda_0\) which consists of closed balls. If $p(xy)\vdash \rad(F_{\lambda_0}(x)) = +\infty$, then \(q(xy) \vdash \phi(xy;e)\) if and only if \(p(xy)\vdash \points{F_{\lambda_0}}(x) \to \phi(xy;e)\) --- cf. \cite[Proposition~6.6]{Rid-VDF}. If $p(xy)\vdash \rad(F_{\lambda_0}(x)) \neq +\infty$, let \[Y_s := \{\mu\in\Lambda_p \mid \fa{p}xy\ F_{\mu}(x)\subseteq \res_{F_\lambda(x)} \wedge \ex{p}xy\ \phi(xy;s)\wedge y\in\points{F_{\mu}}(x)\}.\] Let \(n\) be a uniform bound on the cardinality of finite \(\germ{p}{Y_s}\), as in \UFresgloc{p}{F}.

\begin{claim}\label{quanti cl ball}
For every \(e\in M^s\), \(q\) is consistent with \(\phi(xy;e)\) if and only if, for every \((\mu_i)_{i< n}\in\Lambda_p\), with \(\fa{p}xy F_{\mu_i}(x) \in \res_{F_{\lambda_0(x)}}\), \(\ex{p}xy\ \phi(xy;e)\wedge y\in\points{F_{\lambda_0}}(x)\sminus\bigcup_{i < n} \points{F_{\mu_i}}(x)\).
\end{claim}

\begin{proof}
Assume \(q\) is not consistent with \(\phi(xy;e)\), then, by compactness, there exists \((\mu_i)_{i< m} \in\Lambda_p\) such that \(\mu_i <_p \cE\) and \(\fa{p}xy\ \phi(xy;e)\wedge y\in\points{F_{\lambda_0}}(x)\to\bigvee_{i<m} y\in\points{F_{\mu_i}}(x)\). By the large ball property, we may assume \(\fa{p}xy\ F_{\mu_i}(x) \in \res_{F_{\lambda_0(x)}}\). Choosing a minimal \(m\), we may also assume that, \(\ex{p}xy\ \phi(xy;e)\wedge y\in\points{F_{\mu_i}}(x)\). In particular, \(\mu_i\in Y_e\).

By definition of \(Y_s\), for every \(\mu\in Y_e(M)\), we find \(ac\models p\) such that \(\phi(ac;e)\) and \(c\in \points{F_\mu}(a) \subseteq \points{F_{\lambda_0}}(a)\). So there is an \(i\) such that \(c\in \points{F_{\mu_i}}(a)\). By irreducibility, \(F_{\mu}(a) = F_{\mu_i}(a)\). It follows that \(\germ{p}{Y_e}\) is finite and thus \(m\leq \card{\germ{p}{Y_e}} \leq n\).
\end{proof}

Since \(q \vdash \phi(xy;e)\) if and only if \(q\) is not consistent with \(\neg\phi(xy;e)\), \cref{quanti cl ball} allows us to conclude.
\end{proof}

We also need a better understanding of the interpretable set \(\germ{p}{F_\Lambda}\). Note that it is, a priori, \(\cL(M)\)-interpretable which is exactly the kind of sets elimination of imaginaries aims at describing. However, if \(p\) happens to be the restriction to \(M\) of a global \(\cL_0(M)\)-definable type, then \(\germ{p}{F_\Lambda}\) naturally embeds in an \(\cL_0(M)\)-interpretable set. The goal of the following lemmas is to give (necessary) hypotheses under which any definable \(p\) verifies that condition. Valued vector spaces will play an important role:

\begin{definition}
Let \((K,\val)\) be a valued field and \(V\) be a \(K\)-vector space. A \emph{valuation on \(V\)} is a map \(v: V\to X\) where \(X\) is an ordered set with a maximal element \(\infty\) and an action \(+\) of \(\Gamma\), respecting the order, such that:
\begin{itemize}
\item \(v(0) = \infty\);
\item For all \(x,y\in V\), \(v(x + y) \geq \min \{v(x),v(y)\}\);
\item For all \(a\in K\) and \(x\in V\), \(v(a\cdot x) = \val(a) + v(x)\).
\end{itemize}

We say that a family \((x_i)_{i\in I} \in V\) is \emph{separating} if for every
finite \(I_0\subseteq I\) and every \((a_i)_{i\in I_0} \in K\), \(v(\sum_{i\in
I_0} a_i x_i) = \min_{i\in I_0} (\val(a_i) + v(x_i))\).
\end{definition}

The following lemma owes much to Johnson's computation of the canonical basis of
definable types in \(\ACVF\), \emph{cf.}\ \cite[Section~5.2]{Joh-EIACVF}. Let
\(\phi_d(x;yz) := \val(\sum_{|I|<d} y_I x^I) \geq \val(\sum_{|I|<d} z_I x^I)\).

\begin{proposition}\label{compl alg}
Assume:
\begin{itemize}[leftmargin=50pt]
\item[\CV] For every \(n\in \Zz_{\geq 1}\), every \(\cL(M)\)-definable valuation \(v\) on \(\K^n\) has a separating basis;
\item[\Cvg] \(T\) has definably complete value group.
\end{itemize}
Then, for every \(A = \dcleq(A)\subseteq \eq{M}\), \(\cL(A)\)-definable \(p\in\TP_{\phi_d}(M)\) and algebraic extension \(K(M)\leq L\), any \(q\in \TP_{\phi_d}(L)\), extending \(p\) and finitely satisfiable in \(M\!\), is \(\cL_0(\Geom(A))\)-definable.
\end{proposition}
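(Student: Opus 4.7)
The plan is to encode $p$ (and hence $q$) by a $\Val$-lattice with code in the geometric sorts. To $p$ associate the $\cL(A)$-definable valuation $v_p$ on the $N$-dimensional $M$-vector space $V := M[x]_{<d}$, $N = \binom{d+n-1}{n}$, by declaring $v_p(f) \geq v_p(g) \iff \phi_d(x;f,g) \in p$. By \CV{}, $v_p$ admits a separating basis $(e_1,\ldots,e_N) \in V(M)^N$; write $\gamma_i := v_p(e_i)$, a priori in an extension of $\vg(M)$.

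Hypothesis \Cvg{} next supplies, for each $i$, a supremum $\tau_i \in \vg(M)$ of the $\cL(A)$-definable cut $\{\delta \in \vg(M) : \delta \leq \gamma_i\}$. Using that a definably complete ordered abelian group is elementarily equivalent to $\Qq$ or $\Zz$, one checks that the potentially non-$\vg(M)$-valued $\gamma_i$ collapses to $\tau_i$ when computing balls in $\vg(M)$, so the submodule $\Lambda_\gamma := \{f \in V(M) : v_p(f) \geq \gamma\}$ is equal to the honest free $\Val$-lattice $\bigoplus_i b_i(\gamma)\,\Val\cdot e_i$ with $\val(b_i(\gamma)) = \gamma - \tau_i$. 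In particular $\Lambda^* := \Lambda_0$ is an $\cL(A)$-definable element of $\Lat_N$; since $A = \dcleq(A)$, its canonical code lies in $\Lat_N(A) \subseteq \Geom(A)$. Because $\Lambda_\gamma = c\Lambda^*$ whenever $\val(c) = \gamma$, the type $p$ is $\cL_0(\Lambda^*)$-definable through
\[
\phi_d(x;f,g) \in p \iff \forall c \in \K(M)\,\bigl(g \in c\Lambda^* \to f \in c\Lambda^*\bigr),
\]
and hence $\cL_0(\Geom(A))$-definable.

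To pass to $q$, I would take as candidate the type $q^*$ on $V(L) = L[x]_{<d}$ defined by the same formula with $\Lambda^L := \Val(L)\cdot\Lambda^* \in \Lat_N(L)$ in place of $\Lambda^*$; since $\Lambda^L$ is $\cL_0(\Lambda^*)$-definable over $L$, $q^*$ is itself $\cL_0(\Geom(A))$-definable, so it suffices to prove $q = q^*$. This reduces to showing that $(e_1,\ldots,e_N)$ remains a separating basis for $v_q$ on $V(L)$. The ultrametric bound $v_q(\sum l_i e_i) \geq \min_i(\val(l_i) + \gamma_i)$ is immediate for $l_i \in L$; a putative strict inequality would, by finite satisfiability of $q$ in $M$, be witnessed by some $m \in M$, and one then expands each $l_i$ in a $K(M)$-valuation basis of the finite extension $K(M)[l_1,\ldots,l_N] \subseteq L$ --- which exists since algebraic extensions of valued fields are separated --- to convert the $L$-linear identity into an $M$-linear one that contradicts the separating property of $(e_i)$ for $v_p$.

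The main obstacle is this final reduction, namely transferring the separating-basis description across the algebraic extension $L/K(M)$ using only finite satisfiability. The remainder of the argument is careful bookkeeping on the interplay between the $\cL(A)$-definable lattice data, the separating basis property, and the two structural hypotheses \CV{} and \Cvg{}.
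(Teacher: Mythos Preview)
Your reduction to a single lattice $\Lambda^*\in\Lat_N$ is where the argument breaks. The values $\gamma_i=v_p(e_i)$ live in the ordered $\vg(M)$-set $v_p(V)$, not in $\vg(M)$ itself, and in general they occupy several distinct $\vg(M)$-orbits. Your sentence ``the potentially non-$\vg(M)$-valued $\gamma_i$ collapses to $\tau_i$ when computing balls in $\vg(M)$'' is exactly the false step. Two examples: (i) with $\vg\equiv\Qq$, take $p$ the generic $\phi_2$-type of $\Mid$ in one variable; then $v_p(1)=0$ and $v_p(x)=0^+$, your $\tau_1=\tau_2=0$, and your $\Lambda^*=\Val\oplus\Val$ is a genuine lattice but your displayed biconditional declares $v_p(1)\ge v_p(x)$, which is wrong. (ii) With $\vg\equiv\Zz$, take the $\emptyset$-definable finitely satisfiable $\phi_2$-type asserting $\val(x)>n$ for all $n$; then $v_p(x)>\vg(M)$, the cut $\{\delta\le\gamma_2\}$ is all of $\vg(M)$ so $\tau_2$ does not exist, and $\Lambda_0=Kx\oplus\Val$ is not a free $\Val$-module at all. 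In both cases a single lattice cannot encode $p$.

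The paper's proof repairs this by keeping track of the Archimedean and infinitesimal equivalence classes on $v_p(V)$. After normalising via \Cvg\ so that within each Archimedean class the $v(P_i)$ are infinitesimally close and lie in $A$, one records not one lattice but the family $M_{i,d}=\{P:v(P)\ge v(P_i)\}$; in the separating basis each $M_{i,d}$ is a product of factors $\{0\}$, $\Mid$, $\Val$, $K$ according to how $v(P_j)$ compares with $v(P_i)$, and it is these mixed objects that are $\cL_0(\Geom(A))$-definable and together recover $q$. Separately, your justification ``algebraic extensions of valued fields are separated'' is false without further hypotheses; the paper obtains the needed separating basis of a finite $L/K(M)$ from \CV\ applied to $L$ as an $\cL(M)$-definable valued $K$-vector space. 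Your finite-satisfiability reduction for the extension step is otherwise the right idea and matches the paper's Claim.
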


\begin{proof}
For every field $F$ with \(K = \K(M)\leq F\leq L\), we define a valuation \(v\) on the \(F\)-vector space \(V_d(F) := F[x]_{\leq d}\) of polynomials in variables \(x\) over \(F\) of degree at most \(d\) by \(v(P(x)) \leq v(Q(x))\) if \(\val(P(x))\leq \val(Q(x)) \in q\). The valuation \(v\) on \(V_d(K)\) is \(\cL(A)\)-definable. By hypothesis \CV, it has a separating basis \((P_i)_i \in V_d(K)\).

\begin{claim}\label{sep alg}
\((P_i)_i\) is a separating basis of \(V_d(L)\) over \(L\).
\end{claim}

\begin{proof}
We may assume that \(K\leq L\) is finite. By \cite[Remark\ 2.7]{Joh-DpFinI}, the
valuation on \(L\) (interpreted in \(K\)) is then \(\cL(M)\)-definable and
hence, by hypothesis \CV, also has a separating basis \((c_j)_j\in L\) over
\(K\). Let \(b_ i = \sum_j b_{i,j} c_j \in L\), where \(b_{i,j}\in K\). If
\(v(\sum_{j} (\sum_i b_{i,j}P_i)c_j) >  \min_j v(\sum_i b_{i,j}P_i) +
\val(c_j)\), since \(q\) is finitely satisfiable in \(M\), we have \(v(\sum_{j}
(\sum_i b_{i,j}P_i(a))c_j) >  \min_j v(\sum_i b_{i,j}P_i(a)) + \val(c_j)\) for
some \(a\in M\), contradicting that \((c_j)_j\) is separating over \(K\). So
\begin{align*}
v(\sum_i b_i P_i) &= v(\sum_{j} (\sum_i b_{i,j}P_i)c_j)\\
&= \min_j v(\sum_i b_{i,j}P_i) + \val(c_j)\\
&= \min_{i,j} \val(b_{i,j}) + \val(c_j) + v(P_i)\\
&\leq \min_i \val(b_i) + v(P_i)\\
&\leq v(\sum_i b_i P_i) \qedhere
\end{align*}
\end{proof}

We now define the \(L\)-Archimedean equivalence on \(v(V_d(L))\): \(v(P)
\sim_L^\infty v(Q)\) holds if there exists \(c\in L^\times\), with \(\val(x)\geq
0\), such that \(- \val(c) + v(P) \leq v(Q) \leq \val(c) + v(P)\). One can check
that \(\card{v(V_d(L))/\sim_L^\infty} \leq \card{v(V_d(L))/\val(L)} \leq
\dim_L(V_d(L)) + 1 < \infty\). We also define the \(L\)-infinitesimal
equivalence on \(v(V_d(L))\): \(v(P)\sim_L^0 v(Q)\)  holds if for every \(\gamma
\in \val(L)_{>0}\) we have \(- \gamma + v(P) < v(Q) < \gamma + v(P)\). Note that
two elements of the same \(\val(L)\)-orbit cannot be \(\sim^0_L\)-equivalent
unless they are equal. It follows that \(\sim_L^0\)-classes are finite.

Let \(C\) be any \(K\)-Archimedean class and \(\overline{C}\) denote its upwards
closure. Then \(V_C := v^{-1}(\overline{C}) \leq V_d(K)\) is an
\(\cL(A)\)-definable sub-\(K\)-vector space.

\begin{claim}[{\cite[Lemma~4.3]{Joh-EIACVF}}]
\(V_C\) has a basis of elements in \(A\). 
\end{claim}

\begin{proof}
Some coordinate projection \(V_C \subseteq K^l \to K^m\) restricts to an
isomorphism on \(V_C\). The preimage of the standard basis of \(K^m\) then has
the required properties.
\end{proof}

Let \(C_0\) be the successor of \(C\) in \(V_d(K)/\sim_K^\infty\). Since
\(V_{C_0}\subset V_C\), any basis of \(V_C\) has an element outside \(V_{C_0}\).
In particular \((V_C \sminus V_{C_0})(A)\neq\emptyset\) and we find
\(\gamma_C\in C(A) \neq\emptyset\). Then the whole (finite) \(K\)-infinitesimal
class of \(\gamma_C\) is in \(A\). Let \(i\) be such that \(v(P_i) \in C\). By
\Cvg, the set \(\{\gamma\in\val(K)\mid \gamma + v(P_i) \leq \gamma_C\}\) has a
supremum \(\gamma_i\). Multiplying \(P_i\) by some constant \(c\in K\) with
\(\val(c) = -\gamma_i\), we may assume that \(\gamma_i = 0\) in which case
\(v(P_i)\) is \(K\)-infinitesimally close to \(\gamma_C \in A\). Since the
\(K\)-infinitesimal class of \(\gamma_C\) is finite, it follows that \(v(P_i)\)
is also in \(A\). Since every \(\val(K)\)-orbit is contained in some
\(K\)-Archimedean class, we now have that for any \(i\), \(v(P_i) \in A\) and
for any \(j\), if \(v(P_i) \sim_K^\infty v(P_j)\), then \(v(P_i) \sim_K^0
v(P_j)\).

Note that, since \(\val(L)\) is in the convex hull of \(\val(K)\), \(\sim_{L}^\infty\) extends \(\sim_K^\infty\). Also, if \(v(K)\) is dense, then \(\sim_{L}^0\) extends \(\sim_{K}^0\). However, if \(\val(K)\) is discrete then \(\sim_K^0\) reduces to equality. In particular, we also have that, if \(v(P_i) \sim_{L}^\infty v(P_j)\), then \(v(P_i) \sim_{L}^0 v(P_j)\).

For every \(i\), \(d\), let \(M_{i,d}(L) = \{P\in V_d(L) \mid v(P) \geq v(P_i)\}\). Note that, by \cref{sep alg}, \(\sum \lambda_j P_j \in M_{i,d}(L)\) if and only if:
\begin{itemize}
\item \(\lambda_j = 0\) for every \(j\) with \(v(P_j) < v(P_i)\) and \(v(P_j) \nsim_K^\infty v(P_i)\);
\item \(\lambda_j \in \Mid\) for \(j\) with \(v(P_j) < v(P_i)\) and \(v(P_j) \sim_K^0 v(P_i)\);
\item \(\lambda_j \in \Val\) for \(j\) with \(v(P_i) \leq v(P_j)\) and \(v(P_j) \sim_K^0 v(P_i)\).
\end{itemize}
So \(M_{i,d}\) is (quantifier free) \(\cL_0(\Geom(A))\)-definable. Since \(q\) is \(\cL_0(\bigcup_{i,d}\code{M_{i,d}})\)-definable, it is indeed \(\cL_0(\Geom(A))\)-definable.
\end{proof}


If \(p\in\TP^0(M)\) the existence (and uniqueness) of such a \(q\) follows, on general grounds, from the finite satisfiability of \(p\):

\begin{lemma}\label{compl stationary}
Let \(p\in\TP^0(M)\) be finitely satisfiable in \(M\). Then any two realisations of \(p\) have the same \(\cL_0(\acl_0(M))\)-type. In particular, the unique extension of \(p\) to \(\acl_0(M)\) is finitely satisfiable in \(M\).
\end{lemma}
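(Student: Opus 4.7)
The plan is to reduce, via a symmetry argument, to showing that $b$ cannot separate $\alpha$ from one of its qf-$\cL_0(M)$-conjugates, and to derive this from finite satisfiability of $p$ in $M$.

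Work inside a sufficiently large ACVF model $\mathfrak{M}$ containing $M$, and fix realisations $a, b \in \mathfrak{M}$ of $p$ together with a tuple $\alpha$ from $\acl_0(M) \subseteq \mathfrak{M}$. Since $\ACVF$ eliminates quantifiers and $\qftp(a/M) = \qftp(b/M) = p$, pick $\sigma \in \aut(\mathfrak{M}/M)$ with $\sigma(a) = b$, and set $\alpha' := \sigma(\alpha)$, which lies in $\acl_0(M)$ and shares the qf $\cL_0(M)$-type of $\alpha$. Applying $\sigma$ gives $\qftp(a,\alpha/M) = \qftp(b,\alpha'/M)$, so it suffices to prove $\alpha \qfequiv[Mb] \alpha'$: any $\tau \in \aut(\mathfrak{M}/Mb)$ sending $\alpha' \to \alpha$ will make $\tau\sigma$ witness the desired equality.

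The heart of the argument is proving $\alpha \qfequiv[Mb] \alpha'$, and this is where I expect the main technical weight to lie. The key observation is that the $\aut(\mathfrak{M}/M)$-orbit $O$ of $\alpha$ (equivalently, the set of tuples in $\acl_0(M)$ realising $\qftp(\alpha/M)$) is finite --- sitting inside the product of the root sets of the minimal polynomials of the coordinates of $\alpha$ over $K(M)$ --- and is cut out by a single qf $\cL_0(M)$-formula $R_\alpha(y)$, obtained by adjoining to those polynomial equations finitely many qf conditions that distinguish $O$ from the other orbits inside that finite set. Suppose for contradiction that some qf $\cL_0(M)$-formula $\psi(y;x)$ satisfies $\psi(\alpha,b) \wedge \neg\psi(\alpha',b)$; the $\cL_0(M)$-formula
\[ \Xi(x) \;:=\; \exists y_1\, y_2\, \bigl(R_\alpha(y_1) \wedge R_\alpha(y_2) \wedge \psi(y_1,x) \wedge \neg\psi(y_2,x)\bigr) \]
is witnessed by $(\alpha,\alpha')$ at $x = b$, so (via quantifier elimination in $\ACVF$) belongs to $p$. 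Finite satisfiability then produces $m \in M$ with $\Xi(m)$, giving two tuples $\beta_1, \beta_2 \in O$ separated by $\psi(\cdot, m)$ --- but elements of $O$ share the same qf $\cL_0(M)$-type, so no qf $\cL_0(M)$-formula with parameters in $M$ can distinguish them, a contradiction.

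For the ``in particular'' assertion, let $q$ be the (now unique) extension of $p$ to $\acl_0(M)$. Given any $\phi(x,\alpha) \in q$, the $\cL_0(M)$-formula $\exists y\,(R_\alpha(y) \wedge \phi(x,y))$ holds at $a$ (witnessed by $\alpha$) and so is entailed by $p$; finite satisfiability yields $m \in M$ together with some $\beta \in O$ such that $\phi(m,\beta)$ holds, and any $\rho \in \aut(\mathfrak{M}/M)$ sending $\beta \to \alpha$ (existing because $O$ is an $\aut(\mathfrak{M}/M)$-orbit) fixes $m$ and transports $\phi(m,\beta)$ to $\phi(m,\alpha)$, as required.
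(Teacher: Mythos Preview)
Your proof is correct, but it takes a different route from the paper's. The paper argues directly on the realisation side: for each algebraic $c$ (with algebrising formula $\psi(y)$) and each formula $\phi(x,y)$, the relation
\[
x_1 \,E\, x_2 \;:\Longleftrightarrow\; \forall y\,[\psi(y)\to(\phi(x_1,y)\leftrightarrow\phi(x_2,y))]
\]
is an $\cL_0(M)$-definable equivalence relation with finitely many classes (since $\psi$ has finitely many realisations). Finite satisfiability of $p$ then forces the $E$-class of any realisation $a$ to contain some $e\in M$, so $p(x)\vdash xEe$, and in particular $p$ already decides $\phi(x,c)$ by the value $\phi(e,c)$. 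This immediately gives $p\vdash q$ for the extension $q$ to $\acl_0(M)$, hence $q$ is finitely satisfiable.

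By contrast, you work on the algebraic side: you move the problem via an automorphism $\sigma$ to comparing two conjugates $\alpha,\alpha'$ over $Mb$, and then rule out a separating formula by pushing it down to $M$ via finite satisfiability. Both arguments hinge on the same finiteness (of the orbit of $\alpha$, equivalently of the $E$-classes), but the paper's formulation avoids the back-and-forth with automorphisms and yields the stronger conclusion $p\vdash q$ in one step, from which the ``in particular'' is immediate. Your approach is a perfectly valid alternative; it is just longer and slightly obscures that $p$ itself already determines $q$.
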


\begin{proof}
Fix any \(c\in \acl_0(M)\), \(\phi(xy)\) an \(\cL_0\)-formula and \(\psi(y)\) an \(\cL_0(M)\)-formula that algebrises \(c\). Then \(\forall y\ [\psi(y) \to (\phi(x_1y) \leftrightarrow \phi(x_2y)\))] defines an \(\cL_0(M)\)-definable equivalence relation with finitely many classes. Then the \(E\)-class of any \(a \in N\supsel M\) realising \(p\) has an element \(e\in M\). It follows that \(p(x)\vdash x E e\). In particular \(p(x) \vdash \phi(xc)\) whenever \(\phi(ec)\) holds.

Let \(q\) be the unique extension of \(p\) to \(\acl_0(M)\). Then \(p\vdash q\) and hence \(q\) is finitely satisfiable in \(M\).
\end{proof}

Following \cite{Bau-Sep}, we can prove that {\CV} follows from definable spherical completeness. Up to definability, this is a standard result. But we include its proof, on the one hand, for the sake of completeness, and, on the other, to show that the proof can indeed been done definably.

\begin{lemma}
Assume:
\begin{itemize}[leftmargin=50pt]
\item[\Cball] \(T\) is definably spherically complete : any \(\cL(M)\)-definable chain of balls has a non-empty intersection.
\end{itemize}
Then any (finite dimensional) \(\cL(M)\)-interpretable valued \(\K\)-vector space \((V,v)\) has a separating basis.
\end{lemma}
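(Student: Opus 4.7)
\emph{Proof sketch.} The plan is to proceed by induction on $n := \dim_\K V$; the case $n \leq 1$ is immediate. For the inductive step with $n \geq 2$, I would pick any codimension-one subspace $W \subsetneq V$ and some $u \in V \setminus W$, and apply induction to $(W,\restr{v}{W})$ to get a separating basis $(w_1,\ldots,w_{n-1})$ of $W$. The goal is then to find $w^* \in W$ maximising $f(w) := v(u - w)$. Once such a $w^*$ is found, setting $v^* := u - w^*$, the maximality forces $v(v^* - w') \leq v(v^*)$ for every $w' \in W$; a standard ultrametric computation by cases --- on whether $v(v^* - w') < v(v^*)$ or $v(v^* - w') = v(v^*)$ --- combined with the separating property of $(w_i)$ then shows that
\[
v\bigl(a_0 v^* + \textstyle\sum_{i \geq 1} a_i w_i\bigr) = \min\bigl(\val(a_0) + v(v^*),\; \min_{i \geq 1}(\val(a_i) + v(w_i))\bigr)
\]
for all $a_0, \ldots, a_{n-1} \in \K$, so $(v^*, w_1, \ldots, w_{n-1})$ is a separating basis of $V$, completing the induction.

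To produce $w^*$, for $\gamma \in v(V)$ I would set $B_\gamma := \{w \in W : v(u - w) \geq \gamma\}$, an $\cL(M)$-definable family. If $w^0 \in B_\gamma$ is some fixed element, the identity $w - w^0 = (u - w^0) - (u - w)$ together with the ultrametric inequality shows that $w \in W$ lies in $B_\gamma$ if and only if $v(w - w^0) \geq \gamma$; writing $w^0 = \sum c_i^0 w_i$ and $w = \sum c_i w_i$, the separating property of $(w_i)$ further reduces this condition to $\val(c_i - c_i^0) \geq \gamma - v(w_i)$ for every $i$. Hence, in the coordinates afforded by $(w_i)$, the set $B_\gamma$ is a product $\prod_{i=1}^{n-1} b_{i,\gamma}$ of closed balls in $\K$. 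For each $i$, the family $(b_{i,\gamma})_\gamma$ is then an $\cL(M)$-definable chain of balls, so by \Cball{} there is some $c_i^\circ \in \bigcap_\gamma b_{i,\gamma}$. Setting $w^* := \sum c_i^\circ w_i$, we have $w^* \in B_\gamma$ for every $\gamma$ with $B_\gamma \neq \emptyset$, whence $v(u - w^*) \geq v(u - w)$ for all $w \in W$; and since $u \notin W$, $v(u - w^*) < \infty$, so $w^*$ is the required maximiser.

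The hard part is the coordinate factorisation of the $B_\gamma$ as products of balls in $\K$: this is exactly what the separating property of the inductive basis of $W$ delivers, and it converts the abstract spherical-completeness problem on $V$ --- whose ambient geometry is not that of balls in the valued field --- into $n-1$ parallel applications of \Cball{} to $\cL(M)$-definable chains of balls in $\K$ itself.
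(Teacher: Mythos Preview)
Your proof is correct and follows the same approach as the paper: induct on $\dim V$, use the separating basis of the codimension-one subspace to factor the best-approximation problem into $n-1$ coordinate-wise chains in $\K$, and apply \Cball{} to each. One small caveat: the expression ``$\gamma - v(w_i)$'' is not in general an element of $\vg$ (the target of $v$ is only assumed to be an ordered $\vg$-set and may have several $\vg$-orbits), so your $b_{i,\gamma}$ need not literally be closed balls; the paper handles exactly this point by re-indexing --- when the chain has no minimum, it takes for each $\gamma\in\val(\K)$ the closed ball of radius $\gamma$ containing a member of the chain --- and the same device applies to your $b_{i,\gamma}$.
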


\begin{proof}
Let us proceed by induction on \(n+1 := \dim(V)\).  In particular, we may assume that we have found a separating family \((y_i)_{0\leq i< n} \in V\).

\begin{claim}\label{sep max}
For every \(x \in V\), \(\{v(x - \lambda y) \mid \lambda\in \K^{n}\}\) has a maximal element.
\end{claim}

\begin{proof}
For every \(\lambda,\mu\in \K^n\), we have \(v(x - \mu y) \geq v(x - \lambda y) =: \gamma\) if and only if \(\min_i \{\val(\mu_i - \lambda_i) + v(y_i)\} = v((\mu-\lambda)y) \geq \gamma\). For every \(i < n\) and \(\lambda \in \K^{n}\), let \(B_{i,\lambda} := \{\mu\in\K\mid v(x-\lambda_{\neq i} y_{\neq i} - \mu y_i) \geq v(x-\lambda y)\} = \{\mu\in\K\mid\val(\mu-\lambda_i) + v(y_i)\geq v(x - \lambda y)\}\). They form a chain for inclusion.

If there is a minimal \(B_{i,\lambda_0}\), pick any \(\lambda_i \in B_{i,\lambda_0}\). If there is no minimal \(B_{i,\lambda}\), for every \(\gamma \in \val(\K)\), let \(b_{i,\gamma}\) be the closed ball of radius \(\gamma\) containing some \(B_{i,\lambda}\), if it exists, or \(\K\) otherwise. Since the chain of \(B_{i,\lambda}\) does not have a minimal element, any \(B_{i,\lambda}\) contains a \(b_{i,\gamma}\) that itself contains a \(B_{i,\mu}\). By definable spherical completeness, we find \(\lambda_i \in \bigcap_\gamma b_{i,\gamma} = \bigcap_{\lambda} B_{i,\lambda}\). Then \(\lambda_i\) has the property that, for any \(\mu \in \K^n\), \(v(x-\mu y)\leq v(x - \mu_{\neq i} y_{\neq_i} - \lambda_i y_i)\). It follows that \(v(x - \lambda y)\) is maximal.
\end{proof}

Let \(x\) be linearly independent from the \(y_i\). By the \cref{sep max}, we may assume that \(v(x) = \max\{v(x - \lambda y) \mid \lambda\in \K^n\}\). Then, for every \(\mu\in\K\) and \(\lambda\in\K^n\), we have \(v(\mu x + \lambda y) \leq v(\mu x)\) and thus, \(v(\mu x + \lambda y) = \min\{v(\mu x),v(\lambda y)\} = \min_i\{v(\mu x),v(\lambda_i y_i)\}\).
\end{proof}

\begin{remark}
Note that given any basis of \(V\), in the above lemma, we actually construct a
separating basis whose base change is upper triangular.
\end{remark}

\subsection{Counting germs}
The last ingredient in this section is to reduce the (seemingly horrendous) hypotheses \UFresgloc{p}{F} and \UFradgloc{p}{F} to something more tractable. We start by generalizing \cite[Section~7]{Rid-VDF}. Let \(M\models T\), \(M_0:=\alg{M}\models T_0\) and \(Q\) be either \(\vg\) or \(\res\).

\begin{lemma}\label{reparam}
Let \((f_\lambda)_{\lambda\in\Lambda} : \K^n \to Q\) be an \(\cL_0\)-definable family and \(c\in \K(N_0)^n\), where \(N_0\supsel M_0\). There exists an \(\cL_0(M)\)-definable family \((g_{\rho})_{\rho\in R}:\K^n\to Q^{[<\infty]}\), with \(R\subseteq Q^m\), such that for all $\lambda\in\Lambda(M_0)$, there exists $\rho\in R(M_0)$ with $f_\lambda(c)\in g_{\rho}(c)$.

In particular, if \(p\in\TP^0(M_0)\) is \(\cL_0(M)\)-definable, there is a \(\cL_0(M)\)-definable finite-to-one map \(\{\germ{p}{f_\lambda} \mid \lambda\in\Lambda(M_0)\}\to Q^m\), for some \(m\in \Zz_{>0}\).
\end{lemma}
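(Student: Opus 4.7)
\textbf{Plan.} My plan is to combine elimination of imaginaries in $\ACVF$ in the geometric sorts (\cref{EI ACVF}) with stable embeddedness and elimination of imaginaries in the induced theory on $Q$ (either $\mathrm{DOAG}$ or $\mathrm{ACF}$, both of which admit EI).

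For each $\lambda\in\Lambda(M_0)=\Lambda(\alg{M})$, the $\aut[0](M_0/M)$-orbit $O_\lambda\subseteq\Lambda(M_0)$ is finite, since $\lambda$ is algebraic over $M$, and by \cref{EI ACVF} admits a code $e(\lambda)\in\Geom(M)^l$. Writing $\Lambda_e$ for the orbit coded by $e$, I set
\[h_e(x):=\{f_\mu(x):\mu\in\Lambda_e\}\in Q^{[<\infty]},\]
an $\cL_0$-definable family of finite $Q$-subsets indexed by $e$, with $f_\lambda(c)\in h_{e(\lambda)}(c)$ and $|h_e(x)|\le|\Lambda_e|$ uniformly bounded.

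Next, I replace the $\Geom$-parametrization by a $Q^m$-parametrization. By EI in the theory of $Q$, any finite $Q$-subset of cardinality at most some fixed $k$ has a canonical $\cL_0$-definable code $\sigma(F)\in Q^m$ --- the sorted tuple in $\mathrm{DOAG}$ or the tuple of elementary symmetric functions in $\mathrm{ACF}$ --- with $\sigma$ and $\sigma^{-1}$ both $\cL_0$-definable. Defining $g_\rho(x):=\sigma^{-1}(\rho)$ and letting $R\subseteq Q^m$ be the set of codes $\sigma(h_e(x'))$ as $e\in\Geom(M)^l$ ranges over orbit codes and $x'\in\K^n$, the family $(g_\rho)_{\rho\in R}$ is $\cL_0(M)$-definable --- by stable embeddedness of $Q$ in $\ACVF$, the image $R$ is an $\cL_0(M)$-definable subset of $Q^m$ --- and for any $\lambda$, $h_{e(\lambda)}(c)=g_{\sigma(h_{e(\lambda)}(c))}(c)$ contains $f_\lambda(c)$.

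The key verification --- and the principal obstacle --- is that $\rho(\lambda):=\sigma(h_{e(\lambda)}(c))$ lies in $R(M_0)=R\cap Q(M_0)^m$, and not merely in $Q(N_0)^m$. A direct Galois-theoretic approach fails because $c\in N_0\setminus M_0$ and different realizations of $\qftp(c/M_0)$ may yield different sets $h_{e(\lambda)}(c)$. The argument requires enlarging $h_e$ to an $\cL_0(M)$-definable family whose value at $c$ contains all values $f_\mu(c')$ as $c'$ ranges over realizations of $\qftp(c/M_0)$ (or more precisely, over a sufficiently rich $\aut[0](N_0/M_0)$-invariant set), and to verify that the resulting enlarged set remains finite. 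That it does follows from a definability and stable-embeddedness analysis of the pushforward of $\qftp(c/M_0)$ to $Q$; once this is established, the code of the enlarged invariant set lands in $Q(M_0)^m$ by the standard invariance-plus-EI argument.

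The ``in particular'' clause follows by taking $c\models p$: the germ $\germ{p}{f_\lambda}$ is then determined by $f_\lambda(c)$, so the choice of $\rho(\lambda)\in R(M_0)$ --- made single-valued via definable Skolem functions available in the induced theory of $Q$ --- yields an $\cL_0(M)$-definable map $\germ{p}{f_\lambda}\mapsto\rho(\lambda)$ whose fibers, contained in the finite set $g_{\rho(\lambda)}(c)$, are finite.
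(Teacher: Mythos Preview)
Your proposal has a genuine gap at exactly the point you flag as ``the principal obstacle,'' and the fix you sketch does not work.

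The issue is this: your parameter $\rho(\lambda)=\sigma(h_{e(\lambda)}(c))$ is the code of the finite set $\{f_\mu(c):\mu\in O_\lambda\}\subseteq Q(N_0)$, and there is no reason for this code to lie in $Q(M_0)$. Your proposed enlargement --- replacing $h_{e(\lambda)}(c)$ by $\{f_\mu(c'):\mu\in O_\lambda,\ c'\models\qftp(c/M_0)\}$ --- is the $\aut(N_0/M_0)$-orbit of the original set, and this orbit is finite only when each $f_\mu(c)$ is already in $Q(\acl_0(M_0))$. That is false in general: take $n=1$, $c\in\Val(N_0)$ transcendental over $M_0$ with $\resf(c)\notin\res(M_0)$, and $f_\lambda(x)=\resf(x)$; then $f_\lambda(c)$ has infinite $\aut(N_0/M_0)$-orbit. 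So the enlarged set is typically infinite and cannot be coded in $Q(M_0)^m$.

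What is missing is the structural input the paper actually uses: for any $A\leq\K(N_0)$ and finite $c$, there is a finite tuple $a\in A$ with $Q(\acl_0(Ac))\subseteq\acl_0(Q(A)\,a\,c)$. This comes from the classification of transcendental $1$-types in $\ACVF$ and is precisely what lets one trade the arbitrary parameter $\lambda\in\Lambda(M_0)$ for a parameter in $Q(M_0)^m$ (plus a fixed finite $a\in M$, absorbed into the definition of $g$). Compactness then uniformizes this into the family $(g_\rho)_\rho$. Your argument never touches this fact, and without it there is no mechanism for producing a $Q^m$-parameter over $M_0$.

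A smaller issue: in the ``in particular'' clause you invoke definable Skolem functions in the induced theory of $Q$. This is fine for $\vg$ but fails for $\res\models\ACF$. The paper avoids this by taking, for each germ, the code of the \emph{set} $Y_\lambda=\{\rho:\fa{p}x\,f_\lambda(x)\in g_\rho(x)\}$, which lands in $Q^m$ by elimination of imaginaries in $Q$ and is finite-to-one because $p(x)\vdash f_\lambda(x)\in\bigcap_{\rho\in Y_\lambda}g_\rho(x)$.
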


\begin{proof}We start with the non-uniform version of the result:

\begin{claim}
\label{gen res}
For every \(N_0\models \ACVF\), \(A\leq\K(N_0)\) and finite tuple \(c\in
\K(N_0)\), there exists a finite tuple \(a\in A\), such that \(Q(\acl_0(Ac))
\subseteq \acl_0(Q(A)ac)\).
\end{claim}

\begin{proof}
If \(\card{c} = 1\), let \(a_0\in \K(\acl_0(A))\) be such that \(\val(c - a_0)\)
is maximal --- if it exists, otherwise the extension \(A\leq A(c)\) is immediate
and we take \(a_0 = 0\). Then \(\RV(A(c)) \subseteq \dcl_0(\rv(A)\rv(c-a_0))\).
It follows that \(Q(\acl_0(Ac)) = \acl_0(Q(A(c))) \subseteq
\acl_0(Q(\acl_0(A))ca_0) = \acl_0(Q(A)c a_0)\). If \(a\in A\) is such that
\(a_0\in\acl_0(a)\), we indeed have \(Q(\acl_0(Ac)) \subseteq \acl_0(Q(A)ac)\).

If \(c = de\) with \(\card{e}=1\), we proceed by induction:
\[Q(\acl_0(Ade))\subseteq \acl_0(Q(\acl_0(Ad))be)\subseteq \acl_0(Q(A)acbe),\]
with \(a,b \in A\).
\end{proof}

By \cref{gen res}, and compactness in a saturated model of the pair
\((N_0,M_0)\), there exists an \(\cL_0(M_0)\)-definable \(g\) as above. The
union of its conjugates over \(M\) has the same properties and is
\(\cL_0(M)\)-definable.

Now, if \(p\in\TP^0(M_0)\) is \(\cL_0(M)\)-definable, then for any
\(\lambda\in\Lambda(M_0)\), let \(Y_{\lambda} :=  \{\rho \mid \fa{p}{x}\
f_\lambda(x) \in g_\rho(x)\}\) and \(h(\germ{p}{f_\lambda}) :=
\code{Y_\lambda}\in Q^m\). Note that \(p(x)\models f_\mu(x) \in \bigcap_{\rho\in
Y_\mu} g_\rho(x)\) which is a finite set. It follows that there are at most
finitely many germs \(\germ{p}{f_\mu}\) associated to a given \(Y_\lambda\), in
other words, \(h\) is finite-to-one.
\end{proof}

\begin{lemma}\label{res to resg}
Assume:
\begin{itemize}[leftmargin=50pt]
\item[\UFres] For any \(\cL(M)\)-definable \((Y_z)_z \subseteq \res\),  there exists \(n\in\Zz_{\geq0}\) such that, for all \(z\), \(\card{Y_z}<\infty\) implies \(\card{Y_z} \leq n\);
\end{itemize}
Then, for every \(\cL_0(M)\)-definable \(p\in \TP^{0}_x(M_0)\) and \(\cL_0\)-definable \((F_{\lambda})_{\lambda\in\Lambda} : \K^x \to \fballs{r}\), \UFresgloc{p}{F} holds, uniformly in \(\lambda\).
\end{lemma}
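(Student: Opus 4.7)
Proof plan.

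The strategy is to reduce \UFresgloc{p}{F} to the global hypothesis \UFres by translating germs $[F_\mu]_p$ whose values lie in $\Res_{F_\lambda(x)}$ into tuples of residues, and then invoking \cref{reparam}.

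Fix $\lambda\in\Lambda(M)$ and let $a\models p$ in some $N_0\supsel M_0$. The $\cL_0$-interpretable set $\Res_{F_\lambda(a)}$ is a disjoint union of at most $r$ torsors over $\res(\acl_0(Ma))$. Choosing a tuple $c=(c_0,\ldots,c_{r-1},\pi)\in\K(N_0)^{r+1}$ of representatives of the (closed) balls of $F_\lambda(a)$ together with a uniformiser $\pi$ of valuation $\rad(F_\lambda(a))$, one obtains an $\cL_0(c)$-definable parameterisation $\Res_{F_\lambda(a)}\to\{0,\ldots,r-1\}\times\res(N_0)$. A subset $F_\mu(a)\subseteq\Res_{F_\lambda(a)}$ of size at most $r$ is then encoded, up to a bounded permutation/padding ambiguity, by a tuple $\tilde f_\mu(a)\in\res(N_0)^{r^2}$, where $(\tilde f_\mu)_{\mu\in\Lambda}$ forms an $\cL_0$-definable family of maps $\K^x\to\res^{r^2}$ depending on $c$ and on $\lambda$ only as parameters. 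Applying the ``in particular'' part of \cref{reparam} coordinatewise (and combining: a germ in $\res^{r^2}$ is determined by its coordinatewise germs, so finite-to-one maps compose to a finite-to-one map into a product) yields an $\cL_0(M)$-definable finite-to-one map
\[\iota\colon\{[F_\mu]_p : \mu\in\Lambda_\lambda(M_0)\}\longrightarrow\res^m\]
for some $m$, where $\Lambda_\lambda:=\{\mu\in\Lambda : p(xy)\vdash F_\mu(x)\subseteq\Res_{F_\lambda(x)}\}$. The whole construction is uniform in $\lambda$, as $\lambda$ appears throughout only as a parameter and the dependency on $c\in\K(N_0)$ is absorbed by \cref{reparam}.

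Now let $(Y_z)_z$ and $\lambda\in\Lambda(M)$ satisfy the hypothesis of \UFresgloc{p}{F}. The pushforward $(\iota(Y_z))_z\subseteq\res^m$ is still an $\cL(M)$-definable family, and $|Y_z|\leq N\cdot|\iota(Y_z)|$, where $N$ is a uniform fibre bound of $\iota$. The hypothesis \UFres extends from $\res$ to $\res^m$ by iterated fibring: projecting onto the first coordinate gives an $\cL(M)$-definable family of subsets of $\res$, uniformly bounded (when finite) by \UFres, and for each value of that coordinate the corresponding fibre is again an $\cL(M)$-definable family of subsets of $\res^{m-1}$, uniformly bounded by induction. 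Combining these bounds we conclude that $|Y_z|$ is uniformly bounded as $z$ and $\lambda$ vary, which is exactly \UFresgloc{p}{F} uniformly in $\lambda$. The main technical point, handled entirely by \cref{reparam}, is replacing the $\cL_0(c)$-definable parameterisation by an $\cL_0(M)$-definable finite-to-one correspondence, so that the pushforward lives in an $\cL(M)$-definable family to which \UFres can be applied.
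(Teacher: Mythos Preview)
Your overall strategy---encode $F_\mu(a)\subseteq\Res_{F_\lambda(a)}$ by residue tuples via an auxiliary parametrisation, apply \cref{reparam} to get an $\cL_0(M)$-definable finite-to-one map into $\res^m$, then invoke \UFres---matches the paper's. There is, however, a genuine gap in your invocation of \cref{reparam}. Its ``in particular'' clause takes an $\cL_0$-definable family of maps $\K^x\to Q$ (with index ranging over $M_0$) together with an $\cL_0(M)$-definable type $p$, and returns an $\cL_0(M)$-definable finite-to-one map on $p$-germs; it does \emph{not} absorb parameters lying outside $M_0$. Your family $(\tilde f_\mu)_\mu$ carries the extra tuple $c=(c_0,\ldots,c_{r-1},\pi)\in\K(N_0)$, and in general $c$ is neither in $M_0$ nor realisable as an $\cL_0(Ma)$-definable function of $a$: a ball of $F_\lambda(a)$ need not have an $\cL_0(Ma)$-definable point, and $\rad(F_\lambda(a))$ need not be the value of an $\cL_0(Ma)$-definable field element. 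So $(\tilde f_\mu)_\mu$ is only $\cL_0(c,\lambda)$-definable, and \cref{reparam} as stated does not apply to it.

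The paper resolves this by making the auxiliary data $\cL_0(Ma)$-definable from the outset. Working in $\alg{M(a)}$ and taking Galois conjugates, one obtains two disjoint $\cL_0(Ma)$-definable finite sets $G_1(a),G_2(a)$ of maximal open subballs of $F_\lambda(a)$, and encodes $F_\mu(a)$ by the finite residue set $f_\mu(a):=\{(b-b_1)/(b_2-b_1)\}$; now $(f_\mu)_\mu$ is a genuine $\cL_0(M)$-definable family $\K^x\to\fres{<\infty}$ (identified with $\res^n$ via symmetric functions), and \cref{reparam} applies with the point $a\models p$. Your route can be salvaged in the same way, taking Galois orbits of $c_i,\pi\in\acl_0(Ma)$ and accepting a multi-valued encoding. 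There is also a smaller point you omit: the resulting map lands a priori only in $\res(\dcl_0(M))$, which is the perfect closure of $\res(M)$; in positive residue characteristic one must compose with a power of Frobenius (and use compactness for uniformity in $\lambda$) before \UFres, which concerns $\cL(M)$-definable families in $\res$, can be invoked.
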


\begin{proof} The core of the proof is the following almost internality result:

\begin{claim}\label{alm int} For every \(\lambda\in\Lambda(M)\), there exists an
\(\cL_0(M)\)-definable finite-to-one map \(g_\lambda : X_\lambda :=
\{\germ{p}{F_\mu}\mid \mu\in\Lambda(M_0)\) and \(p(x) \vdash\) "\(F_\mu(x)\) are
maximal open balls of \(F_{\lambda}(x)\)"\(\} \to \res^m\), for some
\(m\in\Zz_{>0}\).
\end{claim}

\begin{proof}
Let $c\models p$. In $\alg{M(c)} \models \ACVF$, any closed ball of
\(F_\lambda(c)\) has at least two (infinitely many, in fact) distinct maximal
open subballs. So there exists \(G_i(c) \in  \fballs{<\infty}\), for \(i :=
1,2\), two \(\cL_0(Mc)\)-definable sets picking at least one maximal open balls
in each of the ball of \(F_\lambda(c)\) and such that \(G_1(c)\cap G_2(c) =
\emptyset\). For every \(\mu\) with \(\germ{p}{F_\mu} \in X_\lambda\), let
\(f_\mu(x) := \{(b-b_1)/(b_2 - b_1) \mid b\in F_\mu(x),\ b_i \in G_i(x)\) and
\(b,b_1,b_2\) are in the same ball of \(F_\lambda(x)\} \in \fres{<\infty}\).
Note that \(F_\mu(c) \in\acl_0(f_\mu(c)G_1(c)G_2(c))\). Using symmetric
functions, we identify \(\fres{<\infty}\) with some \(\res^n\).

By \cref{reparam}, we find an \(\cL_0(M)\)-definable finite to one map \(h : \{\germ{p}{F_\mu} \mid \germ{p}{F_\mu}\in X_\lambda\}\to \res^m\). Then \(\germ{p}{F_\mu}\in\acl_0(\germ{p}{G_1}\germ{p}{G_1}\germ{p}{f_\mu})\subseteq\acl_0(Mh(\mu))\) and \(h(\mu)\in\dcl_0(M\germ{p}{f_\mu})\subseteq\dcl_0(M\germ{p}{F_\mu})\).
\end{proof}

Since \(\res(\dcl_0(M))\) is the perfect closure of \(\res(M)\), by compactness, composing with a power of the Frobenius automorphism, we may assume that \(g_\lambda(X_\lambda(M)) \subseteq \res(M)\) and that \(g_\lambda\) in uniform in \(\lambda\). The (uniform) bound in \UFresgloc{p}{F} now follows from \UFres.
\end{proof}

\begin{lemma}\label{UFballs}
Assume {\UFres} and:
\begin{itemize}[leftmargin=50pt]
\item[\UFvg] For any \(\cL(M)\)-definable \((Y_z)_z \subseteq \vg\),  there exists \(n\in\Zz_{\geq 0}\) such that, for all \(z\), \(\card{Y_z}<\infty\) implies \(\card{Y_z} \leq n\).
\end{itemize}
Then for every \(\cL_0(M)\)-definable \(p\in\TP_x^{0}(M_0)\), \(\cL_0\)-definable \((F_\lambda)_{\lambda\in\Lambda}: \K^x \to \fballs{r}\), and \(\cL(M)\)-definable \((Y_z)_{z} \subseteq \germ{p}{F_\Lambda}\), there exists \(n\in\Zz_{>0}\) such that \(\card{Y_z}<\infty\) implies \(\card{Y_z} \leq n\).
\end{lemma}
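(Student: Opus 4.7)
The strategy is to construct an $\cL_0(M)$-definable finite-to-one map $\Psi : \germ{p}{F_\Lambda} \to \res^{m_1} \times \vg^{m_2}$ whose fibres are uniformly bounded. Once we have $\Psi$, given a $\cL(M)$-definable family $(Y_z)_z\subseteq\germ{p}{F_\Lambda}$ with $|Y_z|<\infty$, the image $\Psi(Y_z)$ is a finite member of a $\cL(M)$-definable family in $\res^{m_1}\times\vg^{m_2}$, and hence uniformly bounded by successive applications of \UFres{} and \UFvg{} (both extend to families of subsets of Cartesian powers of the respective sorts by fibering over coordinates and using orthogonality); the pre-image then has size controlled by the uniform fibre bound of $\Psi$.

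The map $\Psi$ has two components. For the $\vg^{m_1}$-component I apply \cref{reparam} with $Q = \vg$ to the $\cL_0$-definable family of radii $r_\lambda(x) := \rad(F_\lambda(x))$, obtaining a $\cL_0(M)$-definable finite-to-one map $h_1 : \{\germ{p}{r_\lambda}\mid\lambda\in\Lambda(M_0)\} \to \vg^{m_1}$; composing with the natural projection $\germ{p}{F_\mu}\mapsto\germ{p}{r_\mu}$ yields the desired coordinate. For the $\res^{m_2}$-component I adapt the argument of \cref{alm int}: picking $c \models p$, working in $\alg{M(c)}$, and taking the union of Galois conjugates, one obtains $\cL_0(Mc)$-definable finite reference sets of balls $G_1(c),G_2(c)$ which, for each radius germ arising in the problem, contain two disjoint representative balls of the appropriate type and radius. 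The ratio construction $(b-b_1)/(b_2-b_1)\in\res$, for $b\in F_\mu(x)$ and compatible $b_i\in G_i(x)$ of the same radius as $b$, defines (after symmetrisation) an $\cL_0(Mc)$-definable $\res$-valued auxiliary function $f_\mu$ such that $F_\mu(c)\in\acl_0(f_\mu(c)G_1(c)G_2(c))$; \cref{reparam} with $Q=\res$ applied to the family $(f_\mu)_\mu$ then yields a finite-to-one $\cL_0(M)$-definable map to $\res^{m_2}$, providing the second component.

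The main obstacle lies in this second step: whereas \cref{alm int} works within a single fixed ambient ball $F_\lambda$, here the radii of the balls in $F_\mu$ vary with $\mu$, so the reference sets $G_1,G_2$ must be rich enough to handle every radius germ that appears in the image of $h_1$. Since these germs map finite-to-one into $\vg^{m_1}$ and since $h_1$ has finite fibres, this can be arranged by enlarging $G_1(c),G_2(c)$ appropriately (further Galois-symmetrising to cover enough centres at each radius), while keeping $f_\mu$ $\cL_0(Mc)$-definable uniformly in $\mu$. Once this is achieved, the uniformity of the fibre bound of $\Psi$, and the fact that its image lands in $\res(M)^{m_1}\times\vg(M)^{m_2}$, follow from the same compactness and Frobenius-twist argument used at the end of the proof of \cref{res to resg}, completing the proof.
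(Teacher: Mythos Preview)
Your overall strategy---build a single finite-to-one map $\Psi:\germ{p}{F_\Lambda}\to\res^{m_1}\times\vg^{m_2}$ and then apply \UFres{} and \UFvg{}---is natural, and the $\vg$-component via $\germ{p}{r_\lambda}$ is exactly what the paper does. The problem lies in the $\res$-component.

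The ratio construction from \cref{alm int} relies essentially on the geometric configuration there: all the $F_\mu$ under consideration are maximal open subballs of a \emph{fixed} closed ball from $F_\lambda(x)$, so for $b\in F_\mu(x)$ and $b_1,b_2\in G_i(x)$ in that same closed ball with $b_1\neq b_2$, the quotient $(b-b_1)/(b_2-b_1)$ has valuation~$0$ and its residue lies in $\res$. For arbitrary $F_\mu$ this fails: even after controlling the radius, a ball $b$ can sit at any ``location'' in $\K$, and if $b$ is far from the reference balls $b_1,b_2$ the ratio will have nonzero valuation and no meaningful image in $\res$. Your proposed fix---enlarging $G_1,G_2$ to contain representatives at every relevant radius---does not address this: at a fixed radius there are still infinitely many closed balls (one for each class in $\K$ modulo that ball), and a finite $\cL_0(Mc)$-definable set $G_i(c)$ cannot contain a representative inside each of them. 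If instead you let $G_i$ depend on $\mu$, then $\germ{p}{G_i^\mu}$ carries essentially the same information as $\germ{p}{F_\mu}$ and you have not reduced anything. In short, the radius and a single residue coordinate do not suffice to pin down a ball up to finite ambiguity: the missing invariant is the tree of pairwise distances among the balls in $Y_z$.

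The paper handles exactly this by an induction on that missing invariant. After bounding the radii as you do, one also bounds (via \UFvg{}) the number $k$ of distinct pairwise distances among balls in $Y_z$; taking the smallest such distance $\gamma_z$ and replacing each $F_\lambda$ by the closed ball of radius $\gamma_z$ around it drops $k$ by one. At the base of the induction one has an $H_z$ whose maximal open subballs each contain at most one ball from $Y_z$, and only then does \cref{res to resg} (i.e.\ the ratio construction) apply. Your proposal skips this inductive coarsening, which is the substantive content of the lemma.
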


In particular, {\UFradgloc{p}{F}} holds.

\begin{proof}
Let \(r_\lambda(x) := \rad(F_\lambda(x))\). By \cref{reparam}, there exists an \(\cL_0(M)\)-definable finite-to-one map \(g: \germ{p}{r_\Lambda} \to\Gamma^m\). Composing by division by a fixed integer, we may assume that \(g(\germ{p}{{r_\Lambda}}(M)) \subseteq \Gamma(M)\). It now follows that there is a bound on finite \(\{\germ{p}{r_\lambda} \mid \germ{p}{F_\lambda} \in Y_z\}\). So, cutting each \(Y_z\) in finitely many pieces (and getting rid of the infinite ones), we may assume that \(\germ{p}{\rad(F_\lambda)}\) is constant and the balls are of the same type, as \(\germ{p}{F_\lambda}\) ranges through \(Y_z\). Similarly, we may assume that the set of distances between balls in \(F_\lambda(x)\) and \(F_\mu(x)\), with \(\germ{p}{F_\lambda}, \germ{p}{F_\lambda} \in Y_z\) has size bounded by some integer \(k\). We now proceed by induction on \(k\).

Let \(\gamma_z(x)\) be the smallest such distance, \(G_{\lambda,z}(x)\) be the set of closed balls of radius \(\gamma_z(x)\) around \(F_\lambda(x)\) and \(Z_{z} := \{\germ{p}{G_{\lambda,z}}\mid \germ{p}{F_\lambda}\in Y_z\}\). Then the set of distances between balls in \(Z_z\) has size at most \(k-1\) and we find a bound by induction. In particular, removing some more infinite \(Y_z\), we find an \(H_z : \K^n \to\fballs{<\infty}\) such that every maximal open ball of \(H_z(x)\) contains at most one ball of \(F_\lambda(x)\) as \(\germ{p}{F_\lambda}\) varies through \(Y_z\). The bound now follows from \cref{res to resg}.
\end{proof}

\subsection{The higher arity case}
We can now proceed with the induction:

\begin{proposition}\label{str def dens}
Assume {\CV}, {\Cvg}, \UFres{} and \UFvg{}. Let \(X\subseteq\K^x\) be strict pro-\(\cL(A)\)-definable, where \(A = \acleq(A)\subseteq \eq{M}\) and $x$ is countable. Let \(\Delta(x;t)\) be a finite set of \(\cL_0\)-formulas, \(p\in\TP_x^\Delta(M)\) be \(\cL(A)\)-quantifiable over \(\cL\) and consistent with \(X\) and \(z\subseteq x\). Then, there exists an \(\cL_0(\Geom(A))\)-definable \(q\in\TP^0_z(M_0)\) such that \(\restr{q}{M}\) is consistent with \(p\) and \(X\).
\end{proposition}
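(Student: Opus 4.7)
We proceed by induction on $|z|$; the case of countable $z$ reduces to finite $z$ by an exhaustion, as the resulting types on increasing finite sub-tuples are compatible by construction. The base $|z| = 0$ is trivial. For the inductive step, write $z = z'y$ with $|y| = 1$, and apply the induction hypothesis (with $\Delta$, $p$ and $X$ unchanged) to $z'$ to obtain an $\cL_0(\Geom(A))$-definable $q' \in \TP^0_{z'}(M_0)$ whose restriction to $M$ is consistent with $p$ and $X$; the task is to extend $q'$ along the single variable $y$.

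Fix a finite $\Delta'(z'y;t) \supseteq \Delta$ of $\cL_0$-formulas, and let $(\Psi(z'y;t),F)$ be a good presentation for $\Delta'$, furnished by \cref{Ex GP}. Since $q'$ is $\cL_0(\Geom(A))$-definable, $\restr{q'}{M}$ is in particular an $\cL(A)$-quantifiable $\Psi$-type. The relative arity-one result \cref{rel 1} then produces an $\cL(A)$-definable $\cE \subseteq \Lambda_{q'}$ such that $\gen{\cE}{q'}$ is consistent with $X$; the hypothesis \UFradgloc{q'}{F} follows from \UFres{} and \UFvg{} via \cref{UFballs}. Moreover \cref{quant}---whose own hypothesis \UFresgloc{q'}{F} is discharged by \cref{res to resg} from \UFres{}---shows that $\gen{\cE}{q'}$ is itself $\cL(A)$-quantifiable. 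Letting $\Delta'$ exhaust the finite sets of $\cL_0$-formulas in $z'y$, and observing that $\cL(A)$-quantifiability is a local condition (one formula at a time), we extract a complete $\cL(A)$-quantifiable $\tilde q \in \TP^0_{z'y}(M)$ extending $\restr{q'}{M}$ and consistent with $X$.

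To complete the induction, we upgrade $\tilde q$ to an $\cL_0(\Geom(A))$-definable type over $M_0$. By \cref{compl stationary}, $\tilde q$ has a unique extension $\bar q \in \TP^0_{z'y}(\acl_0(M))$ that is finitely satisfiable in $M$. Applying \cref{compl alg} to each $\phi_d$-reduct of $\bar q$, whose hypotheses \CV{} and \Cvg{} are given, shows that each reduct, and hence $\bar q$, is $\cL_0(\Geom(A))$-definable. Since $\acl_0(M) \subseteq M_0$, this $\bar q$ uniquely determines the desired $q \in \TP^0_{z'y}(M_0)$ extending $q'$ and having $\restr{q}{M}$ consistent with $p$ and $X$.

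The main obstacle is the descent in this last step, from $\cL(A)$-quantifiability in the enriched language (with parameters possibly in $\eq{A}$) to genuine $\cL_0(\Geom(A))$-definability. Imaginary parameters do not in general descend to geometric sorts, so direct coding is impossible; what makes the descent work is the canonical-basis construction in \cref{compl alg}, where lattices associated to separating bases of the induced valuation on $\K[x]_{\leq d}$ furnish codes in $\Lat \subseteq \Geom$. Here \CV{} guarantees existence of separating bases, while \Cvg{} allows one to choose $K$-Archimedean representatives $\cL_0(\Geom(A))$-definably, ensuring the codes land in $\Geom(A)$ as required.
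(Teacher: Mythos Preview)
The main gap is your claim that $\restr{q'}{M}$ is an $\cL(A)$-quantifiable $\Psi$-type merely because $q'$ is $\cL_0(\Geom(A))$-definable. Definability of $q'$ controls $\{b\mid q'\vdash\psi(z';b)\}$ only for $\cL_0$-formulas $\psi$; quantifiability over $\cL$ demands the same for every $\cL$-formula, and nothing in \CV{}, \Cvg{}, \UFres{}, \UFvg{} forces such control in an arbitrary enrichment $\cL\supseteq\cL_0$. This is not cosmetic: \cref{rel 1} genuinely uses $\cL$-quantifiability, since $\treeq$ is defined via $\fa{p}{xy}\,(y\in X_x\cap\points{F_\lambda}(x)\to y\in\points{F_\mu}(x))$, an application of $\fa{p}$ to an $\cL$-formula (as $X$ is only $\cL$-definable), and \cref{discrete} needs $\treequiv$ to be $\cL(A)$-definable so that $\code{\lambda/{\treequiv}}$ is a legitimate imaginary over which one can take algebraic closure.

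The paper resolves this with a double induction. The outer induction on $|z|$ is as you sketch, but its role is only to supply, for each finite $w\subset z$, a complete $\cL_0(\Geom(A))$-definable extension of the current partial type restricted to $w$ --- exactly what \cref{res to resg} and \cref{UFballs} require to verify \UFresgloc{q}{F} and \UFradgloc{q}{F}. The $\cL$-quantifiability is instead threaded from the given $p$ by an inner iteration (Claim~\ref{1 form}): one extends $p$ one $\cL_0$-formula $\phi_i$ at a time, each step producing via \cref{rel 1} and \cref{quant} a quantifiable $(\Delta,\Theta_{\leq i})$-type $q_i$ in the full variable set $x$, consistent with $p\cup\bigcup_{j<i}q_j$ and $X$. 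Your exhaustion over $\Delta'$ cannot replace this: without the threading, the extensions for different $\Delta'$ need not be mutually compatible, and working only in the variables $z'y$ you have no mechanism to maintain consistency with the original $p$, which lives in $x\supseteq z$ and is not absorbed into $X$.
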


\begin{proof}
We proceed by induction on \(\card{z}\). In particular, we may assume that for any set \(\Delta(x;t)\) of \(\cL_0\)-formulas, \(p\in\TP_x^\Delta(M)\) which is \(\cL(A)\)-quantifiable over \(\cL\) and finite strictly smaller \(w\subset z\), \(\restr{p}{w}\) can be extended to an \(\cL_0(\Geom(A))\)-definable \(q \in \TP^0_w(M_0)\).

\begin{claim}\label{1 form}
Let \(\Delta(x;t)\) be a finite set of \(\cL_0\)-formulas, \(p\in\TP_x^\Delta(M)\) be \(\cL(A)\)-quantifiable over \(\cL\) and consistent with \(X\), finite \(z\subseteq x\) and \(\Phi(z;s)\) be a finite set of \(\cL_0\)-formulas. Then, there exists a finite set \(\Theta(z;t)\) containing \(\Phi\) and \(q\in\TP^{\Delta,\Theta}_x(M)\) which is \(\cL(A)\)-quantifiable over \(\cL\) and consistent with \(p\) and \(X\).
\end{claim}

\begin{proof}
We proceed by induction on \(\card{z}\). Assume \(z = w y\) with \(\card{y} =
1\) (where \(w\) might be the empty tuple). By \cref{Ex GP}, we find a finite
good presentation \((\Psi(w;t),F(w))\) for \(\Phi\). By induction, we find
\(\Xi(w;u)\supseteq \Psi\) and \(q\in\TP_{x}^{\Delta,\Xi}(M)\)  which is
\(\cL(A)\)-quantifiable over \(\cL\) and consistent with \(p\) and \(X\). Since
\(w\subset z\), as stated in the first paragraph of the proof, \(\restr{q}{w}\)
extends to a complete \(\cL_0(\Geom(A))\)-definable \(\cL_0(M_0)\)-type.

By \cref{rel 1} and \cref{UFballs}, we now find an \(\cL(A)\)-definable \(r\in \TP_{x}^{\Delta,\Xi,F}(M)\) which is consistent with \(q\) and \(X\). By \cref{quant,res to resg}, \(r\) is \(\cL(A)\)-quantifiable over \(\cL\).
\end{proof}

Let \((\phi_i(z;t_i))_{i\in\omega}\) enumerate all \(\cL_0\)-formulas. By \cref{1 form}, we find \(\Theta_i\) containing \(\phi_i\) and \(q_i\in\TP^{\Delta,\Theta_{\leq i}}_{z}(M)\), which is \(\cL(A)\)-quantifiable over \(\cL\) and consistent with \(p\cup \bigcup_{j<i} q_j\) and \(X\). Then \(\bigcup_i q_i\in\TP^0_z(M)\) is \(\cL(A)\)-definable and consistent with \(p\) and \(X\). By \cref{compl alg} and \cref{compl stationary}, \(q\) extends to a complete \(\cL_0(\Geom(A))\)-definable \(\cL_0(M_0)\)-type.
\end{proof}

This result is already non-trivial when \(X = \K^x\) and \(T = \ACVF\):

\begin{corollary}\label{def compl}
Let \(\Psi(x;t)\) be a set of \(\cL_0\)-formulas and \(A = \acl_0(A) \leq M_0\). Any \(\cL_0(A)\)-quantifiable \(p\in\TP_x^\Psi(M_0)\) can be extended to an \(\cL_0(A)\)-definable \(q\in \TP^0_x(M_0)\).\qed
\end{corollary}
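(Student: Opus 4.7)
The plan is to apply \cref{str def dens} with $T = T_0 = \ACVF$, $\cL = \cL_0$, $M = M_0$, $X = \K^x$, $z = x$, $\Delta = \Psi$ and parameter set $\acleq(A) \subseteq \eq{M_0}$ (note that $p$, being $\cL_0(A)$-quantifiable, is a fortiori $\cL_0(\acleq(A))$-quantifiable). Observe that since every model of $\ACVF$ is $\cL_0$-algebraically closed, we have $M = M_0$ in the notation of \cref{str def dens}.

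I would then verify the four hypotheses of \cref{str def dens} in $\ACVF$. Condition \CV{} follows from the spherical completeness of $\aleph_1$-saturated models of $\ACVF$, via the lemma above deriving \CV{} from \Cball. Condition \Cvg{} holds because the value group is a divisible ordered abelian group, which is o-minimal and hence definably complete. Conditions \UFres{} and \UFvg{} hold because $\ACF$ is strongly minimal and DOAG is o-minimal, so both eliminate $\exists^\infty$.

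The statement of \cref{str def dens} is formulated for a \emph{finite} set $\Delta$ of formulas, whereas $\Psi$ may be infinite. To handle this, I would imitate the induction in the proof of \cref{str def dens}: using \cref{1 form}, one builds a countable increasing chain of $\cL_0(A)$-quantifiable finite-formula-set types whose enumeration simultaneously exhausts all $\cL_0$-formulas and all formulas of $\Psi$. The union is a complete $\cL_0$-type $q$ over $M_0$ extending $p$ and $\cL_0(A)$-quantifiable; by the remark following the definition of quantifiability, $q$ is thus $\cL_0(A)$-definable.

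The main subtlety is the upgrade from the a priori $\cL_0(\Geom(\acleq(A)))$-definability stated in \cref{str def dens} to genuine $\cL_0(A)$-definability. This is automatic in our setting: the only step in the proof of \cref{str def dens} at which the definability domain strictly passes from $\cL(A)$ to $\cL_0(\Geom(A))$ is the final invocation of \cref{compl alg} and \cref{compl stationary}, which converts a complete $\cL$-type over $M$ into a complete $\cL_0$-type over $\alg M$; this step is vacuous in our application, since $M = M_0$ and $\cL = \cL_0$.
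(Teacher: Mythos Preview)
Your approach is correct and matches the paper's (the paper's own proof is a bare \qed). One caveat on the infinite-$\Psi$ step: building a chain of \emph{finite}-formula-set types via \cref{1 form} does not obviously force the new $q_i$ to agree with $p$ on a freshly added $\psi_i\in\Psi$ --- \cref{1 form} only guarantees that $q_i$ extends the input type $q_{i-1}$, not that it respects constraints from $\Psi$-formulas not yet in the formula set. The clean fix is to observe that the proof of \cref{1 form} nowhere uses finiteness of $\Delta$ (good presentations remain good when $\Psi$ grows, and the outer induction hypothesis in \cref{str def dens} is already stated for arbitrary $\Delta$), so you may simply take $\Delta=\Psi$ from the start and run the last paragraph of the proof of \cref{str def dens} verbatim; then every $q_i$ contains $p$ automatically. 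Your point that \cref{compl alg} is vacuous here is right; the remaining descent from $\cL_0(\acleq(A))$-definability to $\cL_0(A)$-definability holds because in $\ACVF$, for $A=\acl_0(A)$, every $\aut(M_0/A)$-invariant definable subset of $\K^n$ is already $\cL_0(A)$-definable.
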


If we do not assume \UFvg, and try to replace the use of \cref{rel 1} by that of \cref{rel 1 H}, the above induction fails. We can, nevertheless, recover a local version of the result:

\begin{proposition}\label{fin dens H}
Let \(n\in \Zz_{>0}\cap \K^\times(M)\) and \(X\subseteq\K^x\) be \(\cL(A)\)-definable, where \(A = \acleq(A)\subseteq \eq{M}\). Assume {\CV} in both \(M\) and the pair \((M_0,M)\), {\Cvg}, {\UFres}, \Infres, \Ram[n]. Also assume that:
\begin{itemize}[leftmargin=50pt]
\item[{\Prep{\pi(X)}{n}}] For every projection \(Y\subseteq \K^{zy}\) of \(X\), with \(\card{y} = 1\), \(N\supsel M\) and \(a\in N^y\), \(Y_a\) is \(n\)-prepared by some finite \(\cL_0(Ma)\)-definable set \(C\subseteq\K\).
\end{itemize}
Then, for every finite set \(\Psi(x,t)\) of  \(\cL_0\)-formulas, there exists an \(\cL_0(\Geom(A))\)-definable \(p\in\TP_x^\Psi(M)\) consistent with \(X\).
\end{proposition}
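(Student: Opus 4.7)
The proof follows the structure of \cref{str def dens}, substituting \cref{rel 1 H} for \cref{rel 1} at the relative-arity-one step and using \Prep{\pi(X)}{n} in place of \UFvg. We work inside the pair $(M_0, M)$ in the language $\LP$, set $A_P := \acleq(A)$ (computed in the pair), and reduce immediately to the case where $\Psi$ involves only a finite subtuple $z \subseteq x$.

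The core ingredient is the following ``1-form claim'', established by induction on $|z|$: for any finite set $\Delta(z;t)$ of $\cL_0$-formulas, any $\LP(A_P)$-quantifiable $p \in \TP^\Delta_z(M_0)$ consistent with $X$, and any finite set $\Phi(z;s)$ of $\cL_0$-formulas, there exist a finite $\Theta \supseteq \Phi$ and an $\LP(A_P)$-quantifiable $q \in \TP^{\Delta,\Theta}_z(M_0)$ extending $p$ and consistent with $X$. In the inductive step one writes $z = wy$ with $|y|=1$, applies \cref{Ex GP} to find a finite good presentation $(\Psi(w;t), F(w))$ refining $\Phi$, and enlarges $F$ to absorb the preparation family supplied by \Prep{\pi(X)}{n}, namely a uniformly $\cL_0(M)$-definable family of finite subsets of $\K$ preparing each fibre of the relevant projection of $X$. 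Iterating the inductive hypothesis through an enumeration of all $\cL_0$-formulas in $w$ extends $\restr{p}{w}$ to a complete $\LP(A_P)$-definable $\cL_0$-type on $w$, which, combined with \cref{res to resg}, supplies the hypotheses required to apply \cref{rel 1 H}. The latter produces an $\LP(A_P)$-definable $\cE \subseteq \Lambda_p(M_0)$ such that $\gen{\cE}{p}$ is consistent with $X$; preservation of quantifiability is then ensured by \cref{quant}.

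To conclude, apply the 1-form claim at the top level with $\Delta = \emptyset$ and $\Phi = \Psi$, obtaining an $\LP(A_P)$-quantifiable $\Psi$-type consistent with $X$. Iterating once more through an enumeration of all $\cL_0$-formulas in $z$ yields a complete $\LP(A_P)$-definable $\cL_0$-type over $M_0$, to which \cref{compl alg} applies in the pair (justified by \CV{} in $(M_0, M)$ together with \Cvg), giving $\cL_0(\Geom(A_P))$-definability. Since $A = \acleq(A)$ in $T$, the geometric sort elements algebraic over $A$ in the pair coincide with those algebraic over $A$ in $\cL_0$, so $\Geom(A_P) = \Geom(A)$; restricting this type to $M$ and to its $\Psi$-fragment produces the desired element of $\TP^\Psi_x(M)$.

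The principal technical obstacle is the preparation step: the preparing sets supplied by \Prep{\pi(X)}{n} are $\cL_0(Ma)$-definable with parameters drawn from $M$, while the good-presentation framework treats ball families $\cL_0$-definably over $M_0$. The pair formalism, together with the flexibility of \cref{Ex GP} to enlarge $F$ by any $\cL_0(M)$-definable uniform family of finite subsets of $\K$, is precisely what reconciles these two forms of definability inside a single good presentation suitable for \cref{rel 1 H}.
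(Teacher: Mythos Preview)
Your overall strategy is right, but the final descent step contains a genuine error. The claim that $\Geom(A_P)=\Geom(A)$ is false: $A_P=\acleq_{\LP}(A)$ is computed in the pair $(M_0,M)$, and since $M_0=\alg{M}$, any element of $M_0$ that is $\cL_0$-algebraic over $\K(A)$ lies in $\Geom(A_P)$, whether or not it lies in $M$, let alone in $A$. The paper does not use any such equality. Instead, having produced an $\cL_0(\Geom(A_P))$-definable $\phi_d$-type over $M_0$, it takes its canonical basis $a\in A_P\subseteq\acl_0(M)$, codes the finite $\cL_0(M)$-orbit of $a$ as $c\in\dcl_0(M)$, and pulls $c$ back along an $\cL_0$-definable map to obtain $e\in\eq{M}$ with finite $\cL(A)$-orbit, whence $e\in A$. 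This shows $\restr{p}{\phi_d,M}$ is $\cL(A)$-definable, and a \emph{second} application of \cref{compl alg}, this time in $M$ rather than in the pair, yields $\cL_0(\Geom(A))$-definability.

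There is also a subtler gap inside your induction. To apply \cref{quant} after \cref{rel 1 H} you need \UFresgloc{p}{F}, and \cref{res to resg} supplies this only when the underlying type extends to a complete $\cL_0(M_0)$-definable $\cL_0$-type. Your induction only maintains $\LP(A_P)$-quantifiability, which is strictly weaker: the defining schemes may genuinely require the predicate $\bP$. The paper resolves this by applying \cref{compl alg} \emph{within} each inductive step: after \cref{rel 1 H} yields an $\LP(A_P)$-definable $(\Theta,F)$-type, \cref{compl alg} in the pair (using \CV{} there and \cref{Cvg pair}) shows its $\phi_d$-restriction is already $\cL_0(\Geom(A_P))$-definable. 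The hereditarily-good-presentation device then gives both $\cL_0$- and $\LP$-quantifiability, together with a complete $\cL_0$-definable extension via \cref{def compl}, so that \cref{res to resg} is available at the next step. In short, the induction must carry $\cL_0(\Geom(A_P))$-definability, not merely $\LP(A_P)$-quantifiability.
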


\begin{proof}
Let \(A_P = \acleq_{\LP}(A)\). We say that \(\Theta(zy,s)\), where \(|y|=1\), is a hereditarily good presentation if \(\Theta\) is of the form \(\Phi(z,t)\cup \{y\in F_\lambda(z)\}\) for some good presentation \((\Phi,F)\) where \(\Phi\) is itself an hereditarily good presentation. 

\begin{claim}\label{hgp quant}
Let \(\Theta(x,s)\) be a hereditarily good presentation and \(q\in\TP_x^\Theta(M_0)\) be \(\cL_0(\Geom(A_P))\)-definable. Then for \(\cL \in\{\cL_0,\LP\}\), \(q\) is \(\cL(\Geom(A_P))\)-quantifiable over \(\cL\) --- in particular it has a complete \(\cL_0(\Geom(A_P))\)-definable extension to \(\TP^0_x(M_0)\).
\end{claim}

\begin{proof}
We proceed by induction on \(|x|\). Since \UFres{} holds both in \(M_0\) and, by \cref{UFres pair}, in the pair \((M_0,M)\), quantifiability follows from \cref{quant,res to resg} --- applied respectively to \(M_0\) and to the pair \((M_0,M)\). The existence of a complete definable extension follows by \cref{def compl} applied in \(M_0\).
\end{proof}

We now prove, by induction on \(x = zy\), the existence of \(p\in\TP_x^\Psi(M_0)\) which is \(\cL_0(\Geom(A_P))\)-definable and consistent with \(X\). By compactness, there exists \((G_\omega)_{\omega\in\Omega} : \K^z \to\fpoints{<\infty}\) such that the family \((X_z)_z\) is \(n\)-prepared by \(G\). Let \(d\in\Zz_{>0}\) bound the degree of any polynomial appearing in \(\Psi\) and \(G\). By \cref{Ex GP}, and induction, we find a finite hereditarily good presentation \((\Theta(z,s),F(z))\) for \(\phi_d(x,uv) := \val(\sum_{|I|<d} u_I x^I) \geq \val(\sum_{|I|<d} v_I x^I)\). By induction, there exists \(q\in\TP_z^\Theta(M_0)\) which is \(\cL_0(\Geom(A_P))\)-definable and consistent with \(X\). By \cref{hgp quant}, \(q\) is \(\LP(\Geom(A_P))\)-quantifiable over \(\LP\). By \cref{rel 1 H}, there exists an \(\LP(A_P)\)-definable \(p\in\TP^{\Theta,F}_x(M_0)\) consistent with \(X\). By hypothesis, \CV{} holds in \((M_0,M)\), and so does \Cvg{},  by \cref{Cvg pair}. By \cref{compl alg}, it follows that \(\restr{p}{\phi_d}\) is \(\cL_0(\Geom(A_P))\)-definable --- and hence so is \(\restr{p}{\Psi}\).

Let \(a\in A_P\) be the canonical basis of \(\restr{p}{\phi_d}\). Since \(M_0 = \alg{M} \subseteq \acl_0(M)\), we have  \(a\in\acl_0(M)\). Let \(c\in\dcl_0(M)\) be a code of the finite \(\cL_0(M)\)-orbit of \(a\) --- which is included in its finite \(\LP(A)\)-orbit. Let \(f\) be \(\cL_0\)-definable such that \(c\in f(M)\) and \(e\in \eq{M}\) be a code of \(f^{-1}(c)\). The \(\LP(A)\)-orbit of \(c\) consists of finite subsets of the \(\LP(A)\)-orbit of \(a\) and is therefore finite. Hence, so is the \(\cL(A)\)-orbit of \(e\); \emph{i.e.}, \(e\in \acleq(A) = A\). It follows that \(\restr{p}{\phi_d,M} \subseteq \bigcap_{\sigma\in\aut(M_0/M)}\sigma(\restr{p}{\phi_d})\) is \(\cL(A)\)-definable. By \cref{compl alg}, it is in fact \(\cL_0(\Geom(A))\)-definable — and hence so is \(\restr{p}{\Psi,M}\).
\end{proof}

This local result does imply the existence of a global invariant type:

\begin{corollary}\label{inv dens H} Let \(n\in \Zz_{>0}\cap \K^\times(M)\) and
\(X\subseteq\K^x\) be strict pro-\(\cL(A)\)-definable, where \(A =
\acleq(A)\subseteq \eq{M}\). Assume {\CV} in both \(M\) and the pair
\((M_0,M)\), {\Cvg}, {\UFres}, \Infres, \Ram[n] and:
\begin{itemize}[leftmargin=50pt]
\item[{\Prep{\pi(X)}{n}}] For every projection \(Y\subseteq \K^{zy}\) of \(X\)
onto finitely many coordinates, with \(\card{y} = 1\), every \(N\supsel M\) and
every \(a\in\K^{z}(N)\), \(Y_a\) is \(n^\ell\)-prepared by some
\(\cL_0(Ma)\)-definable set \(C\subseteq \K(N)\), for some \(\ell\in\Zz_{\geq
0}\).
\end{itemize}
Then there exists an \(\aut(M/\Geom(A))\)-invariant \(p\in\TP^0_x(M)\) consistent with \(X\).
\end{corollary}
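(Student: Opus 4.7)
The plan is to apply the local density \cref{fin dens H} to each finite projection of $X$ and then patch the resulting $\cL_0(\Geom(A))$-definable partial types by compactness in the Stone space $\TP^0_x(M)$ of complete $\cL_0(M)$-types in the $x$-variables. Write $X = \plim_w X_w$ over finite $w \subseteq x$; by strictness each projection $X_w := \pi_w(X) \subseteq \K^w$ is $\cL(A)$-definable.

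\emph{Uniformization of the preparation.} To apply \cref{fin dens H} to $X_w$ I first produce, for each finite $w$, an integer $\ell_w$ such that \Prep{\pi(X_w)}{n^{\ell_w}} holds. Fix one of the finitely many projections $Y \subseteq \K^{zy}$ of $X_w$ with $zy \subseteq w$ and $\card{y}=1$. For each $(\ell,s) \in \Zz_{\geq 0}^2$, the condition ``$Y_a$ is $n^\ell$-prepared by some $\cL_0(Ma)$-definable set of size at most $s$'' is $\cL(M)$-definable in $a$ (via $\ACVF$ quantifier elimination, parametrising $C$ by coefficients of a polynomial of degree $\leq s$) and non-decreasing in $(\ell,s)$. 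The corollary's hypothesis provides such a pair $(\ell,s)$ for every $a$, so saturation of $M$ yields a uniform pair $(\ell_Y,s_Y)$ working for all $a \in \K^z(M)$; taking a maximum over the finitely many relevant projections then produces $\ell_w$. Since \Ram[n^{\ell_w}] follows from \Ram[n], \cref{fin dens H} applies to $X_w$ with $n^{\ell_w}$ in place of $n$: for every finite set of $\cL_0$-formulas $\Psi(w;t)$, there is an $\cL_0(\Geom(A))$-definable type $p_{w,\Psi} \in \TP^\Psi_w(M)$ consistent with $X_w$.

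\emph{Patching by compactness.} For each pair $(w,\Psi)$ let $C_{w,\Psi} \subseteq \TP^0_x(M)$ denote the (closed) set of complete types $p$ whose restriction $\restr{p}{\Psi}$, viewed as a $\Psi$-type in the $w$-variables, is both consistent with $X_w$ and $\aut(M/\Geom(A))$-invariant. The family $\{C_{w,\Psi}\}$ has the finite intersection property: given finitely many $(w_i,\Psi_i)$, set $w = \bigcup_i w_i$ and $\Psi = \bigcup_i \Psi_i$, take the $\cL_0(\Geom(A))$-definable $p_0 \in \TP^\Psi_w(M)$ produced above, and extend $p_0$ arbitrarily to a complete type $p \in \TP^0_x(M)$; then each restriction $\restr{p}{\Psi_i} = \restr{p_0}{\Psi_i}$ inherits $\cL_0(\Geom(A))$-definability (hence $\aut(M/\Geom(A))$-invariance) and is consistent with $X_{w_i}$ via the surjection $X_w \to X_{w_i}$. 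Compactness of $\TP^0_x(M)$ then produces $p \in \bigcap_{w,\Psi} C_{w,\Psi}$; such a $p$ is consistent with every $X_w$ (hence with $X$) and is $\aut(M/\Geom(A))$-invariant, since every $\cL_0$-formula over $M$ involves only finitely many $x$-variables and every such finite restriction is invariant by construction.

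The main obstacle is the uniformisation in the second paragraph: turning the pointwise-in-$a$ preparation guarantee into a single $\ell_w$ per finite projection. This relies crucially on saturation of $M$ together with the definability and monotonicity of the preparation condition. Once this uniformisation is in hand, the remainder is a routine compactness patch.
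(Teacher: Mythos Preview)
Your approach is essentially the paper's: compactness in $\TP^0_x(M)$ over closed sets of types whose $\Psi$-restrictions are $\aut(M/\Geom(A))$-invariant. You are right to make explicit the uniformization of $\ell$ needed before invoking \cref{fin dens H}; the paper's proof silently absorbs this into ``non-empty, by \cref{fin dens H}''. Your patching over pairs $(w,\Psi)$ is correct, though note your consistency condition ``$\restr{p}{\Psi}$ consistent with $X_w$'' is weaker than the paper's ``$p$ consistent with $X$''; intersecting first over $\Psi$ (recovering $\restr{p}{w}$ consistent with $X_w$ by compactness) and then over $w$ gives the same conclusion via strict pro-definability.

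One imprecision in your uniformization: you claim that ``$Y_a$ is $n^\ell$-prepared by some $\cL_0(Ma)$-definable set of size $\leq s$'' is $\cL(M)$-definable in $a$ by parametrising $C$ via coefficients of a degree-$\leq s$ polynomial. If those coefficients range over $\K$, the existential witness $c$ may lie outside $M(a)$ in an elementary extension, so the resulting root set need not be $\cL_0(Ma)$-definable---which is exactly what \cref{fin dens H} requires. The clean fix is to bypass definability and run compactness directly: for each $\cL_0$-formula $\varphi(y;u,v)$ and each $m\in M$, the condition ``$\varphi(\cdot;m,a)$ is finite and $n^\ell$-prepares $Y_a$'' \emph{is} an $\cL(M)$-formula in $a$ (preparation can be rephrased over $M$-points alone, and ``$\exists e$ a root of $P$ with $\val(e-c_1)\geq\gamma$'' is quantifier-free $\cL_0$ by $\ACVF$ elimination). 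The partial type in $a$ over $M$ asserting the negation of all such conditions (over all $\ell,\varphi,m$) is inconsistent by hypothesis; compactness yields finitely many triples $(\ell_i,\varphi_i,m_i)$ with $m_i\in M$, and $\ell_w=\max_i\ell_i$ then satisfies \Prep{\pi(X_w)}{n^{\ell_w}} on the nose.
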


Note that \(\Ram[n]\) implies \(\Ram[n^\ell]\) for every \(\ell\in\Zz_{\geq 0}\). Also if \(M\) is a finitely ramified henselian field, \CV{} holds in both \(M\) and \((M_0,M)\), since both theories have maximally complete models, and \Prep{\pi(X)}{n} holds by \cref{Hen0 prep}.

\begin{proof}
For every (finite) set \(\Psi(x,t)\) of \(\cL_0\)-formulas, the set of \(p\in\TP^0_x(M)\) which are consistent with \(X\) and whose \(\Psi\)-type is \(\aut(M/\Geom(A))\)-invariant is closed. It is non-empty, by \cref{fin dens H}. By compactness, the intersection of all these sets, which coincides with the set of \(\aut(M/\Geom(A))\)-invariant \(p\in\TP^0_x(M)\) consistent with \(X\), is also non-empty.
\end{proof}

\begin{remark}
If \(T\) is a \(\res\)-\(\vg\)-enrichment of finitely ramified henselian fields,
then, by \cref{res-vg pure P}, the pair \((M_0,M)\) is elementarily equivalent
to one where both \(\K\) and \(\PK\) are maximally complete --- namely the pair
of the maximal completion of \(M\) inside the maximal completion of its
algebraic closure. Hence {\Cball} --- and therefore {\CV}--- holds both in \(M\)
and in the pair \((M_0,M)\).
\end{remark}

\section{Invariant completions}\label{invariant}

\begin{notation}
In this section, let \(T\) be an \(\RV\)-enrichment of the theory of characteristic zero henselian fields.
\end{notation}

\subsection{Main results}
Our goal in this section is to describe the behaviour of global \(\cL\)-types whose underlying \(\cL_0\)-type is invariant. A crucial point is that \cref{EQ rv} can be reformulated in the following manner: for every \(A\substr M\models T\),
\[\qftp(A)\cup\tp(\RV(A))\vdash\tp(A).\]

Therefore, the main point of this section is to better understand $\tp(\RV(A))$ and then deduce properties of $\tp(A)$. In particular, we will show that $\RV(A)$ is generated by a small canonical set. This will allow us to conclude that a global type whose underlying quantifier free type is invariant is itself invariant over \(\RV\) (\emph{cf.} \cref{RV inv}). However, a better control of the parameters requires more auxiliary sorts. Recall that \[\Lin_A := \bigsqcup_{\substack{s\in\Lat(\dcl_0(A))\\ \ell\in\Zz_{>0}}} s/\ell\Mid s.\]

In this section, we prove:

\begin{theorem}\label{inv}
Assume
\begin{itemize}[leftmargin=50pt]
\item[\Infres] The residue field \(\res\) is infinite.
\end{itemize}
Let \(M\subsel N\models T\) sufficiently saturated and homogeneous, \(A \subseteq\Geom(M)\) and \(a\in\K(N)\) such that \(\qftp(a/M)\) is \(\aut(M/A)\)-invariant. Then the type \(\tp(a/M)\) is \(\aut(M/A\RV(M)\Lin_A(M))\)-invariant.
\end{theorem}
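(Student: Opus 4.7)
The plan is to use the field quantifier elimination of \cref{EQ rv} to reduce the invariance of $\tp(a/M)$ to an invariance statement about tuples in $\RV$, then to exploit polynomial factorization over $\alg{M}$ to identify the $\RV$-data generated by $a$, and finally to encode its ``fine'' content into elements of $\Lin_A$ via carefully chosen $A$-definable lattices. First, by \cref{EQ rv}, every $\cL$-formula $\phi(x,\bar y)$ is equivalent modulo $T$ to one of the form $\psi(\rv_\infty(P_1(x,\bar y)),\ldots,\rv_\infty(P_k(x,\bar y)))$, where each $P_i\in\Zz[x,\bar y]$ and $\psi$ is an $\LRV$-formula on $\RV$-variables; those subterms not involving $x$ evaluate in $\RV(M)$ and are fixed by any $\sigma\in\aut(M/A\RV(M)\Lin_A(M))$. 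It therefore suffices to prove that for any such $\sigma$, any polynomials $P_1,\ldots,P_k\in\Zz[x,\bar y]$, and any $\bar b\in M^{\bar y}$, the tuples $(\rv_\infty(P_i(a,\bar b)))_i$ and $(\rv_\infty(P_i(a,\sigma(\bar b))))_i$ have the same $\LRV$-type over $\RV(M)$.

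Next, I would factor each polynomial as $P_i(x,\bar b)=u\prod_j(x-c_j)$ over $\alg{M}$ with $u\in M$ and $c_j\in\alg{M}$, so that $\rv_n(P_i(a,\bar b))=\rv_n(u)\prod_j\rv_n(a-c_j)$. Extending $\sigma$ to $\tilde\sigma\in\aut(\alg{M}/A)$, we have $P_i(x,\sigma(\bar b))=\sigma(u)\prod_j(x-\tilde\sigma(c_j))$; since $\rv_n(u)\in\RV(M)$ is $\sigma$-fixed, the comparison reduces to that of the Galois-symmetrized products $\prod_j\rv_\infty(a-c_j)$ and $\prod_j\rv_\infty(a-\tilde\sigma(c_j))$ over $\RV(M)$. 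The $\aut(M/A)$-invariance of $\qftp(a/M)$ extends canonically to the unique $\cL_0$-type over $\alg{M}$ and so preserves all $\cL_0$-data relating $a$ to the roots; in particular $v(a-c_j)=v(a-\tilde\sigma(c_j))$. The remaining fine content of $\rv_n(a-c_j)$ lives in a fiber of the short exact sequence $1\to\Res_n^\times\to\RV_n^\times\to\vg^\times\to 0$ and, following the idea announced in the paper's introduction for this section, can be captured as the reduction of $a-c_j$ in $s/\ell\Mid s$ for a suitable lattice $s\in\Lat(\dcl_0(A))$, an element of $\Lin_A$ which, after Galois symmetrization, is controlled by the $\Lin_A(M)$-data that $\sigma$ fixes pointwise.

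The main obstacle is this last construction: since $v(a-c_j)$ is not in general in $\vg(\dcl_0(A))$, the natural rank-one lattice of radius $v(a-c_j)$ around $0$ is not itself $A$-definable. The workaround will be to use coarser $A$-definable lattices, compensating at the level of the Galois-symmetric products that $\RV(M[a])$ actually sees; the hypothesis \Infres{} prevents residue-field collapse, and in mixed characteristic the full projective system of residue rings $\Res_\ell$ is essential to separate all $\rv_n$-data for $n\in\Zz_{>0}$. Granting this construction, both Galois-symmetrized products reduce to elements of $\Lin_A(M)$ attached to the same $A$-definable sublattice and are thus $\sigma$-fixed; combined with the preserved $\cL_0$-data, this yields the desired $\LRV$-type equality and hence the invariance of $\tp(a/M)$.
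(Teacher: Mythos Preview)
Your reduction via field quantifier elimination and polynomial factorization is the right starting point, and matches the spirit of \cref{inv crit} and \cref{red irr,calcul} in the paper. However, the proposal has a genuine gap at exactly the place you flag: you never construct the ``encoding in $\Lin_A$'' that would control the fine $\RV$-content of $\rv_n(a-c_j)$, and you explicitly write ``Granting this construction\ldots''. The obstacle you name is real: the natural lattice attached to $v(a-c_j)$ is not $A$-definable, and there is no direct Galois-symmetrization trick that fixes this while staying inside $\Lin_A(M)$. In particular, your claim that the symmetrized product ``reduces to elements of $\Lin_A(M)$ attached to the same $A$-definable sublattice'' is not substantiated, and in the closed-ball case (where $a$ is generic in a closed $\val_\infty$-ball over $M$) it is not clear any such $A$-definable lattice exists.

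The paper's proof takes a structurally different route. It does \emph{not} try to encode $\rv_\infty(a-c)$ directly into $\Lin_A$. Instead it proceeds by transitivity through intermediate models. First (\cref{rel one,high dim}) one shows that $\rv_\infty(M(a))$ is generated over $\rv_\infty(M)$ by $\rho(a)$ for a pro-$\cL_0(\hA)$-definable $\rho$, where $\hA$ is a set containing a realization of every $\cL(A)$-type; this yields only $\aut(M/\hA\RV)$-invariance (\cref{RV inv}). The passage from $\hA$ down to $A$ is done in \cref{lift RV,inv model,inv sat,lift stdom,lift Tn,inv lift}: one constructs $C\subseteq\K(N)$ with $A\subseteq\dcl_0(C)$ whose type over $M$ has $\aut(M/A\RV(M)\Lin_A(M))$-invariant $\RV$-germs, then a saturated model $\hC$ over $C$ with invariant type, and concludes by two applications of transitivity (\cref{inv trans}). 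The appearance of $\Lin_A$ is in \cref{lift stdom}, where bases of lattices $s\in\Lat(A)$ are chosen generically via $\eta_{\Val}^{\otimes n^2}$; this is also precisely where \Infres{} is used (to realize that generic type), not for any ``residue-field collapse'' argument. Your sketch contains none of this resolution/transitivity machinery, and without it the gap cannot be closed along the lines you indicate.
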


\subsection{Invariance and stably embedded sets}

Note that we consider invariance over large subsets of our model --- that happen to be the points of some stably embedded definable sets. This gives rise to some subtle issues and two notions of invariance. When \(D = \bigcup_i D_i\) is ind-\(\cL\)-definable, we denote by \(\eq{D}\) the ind-\(\cL\)-definable union of all \(\cL\)-interpretable sets \(X\) that admit an \(\cL\)-definable surjection \(\prod_j D_{i_j} \to X\).

\begin{definition}Let \(M\) be an \(\cL\)-structure, \(C \subseteq M\), \(D\) be a (ind-)\(\cL\)-definable set and \(p\) be a partial \(\cL(M)\)-type. 
We say that \(p\):
\begin{itemize}
\item is \emph{\(\aut(M/C)\)-invariant} if for every \(\sigma\in\aut(M/C)\), \(p\) and \(\sigma(p)\) are equivalent.
\item has \emph{\(\aut(M/C)\)-invariant \(D\)-germs} if it is \(\aut(M/C)\)-invariant and so is the \(p\)-germ of every \(\cL(M)\)-definable map \(f : p \to \eq{D}\);
\item is \emph{\(\aut(M/D)\)-invariant} if it has \(\aut(M/D(M))\)-invariant \(D\)-germs.
\end{itemize}
\end{definition}

We will only apply these notions for \(M\) saturated, \(p\) a complete $\Delta$-type for some set of \(\cL\)-formulas $\Delta$ and \(C\) equal to the $M$-points of a stably embedded (ind-)$\cL$-definable set, \(D\) stably embedded --- an ind-$\cL$-definable set \(D = \bigcup_i D_i\) is stably embedded if any definable \(X \subseteq \prod_j D_{i_j}\) is definable with parameters from \(D\).

\begin{remark}
\begin{enumerate}
\item A type might be \(\aut(M/D(M))\)-invariant but not \(\aut(M/D)\)-invariant. For example, let \(M\models\ACVF\) and \(b\) be a closed ball of \(M\) without any \(\acl(\code{b}\RV_1(M))\)-definable subballs. Then any two \(a_1,a_2\in b(M)\) have the same type over \(\acl(\code{b}\RV_1(M))\). However, for every \(x\in b\), \(\rv_1(x - a_1) = \rv_1(x-a_2)\) implies that the \(a_i\) are in the same maximal open subball of \(b\). It follows that the generic of \(b\) over \(M\) is \(\aut(M/\code{b})\)-invariant but not \(\aut(M/\code{b}\RV_1)\)-invariant.
\item A type \(p\in\TP(M)\) is \(\aut(M/C)\)-invariant if and only if for every realization \(a\models p\) in a sufficiently homogeneous \(N\supsel M\), any \(\sigma\in\aut(M/C)\) extends to an element of \(\aut(N/Ca)\).
\item On the other hand, a type \(p\in\TP(M)\) has \(\aut(M/C)\) invariant \(D\)-germs, where \(D\) is stably embedded, if and only if for every \(a\models p\) in a sufficiently saturated \(N\supsel M\), any \(\sigma\in\aut(M/C)\) extends to an element of \(\aut(N/CD(N)a)\) --- \emph{cf.} the proof of \cref{inv trans}.
\item The space of types with \(\aut(M/C)\)-invariant \(D\)-germs is closed: for any \(\sigma\in\aut(M/C)\) and \(\cL(M)\)-definable map \(f : p \to \eq{D}\), no types in the open set "\(\germ{p}{f} \neq \germ{p}{f^\sigma}\)" has \(\aut(M/C)\)-invariant \(D\)-germs.
\end{enumerate}
\end{remark}

Let us now recall the following folklore result on stable embeddedness which 
states that we can recover the usual characterisation of types and hence of definable closure (equivalently internality) from invariance over a stably embedded definable set:



\begin{lemma}\label{ste aut}
Let \(M\) be saturated sufficiently large, \(D\) be (ind-)\(\cL\)-definable stably embedded and \(e\in M\). If \(e\) is fixed by every \(\sigma\in\aut(M/D(M))\), then \(e\in\dcl(D(M))\).
\end{lemma}

\begin{proof}
Let $e'\in M$ be such that \(e\equiv_{D(M)} e'\). By \cite[Lemma~10.1.5]{TenZie} --- more precisely its extension \emph{mutatis mutandis} to stably embedded ind-definable sets --- we can find \(\sigma\in\aut(M/D(M))\) such that \(e' = \sigma(e) = e\). Since \(D\) is stably embedded, there exists a small \(A\subseteq D(M)\) such that, \(\tp(e/A)\vdash \tp(e/D(M))\). So both types have a single realisation in \(M\), \emph{i.e.}, $e\in\dcl(A) \subseteq \dcl(D(M))$.
\end{proof}

One advantage of the stronger notion of invariance is transitivity:

\begin{lemma}\label{inv trans}
Let \(M\subsel N\) be \(\cL\)-structures with \(N\) saturated and sufficiently large, \(C \subseteq M\) (potentially large), \(D\) be an (ind-)\(\cL\)-definable stably embedded set, \(p\in\TP(M)\) have \(\aut(M/C)\)-invariant \(D\)-germs, \(a\models p\) in \(N\) and \(q\in\TP(N)\) be \(\aut(N/CD(N)a)\)-invariant. Then \(\restr{q}{M}\) is \(\aut(M/C)\)-invariant.

If, moreover, \(q\) has \(\aut(N/CD(N)a)\)-invariant \(E\)-germs, for some (ind-)\(\cL\)-definable set \(E\), then \(\restr{q}{M}\) has \(\aut(M/C)\)-invariant \(E\)-germs.
\end{lemma}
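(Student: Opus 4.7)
The plan is to reduce invariance of $\restr{q}{M}$ under $\aut(M/C)$ to the given invariance of $q$ by extending each $\sigma\in\aut(M/C)$ to some $\tilde{\sigma}\in\aut(N/CD(N)a)$. Once such an extension exists, $\tilde{\sigma}(q)=q$ by hypothesis; since $\tilde{\sigma}$ agrees with $\sigma$ on $M$, restricting to $\cL(M)$-formulas gives $\sigma(\restr{q}{M})=\restr{q}{M}$. In particular, this very construction also establishes point (3) of the remark just before the statement, as advertised.

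The core step is thus the construction of $\tilde{\sigma}$, and this is also where the hard work lies. By saturation and sufficient homogeneity of $N$, it is enough to show that the partial map $\sigma\cup\id_{D(N)\cup\{a\}}$ is $\cL$-elementary, i.e.\ that for any $\cL$-formula $\phi(y,x,z)$ with $y$ of sort $D$, any tuple $m$ from $M$ and any tuple $d$ from $D(N)$, one has $\models\phi(d,m,a)$ iff $\models\phi(d,\sigma(m),a)$. The (uniform) form of stable embeddedness of $D$ --- valid in a saturated structure, as in \cite[Appendix]{ChaHru-ACFA} --- provides an $\cL$-formula $\psi(y,w)$, with $w$ of sort $\eq{D}$, and an $\cL$-definable function $f$ such that $\phi(y,x,z)\leftrightarrow\psi(y,f(x,z))$. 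Then $x_0\mapsto f(m,x_0)$ is an $\cL(M)$-definable map from $p$ to $\eq{D}$, and, as $f$ is $\cL$-definable, its $\sigma$-conjugate equals $x_0\mapsto f(\sigma(m),x_0)$. The hypothesis that $p$ has $\aut(M/C)$-invariant $D$-germs therefore yields $f(m,a)=f(\sigma(m),a)$, and the desired equivalence $\phi(d,m,a)\leftrightarrow\phi(d,\sigma(m),a)$ follows. This is precisely where germ invariance (rather than mere invariance of $p$) is indispensable: stable embeddedness introduces auxiliary parameters from $\eq{D}$, and only the stronger hypothesis controls them.

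For the moreover part, fix an $\cL(M)$-definable map $g:\restr{q}{M}\to\eq{E}$ and $\sigma\in\aut(M/C)$, and extend $\sigma$ to $\tilde{\sigma}\in\aut(N/CD(N)a)$ as above. Since $g$ is defined over $M$, one has $g^{\tilde{\sigma}}=g^{\sigma}$, and the assumed $\aut(N/CD(N)a)$-invariance of the $E$-germs of $q$ gives $\germ{q}{g}=\germ{q}{g^{\sigma}}$; equivalently, the formula $g(x)=g^{\sigma}(x)$ lies in $q$. As this formula is $\cL(M)$-definable, it belongs to $\restr{q}{M}$, proving $\germ{\restr{q}{M}}{g}=\germ{\restr{q}{M}}{g^{\sigma}}$ and completing the argument.
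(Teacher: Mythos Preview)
Your argument is correct and follows the same plan as the paper's: extend each $\sigma\in\aut(M/C)$ to some $\tilde\sigma\in\aut(N/CD(N)a)$ and then transport invariance of $q$ back to $\restr{q}{M}$. The paper phrases the key step as ``the map $\tau:M(a)\to M(a)$ induced by $\sigma$ is the identity on $\eq{D}(\dcl(M(a)))$ (by germ invariance), hence extends over $D(N)$ by stable embeddedness''; your unpacking via canonical parameters is exactly the same content.

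One small imprecision: you justify the extension of $\sigma\cup\id_{D(N)\cup\{a\}}$ to an automorphism of $N$ by ``saturation and sufficient homogeneity'', but the domain has size $|D(N)|$, which may equal $|N|$, so bare homogeneity is not enough. What is actually needed is the back-and-forth over a stably embedded set (the paper's \cref{ste aut}, essentially \cite[Appendix]{ChaHru-ACFA}, which you already invoke for the canonical-parameter form). This is a cosmetic fix, not a gap in the idea.
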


\begin{proof}
Fix \(\sigma \in \aut(M/C)\). Since \(p\) is \(\aut(M/C)\)-invariant, the
automorphism \(\sigma\) extends to a partial \(\cL\)-elementary isomorphism
\(\tau : M(a) \to M(a)\) fixing \(a\). Since \(\sigma\) fixes the germs of every
definable map from \(p\) to \(\eq{D}\), \(\tau\) induces the identity on
\(\eq{D}(\dcl(M(a)))\). It follows, since \(D\) is stably embedded that \(Ca\)
and \(\tau(Ca)\) have the same type over \(D(N)\) and hence that \(\tau\)
extends to an element of \(\aut(N/CD(N)a)\) --- \emph{cf.}
\cite[Lemma~10.1.5]{TenZie}. This automorphism \(\tau\) fixes \(q\). It follows
that \(\restr{q}{M}\) is fixed by \(\restr{\tau}{M} = \sigma\).

If, moreover, \(q\) has \(\aut(N/CD(N)a)\)-invariant \(E\)-germs, then \(\sigma = \restr{\tau}{M}\) fixes the \(\restr{q}{M}\)-germ of any \(\cL(M)\)-definable function into \(\eq{E}\).
\end{proof}

The core of our proof of \cref{inv} is the following variation on transitivity:

\begin{lemma}\label{inv crit}
Let \(M\subsel N \models T\), \(C\subseteq M\) potentially large, \(a\in\K^x(N)\) a (potentially infinite) tuple and \(\rho:\K^x\to \RV\) be pro-\(\cL_0(M)\)-definable. Assume that \(\rv_\infty(M(a)) \subseteq \dcl_0(C\rho(a))\) and that \(p := \qftp(a/M)\) and \(\germ{p}{\rho}\) are \(\aut(M/C)\)-invariant. Then \(\tp(a/M)\) has \(\aut(M/C)\)-invariant \(\RV\)-germs.
\end{lemma}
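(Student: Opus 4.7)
The plan is to extend any given $\sigma\in\aut(M/C)$ to an $\cL$-automorphism $\tilde\sigma\in\aut(N/C\RV(N)a)$; granting this, $\aut(M/C)$-invariance of $\tp(a/M)$ follows by applying $\tilde\sigma$ to an arbitrary $\cL(M)$-formula $\phi(a,m)$, and the germ invariance of any $\cL(M)$-definable $f:\K^x\to\eq{\RV}$ follows from the identity $f(a)=\tilde\sigma(f(a))=f^\sigma(a)$, since $\tilde\sigma$ fixes $a$ and $\RV(N)$ pointwise.

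To produce $\tilde\sigma$, I would define a partial map $\hat\sigma$ on the $\cL$-substructure of $N$ generated by $M\cup\{a\}\cup\RV(N)$, whose $\K$-sort equals $M(a)$ and $\RV$-sort equals $\RV(N)$ (the latter equality uses that $\rv_\infty(M(a))\subseteq\RV(N)$). Using $\aut(M/C)$-invariance of $p=\qftp(a/M)$, the assignment $m\mapsto\sigma(m)$, $a\mapsto a$ extends to a qf-$\cL_0$-elementary map $\hat\sigma_0:M(a)\to M(a)$ sending $f(a,m)\mapsto f(a,\sigma(m))$; set $\hat\sigma=\hat\sigma_0$ on $M(a)$ and $\hat\sigma=\id$ on $\RV(N)$. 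The crucial point is compatibility with the maps $\rv_n$: for $x=f(a,m)\in M(a)$ one must have $\rv_n(f(a,\sigma(m)))=\rv_n(f(a,m))$. Here the hypothesis $\rv_\infty(M(a))\subseteq\dcl_0(C\rho(a))$ gives an $\cL_0(C)$-definable function $h$ with $\rv_n(f(a,m))=h(\rho(a))$; this equality is witnessed by a qf-$\cL_0(M)$-formula in $p$, and applying $\sigma$ to that formula — using that $\sigma$ fixes $C$ and that $\germ{p}{\rho}$ is $\aut(M/C)$-invariant, so $h\circ\rho$ and $h\circ\rho^\sigma$ agree on $p$ — yields $\rv_n(f(a,\sigma(m)))=h(\rho(a))$, as required.

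Once well-definedness is established, Fact~\ref{EQ rv} (field-quantifier elimination relative to $\RV$) immediately gives $\cL$-elementarity of $\hat\sigma$: qf-$\cL_0$-elementarity on the $\K$-sort is the construction of $\hat\sigma_0$, while $\cL$-elementarity on the $\RV$-sort is trivial since $\hat\sigma$ is the identity there. Using sufficient saturation of $N$, $\hat\sigma$ extends to an automorphism $\tilde\sigma\in\aut(N)$, which by construction lies in $\aut(N/C\RV(N)a)$ and restricts to $\sigma$ on $M$, completing the argument. The main obstacle is precisely the well-definedness step, where the conjunction of the two hypotheses — the internality $\rv_\infty(M(a))\subseteq\dcl_0(C\rho(a))$ and the germ invariance of $\rho$ — is needed to make the identifications in $\RV$ pass coherently through $\hat\sigma$.
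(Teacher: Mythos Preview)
Your argument is correct and follows essentially the same route as the paper: build the $\cL_0$-isomorphism $\hat\sigma_0$ on $M(a)$ extending $\sigma$ and fixing $a$, verify it induces the identity on $\rv_\infty(M(a))$ via germ-invariance of $\rho$ and the hypothesis $\rv_\infty(M(a))\subseteq\dcl_0(C\rho(a))$, and then invoke field quantifier elimination. One minor caveat: extending $\hat\sigma$ to $\tilde\sigma\in\aut(N)$ uses saturation of $N$ (not assumed in the lemma --- the paper instead passes to a saturated $N_0\models\ACVF$) and, since $\operatorname{dom}(\hat\sigma)=M(a)\cup\RV(N)$ has full size, implicitly requires stable embeddedness of $\RV$ rather than mere homogeneity; but this step is in any case unnecessary, as the $\cL$-elementarity of $\hat\sigma$ you establish already yields both conclusions directly.
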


\begin{proof}
Pick \(\sigma\in\aut(M/C)\). Let \(N_0 \models\ACVF\) containing $N$ be
saturated and sufficiently large. By invariance of \(p\), the automorphism
\(\sigma\) extends to a partial \(\cL_0\)-isomorphism \(\tau : M(a) \to M(a)\)
fixing \(a\). Note that \(\tau(\rho(a)) = \rho^\sigma(a) = \rho(a)\) and hence
\(\restr{\tau}{(\rv_\infty(M(a)))}\) is the identity. By quantifier elimination
in \(\ACVF\) (in \(\LRV\), which implies a strong form of stable embeddedness
for \(\RV\)), $\tau$ extends to a partial elementary map which is the identity on
$\RV(N_0)$, which further extends to some element of
\(\aut(N_0/C\RV(N_0)a)\), also denoted \(\tau\) --- \emph{cf.}
\cite[Lemma~10.1.5]{TenZie}. By \cref{EQ rv}, \(\tp(Ma) = \tp(\sigma(M)a)\),
\emph{i.e.}, \(\sigma(p) = p\). Moreover, any \(\cL(Ma)\)-definable \(X\subseteq
\RV^n\) is \(\cL(\rv_\infty(M(a)))\)-definable and hence \(X(N) = \tau(X(N)) =
X^\tau(N)\), equivalently, \(\sigma\) fixes the \(p\)-germ of any
\(\cL(M)\)-definable function into \(\eq{\RV}\).
\end{proof}

\subsection{Computing leading terms}\label{comput rv}

In view of \cref{inv crit}, given any $A$-invariant type $\tp_0(a/M)$, we want to find a pro-$\cL_0(M)$-definable map $\rho$ such that $\rho(a)$ \(\dcl_0\)-generates $\rv_\infty(M(a))$ and \(\germ{p}{\rho}\) is $\aut(M/A)$-invariants. When $A \subsel M$ and \(A\) is sufficiently large, this is done in \cref{high dim}. As previously stated, for general small \(A\), dealing with closed balls forces us to also consider maps into certain \(A\)-definable \(\res\)-vector spaces. The goal then becomes to build a "nice" model of $T$ containing $A$ and proceed by transitivity.

The technical core of the proof consists in a generalisation to relative arity one of the classical description of \(1\)-types in henselian fields, cf. \cref{open,closed,finite}.\medskip

Let us start with three leading term computations that we will need later. 
\begin{lemma}\label{calcul}
Let \(M\models\ACVF\). Let \(L\leq K = \K(M)\) be a subfield and \(R\subseteq \rv_1(K)\) contain \(\rv_1(L)\). Let also \(b\in\balls(K)\), \(g\in\K(\dcl_0(L R))\), \(c\in K\) and \(P := \prod_{i< d}(x-e_i) \in\K(\dcl_0(LR))[x]\).
\begin{enumerate}
\item\label{out} If \(c,g\in b\) and \(e_i\nin b\), for all \(i\), then \(\rv_1(P(c)) = \rv_1(P(g))\in 
\dcl_0(R)\).
\item\label{in} If \(c\nin b\) and \(g, e_i\in b\), for all \(i\). then \(\rv_1(P(c)) = \rv_1(c-g)^d\).
\item\label{sep} Assume that \(b\) is closed, that \(c,g,e_i\in b\), for all \(i\), and that the maximal open 
subball of \(b\) around \(c\) does not contain \(g\) nor any \(e_i\). Then \[\rv_1(P(c)) = \bigoplus_{i\leq d}\rv_1(P_i(g))\rv_1(c-g)^{i}\in\dcl_0(R\rv_1(c-g)),\]
where \(P(y+x) = \sum_i P_i(y) x^i\). In particular, the sum is well-defined.
\end{enumerate}
\end{lemma}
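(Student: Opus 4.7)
All three parts are computations of $\rv_1$ of a polynomial evaluation, and the underlying tool in each case is the elementary observation that $\rv_1(u) = \rv_1(v)$ whenever $\val(u - v) > \val(v)$. The three cases correspond to three different configurations of $c, g, e_i$ relative to the ball $b$, and the first two fall out of this observation while the third requires a reduction to the residue field.

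For (1), since $c, g \in b$ and $e_i \notin b$ (open or closed case alike), a direct valuation estimate yields $\val(c - g) > \val(g - e_i)$ for every $i$, hence $\rv_1(c - e_i) = \rv_1(g - e_i)$. Taking the product over $i$ gives $\rv_1(P(c)) = \rv_1(P(g))$. For membership in $\dcl_0(R)$: the common value $\rv_1(g - e_i)$ equals $\rv_1(b - e_i)$ in the notation of the paper and depends only on the disjoint pair $(b, \{e_i\}) \in \dcl_0(LR)$; together with the hypothesis $\rv_1(L) \leq R$, this places $\rv_1(P(g)) \in \dcl_0(R)$.

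For (2), the situation is dual: now $g, e_i \in b$ and $c \notin b$ give $\val(g - e_i) > \val(c - g)$ for each $i$, so $\rv_1(c - e_i) = \rv_1(c - g)$, and multiplying over $i$ yields $\rv_1(P(c)) = \rv_1(c - g)^d$.

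For (3), all the points $c, g, e_i$ lie in the same closed ball $b$, so the direct approach fails, and the strategy will be a reduction to $\res$. Choose $t \in \K$ with $\val(t) = \rad(b)$ and set $z_0 := (c - g)/t$ and $\epsilon_i := (e_i - g)/t$; all of these lie in $\Val$. The assumption that the maximal open subball of $b$ around $c$ contains neither $g$ nor any $e_i$ translates into $\val(c - g) = \val(e_i - c) = \rad(b)$, hence into $\bar z_0 \neq 0$ and $\bar z_0 - \bar \epsilon_i \neq 0$ in $\res$. Writing $\prod_i(X - \epsilon_i) = \sum_i \tilde a_i X^i \in \Val[X]$, with residues $\bar{\tilde a_i}$ equal to the coefficients of $\bar P(X) := \prod_i(X - \bar\epsilon_i) \in \res[X]$, one has $\bar P(\bar z_0) \neq 0$. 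Since $P(g + tz) = t^d \prod_i(z - \epsilon_i) = t^d \sum_i \tilde a_i z^i$, evaluating at $z = z_0$ gives $\val(P(c)) = d \cdot \rad(b)$. Comparing with the Taylor expansion $P(g + h) = \sum_i P_i(g) h^i$ one reads off $P_i(g) = t^{d-i} \tilde a_i$, so
\[
\val\bigl(P_i(g)(c - g)^i\bigr) = d \cdot \rad(b) + \val(\tilde a_i) \geq d \cdot \rad(b),
\]
with equality whenever the $i$-th coefficient of $\bar P$ is nonzero, which occurs for at least one $i$. Thus the minimum of the summand valuations equals the valuation of the sum $\sum_i P_i(g)(c-g)^i = P(c)$: there is no leading-term cancellation, the $\oplus$-sum $\bigoplus_i \rv_1(P_i(g))\rv_1(c-g)^i$ is well-defined, and it equals $\rv_1(P(c))$. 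The membership in $\dcl_0(R \rv_1(c-g))$ then follows: by the argument of~(1), each $\rv_1(P_i(g))$ lies in $\dcl_0(R)$, so after adjoining $\rv_1(c-g)$ each summand and hence their $\oplus$-sum lies in $\dcl_0(R\rv_1(c-g))$.

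The main obstacle is the bookkeeping in part (3), specifically verifying the absence of cancellation in the Taylor sum; the residue-field reduction via the uniformizer $t$ handles this cleanly by turning the non-cancellation into the statement that $\bar P(\bar z_0) \neq 0$, which is exactly the geometric hypothesis.
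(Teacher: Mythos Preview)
Your proposal is correct and follows essentially the same route as the paper: factor $P$ into linear terms, use the ultrametric inequality to identify $\rv_1(c-e_i)$ with $\rv_1(g-e_i)$ (part~1) or $\rv_1(c-g)$ (part~2), and in part~3 perform a residue-field reduction to show the Taylor sum has no cancellation. The only difference in part~3 is cosmetic: you introduce an auxiliary $t$ with $\val(t)=\rad(b)$, whereas the paper uses $(c-g)$ itself (which has the same valuation) via $Q(x)=P((c-g)x+g)/(c-g)^d$, evaluating at $x=1$.

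One small slip in your part~(1): you justify $\rv_1(P(g))\in\dcl_0(R)$ by saying the value depends only on the pair $(b,\{e_i\})\in\dcl_0(LR)$, but $b$ is an arbitrary ball in $\balls(K)$ and need not lie in $\dcl_0(LR)$. The paper's argument avoids $b$ entirely: since $g\in\K(\dcl_0(LR))$ and $P\in\K(\dcl_0(LR))[x]$, one has $P(g)\in\K(\dcl_0(LR))$, hence $\rv_1(P(g))\in\RV_1(\dcl_0(LR))\subseteq\dcl_0(\rv_1(L)R)=\dcl_0(R)$, the inclusion coming from quantifier elimination for $\ACVF$ in $\LRV$. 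The same remark applies to your justification that $\rv_1(P_i(g))\in\dcl_0(R)$ in part~(3).
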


In fact, computation (2) is a particular case of computation (3) --- consider
the smallest closed ball containing \(c\), \(g\) and the \(e_i\). Recall that if
\(a_1\) and \(a_2\) are in some ball \(b\) that does not contain \(c\),
\(\rv_1(c - a_1) = \rv_1(c-a_2)\) --- \emph{cf}.~\cref{rv ball}.

\begin{proof}
Let us first prove (\ref{out}). We have \(\rv_1(P(c)) = \prod_i\rv_1(c-e_i) = \prod_i\rv_1(g-e_i) = \rv_1(P(g))\in \RV_1(\dcl_0(LR)) \subseteq \dcl_0(\rv_1(L) R) = \dcl_0(R)\), where the inclusion follows from quantifier elimination for \(\ACVF\) in \(\LRV\). As for (\ref{in}), we have \(\rv_1(P(c)) = \prod_i\rv_1(c-e_i) = \rv_1(c-g)^d\). 
Finally, in the case of (\ref{sep}), let \(Q(x) = P((c-g)x + g)/(c-g)^d\) and \(\sum_i Q_i(y)x^i = Q(x+y)\). The roots \((e_i-g)/(c-g)\) of \(Q\) are in \(\Val\). Thus \(Q\in\Val[x]\), \(Q_i(0) = P_i(g)(c-g)^{i-d}\in\Val\) and \(\val(Q(1)) = 0\).  We have:
\begin{align*}
\rv_1(P(c))
&= \rv_1(c-g)^d\resf(Q(1))\\
&= \rv_1(c-g)^d(\sum_i \resf(Q_i(0)))\\
&= \rv_1(c-g)^d(\bigoplus_i \rv_1(Q_i(0)))\\
&= \bigoplus_{i\leq d}\rv_1(P^{(i)}(g))\rv_1(c-g)^{i}\\
&\in \RV_1(\dcl_0(LR\rv_1(c-g)))\\
&\subseteq \dcl_0(R\rv_1(c-g)),
\end{align*}
where the third equality follows from the fact that \(\val(\sum_i Q_i(0)) = \val(Q(1)) = 0 \leq \min_i \{\val(Q_i(0))\}\leq \val(\sum_i Q_i(0)) \).
\end{proof}

\begin{remark}\label{infinity use}
In mixed characteristic, we will be applying this result to the least equicharacteristic zero coarsening, yielding a computation for \(\rv_\infty\) and not just \(\rv_1\).
\end{remark}

Essentially every computation of leading terms reduces to the above cases by the following lemma.

\begin{lemma}%
\label{red irr}
Let \(M\models\ACVF\), \(L\leq K = \K(M)\), \(c\in K\) and \(\rv_\infty(L)\leq R
= \RV(\dcl_0(R)) \leq\rv_\infty(K)\). The following are equivalent:
\begin{enumerate}
\item \(\rv_\infty(L(c))\subseteq R\);
\item For every \(P\in\K(\dcl_0(LR))[x]\) which is monic and irreducible over
$\K(\dcl_0(LR))$, we have \(\rv_\infty(P(c))\in R\).
\end{enumerate}
Moreover if \(P\in\K(\dcl_0(LR))[x]\) is irreducible, then its roots are either
all inside or outside any \(B\in\fballs{<\infty}(\dcl_0(LR))\).
\end{lemma}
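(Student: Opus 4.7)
The plan is to combine multiplicativity of \(\rv_\infty\) with the stable embeddedness of \(\RV\) in \(\ACVF\), which follows from \(\LRV\)-quantifier elimination (instance of \cref{EQ rv}). The preliminary observation is that, since each \(\RV_n\) is stably embedded as a pure \(\LRV\)-structure, and \(R\) is \(\RV\)-closed with \(\rv_\infty(L) \subseteq R\), one has
\[\RV(\dcl[0](LR)) \subseteq \dcl[0](\rv_\infty(L)R) \subseteq R,\]
and similarly with \(L(c)\) in place of \(L\) as soon as \(\rv_\infty(L(c))\subseteq R\).

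The implication (1)\(\Rightarrow\)(2) then reduces to the remark that \(P(c) \in \K(\dcl[0](L(c)R))\), so \(\rv_\infty(P(c))\in\RV(\dcl[0](L(c)R)) \subseteq R\) by the preliminary observation applied to \(L(c)\). For (2)\(\Rightarrow\)(1), I would take an arbitrary element of \(L(c)\), write it as \(P(c)/Q(c)\) with \(P,Q \in L[x]\), and factor \(P = \alpha \prod_i P_i\) and \(Q = \beta \prod_j Q_j\) in \(\K(\dcl[0](LR))[x]\), with \(P_i, Q_j\) monic irreducible and \(\alpha,\beta \in \K(\dcl[0](LR))^\times\). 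By multiplicativity of \(\rv_\infty\), the problem splits: the \(\rv_\infty(P_i(c))\) and \(\rv_\infty(Q_j(c))\) lie in \(R\) by hypothesis (2), while \(\rv_\infty(\alpha),\rv_\infty(\beta) \in R\) by the preliminary observation.

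For the \enquote{moreover} clause, I would use Galois theory in the algebraically closed field \(M\): the roots of an irreducible \(P \in \K(\dcl[0](LR))[x]\) form a single orbit under \(\aut[\cL_0](M/\dcl[0](LR))\), and any such automorphism setwise fixes every finite set of balls \(B\in\fballs{<\infty}(\dcl[0](LR))\), hence also setwise fixes \(\points{B}\). The orbit is thus either entirely contained in \(\points{B}\) or entirely disjoint from it. No step looks genuinely hard; the only point requiring a moment's thought is that \(\RV_\infty\) is only pro-definable, but stable embeddedness for each \(\RV_n\) passes to the inverse limit coordinatewise, so all the arguments above go through verbatim.
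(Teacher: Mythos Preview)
Your proof is correct and follows essentially the same route as the paper: both directions rest on stable embeddedness of \(\RV\) (via \(\LRV\)-quantifier elimination) together with multiplicativity of \(\rv_\infty\), and your Galois-orbit argument for the \enquote{moreover} clause is the automorphism-side version of the paper's construction of the divisor \(Q=\prod_{e\in\points{B}}(x-e)\mid P\) in \(\K(\dcl_0(LR))[x]\). One cosmetic simplification: since \(P,Q\in L[x]\), the leading coefficients \(\alpha,\beta\) already lie in \(L\), so \(\rv_\infty(\alpha),\rv_\infty(\beta)\in\rv_\infty(L)\subseteq R\) directly, without needing the preliminary observation.
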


\begin{proof}
By (1), \(\rv_\infty(P(c))\in\RV(\dcl_0(LRc)) \subseteq\RV(\dcl_0(\rv_\infty(L(c)) R)) = R\). The converse is a consequence of the fact that \(\rv_\infty\) is a multiplicative morphism and any polynomial over \(L\) is a product of (an element of \(L\) and) monic irreducible polynomials over \(\K(\dcl_0(LR))\).

As for the moreover statement, fix some \(B \in\fballs{<\infty}(\dcl_0(LR))\). Let \(E\) be the set of roots of \(P\) that belong to \(B\) and \(Q = \prod_{e\in E} (x-e) \in\K(\dcl_0(LR))[x]\). If \(P\) is irreducible, then \(Q = P\) or \(Q=1\).
\end{proof}

One last important ingredient --- also ubiquitous in the development of motivic
integration (\emph{e.g.}, in \cite{HruKaz}) --- is the fact that, in
characteristic zero, finite sets of points (and of balls in equicharacteristic
zero) can be canonically parametrised by \(\RV\). Recall the definition of
\(\fballs{r}\) from \cref{fballs} and that of $b[n]$ and $B[n]$ from
\cref{preparation}.

\begin{lemma}\label{param finite}
For every \(r\in\Zz_{>0}\), there exists \(m\in\Zz_{>0}\) such that for every characteristic zero valued field \(L\) and \(B\in \fballs{r}(L)\) with \(\card{\lball{B}{m}} = r\), there exists an \(\cL_0(\code{B})\)-definable injection \(\nu : B \to \RV^n\).
\end{lemma}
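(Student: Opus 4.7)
I first unpack the hypothesis: since all balls in $B$ share the same radius $\gamma$ and type, $|\lball{B}{m}|=r$ means that the $m$-inflated balls are pairwise disjoint, equivalently $d(b,b')<\gamma-\val(m)$ for every pair of distinct $b,b'\in B$. The engine of the argument is the following observation: for any representatives $a\in b$, $a'\in b'$ with $b\neq b'$ in $B$, the class $\rv_m(a-a')\in\RV_m^\times$ depends only on the ordered pair $(b,b')$. Indeed, changing $a$ to another element of $b$ perturbs $a-a'$ by an element of valuation $\geq\gamma$, which by the hypothesis strictly exceeds $\val(a-a')+\val(m)=d(b,b')+\val(m)$, so the $\rv_m$-class is preserved; symmetrically for $a'$.

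Next, for each $b\in B$, I fix some $a_b\in b$ and $a_{b'}\in b'$ for each $b'\in B\setminus\{b\}$ and set
\[
\nu(b):=\bigl(\rv_m\bigl(\sigma_k((a_b-a_{b'})_{b'\neq b})\bigr)\bigr)_{1\leq k\leq r-1}\in\RV_m^{r-1},
\]
where $\sigma_k$ denotes the $k$-th elementary symmetric polynomial. Geometrically these are the $\rv_m$-classes of the coefficients of $P_B(x+a_b)/x=\prod_{b'\neq b}(x-(a_{b'}-a_b))$, with $P_B(y)=\prod_{b'\in B}(y-a_{b'})$. Well-definedness --- independence from the auxiliary choices --- is the first thing to verify: via Newton's identities one reduces the statement for $\sigma_k$ to the analogous one for the power sums $p_k=\sum_{b'\neq b}(a_b-a_{b'})^k$, where each summand has well-defined $\rv_m$-class by the preceding observation. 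Taking $m$ large enough as a function of $r$ (so in particular that $\val(m)$ dominates $\val(k!)$ for all $k\leq r$, absorbing both the denominators coming from Newton's identities and possible residue-field cancellations in the sums) makes the whole construction stable. The $\cL_0(\code{B})$-definability of $\nu$ is then immediate from the explicit formula, since all auxiliary choices are "quantifier-invariant" by well-definedness.

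Injectivity is the main obstacle. The strategy: if $\nu(b)=\nu(b')$ for some $b\neq b'$, then the polynomials $R_b(x)=\prod_{b''\neq b}(x-(a_{b''}-a_b))$ and $R_{b'}(x)=\prod_{b''\neq b'}(x-(a_{b''}-a_{b'}))$ have coefficients agreeing modulo $1+m\Mid$. Since the characteristic is zero, a Hensel-type argument recovers, from the $\rv_m$-coefficients of a monic polynomial, the multiset of $\rv_m$-classes of its roots (the roots are "simple" for this purpose, because they are well-separated by the hypothesis). Hence the multisets $\{\rv_m(a_{b''}-a_b):b''\neq b\}$ and $\{\rv_m(a_{b''}-a_{b'}):b''\neq b'\}$ coincide. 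But $\rv_m(a_{b'}-a_b)$ belongs to the first while $-\rv_m(a_{b'}-a_b)$ belongs to the second; tracing through the possible pairings of elements between these two multisets forces a contradiction, provided $m$ is chosen so that $-1\not\equiv 1\pmod{m\Mid}$ and, more generally, that the $\rv_m$-classes of the various pairwise differences in $B$ remain distinct. The delicate point is to pin down a single $m$, depending only on $r$, that simultaneously serves well-definedness, Hensel's lemma, and the separation of pairwise differences; in characteristic zero such an $m$ always exists.
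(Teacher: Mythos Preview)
There is a genuine gap in the well-definedness step. You reduce $\rv_m(\sigma_k)$ to the power sums $p_k=\sum_{b'\neq b}(a_b-a_{b'})^k$ and observe that each summand has a well-defined $\rv_m$-class; but this does \emph{not} imply that the sum does. If cancellation makes $\val(p_k)$ strictly larger than the minimum valuation of the summands, then perturbing a representative $a_b$ within its ball can change $\rv_m(p_k)$ arbitrarily. Your proposed remedy, ``take $m$ large enough to absorb residue-field cancellations'', cannot work uniformly: in equicharacteristic zero every $m\in\Zz_{>0}$ has $\val(m)=0$, so $\rv_m=\rv_1$ and the hypothesis $|B[m]|=r$ reduces to $|B|=r$; nothing is gained by increasing $m$. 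Concretely, in $\Cc((t))$ with $r=3$ and balls of radius $\gamma$ centred near $0$, $1$, $-1+t^N$ for $N<\gamma$, one has $p_1(b_0)\approx -t^N$, but replacing the representative $a_0=0$ by $a_0=t^M$ with $N<M\leq\gamma$ gives $p_1(b_0)\approx 2t^M-t^N$, whose $\rv_1$-class differs. Wait, here actually both give $\rv_1=-t^N$; the real failure is when the cancellation is total: take centres near $0$, $1$, $-1$, so $p_1(b_0)=2a_0-a_1-a_2$ has valuation governed entirely by the choice of representatives (it can be made to have any valuation $\geq$ the ball radius), hence $\rv_1(p_1)$ is not determined by $B$. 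No choice of $m$ depending only on $r$ can repair this, and the same phenomenon propagates to the $\sigma_k$ via Newton's identities.

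The paper avoids this entirely: it argues by induction on $r$, first reducing to equicharacteristic zero via the coarsening $\val_\infty$, then grouping the balls of $B$ according to which closed ball of radius $\gamma:=\max\{d(b_1,b_2)\mid b_i\in B\text{ distinct}\}$ contains them. Inside each such group the balls are distinguished by their images in the residue torsor $\Res_{b'}$, and the crucial $\code{B}$-definable datum is the \emph{average} $c_{b'}$ of these images (well-defined precisely because the residue characteristic is zero). The map $b\mapsto(\rv_1(b-c_{b'}),\mu(c_{b'}))$, with $\mu$ obtained by induction on the strictly smaller set $\{c_{b'}\}$, is then manifestly well-defined and injective. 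The averaging step is what replaces your symmetric-function idea, and it sidesteps the cancellation problem because one averages in a residue $\res$-line rather than summing in $\K$.
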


\begin{proof}
We proceed by induction on \(r\). If \(r=1\), take \(m=1\) and \(\nu\) to be constant equal to \(1\in\RV_1\). If \(\card{B}>1\), we may assume that \(\card{\lball{B}{m}} = r\) for all \(m\), the \namecref{param finite} will follow by compactness. Also, assuming that \(L\models\ACVF\) is sufficiently saturated and homogeneous, it suffices to find an \(\aut(L/\code{B})\)-invariant injection \(\nu : B \to (\RV_\infty)^n\). Indeed, since \(B\) is finite some projection to \(\RV^n\) is already injective and it must be definable. Finally, let \(\gamma := \max\{\val(b_1-b_2)\mid b_i\in B\) distinct\(\}\). Since \(\card{\lball{B}{m}} = r\) for all \(m\), \(\gamma<\rad(B) + \val(\Zz)\) and \(B\) can be injected in the set of open \(\cval\)-balls of radius \(\gamma/\Delta_\infty\). So we may assume that the residue characteristic of \(L\) is zero.

Let \(B'\) be the set of closed balls of radius \(\gamma\) around the balls of \(B\). By construction, we have \(\card{B'} < \card{B} = r\). For every \(b'\in B'\), let \(B_{b'} := \{b\in B \mid b\subseteq b'\}\). Note that, by hypothesis, \(\resf_{b'}(b)\in\Res_{b'} = \{\)maximal open subballs of \(b'\}\)  uniquely determines \(b\) inside $B$. Let \(c_{b'}\in\Res_{b'}\) denote the average of the \(\resf_{b'}(b)\) as \(b\) ranges over \(B_{b'}\). By induction, we find an \(\cL_0(\code{B})\)-definable injection \(\mu : \{c_{b'} \mid b'\in B'\} \to \RV^n\). For every \(b\in B\), let \(\nu(b) := (\rv_1(b - c_{b'}),\mu(c_{b'}))\) where \(b \subseteq b'\in B'\). Then \(\nu : B \to \RV^{n+1}\) is an \(\cL_0(\code{B})\)-definable injection.
\end{proof}

\begin{lemma}\label{pts balls}
For every \(r\in \Zz_{>0}\) there exists an \(m\in\Zz_{>0}\) such that for every characteristic zero valued field \(L\) and every \(B\in \fballs{r}(L)\) with \(\card{\lball{B}{m}} = r\), there exists \(g\in\fpoints{r}(L)\) with exactly one point inside each ball of \(\lball{B}{m}\).
\end{lemma}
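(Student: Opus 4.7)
The key to this statement is to read $\fpoints{r}(L)$ correctly. By Convention~\ref{A-points}, $\fpoints{r}(L) = \K^{[r]}\cap L$ denotes the $L$-rational points of the interpretable sort $\K^{[r]}$, which are the finite subsets of $\K$ of cardinality at most $r$ whose code lies in $L$ -- equivalently, the finite subsets of $\K(\alg{L})$ of cardinality at most $r$ that are setwise invariant under $\Gal(\alg{L}/L)$. The lemma therefore asks for a $\Gal(\alg{L}/L)$-equivariant system of representatives, one point per ball of $\lball{B}{m}$.

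The plan is to reduce to the case $|B|=1$ via a Galois orbit decomposition, then handle that case by an averaging argument. First, partition $B$ into its $\Gal(\alg{L}/L)$-orbits $B_1,\ldots,B_s$. Each orbit has size $k_i\leq r$; for a fixed representative $b_i\in B_i$ with stabilizer $H_i\leq\Gal(\alg{L}/L)$ of index $k_i$, the field $L_i := \alg{L}^{H_i}$ is a degree-$k_i$ extension of $L$ and $b_i$ has code in $L_i$. Producing the desired $g$ equivariantly reduces to picking, for each orbit, a single $H_i$-fixed point $\alpha_{b_i}\in b_i[m]$ -- equivalently an $L_i$-point of $b_i[m]$ -- and transporting by Galois. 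This reduces the problem to the case $|B|=1$ over $L_i$, uniformly for orbits of size $\leq r$.

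For the case $|B|=1$: given a ball $b$ with code in a valued field $K$ (here $K=L_i$), pick any $\alpha\in b$ algebraic over $K$, necessarily with $\Gal(\alg{K}/K)$-orbit contained in $b$ (since $b$ has $K$-code). Form the Galois average $\bar\alpha := d^{-1}\sum_{\sigma}\sigma(\alpha)\in K$, where $d$ is the orbit size; by the ultrametric inequality applied to $\bar\alpha-\alpha=d^{-1}\sum_{\sigma\neq\id}(\sigma(\alpha)-\alpha)$, one has $\val(\bar\alpha-\alpha)\geq\rad(b)-\val(d)$, so $\bar\alpha\in b[d]\cap K$. Transporting $\bar\alpha$ by representatives of $\Gal/H_i$ then produces the equivariant family $\{\alpha_b\}_{b\in B_i}$; taking the union over $i$ yields $g$. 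Since $\alpha_b\in b\subseteq b[m]$ for every $b$, and the hypothesis $|\lball{B}{m}|=r$ forces the balls in $\lball{B}{m}$ to be pairwise disjoint (distinct balls of the same radius and type), each $b[m]$ contains exactly one element of $g$, so $|g|=r$.

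The only real point to check is the uniformity of $m$ in $r$ (and not in $L$). The orbit sizes arising in the averaging -- the degree $d$ above, and the indices $k_i$ of the $H_i$ -- are all bounded in terms of $r$: the Galois action on $B$ is through its image in $\mathfrak{S}_r$, so $k_i\leq r$, and one chooses $\alpha$ in the $r=1$ step so that its orbit has the minimal size compatible with $b$ having $K$-code, which again is bounded by a function of $r$ (obtained by iterating the same construction on the smaller tower of balls inside $b$, as in the proof of \cref{param finite}). The integer $m=(r!)\cdot m_0$, where $m_0$ is the $m$ of \cref{param finite}, suffices and depends only on $r$; this combinatorial bookkeeping of orbit sizes is the only delicate point of the proof.
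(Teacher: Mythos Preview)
Your overall strategy --- decompose $B$ into Galois orbits, reduce to a single $L_i$-definable ball $b_i$, then average the conjugates of an algebraic point of $b_i$ to land in $b_i[d]\cap L_i$ --- is exactly the paper's approach (the paper phrases it in terms of irreducibility over $L$ rather than Galois orbits, but this is the same thing). Two points are missing, one minor and one genuine.

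The minor one: you assert ``pick any $\alpha\in b$ algebraic over $K$'' without justification. The paper isolates this as a separate claim: every ball $b\in\balls(\acl_0(L))$ satisfies $b(\acl_0(L))\neq\emptyset$. This is not completely trivial (one uses model completeness of $\ACVF$ when the valuation is non-trivial, and a direct argument otherwise), and it deserves a sentence.

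The genuine gap is in your uniformity argument. After reducing to a single $L_i$-definable ball $b$, the integer $d$ that enters the averaging is the size of the $\mathrm{Gal}(\alg{L_i}/L_i)$-orbit of your chosen $\alpha$, i.e.\ $[L_i(\alpha):L_i]$. This degree is \emph{not} bounded by any function of $r$: for $r=1$ the ball $b$ is already $L$-definable, there is no ``smaller tower of balls inside $b$'' to iterate on, and the analogy with \cref{param finite} breaks down (that lemma reduces $r$ balls to fewer balls, which gives nothing when $r=1$). Concretely, over $L=\Qq_2$ the open ball of radius $1/2$ around $\sqrt{2}$ is $L$-definable with no $L$-point, so already for $r=1$ one cannot take $m=1$; and nothing in your argument prevents the minimal degree of an algebraic point of $b$ over $L_i$ from being arbitrarily large. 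Your proposed bound $m=(r!)\cdot m_0$ is therefore unjustified. The paper does not attempt an explicit bound: it proves the statement for each fixed $L$ and $B$ (with $m$ depending on both), and then obtains the uniform $m$ depending only on $r$ by compactness. Replacing your final paragraph by that compactness step would complete the proof.
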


\begin{proof}
Let us start with a weaker version of the result:

\begin{claim}
For every \(b\in\balls(\acl_0(L))\), \(b(\acl_0(L)) \neq\emptyset\).
\end{claim}

\begin{proof}
We may assume that \(L\) is algebraically closed. If \(v(L)\neq 0\), \(L\models\ACVF\) and hence, by model completeness, \(b(L) \neq\emptyset\). If \(\val(L) = 0\), \(\rad(b)\in \vg(\dcl_0(L)) = \{0\}\). If \(0\in b\), we are done. Otherwise, \(\val(b) = 0\) and \(b\subseteq\Val\). So \(b\) is open and it is (interdefinable with) a residue element. But \(\res(\acl_0(L)) = \resf(L)\) and thus \(b(L)\neq\emptyset\).
\end{proof}

\begin{claim}
For every \(B\in \fballs{r}(\dcl_0(L))\), there  exists \(m\in\Zz_{>0}\) and \(g\in\fpoints{r}(\dcl_0(L))\) such that, if \(\card{\lball{B}{m}} = r\), there is exactly one point of \(g\) inside each ball of \(\lball{B}{m}\).
\end{claim}

\begin{proof}
We may assume that \(B\) is irreducible over \(L\) --- \emph{i.e.}, for any non-empty \(\cL_0(L)\)-definable \(C\subseteq B\), \(C = B\). For every \(b\in B\), let \(d\in b(\acl_0(L))\). Let \(D\) be \(\cL_0(L)\)-definable and irreducible over \(L\) containing \(d\) and let \(g_b\in\acl_0(L)\) be the average of \(D\cap b\). Then \(g_b\in b[m]\), where \(m := \card{D\cap b}\).  Let \(g\) be finite \(\cL_0(L)\)-definable set irreducible over \(L\) containing \(g_b\). Since \(\card{\lball{B}{m}} = r = \card{B}\), we get \(g \cap b[m] = \{g_b\}\).  By irreducibility, each ball of \(\lball{B}{m}\) contains exactly one element of \(g\).
\end{proof}

The lemma follows by compactness.
\end{proof}

Let \(\pballs{r}{x}\) denote the (ind-\(\cL_0\)-definable) set of \(\cL_0\)-definable maps \(F:\K^x \to \fballs{r}\) and \(\pballs{<\infty}{x}\) denote the (ind-\(\cL_0\)-definable)  set \(\bigcup_r\pballs{r}{x}\). Similarly we denote \(\ppoints{r}{x}\) the  (ind-\(\cL_0\)-definable) set of \(\cL_0\)-definable maps \(F:\K^x \to \fpoints{r}\) and \(\ppoints{<\infty}{x}\) the  (ind-\(\cL_0\)-definable) set \(\bigcup_r\ppoints{r}{x}\).

\begin{notation}
We fix \(M\subsel N\models T\), \(A\subseteq \eq{M}\), \(a\in\K^x(N)\) a
potentially infinite tuple and \(c\in\K(N)\) a single element. Assume that
\(p(xy) = \qftp(ac/M)\) is \(\aut(M/A)\)-invariant and let \(q := \qftp(a/M)\).
For every \(F,G\in\pballs{<\infty}{x}(M)\), we write \(F\leq_q G\) if
\(q(x)\vdash\points{F}(x)\subseteq\points{G}(x)\). Finally, let \(E :=
\{F\in\pballs{<\infty}{x}(M)\mid p(x,y)\vdash y\in \points{F}(x)\}\).
\end{notation}

In the following \Cref{finite,open,closed}, we will describe how \(\RV(M(ac))\) is generated depending on the shape of \(E\).

\begin{lemma}[Finite sets]
\label{finite}
Assume that \(E\) has a least element \(f\) for \(\leq_q\) and that
\(f\in\ppoints{r}{x}\). Then, there exists a pro-\(\cL_0(M)\)-definable map
\(\rho : \K^{xy} \to \RV^n\), whose \(p\)-germ is \(\aut(M/A) \)-invariant such
that \(\rv_\infty(M(ac))\subseteq\dcl_0(\rv_\infty(M(a))\rho(ac))\).
\end{lemma}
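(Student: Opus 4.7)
The plan is to take $\rho$ to be the canonical parametrisation of the finite set $f(a)$ of points by a tuple in $\RV^n$ provided by \cref{param finite}. Since $c$ lies in this set and the parametrisation is injective, $\rho(a,c)$ will pin down $c$ as an element of $\dcl_0(M(a)\rho(a,c))$, and the required inclusion on leading terms will then follow immediately from the fact that $\RV$ is stably embedded in $\ACVF$ (via field quantifier elimination in $\LRV$).

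More precisely: since $f$ takes values in sets of points and any point $e$ satisfies $\lball{e}{m}=\{e\}$, the identity $\card{\lball{f(x)}{m}}=\card{f(x)}$ holds unconditionally. After replacing $r$ by the value $r_0$ with $q(x)\vdash\card{f(x)}=r_0$ (legitimate because $q$ decides $\card{f(x)}$ and only finitely many values are possible), I may apply \cref{param finite} uniformly to $f(x)$ to obtain a $\cL_0(\code{f(x)})$-definable injection $\nu_{f(x)}\colon f(x)\to\RV^n$. Setting $\rho(x,y):=\nu_{f(x)}(y)$ on $\{(x,y):y\in f(x)\}$ (extended to be, say, $0$ elsewhere) produces an $\cL_0(M)$-definable, and a fortiori pro-$\cL_0(M)$-definable, map.

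For the invariance of the $p$-germ of $\rho$: any $\sigma\in\aut(M/A)$ fixes $p$, hence also $E$ and $\leq_q$, so $\sigma(f)$ is another $\leq_q$-minimum of $E$. Any two $\leq_q$-minima are $\leq_q$-equivalent; since $f$ takes point values, $\points{f}(x)=f(x)$, and this yields $q(x)\vdash f(x)=\sigma(f)(x)$ as equalities of finite sets of points. Hence $\nu_{f(x)}=\nu_{\sigma(f)(x)}$ on their common domain, and therefore $p(xy)\vdash\rho(xy)=\rho^\sigma(xy)$.

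For the generating inclusion: injectivity of $\nu_{f(a)}$ identifies $c$ as the unique element of $f(a)$ sent to $\rho(a,c)$, so $c\in\dcl_0(M(a)\rho(a,c))$ and thus $M(a,c)\subseteq\dcl_0(M(a)\rho(a,c))$. By field quantifier elimination for $\ACVF$ in $\LRV$ (\cref{EQ rv}), $\RV$ is stably embedded with induced structure the restriction of $\LRV$, so any $\RV$-element lying in $\dcl_0(M(a)\rho(a,c))$ in fact lies in $\dcl_0(\rv_\infty(M(a))\rho(a,c))$. Applied to $\rv_\infty(M(a,c))$, this yields the conclusion. This ``finite'' case is by far the simplest of the three cases to come (finite, open, closed); the only mildly subtle point is the derivation of germ invariance from uniqueness of the $\leq_q$-minimum, and it causes no real obstacle.
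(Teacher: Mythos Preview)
Your proposal is correct and follows essentially the same approach as the paper: define $\rho$ via the injection $\nu_{f(a)}$ from \cref{param finite}, deduce $\aut(M/A)$-invariance of $\germ{p}{\rho}$ from uniqueness of the $\leq_q$-minimum, and obtain the inclusion on leading terms from $c\in\dcl_0(Ma\rho(ac))$ together with stable embeddedness of $\RV$. Your treatment is slightly more explicit than the paper's (notably in checking the cardinality hypothesis of \cref{param finite} for point sets), but the argument is the same.
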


\begin{proof}
Let \(\rho(ac) = \nu_a(c)\), where \(\nu_a : f(a) \to \RV^n\) is the
\(\cL_0(f(a))\)-definable injection of \cref{param finite}. By invariance of
\(p\), for every \(\sigma\in\aut(M/A)\), \(c\in f^\sigma(a)\cap f(a)\). Since
\(f\) is the least element of \(E\), we have \(f^\sigma(a) = f(a)\) and hence
\(\germ{q}{f}\) --- and thus \(\germ{p}{\rho}\) --- is \(\aut(M/A)\)-invariant.
Moreover, since \(c\in\dcl_0(Ma\rho(ac))\), we have \(\rv_\infty(M(ac))
\subseteq \RV(\dcl_0(Ma\rho(ac))) \subseteq \dcl_0(\rv_\infty(Ma)\rho(ac))\).
\end{proof}

We now assume that \(E\cap\ppoints{<\infty}{x} = \emptyset\).

\begin{lemma}
\label{sep finite}
There exists a pro-\(\cL_0(M)\)-definable map
\(\nu : \K^{xy} \to \RV^\xi\), with \(\xi\) potentially infinite, whose
\(p\)-germ is \(\aut(M/A)\)-invariant such that, for every \(F\in E\), for some
\(m\in\Zz_{>0}\) which only depends on \(\card{F(a)}\), the ball \(b\in
\lball{F(a)}{m}\) containing \(c\) is \(\cL_0(Ma\nu(ac))\)-definable, and
$\nu(ac)\in\acl_0(Ma)$.
\end{lemma}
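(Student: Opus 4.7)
The plan is to define $\nu$ by combining, for every finite family of balls $F \in \pballs{<\infty}{x}(M)$, the canonical $\RV$-injection provided by \cref{param finite} applied to an appropriate enlargement of $F$. For each $r \in \Zz_{>0}$, let $m_r$ be the integer given by \cref{param finite}. Given $F \in \pballs{r_0}{x}(M)$ and $x \in \K^x$, iteratively set $B_0(x) := F(x)$ and $B_{i+1}(x) := \lball{B_i(x)}{m_{\card{B_i(x)}}}$ whenever this strictly decreases the cardinality of the set of balls, otherwise $B_{i+1}(x) := B_i(x)$. After at most $r_0$ steps this stabilises at some $B^*(F,x)$ satisfying the hypothesis $\card{\lball{B^*(F,x)}{m_{\card{B^*(F,x)}}}} = \card{B^*(F,x)}$ of \cref{param finite}, and the cumulative enlargement factor $m(F,x)$ is bounded by $\prod_{s \leq \card{F(x)}} m_s$, hence uniformly in $\card{F(x)}$. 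Since the procedure is parameter-free $\cL_0$-definable in $F$ and $x$, \cref{param finite} furnishes a uniformly $\cL_0$-definable injection $\mu_F : B^*(F,x) \to \RV^{n_F}$.

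We then define $\nu(x,y)$ to be the pro-tuple indexed by $F \in \pballs{<\infty}{x}(M)$ whose $F$-th coordinate is $\mu_F(b)$ if there is a unique ball $b \in B^*(F,x)$ with $y \in b$, and $0$ otherwise; this is pro-$\cL_0(M)$-definable into $\RV^\xi$ for some potentially infinite $\xi$. For any $F \in E$, $c \in \points{F(a)} \subseteq \points{B^*(F,a)}$, so the unique ball $b \in B^*(F,a) = \lball{F(a)}{m(F,a)}$ containing $c$ is well-defined; since $B^*(F,a)$ is a finite set in $\dcl_0(Ma)$, $b \in \acl_0(Ma)$, and hence $\nu(ac)_F = \mu_F(b) \in \acl_0(Ma)$. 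Injectivity of $\mu_F$ then ensures that $b$ is $\cL_0(Ma\nu(ac))$-definable, with $m(F,a) \leq \prod_{s \leq \card{F(a)}} m_s$ uniformly bounded in $\card{F(a)}$ as required.

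The $p$-germ of $\nu$ is $\aut(M/A)$-invariant: the construction $F \mapsto (B^*, \mu_F)$ is parameter-free $\cL_0$-definable (only $F$ itself enters as data), and the indexing set $\pballs{<\infty}{x}(M)$ is $\aut(M/A)$-invariant as a whole. Thus for any $\sigma \in \aut(M/A)$, the conjugate $\nu^\sigma$ is obtained from $\nu$ by the natural reindexing $F \mapsto F^\sigma$ of the coordinates, so $\germ{p}{\nu^\sigma}$ coincides with $\germ{p}{\nu}$ modulo this canonical identification. The main technical point is to formalise the iterative enlargement as a parameter-free $\cL_0$-definable procedure of bounded length; this is done by running exactly $r_0$ uniform steps, with the understanding that a step producing no strict decrease in cardinality is a no-op. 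The bookkeeping then gives the expected uniform bound on $m$ in terms of $\card{F(x)}$ alone, rather than $r_0$.
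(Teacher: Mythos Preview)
The construction you propose does not actually yield an \(\aut(M/A)\)-invariant \(p\)-germ, and the phrase ``modulo this canonical identification'' is precisely where the argument breaks. The \(p\)-germ of \(\nu\) being \(\aut(M/A)\)-invariant means, by definition, that for every \(\sigma\in\aut(M/A)\) one has \(p(xy)\vdash \nu(xy)=\nu^\sigma(xy)\) as tuples in a \emph{fixed} codomain \(\RV^\xi\). Your index set \(\xi=\pballs{<\infty}{x}(M)\) is merely permuted by \(\sigma\), and \(\nu^\sigma\) is obtained from \(\nu\) by the reindexing \(F\mapsto F^\sigma\) of coordinates; so invariance would force \(p(xy)\vdash \mu_F(b_F(xy))=\mu_{F^\sigma}(b_{F^\sigma}(xy))\) for every \(F\), which there is no reason to expect (distinct \(F\)'s in the same \(\aut(M/A)\)-orbit will generally produce distinct balls \(b\) and distinct \(\RV\)-values). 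Allowing a reindexing of the target is exactly what the later use of \(\nu\) (in \cref{inv crit}) does \emph{not} permit: one needs \(\tau(\rho(a))=\rho^\sigma(a)=\rho(a)\) literally.

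The paper circumvents this by indexing \(\nu=(\nu_r)_{r\geq 1}\) over the fixed, \(\sigma\)-invariant set \(\Zz_{>0}\). For each cardinality \(r\), one chooses any irreducible \(F\in E\cap\pballs{r}{x}\) with \(\card{\lball{F}{m_r}}=r\), and the key observation is that the resulting set \(B_r(a)\) of open balls of radius \(\gamma+\val(m_r)\) (where \(\gamma\) is the maximal pairwise distance in \(F(a)\)) does not depend on the choice of such \(F\): irreducibility forces any two such \(F\)'s to have the same pattern of distances. This canonicity is what gives \(\germ{q}{B_r}\)---and hence \(\germ{p}{\nu_r}\)---genuine \(\aut(M/A)\)-invariance. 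The final paragraph of the paper's proof then shows how an arbitrary \(F\in E\) is handled by enlarging \(\lball{F}{M^k}\) until its cardinality stabilises at some \(r\), so that the single coordinate \(\nu_r\) suffices to recover the relevant ball. Your iterative enlargement idea is close to this last step, but without the ``one coordinate per cardinality, made canonical via irreducibility'' device, invariance cannot be obtained.
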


\begin{proof}
For every \(r\in\Zz_{>0}\), let \(m_r\in\Zz_{>0}\) be as in \cref{param finite}. Let \(F\in E\cap \pballs{r}{x}\) be irreducible over \(q\) and such that \(\card{\lball{F}{m_r}} =r\) --- if such an \(F\) does not exist let \(\nu_r(x) = 1\). By irreducibility, for every \(G\in E\cap \pballs{r}{x}\) with \(G\leq_q F\), every ball in \(F(a)\) contains exactly one ball of \(G(a)\). In particular, neither \(\gamma= \max\{\val(b_1-b_2)\mid b_i \in F(a)\) distinct\(\}\) nor \(B_{r}(a)\), the set of open balls of radius \(\gamma + \val(m_r)\) around balls of \(F(a)\), depend on the choice of \(F\). It follows that \(\germ{q}{B_r}\) is \(\aut(M/A)\)-invariant. By construction, inclusion induces an injection \(F(a) \to B_r(a)\) and that $\card{B_r(a)}=\card{B_r(a)[m_r]}$.

Ss in \cref{param finite}, let \(\nu_r : B_r(a) \to \RV^n\) be an \(\cL_0(\code{B_r(a)})\)-definable injection. Let \(\nu_r(ac) = \nu_r(b)\) where \(c\in b\in B_r(a)\). Note that $\nu_r(ac)\in\acl_0(Ma)$. The element of \(F(a)\) containing \(c\) is uniquely determined by \(b\), and hence by \(\nu_r(ac)\); and \(\germ{p}{\nu_r}\) is \(\aut(M/A)\)-invariant by construction.

Let us now fix any \(F\in E\) that we can assume irreducible. Let \(M = \max\{m_s \mid s\leq\card{F(a)}\}\). The sequence \(\card{\lball{F(a)}{M^k}}\geq 1\) is decreasing, bounded by \(\card{F(a)}\) and hence, there exists \(k\leq \card{F(a)}\) such that \(\card{\lball{\lball{F(a)}{M^{k}}}{M}} = \card{\lball{F(a)}{M^{k+1}}} = \card{\lball{F(a)}{M^{k}}}\). Let \(r := \card{\lball{F}{M^k}}\). By the previous paragraphs and the choice of $M$, the ball \(b\in \lball{F(a)}{M^k}\) containing \(c\) is \(\cL_0(Ma\nu_r(ac))\)-definable. It follows that \(\nu = (\nu_r)_{r\geq1}\) has the required properties.
\end{proof}

We now wish to consider the case where, either \(E\) induces a strict intersection in the least equicharacteristic zero coarsening \(\val_\infty\) (case (1)), or \(c\) is generic over \(Ma\) in a finite set of open \(\val_\infty\)-balls (case (2)):

\begin{lemma}[Open and strict balls]\label{open}
Assume that one of the following holds:
\begin{enumerate}
\item For all \(F\in E\), there exists \(G\in E\) with \(G[m] <_q F\), for any \(m\in\Zz_{>0}\);
\item There exists an \(r\in\Zz_{>0}\) such that for every \(F\in E\) and \(m\in\Zz_{>0}\), there exists an open \(G\in E\cap\pballs{r}{x}\) with \(\lball{G}{m}\leq_q F\).
\end{enumerate}
Then, there is a pro-\(\cL_0(M)\)-definable map \(\rho : \K^{x y} \to \RV^\xi\) whose \(p\)-germ is \(\aut(M/A)\)-invariant and such that \(\rv_\infty(M(ac))\subseteq\dcl_0(\rv_\infty(M(a))\rho(ac))\).
\end{lemma}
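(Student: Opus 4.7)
The strategy is to construct $\rho$ so that, combined with $\rv_\infty(Ma)$, it determines $\rv_\infty(P(c))$ for every monic irreducible polynomial $P$ over $\K(\dcl_0(Ma\,\rv_\infty(Ma)\,\nu(ac)))$; by \cref{red irr} this suffices to obtain $\rv_\infty(M(ac))\subseteq \dcl_0(\rv_\infty(Ma)\,\rho(ac))$. The computation will use \cref{calcul}(\ref{out}) applied in the equicharacteristic zero coarsening (cf.\ \cref{infinity use}), replacing $c$ by a pro-$\cL_0(M)$-definable point $g$ lying in the same coarsened ball as $c$.

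The data of $\rho$ comes in two parts. First, \cref{sep finite} yields a pro-$\cL_0(M)$-definable $\nu : \K^{xy} \to \RV^\xi$ with $\aut(M/A)$-invariant $p$-germ such that, for every $F \in E$, the specific ball $b_F \in \lball{F(a)}{m_F}$ containing $c$ is $\cL_0(Ma\,\nu(ac))$-definable, with $m_F$ uniformly bounded in $\card{F(a)}$. Second, \cref{pts balls} provides pro-$\cL_0(M)$-definable $h_F : \K^x \to \fpoints{<\infty}$ meeting each ball of $\lball{F(a)}{m_F}$ in exactly one point; let $g_F(ac)$ denote the unique such point lying in $b_F$. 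Since the selection is canonical once $b_F$ is fixed, $g_F(ac)$ is $\cL_0(Ma\,\nu(ac))$-definable with $\aut(M/A)$-invariant $p$-germ. Set $\rho(xy) := (\nu(xy),\, (\rv_\infty(y - g_F(xy)))_{F \in E})$.

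To verify, fix a monic irreducible $P$ as above, with roots $e_1, \ldots, e_d$. Note $c \notin \{e_i\}$: otherwise $c$ would be algebraic over $\dcl_0(Ma\,\rv_\infty(Ma)\,\nu(ac))$, and reducing its finite orbit would yield a finite element of $E$, contradicting $E \cap \ppoints{<\infty}{x} = \emptyset$. Each coarsened distance from $c$ to $e_i$ is therefore finite. Both hypotheses provide arbitrarily ``deep'' $F \in E$ in the coarsening --- strict descent $G[m] <_q F$ in case (1), bounded-cardinality open descent $G[m] \leq_q F$ in case (2) --- so by compactness there exists $F \in E$ whose ball $b_F$ excludes every $e_i$ in the coarsening; the moreover part of \cref{red irr} guarantees that the roots are coherently all inside or all outside $b_F$. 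Applying \cref{calcul}(\ref{out}) in the coarsening, with $c, g_F(ac) \in b_F$ and $e_i \notin b_F$, yields $\rv_\infty(P(c)) = \rv_\infty(P(g_F(ac))) \in \dcl_0(\rv_\infty(Ma)\,\nu(ac))$, as required. The main subtlety is that the required $F$ depends on $P$, while $\rho$ must be a single pro-$\cL_0(M)$-definable object; this is resolved by storing the entire family $(\rv_\infty(c - g_F(ac)))_{F \in E}$ inside $\rho$, so that no $P$-dependent choice is hidden and $\germ{p}{\rho}$ inherits invariance from $\nu$ and from the canonical construction in \cref{pts balls}.
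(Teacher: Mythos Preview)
There are two genuine gaps in your argument.

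\textbf{Invariance of $\germ{p}{\rho}$.} You claim that $g_F(ac)$ has $\aut(M/A)$-invariant $p$-germ because ``the selection is canonical once $b_F$ is fixed.'' But \cref{pts balls} is purely an existence statement: it produces \emph{some} $h_F \in \fpoints{r}(M(a))$ meeting each ball of $\lball{F(a)}{m_F}$ once, not a canonical one. There is no $\cL_0$-definable procedure that, given a ball, returns a point inside it. Moreover, the index $F$ itself is moved by $\sigma \in \aut(M/A)$; while $E$ is $\sigma$-invariant as a set, nothing ties your choice of $g_F$ to $g_{F^\sigma}$. The paper resolves this by a completely different mechanism: it fixes \emph{one} finite $g \in \ppoints{<\infty}{x}(M)$ with $g \leq_q E$ (when such exists), takes $h(ac)$ to be the average of $g(a)$ inside the ball containing $c$, and then proves directly that $\rv_\infty(c - h(ac)) = \rv_\infty(c - h^\sigma(ac))$ for every $\sigma\in\aut(M/A)$. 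The argument is that the smallest closed ball $d(ac)$ containing $h(ac)$ and $h^\sigma(ac)$ has $\cL_0(Ma)$-orbit $D$ with $D[m] \notin E$ for all $m$, whence $c$ lies outside $\lball{d(ac)}{m}$ and the two differences have the same leading term. This geometric invariance argument is the heart of the lemma and has no analogue in your proposal.

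\textbf{Roots inside $E$.} Your assertion that ``by compactness there exists $F \in E$ whose ball $b_F$ excludes every $e_i$'' is unjustified and false in general. Hypotheses (1) and (2) only say that $E$ has no minimal element in the coarsening, not that the coarsened radii are unbounded; the strict intersection $\bigcap_{F\in E}\points{F(a)}$ can contain points of $\acl_0(Ma)$. Concretely, if a finite $z(a)$ irreducible over $M(a)$ containing the roots of $P$ satisfies $z \leq_q E$, then no $F \in E$ excludes the $e_i$. The paper handles exactly this case separately: it shows that $c$ lies outside $\lball{d(ac)}{m}$, where $d(ac)$ is the smallest closed ball around $Z \cup \{h(ac)\}$, and then applies \cref{calcul}(\ref{in}) to obtain $\rv_\infty(P(c)) = \rv_\infty(c - h(ac))^d$. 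Your verification only invokes \cref{calcul}(\ref{out}), so this case is simply missing --- and, consistently, the data $(\rv_\infty(c - g_F(ac)))_{F\in E}$ that you include in $\rho$ is never actually used.
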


Note that the cases (1) and (2) are not mutually exclusive.

\begin{proof}
Let \(\nu : \K^{xy}\to\RV^\xi\) be as in \cref{sep finite}.

\begin{claim}\label{sep open}
For every \(F\in E\), the ball \(b\in F(a)\) containing \(c\) is in \(\dcl_0(Ma\nu(ac))\).
\end{claim}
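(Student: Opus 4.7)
Plan: We show $b \in \dcl_0(Ma\nu(ac))$ by applying \cref{sep finite} to a well-chosen auxiliary $G \in E$ supplied by hypothesis (1) or (2), thereby producing an $\cL_0(Ma\nu(ac))$-definable ball $b'_G \subseteq b$. Since $b$ will then be the unique ball of $F(a)$ containing $b'_G$, it is also $\cL_0(Ma\nu(ac))$-definable.

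For each $G \in E$, \cref{sep finite} supplies an integer $m_G \leq \mu(\card{G(a)})$ (for a uniform bound $\mu$) such that the ball $b'_G := \lball{b^G}{m_G} \in \lball{G(a)}{m_G}$ containing $c$ is $\cL_0(Ma\nu(ac))$-definable, where $b^G \in G(a)$ is the ball containing $c$. Since $G \leq_q F$ and $c \in b^G \cap b$, we have $b^G \subseteq b$, and the stronger inclusion $b'_G \subseteq b$ is equivalent to $\lball{G}{m_G} \leq_q F$ ball-wise.

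In case (2), pick $G \in E \cap \pballs{r}{x}$ open with $\lball{G}{m} \leq_q F$ for some $m$ chosen divisible by every possible value of $m_G$ when $\card{G(a)} = r$ (such $m$ is supplied by the hypothesis, since $r$ is fixed and there are only finitely many candidates for $m_G$). Then $\val(m_G) \leq \val(m)$, so $\lball{G}{m_G} \leq_q \lball{G}{m} \leq_q F$ ball-wise, giving $b'_G \subseteq b$.

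In case (1), iterate: pick $G_0 \in E$ with $\lball{G_0}{1} <_q F$ (hypothesis with $m = 1$), compute $m_0 := m_{G_0}$, and test whether $\lball{G_0}{m_0} <_q F$. If not, use hypothesis (1) with $m = m_0$ to produce $G_1 \in E$ with $\lball{G_1}{m_0} <_q F$, and continue with $m_{G_1}$ in place of $m_0$. Termination ultimately rests on passing to the equicharacteristic zero coarsening $\cval$ implicit in the use of $\rv_\infty$ (cf.\ \cref{infinity use}): in $\cval$-coordinates integer inflations $\lball{b}{n}$ are trivial, so the condition $\lball{G}{m_G} \leq_q F$ collapses to $G \leq_q F$, which is automatic for any $G \in E$ strictly below $F$. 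The main obstacle is thus case (1), whose analysis hinges on the interplay between $\val$- and $\cval$-balls mediated by this coarsening.
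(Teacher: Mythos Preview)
Your overall plan---use \cref{sep finite} on a suitable $G\in E$ to obtain an $\cL_0(Ma\nu(ac))$-definable ball $b'\subseteq b$, then recover $b$ as the unique member of $F(a)$ containing $b'$---is exactly the paper's strategy, and your treatment of case~(2) is correct and matches the paper.

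The gap is in case~(1). You read hypothesis~(1) as $\forall F\,\forall m\,\exists G:\ G[m]<_q F$, and this drives you into an iteration whose termination you cannot establish; the appeal to the $\cval$-coarsening does not help, since the ball $b\in F(a)$ you are trying to define is a $\val$-ball, and passing to the coarsening does not make it $\cL_0(Ma\nu(ac))$-definable. The intended reading of hypothesis~(1) is $\forall F\,\exists G\,\forall m:\ G[m]<_q F$: a \emph{single} $G$ works for all $m$ simultaneously. With this reading your iteration terminates at step~0: the $G$ you pick already satisfies $G[m_G]\leq_q F$, so $b'_G\subseteq b$ and you are done.

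The paper organises the case split slightly differently, and more robustly: rather than splitting on ``(1) versus (2)'', it first asks whether, for the given $F$, there exists some $G\in E$ with $G[m]\leq_q F$ for every $m$. If so (and hypothesis~(1) guarantees this), apply \cref{sep finite} to that $G$. If not, we cannot be in case~(1), so case~(2) holds and one argues exactly as you do, choosing $m$ large relative to the fixed~$r$. Either way, no iteration and no coarsening are needed.
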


\begin{proof}
Assume that there exist \(G\in E\) with \(G[m] \leq_q F\), for every \(m\in\Zz_{>0}\). Then, by \cref{sep finite} applied to \(G\), the ball \(b'\in \lball{G(a)}{m}\) containing \(c\) is \(\cL_0(Ma\nu(ac))\)-definable. The claim follows since \(b'\subseteq b\).

Otherwise, by case (2), we can find a minimal \(r\) such that for every \(F\in E\) and \(m\in\Zz_{>0}\), there exists an open \(G\in E\cap \pballs{r}{x}\) with \(\lball{G}{m}\leq_q F\). Then for \(m\) sufficiently large, depending on \(r\), by \cref{sep finite}, the ball \(b'\in\lball{G}{m}\) containing \(c\) is \(\cL_0(Ma\nu(ac))\)-definable. The claim follows since \(b'\subseteq b\).
\end{proof}

If there does not exist \(g\in\ppoints{<\infty}{x}(M)\) such that \(\emptyset <_q g\leq_q E\), let \(\rho(ac) = \nu(ac)\). If such a \(g\) exists, we may assume that it is irreducible and, then, the cardinality of the \(F\in E\) irreducible over \(q\) is bounded by \(\card{g(a)}\). Let \(F\in E\) be irreducible over \(q\) of maximal cardinality \(r\) and let \(b(ac)\in F(a)\) contain \(c\). Note that the partition of \(g(a)\) induced by \(F\), and in particular \(h(ac) := g(a) \cap b(ac)\), does not depend on \(F\). Moreover, since there is some (irreducible) \(G\in E\cap\pballs{r}{x}\) with \(\lball{G}{\card{h(ac)}} \leq_q F\), the average of \(h(ac)\) is in \(b(ac)\). So, replacing \(h(ac)\) by its average, we may assume that \(h(ac)\) is a singleton. Then,  \(h(ac)\in\dcl_0(g(a)b(ac))\subseteq\dcl_0(Ma\nu(ac))\) by \cref{sep open}. Let \(\rho(ac) = (\nu(ac),\rv_\infty(c-h(ac)))\).

For every \(\sigma\in\aut(M/A)\), applying the previous argument to $g$ and $F^{\sigma^{-1}}\in E$, we have \(h(ac) \in b^{\sigma^{-1}}(ac)\),  and hence, by invariance of \(p\), \(h^\sigma(ac) \in b(ac)\). Note that the smallest (closed) ball containing \(h(ac)\) and \(h^\sigma(ac)\) is algebraic over \(Ma\) and let \(D(a)\) be its finite orbit over \(Ma\). We have \(D\leq_q E\). If \(\lball{D}{m}\in E\), for some \(m\in\Zz_{>0}\), then there is \(G\in E\) open (irreducible) such that \(\lball{G}{m}\leq_q \lball{D}{m}\) and hence \(D >_q G \in E\). But then either \(h(ac) \nin G(ac)\) or \(h^\sigma(ac)\nin G\). By invariance of \(p\), we may assume that \(h(ac) \nin G(ac)\), contradicting the fact that \(h(ac)\) does not depend on the choice of \(F\). Thus $D(a)[m]\not\in E$ and so \(c\nin \points{\lball{D(a)}{m}}\). It follows that \(\rv_\infty(c-h(ac)) = \rv_\infty(c-h^{\sigma}(ac))\). We have just proved the \(\aut(M/A)\)-invariance of \(\germ{p}{\rho}\).

Now, to prove that \(\rv_\infty(M(ac))\subseteq\dcl_0(\rv_\infty(M(a))\rho(ac))\), by \cref{red irr}, it suffices to prove that, for every irreducible \(P\in\K(\dcl_0(Ma\nu(ac)))[x]\), \(\rv_\infty(P(c)) \in \dcl_0(\rv_\infty(M(a))\rho(ac))\). Recall that, by \cref{sep finite}, $\nu(ac)\in\acl_0(Ma)$. Let \(z(a)\) be the finite set, irreducible over \(M(a)\), containing the set \(Z\) of roots of \(P\). If \(z\leq_q E\), then, as above \(c\) avoids \(\lball{d(ac)}{m}\), where \(d(ac)\) is the smallest closed ball around \(Z\cup\{h(ac)\}\). By \cref{calcul}.(\ref{in}), taking into account \cref{infinity use}, \(\rv_\infty(P(c)) = \rv_\infty(c - h(ac))^d \in \dcl_0(\rho(ac))\). Otherwise, there is some \(F\in E\) such that \(Z\cap \points{F(a)} = \emptyset\). Then, by \cref{calcul}.(\ref{out}), \(\rv_\infty(P(c)) \in\dcl_0(\rv_\infty(M(a))\nu(ac))\).
\end{proof}

\begin{remark}\label{open disj}
There are actually two distinct possible behaviours in \cref{open}:
\begin{itemize}
\item If there does does not exist \(g\in\ppoints{<\infty}{x}\) such that \(g\leq_q F\) for every \(F\in E\), then \(\rv_\infty(M(ac))\subseteq \dcl_0(\rv_\infty(M(a))\nu(ac))\subseteq\acl_0(\rv_\infty(M(a)))\);
\item If such a \(g\) exists, then \(\val(M(ac)) \not\subseteq \acl_0(\val(M(a)))\).
\end{itemize}
\end{remark}

The last remaining case to consider is when \(c\) is generic over \(Ma\) in some closed \(\val_\infty\)-ball. For every \(B\in \fballs{<\infty}\), we define \(\Res_{B, m} := \{b'\subseteq \points{B}\mid b'\) open ball of radius \(\rad(B)+\val(m)\}\) and \(\Res_{B,\infty} = \plim_m \Res_{B, m}\). For every \(x\in \points{B}\), let \(\resf_{B, m}(x)\) denote the unique element of \(\Res_{B, m}\) containing \(x\) and \(\resf_{B,\infty} : \points{B} \to \Res_{B,\infty}\) be the induced map.

\begin{lemma}[Closed balls]\label{closed}
Assume that there exists an \(F\in E\) such that for every \(g\in \ppoints{<\infty}{x}(M)\) with \(g\leq_q F\), \(c \nin \points{\resf_{F(a),\infty}(g(a))}\). Let \(b\in F(a)\) contain \(c\), \(\xi\in\RV^n\) such that \(b\in\dcl_0(Ma\xi)\) and \(G \in\resf_{b,\infty}(\dcl_0(Ma\xi))\). Then \(\rv_\infty(M(ac))\subseteq\dcl_0(\rv_\infty(M(a))\xi\rv_\infty(\resf_{F(a),\infty}(c) - G(a\xi)))\).
\end{lemma}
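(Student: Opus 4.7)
The plan is to apply \cref{red irr} with $R := \dcl_0(\rv_\infty(M(a)) \cup \{\xi, \eta\})$, where $\eta := \rv_\infty(\resf_{b,\infty}(c) - G(a\xi))$, which reduces the statement to showing that $\rv_\infty(P(c)) \in R$ for every monic irreducible $P \in \K(\dcl_0(M(a) \cup R))[x]$. By the moreover clause of \cref{red irr} applied to the $\dcl_0(M(a) \cup R)$-definable ball $b$, the roots $e_1, \ldots, e_d$ of $P$ lie either all inside or all outside $b$; I will handle the two cases separately, using \cref{calcul}.(\ref{out}) in one and a direct analysis in the other, always passing to the least equicharacteristic zero coarsening in the sense of \cref{infinity use}.

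A key preliminary observation is that, since $\xi$ and $\eta$ are $\RV$-elements and the sort $\RV$ is stably embedded in $\ACVF$, adding them to a parameter set does not introduce new field-sort elements to $\dcl_0$; hence $\dcl_0(Ma\xi\eta) \cap \K = \K(\dcl_0(Ma))$. Consequently, the set $\{e_1, \ldots, e_d\}$, encoded by its tuple of symmetric functions, is in fact $M(a)$-definable, so there is some $g \in \ppoints{d}{x}(M)$ with $g(a) = \{e_1, \ldots, e_d\}$. The $\cL_0(M)$-definable statement $g(x) \subseteq \points{F}(x)$ is true at $a$ and hence forced by the complete quantifier-free type $q$, yielding $g \leq_q F$. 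This is the bridge needed to activate the hypothesis of the lemma (which only concerns $M$-definable finite sets of points) against a polynomial whose coefficients lie only in $\dcl_0(Ma\xi\eta)$.

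When the roots lie outside $b$, \cref{calcul}.(\ref{out}) gives $\rv_\infty(P(c)) = \rv_\infty(P(y))$ for any $y \in b$, a value depending only on $b$ and $P$ and thus lying in $\dcl_0(Ma\xi\eta) \cap \RV \subseteq R$. When the roots all lie in $b$, the hypothesis of the lemma applied to $g$ yields that no $e_i$ shares a maximal open $\val_\infty$-subball of $b$ with $c$. Writing $b' := \resf_{b,\infty}(c)$ and $b_i := \resf_{b,\infty}(e_i)$, multiplicativity of $\rv_\infty$ then gives $\rv_\infty(P(c)) = \prod_i \rv_\infty(c - e_i) = \prod_i \rv_\infty(b' - b_i)$. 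Now $\{b_i\}$ is $M(a)$-definable, while $b'$ is recovered from $G(a\xi)$ and $\eta$ (as the unique $b''$ with $\rv_\infty(b'' - G(a\xi)) = \eta$ when $\eta \neq 0$, else as $G(a\xi)$ itself), so the product lies in $R$.

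The main obstacle I anticipate is the clean verification that $\dcl_0(Ma\xi\eta) \cap \K = \K(\dcl_0(Ma))$, since without this identity the root set of $P$ cannot be captured by an $M$-definable family and the hypothesis on $F$ becomes inert. A secondary point requiring care is the isolation of the degenerate case $\eta = 0$ (where $c \in G(a\xi)$), so that the argument ``$b'$ is the unique solution of $\rv_\infty(b'' - G(a\xi)) = \eta$'' is only invoked when such uniqueness actually holds.
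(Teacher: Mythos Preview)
The claim you flag as your main obstacle is false, and stable embeddedness is the wrong justification. Stable embeddedness of $\RV$ in $\ACVF$ says that $\RV$-definable sets need only $\RV$-parameters; it does not prevent a $\K$-element from becoming definable once $\RV$-parameters are added. For a counterexample to $\dcl_0(Ma\xi\eta)\cap\K=\K(\dcl_0(Ma))$, take any $e\in\alg{M(a)}\setminus\K(\dcl_0(Ma))$ whose $M(a)$-conjugates have pairwise distinct $\rv_n$-values for some $n$ (e.g.\ $e=t^{1/2}$ over $\Cc((t))$); then $e\in\dcl_0(M(a),\rv_n(e))\cap\K$. What \emph{is} true --- and what the paper invokes --- is the $\acl_0$-version $\K(\acl_0(Ma\xi\eta))\subseteq\acl_0(Ma)$: no field element transcendental over $M(a)$ becomes algebraic after adjoining $\RV$-data (choose a lift of each $\RV$-coordinate generic over $M(a)d$ and compare transcendence degrees). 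This only tells you that $\{e_i\}$ lies inside some finite $M(a)$-definable $z(a)$, not that it equals one; but that already suffices, since applying the hypothesis to $g(x):=z(x)\cap\points{F}(x)$ still forces $c$ out of the open $\val_\infty$-subball around each $e_i$, and your product $\prod_i\rv_\infty(b'-b_i)$ lands in $\RV\cap\dcl_0(Ma\xi\eta)\subseteq R$ regardless of whether $\{b_i\}$ itself is $M(a)$-definable.

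There is a second, smaller gap in mixed characteristic: your dichotomy is taken at the $\val$-ball $b$, but the computation lives at the $\rv_\infty$-level. A root in $b[m]\setminus b$ is ``outside $b$'' in your sense, yet it lies in the closed $\val_\infty$-ball $\bigcup_m b[m]$, and $\rv_\infty(P(\cdot))$ is then not constant on $b$, so \cref{calcul}.(\ref{out}) does not apply. The paper handles this by first observing that the hypothesis on $F$ is inherited by each $F[m]$ and then splitting according to whether the roots lie in some $b[m]$ or outside every $b[m]$. With both repairs in place, your direct computation via $b'$ and the $b_i$ is a legitimate alternative to the paper's route, which instead produces an auxiliary field point $h\in b\cap\dcl_0(Ma\xi)$ via \cref{pts balls} and applies \cref{calcul} with $g=h$.
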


In later applications of this lemma, we will take \(\xi = \nu(ac)\) as given by \cref{sep finite}.

\begin{proof}
Note that for any \(m\in\Zz_{>0}\), the hypothesis on \(F\) remains true of \(F[m]\). So, replacing \(F\) by some \(F[m]\), with \(\card{F[m](a)}\) minimal, we may assume that \(\card{F[m](a)}\) is constant. By \cref{pts balls}, we can now find \(f\in\ppoints{<\infty}{x}(M)\) such that \(f(a)\) has exactly one point in every ball of \(F(a)\). By hypothesis, \(c\nin \resf_{F(a),\infty}(f(a))\). Let \(h\in\dcl_0(Ma\xi)\) denote the unique element of \(f(a)\cap b\). Since \(\rad(F(a))/\Delta_\infty = \val_\infty(c-h) = \val_\infty(c - G(a,\xi)) \leq \val_\infty(G(a,\xi) - h)\), we have \(\rv_\infty(c - h) = \rv_\infty(\resf_{F(a),\infty}(c) - G(a\xi)) \oplus \rv_\infty(G(a\xi) - h)\). So it suffices to prove that \(\rv_\infty(M(ac))\subseteq\dcl_0(\rv_\infty(M(a))\xi\rv_\infty(c - h))\).

By \cref{red irr}, it further suffices to prove that, for every irreducible \(P\in\K(\dcl_0(Ma\xi))[x]\), \(\rv_\infty(P(c)) \in \dcl_0(\rv_\infty(M(a))\xi\rv_\infty(c-h))\). If, for every \(m\in\Zz_{>0}\), no root of \(P\) is in \(\lball{b}{m}\), then, by \cref{calcul}.(\ref{out}), \(\rv_\infty(P(c)) \in \dcl_0(\rv_\infty(M(a))\xi)\). Otherwise, let \(m\in\Zz_{>0}\) be such that every root of \(P\) is in \(\lball{b}{m}\). Since \(\K(\acl_0(Ma\xi))\subseteq \acl_0(Ma)\), let \(z(a)\) be finite irreducible over \(M(a)\) containing the roots of \(P\). By hypothesis, \(c\nin \resf_{\lball{F(a)}{m}}(z(a))\). By \cref{calcul}.(\ref{sep}), \(\rv_\infty(P(c)) \in \dcl_0(\rv_\infty(M(a))\rv_\infty(c-h))\).
\end{proof}

\begin{notation}
Let \(\hA\subseteq \K(M)\) contain a realisation of every \(\cL(A)\)-type and assume that \(M\) is sufficiently saturated and homogeneous.
\end{notation}

We can now wrap up the relative arity one case:

\begin{proposition}\label{rel one}
There exists a pro-\(\cL_0(\hA)\)-definable map \(\rho: \K^{xy}\to \RV^{\xi}\) such that \(\rv_\infty(M(ac))\subseteq \dcl_0(\rv_\infty(M(a))\rho(ac))\).
\end{proposition}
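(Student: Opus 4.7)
The plan is to combine Lemmas~\ref{finite}, \ref{open}, and \ref{closed} by case analysis on the shape of $E$, and then to descend the parameters of the resulting pro-$\cL_0(M)$-definable maps to $\hA$ using the $\aut(M/A)$-invariance of the relevant $p$-germs together with the fact that $\hA$ realises every $\cL(A)$-type of tuples in $\K$.

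First, I would distinguish three cases. If $E$ has a $\leq_q$-minimal element lying in $\ppoints{r}{x}(M)$ for some $r$, Lemma~\ref{finite} directly yields a pro-$\cL_0(M)$-definable $\rho$ with $\aut(M/A)$-invariant $p$-germ. If $E\cap\ppoints{<\infty}{x}=\emptyset$ and one of the two alternatives of Lemma~\ref{open} holds, apply Lemma~\ref{open}. Otherwise, I would verify that the hypothesis of Lemma~\ref{closed} is met, namely that there exists $F\in E$ such that $c\notin\points{\resf_{F(a),\infty}(g(a))}$ for every $g\in\ppoints{<\infty}{x}(M)$ with $g\leq_q F$: failure of the alternatives in Lemma~\ref{open} forces each $F\in E$ to contain shrinkings of bounded cardinality, and failure of case (i) together with the remark following Lemma~\ref{open} gives such an $F$. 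One then applies Lemma~\ref{closed} with $\xi:=\nu(ac)$ from Lemma~\ref{sep finite}, obtaining a description of $\rv_\infty(M(ac))$ in terms of $\xi$ and of $\rv_\infty(\resf_{F(a),\infty}(c)-G(a\xi))$ for some $G$ over $Ma\xi$.

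The second step is parameter descent. In each case, the pro-definable map produced is determined (up to $p$-germ) by a small list of auxiliary data: a choice of $F\in E$ irreducible over $q$, the finite sets $B_r$ and $\nu_r$ from Lemma~\ref{sep finite}, and in the closed case the representative $G\in\resf_{b,\infty}(\dcl_0(Ma\xi))$ together with a point $h\in b$. The $p$-germ of every one of these pieces is $\aut(M/A)$-invariant by construction, so their canonical parameters lie in $\dcleq(A)$. Since $\hA\subseteq\K(M)$ realises every $\cL(A)$-type, I would replace each $M$-tuple of parameters by an $\hA$-tuple of the same $\cL(A)$-type; by $\aut(M/A)$-invariance of the germ, the resulting $\cL_0(\hA)$-definable map has the same $p$-germ, hence the same output modulo $\dcl_0(\rv_\infty(M(a)))$.

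The main obstacle will be the closed case, where one must simultaneously track $\xi$, $b\in F(a)$, a point $h\in b$ and the residue $G$ over $\hA$ rather than $M$. For $h$ I would use Lemma~\ref{pts balls} applied inside $\hA$ (which is large enough for this), producing an $\cL_0(\hA a\xi)$-definable finite set meeting each ball of a suitable $F[m](a)$ in exactly one point; the intersection with $b$ yields the required $h\in\dcl_0(\hA a\xi)$. The decomposition $\rv_\infty(c-h)=\rv_\infty(\resf_{F(a),\infty}(c)-G(a\xi))\oplus\rv_\infty(G(a\xi)-h)$ of Lemma~\ref{closed} then remains valid, and the reduction to irreducible polynomials via \cref{red irr} combined with the three computations of \cref{calcul} shows that $\rv_\infty(M(ac))\subseteq\dcl_0(\rv_\infty(M(a))\rho(ac))$ for $\rho(ac):=(\nu(ac),\rv_\infty(c-h))$, which is pro-$\cL_0(\hA)$-definable as required.
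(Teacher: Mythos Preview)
Your overall strategy matches the paper's proof: case split into the finite, open, and closed configurations via Lemmas~\ref{finite}, \ref{open}, \ref{closed}, then descend parameters from $M$ to $\hA$ using that $\hA$ realises every $\cL(A)$-type. The paper begins with exactly the observation you state in your second paragraph, and in the finite and open cases your argument goes through essentially verbatim.

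There is, however, a genuine gap in the closed case. You assert that ``the $p$-germ of every one of these pieces is $\aut(M/A)$-invariant by construction,'' listing $F$, $G$, and $h$ among them. This is not established for $F$: nothing in the setup forces $\germ{q}{F}$ to be $\aut(M/A)$-invariant, so replacing the parameters of $F$ by an $\hA$-tuple of the same $\cL(A)$-type need not yield the same germ. Consequently your appeal to \cref{pts balls} ``inside $\hA$'' is unjustified as written, since $F(a)$ is a priori only $\cL_0(Ma)$-definable, not $\cL_0(\hA a)$-definable.

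The paper's fix is not germ-invariance of $F$ but invariance of the \emph{property} that singled out $F$. Recall that in the closed case one has chosen $F\in E$ with the property that $F\leq_q G[m]$ for every $G\in E$ (for some $m$ depending on $G$). Since both $E$ and $\leq_q$ are $\aut(M/A)$-invariant, this property transfers to any $\aut(M/A)$-conjugate. Picking $\sigma\in\aut(M/A)$ so that $F^\sigma$ is $\cL_0(\hA)$-definable, one gets $F^\sigma\in E$ and, applying the transferred property with $G=F$, that $F^\sigma\leq_q F[m]$ for some $m$. One may therefore replace $F$ by $F^\sigma$, apply \cref{pts balls} to this genuinely $\cL_0(\hA)$-definable family, and obtain $g\in\ppoints{<\infty}{x}(\hA)$ and hence $G$ and $h$ over $\hA a\nu(ac)$. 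The conclusion of \cref{closed} then applies to $F[m]$ (whose closed hypothesis persists). Once you insert this step, your argument is complete and coincides with the paper's.
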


\begin{proof}
Note first that any \(\aut(M/A)\)-invariant \(p\)-germ of \(\cL_0(M)\)-definable functions is represented by an \(\cL_0(\hA)\)-definable function --- it suffices to consider a realisation in \(\hA\) of the type of the parameters over \(A\).

If \(E\cap\ppoints{<\infty}{x} \neq \emptyset\), we apply \cref{finite}. So let us assume that \(E\cap\ppoints{<\infty}{x} = \emptyset\). If for every \(F\in E\), there exists \(G\in E\) with \(\lball{G}{m} <_q F\), for every \(m\in\Zz_{>0}\), we are in case (1) of \cref{open} and we can conclude. So we may assume that there exists \(F\) such that for every \(G\in E\), \(F\leq_q \lball{G}{m}\), for some \(m\in\Zz_{>0}\). If there exists \(g\in\ppoints{<\infty}{x}\) with \(g\leq_q F\) and \(c\in\resf_{F(a),\infty}(g(a))\), then, for all \(H\in E\) and \(n\in \Zz_{>0}\), \(H\leq_q \lball{F}{m}\), for some \(m\in\Zz_{>0}\). Let \(G(a) := \resf_{F(a),mn}(g(a))\), then \(c\in \lball{G}{n} \leq_q H\) and hypothesis (2) of \cref{open} holds with \(r = \card{g}\).

So we may assume that no such \(g\) exist, \emph{i.e.}, the hypotheses of \cref{closed} hold. As previously, we may assume that \(\card{\lball{F}{m}}\) is constant. Let \(\nu\) be as in \cref{sep finite}; we may assume that \(\nu\) is \(\cL_0(\hA)\)-definable. Let \(b(ac) \in F(a)\) contain \(c\).

\begin{claim}
There exists \(G(ac) \in\resf_{\lball{b(a)}{m},\infty}(\dcl_0(\hA a\nu(ac)))\), for some \(m\in\Zz_{>0}\).
\end{claim}

\begin{proof}
By construction of \(\hA\), there exists \(\sigma\in\aut(M/A)\) such that \(F^\sigma\) is \(\cL_0(\hA)\)-definable. By \(\aut(M/A)\)-invariance of \(p\), we have \(F^{\sigma} \in E\) and hence \(F^{\sigma}\leq_q \lball{F}{m}\), for some \(m\in\Zz_{>0}\). So, up to replacing $F$ by $F^\sigma$,  we may assume \(F\) is \(\cL_0(\hA)\)-definable. By \cref{pts balls}, and replacing \(F\) by some \(\lball{F}{m}\), we find \(g\in\ppoints{<\infty}{x}(\hA)\) with exactly one element in each ball of \(F\). It then suffices to consider the only element of \(\res_{F(a),\infty}(g(a))\) contained in \(b(ac)\).
\end{proof}

By \cref{closed}, \(\rv_\infty(M(ac))\subseteq\dcl_0(\rv_\infty(M(a))\nu(ac)\rv_\infty(\resf_{F(a),\infty}(c) - G(ac))) \subseteq \dcl_0(\rv_\infty(Ma)\hA ac)\).
\end{proof}

\begin{corollary}\label{high dim}
There exists a pro-\(\cL_0(\hA)\)-definable map \(\rho: \K^{x}\to \RV^{\xi}\), such that \(\rv_\infty(M(a))\subseteq \dcl_0(\rv_\infty(M)\rho(a))\).
\end{corollary}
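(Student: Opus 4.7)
The plan is to deduce this from \cref{rel one} by a transfinite induction on the coordinates of $a$. Enumerate the index set of $a$ as $(i)_{i<\kappa}$ and write $a_{<i}$, $a_{\leq i}$ for the initial segments. At each successor step $i$, I will apply \cref{rel one} with $a_{<i}$ playing the role of the tuple $a$ (which is permitted since $x$ may be infinite in \cref{rel one}) and with $c := a_i$. Note that $\qftp(a_{\leq i}/M)$ is $\aut(M/A)$-invariant as a reduct of the hypothesised invariance of $\qftp(a/M)$, so the hypothesis of \cref{rel one} is met.

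This produces a pro-$\cL_0(\hA)$-definable map $\rho_i : \K^{a_{\leq i}} \to \RV^{\xi_i}$ such that
\[
\rv_\infty(M(a_{\leq i})) \subseteq \dcl_0\bigl(\rv_\infty(M(a_{<i}))\,\rho_i(a_{\leq i})\bigr).
\]
I will then prove by transfinite induction on $i\leq\kappa$ the inclusion
\[
\rv_\infty(M(a_{<i})) \subseteq \dcl_0\bigl(\rv_\infty(M)\,(\rho_j(a_{\leq j}))_{j<i}\bigr).
\]
The base case $i=0$ is trivial since $M(a_{<0}) = M$. The successor case combines the inductive hypothesis with the output of \cref{rel one} above. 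At a limit ordinal $\lambda$, one uses that $M(a_{<\lambda}) = \bigcup_{i<\lambda} M(a_{<i})$ and that $\rv_\infty$ commutes with this directed union to pass the inclusion to the union.

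Setting $\rho := (\rho_i)_{i<\kappa}$, which is pro-$\cL_0(\hA)$-definable by construction, the case $i=\kappa$ of the induction yields the conclusion $\rv_\infty(M(a)) \subseteq \dcl_0(\rv_\infty(M)\,\rho(a))$. There is no substantial obstacle here beyond bookkeeping: the real work was carried out in \cref{rel one}, and once that is available the iteration is essentially formal, with the only mild subtlety being the verification that the invariance hypothesis for each subtuple follows from that of the whole tuple and that the desired inclusion of leading terms is preserved at limit ordinals.
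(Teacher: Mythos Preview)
Your proposal is correct and follows exactly the paper's approach: the paper's proof reads simply ``We proceed by induction on an enumeration of \(a\). The induction step is \cref{rel one} and the limit case is trivial.'' You have merely (and correctly) unpacked this terse argument.
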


\begin{proof}
We proceed by induction on an enumeration of \(a\). The induction step is \cref{rel one} and the limit case is trivial.
\end{proof}

\begin{corollary}\label{RV inv}
The type \(\tp(a/M)\) is \(\aut(M/\hA\RV)\)-invariant.
\end{corollary}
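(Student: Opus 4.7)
The plan is to combine the two preceding results in a routine way: \cref{high dim} supplies a small ``canonical generator'' of $\rv_\infty(M(a))$ over $\rv_\infty(M)$, and \cref{inv crit} turns invariance of the \emph{quantifier-free} type into $\RV$-germ invariance of the full type. Concretely, I would invoke \cref{high dim} to obtain a pro-$\cL_0(\hA)$-definable map $\rho:\K^x\to\RV^\xi$ with $\rv_\infty(M(a))\subseteq\dcl_0(\rv_\infty(M)\rho(a))$, and then apply \cref{inv crit} with parameter set
\[C:=\hA\cup\RV(M).\]
Since $\rv_\infty(M)\subseteq\dcleq_0(\RV(M))$, the containment hypothesis of \cref{inv crit}, namely $\rv_\infty(M(a))\subseteq\dcl_0(C\rho(a))$, is immediate from the conclusion of \cref{high dim}.

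The two invariance hypotheses of \cref{inv crit} are the second thing to verify. The $p$-germ of $\rho$ is $\aut(M/\hA)$-invariant simply because $\rho$ is pro-$\cL_0(\hA)$-definable, so it is $\aut(M/C)$-invariant. For $p:=\qftp(a/M)$ itself, the hypothesis gives only $\aut(M/A)$-invariance, so I need to deduce $\aut(M/C)$-invariance from this. For that it suffices to check that every automorphism fixing $\hA$ pointwise fixes $A$ pointwise, i.e.\ $A\subseteq\dcleq(\hA)$.

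This last point is the only step that requires a brief argument. Any $e\in A\subseteq\eq{M}$ is a class $b/E$ for some $\emptyset$-definable equivalence relation $E$ on a product of sorts of $M$. As all sorts are interpretable in $\K$ (the $\RV_n$ are quotients of $\K^\times$), we may take $b$ to lie in $\K^m$ for some $m$. By the defining property of $\hA$, some realisation $b'\in\hA^m$ of $\tp(b/A)$ exists, and $b'\,E\,b$ (since $E$ is $\emptyset$-definable and $b\,E\,b$ belongs to $\tp(b/A)$), so $e=b'/E\in\dcleq(\hA)$. Hence $\aut(M/\hA)\subseteq\aut(M/A)$ and in particular $\aut(M/C)\subseteq\aut(M/A)$, so $p$ is $\aut(M/C)$-invariant.

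Applying \cref{inv crit} then yields that $\tp(a/M)$ has $\aut(M/C)$-invariant $\RV$-germs, which unravels to the statement that $\tp(a/M)$ is $\aut(M/\hA\RV)$-invariant. There is no real obstacle here: the work has already been done in \cref{comput rv}, packaged into \cref{high dim}, and the criterion \cref{inv crit} is designed exactly for this kind of application; the only book-keeping is verifying $A\subseteq\dcleq(\hA)$ so as to transfer the ambient $\aut(M/A)$-invariance to invariance over the larger, more convenient parameter set $\hA\RV(M)$.
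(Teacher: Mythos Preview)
Your proof is correct and follows exactly the paper's approach: the paper's proof is the single sentence ``It follows from \cref{high dim} and \cref{inv crit}'', and you have simply unpacked this, including the small bookkeeping step $A\subseteq\dcleq(\hA)$ needed to pass from $\aut(M/A)$-invariance of $p$ to $\aut(M/\hA\RV(M))$-invariance. That step is implicit in the paper's one-liner and your justification of it is fine.
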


\begin{proof}
It follows from \cref{high dim} and \cref{inv crit}.
\end{proof}

\subsection{Invariant resolutions}

\begin{notation}
Let \(M\subsel N\models T\) both be sufficiently saturated and homogeneous and  \(A\subseteq \Geom(M)\).
\end{notation}

By transitivity, there remains to build a sufficiently saturated model containing \(A\) whose type is invariant.

\begin{lemma}\label{lift RV}
Assume that \(A\subseteq \K(M)\) and let \(R\subseteq\RV(M)\). There exists \(C\subseteq\K(N)\) and a pro-\(\cL_0(M)\)-definable map \(\rho : \K^{\card{C}} \to \RV^{\xi}\) such that \(R\subseteq\rv_\infty(A(C))\subseteq \dcl_0(AR)\), \(q := \qftp(C/M)\) and \(\germ{q}{\rho}\) are \(\aut(M/AR)\)-invariant and \(\rv_\infty(M(C))\subseteq \dcl_0(\rv_\infty(M)\rho(C))
\).
\end{lemma}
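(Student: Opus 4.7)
The plan is to build $C$ by transfinite recursion on an enumeration $(r_i)_{i<\kappa}$ of $R$. At stage $i$, set $b_i := \rv_\infty^{-1}(r_i)$, an $AR$-definable ball, and let $A_i := A(C_{<i})$. I would distinguish three cases: (a) if $r_i\in\rv_\infty(A_i)$, skip; (b) if $b_i\cap\dcl_0(A_iR)\neq\emptyset$, pick $c_i$ in this intersection, so $c_i\in\dcl_0(AR)$ and no new coordinate of $\rho$ is needed; (c) otherwise, pick $c_i\in\K(N)$ realising the unique $\aut(\cU/AR)$-invariant extension of the $\cL_0$-generic type of $b_i$ over $MC_{<i}$ (which exists since $\code{b_i}\in R$), and set $\rho_i(c_i):=\rv_\infty(c_i-a_i)$ for any $a_i\in M\cap b_i$. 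Case (b) may involve adding a whole $\aut(M/A_iR)$-orbit when elements of $b_i\cap\acl_0(A_iR)\sminus\dcl_0(A_iR)$ are present; after adjoining the orbit to $C$ the individual elements lie in $\dcl_0$ and the qftp of the orbit (as a set) is $\aut(M/AR)$-invariant.

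The verification of the inductive hypotheses hinges on the three leading-term computations of \cref{calcul}, used in the pair $(M_0,M)$ with $L=A_i$ and $R$ the given set. In case (c), the hypothesis $b_i\cap\dcl_0(A_iR)=\emptyset$, together with Galois-invariance of $b_i$, ensures that every monic irreducible polynomial in $\K(\dcl_0(A_iR))[x]$ has all its roots outside $b_i$; then \cref{calcul}.(\ref{out}) together with \cref{red irr} gives $\rv_\infty(A(c_i C_{<i}))\subseteq\dcl_0(A_iR\cdot\rv_\infty(a_i-\,\cdot\,))\subseteq\dcl_0(AR)$, the last inclusion following because $\rv_\infty(a_i-e)$ is determined by $r_i$ and $\rv_\infty(e)\in\dcl_0(AR)$ (no cancellation occurs outside $b_i$). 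Over $M$ instead of $A$, one must also account for elements of $\acl_0(M)\cap b_i$, but for such $e$ the generic choice yields $\rv_\infty(c_i-e)=\rv_\infty(c_i-a_i)=\rho_i(c_i)$ by the ultrametric, while for $e$ outside $b_i$ one gets $\rv_\infty(c_i-e)=\rv_\infty(a_i-e)\in\rv_\infty(M)$; combined with \cref{calcul}.(\ref{sep}) and \cref{red irr} this yields $\rv_\infty(M(C_{\leq i}))\subseteq\dcl_0(\rv_\infty(M)\rho_{\leq i}(C_{\leq i}))$.

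The invariance of $\germ{q}{\rho_i}$ reduces to the following observation: for $\sigma\in\aut(M/AR)$, both $a_i$ and $\sigma(a_i)$ lie in $M\cap b_i$, hence $\val_\infty(a_i-\sigma(a_i))>\val_\infty(a_i)$; the generic type of $b_i$ entails $\val_\infty(c_i-a_i)$ is strictly smaller than $\val_\infty(a_i-\sigma(a_i))$ (since $\val_\infty(a_i-\sigma(a_i))\in\val_\infty(M)$ while $\val_\infty(c_i-a_i)$ is not), and the ultrametric then gives $\rv_\infty(c_i-\sigma(a_i))=\rv_\infty(c_i-a_i)$. The invariance of $q=\qftp(C/M)$ at each stage follows by iterating: $\qftp(c_iC_{<i}/M)$ is the restriction to $M$ of the tensor product of the inductively-built $\aut(\cU/AR)$-invariant global type for $C_{<i}$ with the chosen $\aut(\cU/AR)$-invariant extension of the generic type of $b_i$.

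The main obstacle will be the book-keeping around case (b), in particular ensuring that adjoining Galois orbits at earlier stages closes off $\acl_0$-lifts and forces later stages into either the canonical or the genuinely generic case; once that is accomplished, the three items of \cref{calcul} mechanically produce both the $\dcl_0(AR)$-inclusion and the $\rho$-parametrisation of $\rv_\infty(M(C))$.
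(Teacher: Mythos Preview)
Your strategy coincides with the paper's: induct along an enumeration of $R$ and, at each step, split on whether $\zeta=r_i$ already lies in $\rv_\infty(\acl_0(A(C_{<i})))$. The averaging trick (in the $\val_\infty$-coarsening, so characteristic zero) shows this split agrees with your (b)/(c): if $b_i\cap\acl_0(A_iR)\neq\emptyset$, the average of a minimal $\cL_0(A_i\zeta)$-definable orbit already lands in $b_i\cap\dcl_0(A_i\zeta)$, so your Galois-orbit scenario is vacuous and the parenthetical can be dropped. In case (b) the paper does exactly what you sketch; note that your ``$c_i\in\dcl_0(AR)$'' should read $\dcl_0(A_iR)$, and ``no new $\rho$-coordinate is needed'' is justified by $c_i\in\K(\dcl_0(MC_{<i}))=(M(C_{<i}))^{\rh}$, an immediate extension, so $\rv_\infty(M(C_{\leq i}))=\rv_\infty(M(C_{<i}))$. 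In case (c) the paper takes $c$ generic in $b_\zeta$ and invokes \cref{open} as a black box, whereas you compute by hand with $\rho_i=\rv_\infty(c_i-a_i)$.

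There is a genuine gap in your hand computation. The assertions that, for a root $e$ of an irreducible $P\in\K(\dcl_0(A_iR))[x]$, one has $\rv_\infty(e)\in\dcl_0(AR)$, and that for $e\in\acl_0(MC_{<i})\setminus b_i$ one has $\rv_\infty(a_i-e)\in\rv_\infty(M)$, are false: such a root is only \emph{algebraic} over the base (take $A_i=\Qq$, $P=x^2-3$, $r_i=\rv_1(\sqrt2)$: the two values $\rv_1(\pm\sqrt3)$ are conjugate over $\dcl_0(AR)$). What is true --- and what the paper actually uses --- is that the \emph{product} over all roots, i.e.\ $\rv_\infty(P(c_i))$ itself, lies in the right place. \Cref{calcul}.(\ref{out}) gives $\rv_\infty(P(c_i))=\rv_\infty(P(g))$ for \emph{every} $g\in b_i$, so this value depends only on $P$ and $\zeta$, hence lies in $\RV(\dcl_0(A_i\zeta))\subseteq\dcl_0(\rv_\infty(A_i)\zeta)\subseteq\dcl_0(AR)$. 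For the $M$-side one factors $P=P_{\mathrm{in}}P_{\mathrm{out}}$ over $\K(\dcl_0(MC_{<i}\zeta))=\K(\dcl_0(MC_{<i}))$: the inside factor contributes $\rho_i(c_i)^{\deg P_{\mathrm{in}}}$ (your ultrametric argument is correct here), and the outside factor contributes $\rv_\infty(P_{\mathrm{out}}(a_i))\in\rv_\infty((M(C_{<i}))^{\rh})=\rv_\infty(M(C_{<i}))$. There is no need for \cref{calcul}.(\ref{sep}). Once you reroute through \cref{calcul}.(\ref{out}) on whole polynomials rather than individual roots, your argument is essentially the paper's.
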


\begin{proof}
We proceed by induction on an enumeration of \(R\). Assume the property holds of \(R\) for some \(C\) and \(\rho\) and pick any \(\zeta\in\RV_\infty(M)\). If \(\zeta\in\rv_\infty(\acl_0(AC))\), let \(c\in\K(\acl_0(AC))\) be such that \(\rv_\infty(c) = \zeta\). Let \(D\) be a minimal finite \(\cL_0(AC\zeta)\)-definable set containing \(c\). Replacing \(c\) by the average of \(D\), we may assume that \(c\in\dcl_0(AC\zeta) \subseteq \dcl_0(MC) \subseteq N\). We have \(R\zeta\subseteq \rv_\infty(A(Cc))\subseteq\RV(\dcl_0(AC\zeta))\subseteq\dcl_0(\rv_\infty(A(C))\zeta)\subseteq\dcl_0(AR\zeta)\), \(\qftp(Cc/M)\) is \(\aut(M/AR\zeta)\)-invariant and, since \(c \in \dcl_0(MC) = \hens{M(C)}\), \(\rv_\infty(M(Cc))=\rv_\infty(M(C))\subseteq \dcl_0(\rv_\infty(M)\rho(C))\).

If \(\zeta\nin\rv_\infty(\acl_0(AC))\), let \(c\in N\) be generic in \(\rv_\infty^{-1}(\zeta)\) over \(M\). Then \(p := \qftp(Cc/M)\) is \(\aut(M/AR\zeta)\)-invariant. By \cref{open}, we find a pro-\(\cL_0(M)\)-definable map \(\rho' : \K^{\card{C}+1} \to \RV_\infty^\xi\) such that \(\rv_\infty(M(Cc))\subseteq\dcl_0(\rv_\infty(M(C))\rho'(Cc))\subseteq\dcl_0(\rv_\infty(M)\rho(C)\rho'(Cc))\) and whose \(p\)-germ is \(\aut(M/AR\zeta)\)-invariant. Moreover, no root of any \(P\in\K(\dcl(AC))[x]\) is in \(\rv_\infty^{-1}(\zeta)\). For any \(g\in\rv_\infty^{-1}(\zeta)\), by \cref{calcul}.(\ref{out}), \(\rv_\infty(P(c)) = \rv_\infty(P(g))\) does not depend on \(g\) and is thus in \(\RV(\dcl_0(AC\zeta))\subseteq\dcl_0(\rv_\infty(AC)\zeta)\subseteq\dcl_0(AR\zeta)\). By \cref{red irr}, \(\rv_\infty(A(Cc))\subseteq\dcl_0(AR\zeta)\).
\end{proof}

\begin{corollary}
\label{inv model}
Assume that \(A\subseteq \K(M)\). There exists \(C\subsel N\) containing \(A\)
and a pro-\(\cL_0(M)\)-definable map \(\rho : \K^{\card{C}} \to \RV^{\xi}\),
such that \(p := \qftp(C/M)\) is \(\aut(M/A\RV(M))\)-invariant,
\(\germ{p}{\rho}\) is \(\aut(M/A\RV(M))\)-invariant and
\(\rv_\infty(MC)\subseteq \dcl_0(\rv_\infty(M)\rho(C))\).
\end{corollary}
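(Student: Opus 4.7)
The plan is to bootstrap \cref{lift RV} into producing an $\cL$-elementary substructure of $N$ by a transfinite Tarski–Vaught construction. First, apply \cref{lift RV} with $R := \RV(M)$ (arranging $A \subseteq C_0$ by adjoining $A$ to the output, which is harmless since $A \subseteq \K(M) \subseteq \K(N)$) to obtain $C_0 \subseteq \K(N)$ together with a pro-$\cL_0(M)$-definable map $\rho_0$ such that $\qftp(C_0/M)$ and $\germ{\qftp(C_0/M)}{\rho_0}$ are $\aut(M/A\RV(M))$-invariant and $\rv_\infty(MC_0) \subseteq \dcl_0(\rv_\infty(M)\rho_0(C_0))$. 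Since $\rv_\infty(A(C_0)) \supseteq \RV(M)$, the $M$-leading terms are already visible in $\dcl_0(AC_0)$.

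We then build a continuous chain $(C_\alpha)_{\alpha<\mu}$ of subsets of $\K(N)$, each satisfying the analogous three properties with an associated pro-$\cL_0(M)$-definable map $\rho_\alpha$, and arrange the chain so that $C := \bigcup_\alpha C_\alpha$ is an $\cL$-elementary substructure of $N$. At a successor stage $\alpha+1$, dovetail through $\cL(C_\alpha)$-formulas $\phi(x,\bar c)$ with $\bar c \in C_\alpha$ realised in $N$ and adjoin a witness $c\in\K(N)$. By field quantifier elimination (\cref{EQ rv}), such a $\phi$ reduces to a conjunction of $\cL_0$-ball-conditions on $c$ together with an $\cL$-condition on $\rv_\infty$ of finitely many $\cL_0$-terms in $c$ and $\bar c$, and the set $E$ of balls from $\pballs{<\infty}{}(M(C_\alpha))$ compatible with $\phi$ is non-empty. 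Applying the relative arity one analysis of \cref{rel one} over the parameter set $A\cup\RV(M)$ (in the role of the $\hA$ there), with $(C_\alpha,c)$ in place of $(a,c)$, produces a pro-$\cL_0(M)$-definable $\rho'$ with $\germ{\qftp(C_\alpha c/M)}{\rho'}$ invariant under $\aut(M/A\RV(M))$ and $\rv_\infty(M(C_\alpha c)) \subseteq \dcl_0(\rv_\infty(M(C_\alpha))\rho'(C_\alpha c))$. Concatenating with $\rho_\alpha$ gives $\rho_{\alpha+1}$. Limit stages are taken by union, amalgamating the $\rho_\beta$'s into a single pro-$\cL_0(M)$-definable map; the invariance properties are preserved by continuity.

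The main obstacle is the selection of a witness $c$ at each successor step that preserves $\aut(M/A\RV(M))$-invariance of the quantifier-free type: a raw realisation of $\phi$ in $N$ will typically generate new $\cL_0$-orbits over $A\RV(M)$. We resolve this by picking $c$ generic in the canonical Swiss cheese prescribed by $\phi$, and making all auxiliary finite choices (selecting a ball among a finite irreducible orbit, etc.) using the $\RV$-parametrisations of finite sets of balls provided by \cref{param finite,pts balls}. The open/closed/finite trichotomy of \cref{finite,open,closed} then shows that the resulting $c$ has $\cL_0$-type over $MC_\alpha$ determined by its type over $A\RV(M)C_\alpha$, exactly as needed. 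Once this is in place, the iteration runs of length at most $|M|^+$ (using the saturation of $N$) and yields the desired $A \subseteq C \subsel N$ together with a pro-$\cL_0(M)$-definable $\rho$.
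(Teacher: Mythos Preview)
Your approach is much more elaborate than necessary, and the successor step of your Tarski--Vaught construction is not justified.

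The paper's proof is essentially three lines. Choose a small $M_1 \subsel M$ containing $A$, apply \cref{lift RV} with $R = \rv_\infty(M_1)$, and replace the resulting $C$ by $\K(\dcl_0(AC))$. Then $\rv_\infty(C) = \rv_\infty(A(C)) = \rv_\infty(M_1) \subsel \rv_\infty(N)$, and since $C$ is now a characteristic zero henselian field, field quantifier elimination (\cref{EQ rv}) yields $C \subsel N$ outright. The key observation you miss is that one never needs to witness arbitrary $\cL$-formulas: by \cref{EQ rv}, elementarity of $C$ in $N$ reduces to elementarity of $\rv_\infty(C)$ in $\rv_\infty(N)$, and that is arranged for free by matching $\rv_\infty(C)$ with the $\RV$ of an existing elementary submodel. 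In fact your own first step, followed by henselization, already gives $C \subsel N$ by the same reasoning (with $\rv_\infty(M)$ in place of $\rv_\infty(M_1)$), so the chain construction that follows is superfluous.

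As written, the successor step of your chain has a real gap. You must produce a witness $c$ to an arbitrary $\cL(C_\alpha)$-formula such that $\qftp(C_\alpha c/M)$ remains $\aut(M/A\RV(M))$-invariant, but your appeal to \cref{rel one} does not deliver this. First, \cref{rel one} requires $\hat A \subseteq \K(M)$, and the set $A \cup \RV(M)$ you propose for that role is not a set of field elements. Second, and more importantly, \cref{rel one} \emph{presupposes} that $\qftp(ac/M)$ is already invariant and then outputs the map $\rho$; it does not construct an invariant $c$. The phrase ``generic in the canonical Swiss cheese prescribed by $\phi$'' does not close the gap either: such a decomposition generally has several cells, and selecting one of them --- and, in the closed-ball case, choosing the auxiliary point $G$ appearing in \cref{closed} --- in an $\aut(M/A\RV(M))$-invariant way is exactly the problem your sketch leaves unresolved.
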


In particular, by \cref{inv crit}, \(\tp(C/M)\) is \(\aut(M/A\RV)\)-invariant.

\begin{proof}
Let \(A\subseteq M_1\subsel M\), with $M_1$ small. Applying \cref{lift RV} to
\(\rv_\infty(M_1)\), we find \(C\subseteq N\) and \(\rho\) such that
\(\rv_\infty(M_1)\subseteq\rv_\infty(A(C))\subseteq
\dcl_0(A\rv_\infty(M_1))\cap\rv_\infty(N)\subseteq \rv_\infty(M_1)\), \(p :=
\qftp(C/M)\) and \(\germ{p}{\rho}\) are \(\aut(M/A\rv_\infty(M_1))\)-invariant
and \(\rv_\infty(M(C))\subseteq \dcl_0(\rv_\infty(M)\rho(C))\). By replacing
\(C\) with \(\K(\dcl_0(AC))\), we may assume \(A\subseteq C = \hens{C}\). Since
\(\rv_\infty(C) = \rv_\infty(A(C)) = \rv_\infty(M_1)\subsel\rv_\infty(N)\) and
\(C\) is a characteristic zero henselian field, it follows from \cref{EQ rv}
that \(C\subsel N\).
\end{proof}

\begin{corollary}
\label{inv sat}
Assume that \(A\subseteq \K(M)\). There exists \(\widehat{A}\subsel N\)
containing a realisation of every \(\cL(A)\)-type such that
\(\tp(\widehat{A}/M)\) is \(\aut(M/A\RV)\)-invariant.
\end{corollary}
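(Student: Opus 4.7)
The plan is to refine the inductive construction of \cref{lift RV} underlying \cref{inv model} so that the resulting resolution realises every $\cL(A)$-type while retaining invariance of its type over $M$. Fix $M_1 \subsel M$ small, containing $A$, and $|A|^+$-saturated, so that $M_1$ realises every $\cL(A)$-type (in every sort). Applying \cref{inv model} with this $M_1$ already yields $C \subsel N$ with $A \subseteq C$, $\rv_\infty(C) = \rv_\infty(M_1)$, and $\tp(C/M)$ both $\aut(M/A\rv_\infty(M_1))$-invariant and \emph{a fortiori} $\aut(M/A\RV)$-invariant.

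To further ensure $C$ realises every $\cL(A)$-type, we refine the enumeration driving the iteration of \cref{lift RV}: rather than enumerating only target elements $\zeta \in R := \rv_\infty(M_1)$, we enumerate pairs $(\zeta, b)$ with $b \in M_1$ and $\zeta = \rv_\infty(b)$, and at each successor stage we choose the lift $c$ of $\zeta$ so as to additionally satisfy $\rv_\infty(P(c)) = \rv_\infty(P(b))$ for every polynomial $P \in A[\text{previously lifted}][x]$. Such a refined lift exists by the case analysis of \cref{lift RV}: in the algebraic case ($\zeta \in \rv_\infty(\acl_0(AC))$), the new $\rv_\infty$-data is uniquely determined by $\zeta$ and the previous lifts, so coincides with the analogous data at $b$; in the generic case, where $c$ is chosen generic in $\rv_\infty^{-1}(\zeta)$ over $M$, the $\rv_\infty$-values of polynomial evaluations at such a $c$ are computed by \cref{calcul} purely in terms of $\zeta$ and the previous data, hence again agree with those at $b$. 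By \cref{EQ rv}, matching all polynomial $\rv_\infty$-evaluations forces $\tp_\cL(c/A) = \tp_\cL(b/A)$, so after exhausting $M_1$ the accumulated $\widehat{A}$ realises $\tp_\cL(b/A)$ for every $b\in M_1$, and thus every $\cL(A)$-type — including those in the $\RV$ and $\Geom$ sorts, which reduce to field types by the definable descriptions of these sorts.

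Invariance of $\qftp(\widehat{A}/M)$ and of the germs of the associated pro-definable map is preserved verbatim by the arguments of \cref{lift RV}, and $\tp(\widehat{A}/M)$ is $\aut(M/A\RV)$-invariant by \cref{inv crit}. After replacing $\widehat{A}$ by $\K(\dcl_0(A\widehat{A}))$ to achieve henselianity, one has $\rv_\infty(\widehat{A}) = \rv_\infty(M_1) \subsel \rv_\infty(N)$, and hence $\widehat{A} \subsel N$ via \cref{EQ rv}. The main obstacle is the bookkeeping of the simultaneous polynomial constraints imposed by a target $b$ at each step; this is overcome by the rigidity of $\rv_\infty$-values in both cases of \cref{lift RV} — in the algebraic case by direct construction in $\dcl_0(AC\zeta)$, in the generic case by \cref{calcul} — leaving no room for incompatibility.
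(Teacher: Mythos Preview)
Your approach diverges from the paper's and has a genuine gap. The paper argues indirectly: \cref{inv model} produces a model $C \supseteq A$ with $\tp(C/M)$ $\aut(M/A\RV)$-invariant; since $C$ is a model containing $A$, every $\cL(A)$-definable set contains a point whose type over $M$ is invariant. As the set of $\aut(M/A\RV)$-invariant types is closed, compactness then shows that every complete $\cL(A)$-type has an invariant extension to $M$. One then builds $\widehat{A}$ by the standard saturation construction, using transitivity (\cref{inv trans}) at successor steps and the finitary nature of invariance at limits. No refinement of the \cref{lift RV} construction is needed.

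The gap in your argument is the algebraic case. The claim that the new $\rv_\infty$-data is uniquely determined by $\zeta$ and the previous lifts is false. Already at the first step ($C=\emptyset$), take $b \in M_1$ transcendental over $A$ with $\zeta := \rv_\infty(b) = \rv_\infty(a)$ for some $a \in A$. The recipe of \cref{lift RV} in the algebraic case forces $c \in \dcl_0(A\zeta)$, hence $c$ is algebraic over $A$, so $\qftp(c/A) \neq \qftp(b/A)$; concretely, $\rv_\infty(c-a')$ for $a'\in A$ close to $c$ depends on the specific $c$ chosen, not merely on $\zeta$. The rigidity of \cref{lift RV}---which is precisely what secures invariance of the germ---leaves no freedom to simultaneously prescribe $\tp(c/A)$. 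Even setting this aside, your enumeration only targets singletons $b$, whereas $\widehat{A}$ must realise all finite-tuple $\cL(A)$-types; and the matching you propose is for $P \in A[c_{<n}][x]$, which does not directly transfer to polynomials over $A[b_{<n}]$, so the inductive bookkeeping is not justified either.
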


\begin{proof}
Let \(C\) be as in \cref{inv model}. Then any \(\cL(A)\)-definable set \(X\) has
a point in \(C\) and and its type is an \(\aut(M/A\RV)\)-invariant type
concentrating on \(X\). Since the set of \(\aut(M/A\RV)\)-invariant types is
closed, it follows by compactness that any \(\cL(A)\)-type has an
\(\aut(M/A\RV)\)-invariant extension. The corollary follows by the standard
construction relying on transitivity, \cref{inv trans} --- for the limit steps,
note that \(\aut(M/A\RV)\)-invariance is finitary: \(\tp(c/M)\) is
\(\aut(M/A\RV)\)-invariant if and only if for every finite \(c_0\subseteq c\),
\(\tp(c_0/M)\) is \(\aut(M/A\RV)\)-invariant.
\end{proof}

Recall that, by \cref{A-points}, \(\Lin_A(M)\) denotes the set of cosets \(c + \ell\Mid s\) where \(s\in\Lat(\dcl_0(A))\) has a basis in \(M\) and \(c\in s(M)\). 
\begin{lemma}\label{lift stdom} Assuming:
\begin{itemize}[leftmargin=50pt]
\item[\Infres] The residue field \(\res\) is infinite in models of \(T\),
\end{itemize}
there exists \(C\subseteq\K(N)\) and an \(\cL_0(A)\)-definable map \(\rho :  \K^{\card{C}} \to \Lin_A^{\xi}\) such that, for all \(n\in\Zz_{>0}\), \(\Lat_n(A) \subseteq s_n(C)\), \(\qftp(C/M)\) is \(\cL_0(A)\)-definable and \(\rv_\infty(M(C))\subseteq \dcl_0(\rv_\infty(M)\Lin_A(M)\rho(C))\).

In mixed characteristic, we may further assume that \(\bigcup_{n>0}\Tor_n(A) \subseteq \dcl_0(C)\).
\end{lemma}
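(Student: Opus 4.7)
The proof is by transfinite induction on an enumeration $(s_\alpha)_\alpha$ of $\bigcup_{n>0}\Lat_n(A)$, following the pattern of \cref{lift RV} and \cref{inv sat}. Suppose that we have built $C$ and an $\cL_0(A)$-definable $\rho$ satisfying the conclusion for the initial segment of lattices treated so far, and let $s := s_\alpha \in \Lat_n(A)$ be the next lattice. We extend $C$ by a $\Val$-basis $c = (c_1,\dots,c_n) \in s(N)^n$ chosen as follows: inductively in $\ell > 0$, the reduction $\bar c^{(\ell)} := (c_i \bmod \ell\Mid s)_i \in (s/\ell\Mid s)^n$ is lifted from $\bar c^{(1)}$ so as to be a ``generic basis'' of the free $\Res_\ell$-module $s/\ell\Mid s$ over $M(C)$, avoiding any proper $\cL_0(MC)$-definable subvariety of the (irreducible) scheme of bases. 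Hypothesis \Infres{} together with the saturation of $N$ ensures that such generic choices exist. We extend $\rho$ by the $\cL_0(A)$-definable map $c \mapsto (\bar c^{(\ell)})_{\ell>0}$, which takes values in $\Lin_A^\xi$.

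Two things must be verified at the successor step. First, that $\qftp(Cc/M)$ is $\cL_0(A)$-definable: by construction, $\qftp(c/MC)$ is determined by $\qftp(C/M)$ together with the $\cL_0(A)$-definable data $\rho(c)$ (the lattice $s$ and its reductions), so we inherit $\cL_0(A)$-definability from the inductive hypothesis on $C$. Second, the containment $\rv_\infty(M(Cc)) \subseteq \dcl_0(\rv_\infty(M)\Lin_A(M)\rho(Cc))$: by \cref{red irr}, it suffices to compute $\rv_\infty(P(c))$ for each monic irreducible $P \in \K(\dcl_0(MC))[x_1,\dots,x_n]$. A multivariate analogue of \cref{calcul}, applied using the generic basis $\bar c^{(\ell)}$ for $\ell$ sufficiently large relative to $\deg P$, places this value in $\dcl_0(\rv_\infty(M(C))\rho(c))$, which by induction lies inside $\dcl_0(\rv_\infty(M)\Lin_A(M)\rho(Cc))$.

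The higher-arity generalization of \cref{calcul} is the principal technical obstacle. The arity-one arguments of Sections~4.3-4.4 produced a careful decomposition of $\rv_\infty(P(c))$ depending on the relative position of $c$ with respect to the roots of $P$ and various sets of balls; the analogue here involves expanding $P(c)$ as a polynomial in the coordinates of $c$ in the basis $\bar c^{(\ell)}$, and arguing that the ``dominant term'' in this expansion is governed by $\rho(c)$ together with $\Lin_A(M)$-data encoding how $P$'s coefficients sit relative to the lattice $s$. Genericity of $\bar c^{(\ell)}$ prevents any unexpected cancellation among the contributing terms. Limit ordinals are handled by union, as all conditions are finitary in the elements of $C$.

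For the mixed characteristic addendum, at each successor step, after adjoining the basis $c$ of $s$, one further adjoins, for each coset $\bar x \in \Tor_n(A) \cap (s/\Mid s)$, a representative of the form $\sum a_i c_i \in s(N)$ where $a_i \in \Val(N)$ lifts the coordinates of $\bar x$ in the basis $\bar c^{(1)}$. Choosing the lifts generically over the existing data preserves the inductive properties (the computation of leading terms of such a sum reduces, via \cref{calcul}.\eqref{out}, to data already controlled by $\rho(c)$ and the coset $\bar x \in \Lin_A(M)$), and by construction guarantees $\bigcup_{n>0}\Tor_n(A) \subseteq \dcl_0(C)$ at the end of the induction.
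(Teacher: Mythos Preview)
Your proposal has a genuine gap: you reduce the key containment \(\rv_\infty(M(Cc))\subseteq\dcl_0(\rv_\infty(M)\Lin_A(M)\rho(Cc))\) to what you call ``a multivariate analogue of \cref{calcul}'', admit this is ``the principal technical obstacle'', and then do not actually establish it. The arity-one computations of \cref{calcul} do not generalize to several variables in any routine way, and nothing in the paper provides such a generalization. Similarly, your argument that \(\qftp(Cc/M)\) is \(\cL_0(A)\)-definable is a bare assertion: saying the type ``is determined by \(\qftp(C/M)\) together with \(\rho(c)\)'' is not a proof, and the inductive choice of generic reductions \(\bar c^{(\ell)}\) does not by itself pin down a quantifier-free type over \(M\) that is visibly \(\cL_0(A)\)-definable.

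The paper avoids all of this by a simple trick that you are missing. Fix any basis \(\beta\in\GL_n(M)\) of \(s\), and take \(\alpha:=\beta\cdot g\) where \(g\models(\restr{\eta_\Val}{M})^{\otimes n^2}\) is a tuple of independent generics of \(\Val\). Then \(\qftp(\alpha/M)=\beta\cdot\eta_\Val^{\otimes n^2}\) visibly depends only on \(s\in\Lat_n(A)\), so it is \(\cL_0(A)\)-definable. Because \(\beta\in M\), one has \(M(\alpha)=M(g)\), reducing the \(\rv_\infty\) computation to independent one-variable generics of \(\Val\), for which \(\rv_\infty(M(g))\subseteq\dcl_0(\rv_\infty(M)\cresf(g))\) is immediate. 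Finally \(\cresf(g)=\cresf(\beta^{-1}\alpha)\) is the change-of-basis matrix between \(\overline{\alpha}\) and \(\overline{\beta}\) in \(s/\cMid s\), so it lies in \(\dcl_0(\overline{\alpha}\,\overline{\beta})\) with \(\overline{\beta}\in\Lin_A(M)\) and \(\overline{\alpha}=\rho(\alpha)\). No multivariate leading-term calculus is needed. For the mixed-characteristic addendum the paper then adjoins further generics of \(\Mid\) (and of maximal open subballs of \(\Val\)) so that \(\res(\dcl_0(AC))\cap M\subseteq\resf(C)\), after which every \(e\in\Tor_n(A)\) has coordinates in \(\resf(C)\) relative to the basis already in \(C\).
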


\begin{proof}
Fix \(s\in \Lat_n(A)\) and let \(\beta\in\GL_n(M)\) be a basis of \(s\). Then, any \(\alpha\models \beta \cdot (\restr{\eta_{\Val}}{M})^{\tensor n^2}\) --- which is realised in \(N\) by \Infres{} --- where \(\eta_\Val\) is the generic (quantifier free) type of \(\Val\), is a basis of \(s\). Note that \(\qftp(\alpha/M) = \beta \cdot \eta_{\Val}^{\tensor n^2}\) only depends on \(s\) and is indeed \(\cL_0(A)\)-definable. Let \(\overline{\alpha}\), respectively \(\overline{\beta}\), be the basis of \(s/\cMid s\) --- seen as a pro-definable set--- induced by \(\alpha\), respectively \(\beta\). The matrix of \(\cres\)-coefficients of \(\alpha\) in the basis \(\beta\) is \(\cresf(\beta^{-1}\cdot\alpha)\) where \(\beta^{-1}\cdot\alpha \models (\restr{\eta_\Val}{M})^{\tensor n^2}\). It follows that \(\rv_\infty(M(\alpha)) = \rv_\infty(M(\beta^{-1}\cdot\alpha)) \subseteq \dcl_0(\RV(M)\cresf(\beta^{-1}\cdot\alpha)) \subseteq \dcl_0(\RV(M)\overline{\beta}\overline{\alpha})\). The first part of the statement follows by iterating the above construction independently for every \(s\in \Lat_n(A)\).

Let us now assume that we are in mixed characteristic. For every \(c\models\restr{\eta_{\Mid}}{M}\), \(\rv_\infty(M(Cc)) \subseteq\dcl_0(\rv_\infty(M(C))\cresf(c))\). This also holds for all maximal open subballs of \(\Val\). So, enlarging \(C\) further, we may also assume that \(\res(\dcl_0(AC))\cap M \subseteq \resf(C)\). Then, for every \(e \in \Tor_n(A)\), \(s = \tau_n(e)\in \Lat(A)\) has a basis in \(C\) and every coordinate of \(e\) in that basis is the residue of an element of \(C\). It then follows that \(e \in t_n(C)\).
\end{proof}

\begin{lemma}\label{lift Tn}
In equicharacteristic zero, assuming that, for all \(n\in\Zz_{>0}\), \(\tau_n(\Tor_n(A))\subseteq s_n(\K(A))\). Then there exists \(C\subseteq\K(N)\) and an \(\cL_0(M)\)-definable map \(\rho : \K^{\card{C}} \to \RV^{\xi}\) such that \(A\subseteq\dcl_0(C)\), \(q := \qftp(C/M)\) is \(\aut(M/A)\)-invariant, \(\germ{q}{\rho}\) is \(\aut(M/A)\)-invariant and \(\rv_\infty(M(C))\subseteq \dcl_0(\rv_\infty(M)\rho(C))\).
\end{lemma}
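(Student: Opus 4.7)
The plan is to adapt the proof of Lemma~\ref{lift stdom} to the equicharacteristic zero setting, exploiting the stronger hypothesis that every relevant lattice already has a basis in $\K(A)$, so that no auxiliary $\Lin_A$-parameters will be needed in the conclusion. Observe first that for every $s\in\Lat_n(A)$ we have $s = \tau_n(0+\Mid s)$ with $0+\Mid s\in\Tor_n(A)$, so the hypothesis yields $\beta_s\in\GL_n(\K(A))$ with $s_n(\beta_s)=s$. For each $e\in\Tor_n(A)$ with $\tau_n(e)=s$ I will set $\overline{v}_e:=\beta_s^{-1}\cdot e\in\res^n$; since $A\subseteq\Geom(M)$ and $\res$ is stably embedded, $\overline{v}_e\in\res^n(M)$, so I may pick a lift $v_{0,e}\in\Val^n(M)$ and define the fixed lift $c_{0,e}:=\beta_s v_{0,e}\in s(M)$ of the coset $e$.

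In the saturated $N\supseteq M$, I then choose a jointly independent family $(u_e)_e$ with $u_e\models(\restr{\eta_\Mid}{M})^{\otimes n}$, and set $c_e:=c_{0,e}+\beta_s u_e$; this $c_e$ realizes the generic type of the coset $e$ over $M$, which is $\cL_0(\code{e})$-definable, hence $\cL_0(A)$-definable. Taking $C:=\K(A)\cup\{c_e : e\in\bigcup_n\Tor_n(A)\}\subseteq\K(N)$, one gets $A\subseteq\dcl_0(C)$ because each $s\in\Lat_n(A)$ equals $s_n(\beta_s)\in\dcl_0(C)$ and each $e\in\Tor_n(A)$ equals $c_e+\Mid s\in\dcl_0(c_e,s)\subseteq\dcl_0(C)$; by joint independence, $q:=\qftp(C/M)$ is $\cL_0(A)$-definable, in particular $\aut(M/A)$-invariant.

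I then define $\rho:\K^{|C|}\to\RV^\xi$ component-wise, constantly (trivially) on the $\K(A)$-part and by $\rho(c_e):=\bigl(\rv(\beta_s^{-1}(c_e-c_{0,e}))_i\bigr)_{i\leq n}\in\RV^n$ on each $c_e$; this is $\cL_0(M)$-definable through the parameters $\beta_s\in A$ and $c_{0,e}\in M$. The main obstacle is the verification of germ invariance: for $\sigma\in\aut(M/A)$ we have $\sigma(\beta_s)=\beta_s$, while $\sigma(c_{0,e})$ is another lift of $e$ in $s(M)$, so $\delta_e:=\beta_s^{-1}(\sigma(c_{0,e})-c_{0,e})\in\Mid^n(M)$ and, writing $u_e:=\beta_s^{-1}(c_e-c_{0,e})$, one has $\rho^\sigma(c_e)=\rv(u_e-\delta_e)$. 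The key point is that each $u_{e,i}$, being generic in $\Mid$ over $M$, satisfies $\val(u_{e,i})<\val(\delta_{e,i})$ whenever $\delta_{e,i}\neq 0$ (the generic valuation lies in the cut strictly below $\vg(M)_{>0}$), whence $\rv(u_{e,i}-\delta_{e,i})=\rv(u_{e,i})$, so $\rho^\sigma$ and $\rho$ have the same $q$-germ.

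For the last assertion, since $\beta_s,c_{0,e}\in M$ one has $M(c_e)=M(u_e)$, and a standard leading-term computation for the $\otimes$-generic $u_e\in\Mid^n$ over $M$ (in the spirit of Lemma~\ref{calcul}.(\ref{out})--(\ref{in})) yields $\rv(M(u_e))\subseteq\dcl_0\bigl(\rv(M),\rv(u_{e,1}),\ldots,\rv(u_{e,n})\bigr)=\dcl_0(\rv(M),\rho(c_e))$; combining over all $e$ using the independence of the $u_e$'s gives $\rv_\infty(M(C))\subseteq\dcl_0(\rv_\infty(M),\rho(C))$, which is the desired conclusion (recalling that $\RV_\infty=\RV$ in equicharacteristic zero). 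The only genuinely subtle step is the germ invariance, which relies crucially on the equicharacteristic~zero genericity of $\Mid$ to absorb the $\Mid(M)$-valued ambiguity in the choice of the lift~$c_{0,e}$.
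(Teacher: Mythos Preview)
Your argument is correct, but it takes a more hands-on route than the paper. The paper's proof is a two-line reduction: since each $s=\tau_n(e)$ has a basis $\beta_s\in\K(A)$, the quotient $s/\Mid s$ is $\cL_0(A)$-definably isomorphic to $\res^n$, so every $e\in\Tor_n(A)$ is interdefinable over $\K(A)$ with a tuple in $\res(\dcl_0(A))\cap M$. One then simply applies \cref{lift RV} to $R:=\res(\dcl_0(A))\cap M$, which already delivers $C\supseteq\K(A)$ with all four required properties and with $\res(\dcl_0(A))\cap M\subseteq\resf(C)$, whence $A\subseteq\dcl_0(C)$.

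What you do instead is lift each coset $e$ directly by a generic point, essentially re-deriving by hand the open-ball case of \cref{lift RV}/\cref{open}. Your germ-invariance step --- absorbing the ambiguity $\delta_e\in\Mid^n(M)$ using that the generic valuation of $\Mid$ lies below $\vg(M)_{>0}$ --- is exactly the mechanism that makes the open-ball case of \cref{open} work, and your leading-term computation for $\rv(M(u_e))$ is the corresponding instance of \cref{calcul}. So your proof is sound, just less modular: it redoes work already packaged in \cref{lift RV}, whereas the paper's argument gets the invariance for free from that lemma. The payoff of your approach is that it makes the role of equicharacteristic zero (via the cut of $\eta_\Mid$) completely explicit.
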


\begin{proof}
For every \(e\in\Tor_n(A)\), by hypothesis, \(s := \tau(e)\) has a basis in \(\K(A)\). It follows that \(s/\Mid s\) also has a basis of \(\dcl_0(A)\)-points and hence is \(\cL_0(A)\)-definably isomorphic to \(\res^n\). By \cref{lift RV} applied to $R:=\res(\dcl_0(A))\cap M\subseteq \dcl_0(A)$, we find \(\K(A)\subseteq C\leq \K(N)\) such that \(\res(\dcl_0(A))\cap M\subseteq\resf(C)\), \(q := \qftp(C/M)\) is \(\aut(M/A)\)-invariant, \(\germ{q}{\rho}\) is \(\aut(M/A)\)-invariant and \(\rv_\infty(M(C))\subseteq \dcl_0(\rv_\infty(M)\rho(C))\). Then, we have \(A = \K(A)\cup\bigcup_n\Lat_n(A)\cup\bigcup_n\Tor_n(A) \subseteq \dcl_0(C)\).
\end{proof}

\begin{corollary}\label{inv lift}
Assume that \Infres{} holds. Then there exists \(C\subseteq \K(N)\) such that \(\qftp(C/M)\) has \(\aut(M/A\RV(M)\Lin_A(M))\)-invariant \(\RV\)-germs and \(A\subseteq\dcl_0(C)\).
\end{corollary}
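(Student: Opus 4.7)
My plan is to apply \cref{lift stdom} to lift the lattice part of $A$ (and, in mixed characteristic, also the torsor part), then to handle the remaining torsors in equicharacteristic zero by an argument analogous to \cref{lift Tn}, and finally to invoke the quantifier-free version of \cref{inv crit} to deduce the $\RV$-germ invariance.

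The first step is to apply \cref{lift stdom} to obtain $C_0\subseteq\K(N)$ and an $\cL_0(A)$-definable map $\rho_0:\K^{|C_0|}\to\Lin_A^{\xi}$ such that $\Lat_n(A)\subseteq s_n(C_0)$ for every $n$, $\qftp(C_0/M)$ is $\cL_0(A)$-definable, and $\rv_\infty(M(C_0))\subseteq\dcl_0(\rv_\infty(M)\Lin_A(M)\rho_0(C_0))$. After absorbing $\K(A)\subseteq M$ into $C_0$, one has $\K(A)\cup\Lat(A)\subseteq\dcl_0(C_0)$. In mixed characteristic, \cref{lift stdom} already yields $\Tor_n(A)\subseteq\dcl_0(C_0)$, so $A\subseteq\dcl_0(C_0)$ and one may take $C:=C_0$.

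In equicharacteristic zero, one still needs to capture each $e\in\Tor_n(A)$. Fix, for every lattice $s\in\Lat(A)$, a basis $\beta_s\in C_0^n$ provided by \cref{lift stdom}; then each $e\in\Tor_n(A)$ has residue coordinates $\xi_e\in\res^n$ in the induced basis $\bar\beta_s$ of $s/\Mid s$, with $\xi_e\in\res(\dcl_0(AC_0))$. The proof of \cref{lift Tn} would proceed by lifting $R:=\res(\dcl_0(A))\cap M$ via \cref{lift RV}; this cannot be applied verbatim here, because the generic basis $\beta_s$ forces $\xi_e$ out of $\res(M)$ in general. Instead, I would pick, for each $e$, a generic lift $v_e\in\resf^{-1}(\xi_e)(N)$ over $MC_0$ --- whose quantifier-free type over $MC_0$ is $\cL_0(AC_0)$-definable since $\xi_e\in\dcl_0(AC_0)$ --- and set $c_e:=\beta_s\cdot v_e\in e(N)$. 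Taking the $v_e$ mutually independent over $MC_0$ and setting $C:=C_0\cup\{c_e\}_e$, one gets $e=\resf_s(c_e)\in\dcl_0(C)$, hence $A\subseteq\dcl_0(C)$; the joint $\qftp(C/M)$ remains $\cL_0(A)$-definable by the standard transitivity argument for $\cL_0(A)$-definable types; and a Taylor expansion of polynomials in $c_e$ over $M$, using \cref{calcul}, produces an $\cL_0(A)$-definable extension $\rho=(\rho_0,\rho_1)$ of $\rho_0$ with $\rv_\infty(M(C))\subseteq\dcl_0(\rv_\infty(M)\Lin_A(M)\rho(C))$.

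The final step is the quantifier-free analog of \cref{inv crit}: since $\qftp(C/M)$ is $\cL_0(A)$-definable (hence $\aut(M/A\RV(M)\Lin_A(M))$-invariant), $\rho$ is $\cL_0(A)$-definable (so its germ is invariant), and $\rv_\infty(M(C))\subseteq\dcl_0((A\cup\RV(M)\cup\Lin_A(M))\cdot\rho(C))$, the argument of \cref{inv crit} performed in $\cL_0=\ACVF$ using the stable embeddedness of $\RV$ will then show that $\qftp(C/M)$ has $\aut(M/A\RV(M)\Lin_A(M))$-invariant $\RV$-germs. The hard part will be the equicharacteristic zero case: because \cref{lift stdom} uses generic bases, the torsor coordinates are pushed into $\res(N)\setminus\res(M)$, making a black-box appeal to \cref{lift Tn} impossible; the generic-lift trick over $MC_0$ sidesteps this, but it requires careful bookkeeping of the $\cL_0(A)$-definability of the joint qftp and of the $\rv_\infty$-control through the Taylor expansion.
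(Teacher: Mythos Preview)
Your mixed-characteristic case matches the paper's. In equicharacteristic zero, your assertion that ``a black-box appeal to \cref{lift Tn} is impossible'' is where you diverge from the paper, and it reflects a missed idea rather than a genuine obstruction. The hypothesis $\tau_n(\Tor_n(A))\subseteq s_n(\K(A))$ of \cref{lift Tn} indeed fails for $A$ over $M$, but the paper does not apply \cref{lift Tn} over $M$: after \cref{lift stdom} produces $C_0\subseteq\K(N)$ with $\Lat_n(A)\subseteq s_n(C_0)$, one passes to a larger saturated $N_1\supsel N$ and applies \cref{lift Tn} to $A':=A\cup C_0\subseteq\Geom(N)$, with $N$ now playing the role of the base model. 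Since $\tau_n(\Tor_n(A'))=\tau_n(\Tor_n(A))\subseteq\Lat_n(\dcl_0(A))\subseteq s_n(C_0)\subseteq s_n(\K(A'))$, the hypothesis of \cref{lift Tn} is met as stated. A straightforward variant of \cref{inv crit} (allowing $\rho$ to land in $\Lin_A$ rather than $\RV$) supplies the $\RV$-germ invariance for $\qftp(C_0/M)$, and then \cref{inv trans} descends the invariance of the $C_1$ produced by \cref{lift Tn} from $N$ back to $M$. That is the content of the paper's one-line proof: ``lift stdom, lift Tn and transitivity, cf.\ inv trans.''

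Your direct construction---lifting the torsor coordinates via generic $v_e$ over $MC_0$---amounts to re-running the proof of \cref{lift Tn} (really, of \cref{lift RV}) with the set $MC_0$ in place of a model. This is not incorrect, and the ingredients you name (transitivity of $\cL_0(A)$-definable types for the joint $\qftp$, \cref{calcul} for the $\rv_\infty$-control) are the right ones. But it duplicates work already packaged in \cref{lift Tn} and \cref{inv trans}; the ``careful bookkeeping'' you flag as the hard part is precisely what those two lemmas are designed to absorb. One small overclaim in your sketch: the relative-arity-one lemmas (e.g.\ \cref{open}) only guarantee that the $p$-germ of your $\rho_1$ is $\aut(M/A)$-invariant, not that $\rho_1$ itself can be chosen $\cL_0(A)$-definable. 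Fortunately, germ invariance is all that the \cref{inv crit} argument requires, so this does not break your proof.
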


\begin{proof}
In mixed characteristic, this follows immediately from \cref{inv crit,lift stdom}. In residue characteristic zero, it follows from \cref{lift stdom,lift Tn} and transitivity, \emph{cf.} \cref{inv trans}.\end{proof}

\begin{proof}[Proof of \cref{inv}]
By \cref{inv lift}, we find \(C\subseteq\K(N)\) such that \(A\subseteq \dcl_0(C)\) and \(\tp(C/M)\) has \(\aut(M/A\RV(M)\Lin_A(M))\)-invariant \(\RV\)-germs. Let \(M\subsel M_1 \subsel N\) be sufficiently saturated and homogeneous and contain \(C\). By \cref{inv sat}, we find \(\hC \subsel N\) containing a realisation of every \(\cL(C)\)-type such that \(\tp(\hC/M_1)\) is \(\aut(M_1/C\RV)\)-invariant. Let \(M_1\subsel M_2 \subsel N\) be sufficiently saturated and homogeneous and contain \(\hC\). 

Let $p:=\tp_0(a/M)$, which is $\aut(M/A)$-invariant by assumption.

\begin{claim}
\(\tp(a/M)\cup\restr{p}{M_2}\) is consistent.
\end{claim}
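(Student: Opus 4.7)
The plan is to realize the partial type $\tp(a/M)\cup\restr{p}{M_2}$ by constructing a complete $\cL(M_2)$-type extending both, leveraging the relative quantifier elimination \cref{EQ rv}: in any $\RV$-enrichment of $\Hen[0]$, a complete $\cL(M_2)$-type is determined by its $\cL_0$-reduct together with the induced $\cL$-type on the stably embedded $\RV$-sort. First, I would interpret $\restr{p}{M_2}$ precisely: since $p=\tp_0(a/M)$ is $\aut(M/A)$-invariant and the reduct $\ACVF$ is NIP, $p$ admits a canonical global $\aut(\cU/A)$-invariant $\cL_0$-extension $\tilde{p}$ (e.g.\ as a coheir of $p$), and we set $\restr{p}{M_2}:=\restr{\tilde{p}}{M_2}$.

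Next, I would define a complete $\cL(M_2)$-type $r$ by specifying its two pieces of data. Set $r|_{\cL_0}:=\restr{p}{M_2}$, which is compatible with $\tp(a/M)$ since $\restr{\tilde{p}}{M}=p=\tp_0(a/M)\subseteq\tp(a/M)$. For the $\RV$-part, $r|_{\cL_0}$ determines the quantifier-free $\cL_0$-structure of the $\RV$-hull of any realization of $r$ over $\RV(M_2)$; it remains to specify a complete $\cL$-type on this $\RV$-hull over $\RV(M_2)$ extending $\tp(\RV(M(a))/\RV(M))$ (which is encoded in $\tp(a/M)$). Such a coherent extension exists by stable embeddedness of $\RV$ in $T$ and the saturation of $M_2$.

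The main obstacle is to verify that these two pieces of data fit together coherently: the quantifier-free $\cL_0$-embedding $\RV(M(a))\hookrightarrow\RV(M_2(b))$ (where $b$ realizes $r$) induced by $r|_{\cL_0}$ must coincide with the embedding underlying $\tp(a/M)$. This follows because $\restr{\tilde{p}}{M}=p$, so the inclusion $M(a)\hookrightarrow M_2(b)$ dictated by $r|_{\cL_0}$ is precisely the natural one coming from $M\subseteq M_2$ applied to the realization $a$; hence the induced $\RV$-embedding is canonical, making the $\cL$-structures on $\RV(M(a))$ arising from $\tp(a/M)$ and from $r$ compatible. By \cref{EQ rv}, $r$ is then a well-defined complete $\cL(M_2)$-type containing $\tp(a/M)\cup\restr{p}{M_2}$, so any realization of $r$ witnesses the desired consistency.
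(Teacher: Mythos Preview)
Your approach is considerably more elaborate than the paper's and has a genuine gap at the key step.

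First, two minor points on your setup. The appeal to NIP is irrelevant, and ``coheir'' is not the right word (nothing guarantees $p$ is finitely satisfiable in a small set). The canonical invariant extension exists for a more basic reason: $M$ is saturated over the small set $A$, so one defines $\psi(x,d)\in\restr{p}{M_2}$ iff $\psi(x,d')\in p$ for some (hence any) $d'\in M$ with $d'\equiv_A d$; this is well-defined precisely by $\aut(M/A)$-invariance of $p$.

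The real problem is your assertion that ``such a coherent extension exists by stable embeddedness of $\RV$ in $T$ and the saturation of $M_2$.'' This is where all the content lies, and it is not justified. What you need is an amalgamation: given the full $\cL$-type of $\RV(M(a))$ over $\RV(M)$ and a prescribed $\cL_0$-type over $\RV(M_2)$ (the one induced by $\restr{p}{M_2}$) agreeing on the overlap, produce a common $\cL$-extension over $\RV(M_2)$. Stable embeddedness of $\RV$ says nothing about this; it only tells you that $\RV$-types are controlled by $\RV$-parameters. And field quantifier elimination says that a complete $\cL$-type over $M_2$ is \emph{determined} by its $\cL_0$-part together with its $\RV$-part, not that every compatible pair of such data is \emph{realized}. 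Your final paragraph does not close this gap: you argue that the $\cL_0$-embeddings line up (which is true, since $\restr{\tilde p}{M}=p$), but this only confirms that the two constraints are compatible at the $\cL_0$-level over $M$; it does not produce the required $\cL$-type over $\RV(M_2)$.

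By contrast, the paper's argument is a three-line compactness argument that uses nothing specific about valued fields and does not invoke relative quantifier elimination at all. Given $\phi(x,m)\in\tp(a/M)$ (an $\cL$-formula) and $\psi(x,d)\in\restr{p}{M_2}$ (an $\cL_0$-formula), pick $\sigma\in\aut(N/Am)$ with $\sigma(d)\in M$, using saturation of $M$. By $A$-invariance of the extension, $\psi(x,\sigma(d))\in p$, so $a\models\psi(x,\sigma(d))$, whence $\sigma^{-1}(a)\models\psi(x,d)\wedge\phi(x,m)$. The point your approach misses is that invariance over the \emph{small} set $A$ lets one transport the $M_2$-parameter back into $M$ while fixing the $M$-parameter $m$.
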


\begin{proof}
Let \(\phi(x,m)\) be some \(\cL(M)\)-formula such that \(N\models\phi(a,m)\) and let \(\psi(x,d) \in \restr{p}{M_2}\). Let \(\sigma\in\aut(N/Amd)\) be such that \(\sigma(d)\in M\). Then \(N\models\psi(a,\sigma(d))\) and hence \(\sigma^{-1}(a)\models \psi(x,d)\wedge\phi(x,m)\).
\end{proof}

Let \(a' \models\tp(a/M)\cup \restr{p}{M_2}\). Then \(\qftp(a'/M_2)=\restr{p}{M_2}\) is \(\aut(M_2/C)\)-invariant and thus, by \cref{RV inv}, \(\tp(a'/M_2)\) is \(\aut(M_2/\hC\RV)\)-invariant. Since  \(\tp(\hC/M_1)\) is \(\aut(M_1/C\RV)\)-invariant, by transitivity, \emph{cf.} \cref{inv trans}, \(\tp(a'/M_1)\) is  \(\aut(M_1/C\RV)\)-invariant. By transitivity, since \(\tp(C/M)\) has \(\aut(M/A\RV(M)\Lin_A(M))\)-invariant \(\RV\)-germs,  \(\tp(a/M) = \tp(a'/M)\) is  \(\aut(M/A\RV(M)\Lin_A(M))\)-invariant.
\end{proof}

Let us conclude this section by relating \cref{inv} to imaginaries:

\begin{proposition}\label{EI to RV}
Let \(T\supseteq\Hen[0]\) be an \(\cL\)-theory, such that:
\begin{itemize}
\item[\Dens] For every strict pro-\(\cL(A)\)-definable \(X\subseteq\K^x\), with \(A = \acleq(A) \subseteq \eq{M}\models\eq{T}\), there exist an \(\aut(M/\Geom(A))\)-invariant \(p\in\TP^0_x(M)\) consistent with \(X\);
\item[\EQ] For every tuple \(a \in \K(M)\), with \(M\models T\), \(\tp_1(f(a)) \vdash \tp(a)\), where \(f : \K \to \K^x\) is pro-\(\cL\)-definable and \(\cL_0\subseteq \cL_1\subseteq \cL\) such that $\cL_1$ is an \(\RV\)-enrichment of $\cL_0$;
\item[\Infres] The residue field \(\res\) is infinite;
\item[\SE] \(\RV\) and \(\Res = \bigcup_\ell \Val/\ell\Mid\) are stably embedded. 
\end{itemize}
Let \(M\models T\), \(e\in\eq{M}\) and \(A = \acleq(e)\). Then
\[e\in\dcleq(\K(A)\cup\eq{(\RV\cup\Lin_{\Geom(A)})}(A)).\]
\end{proposition}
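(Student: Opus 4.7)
The strategy is to combine the density result \Dens{} with the invariant-completion theorem (\cref{inv}) to obtain $e \in \dcleq(B^\star)$ for an intermediate set $B^\star$, and then descend to $B := \K(A) \cup \eq{(\RV\cup\Lin_{\Geom(A)})}(A)$ using stable embeddedness \SE{}.

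Write $e = [a_0]_E$ for some $\emptyset$-definable equivalence relation $E$ on $\K^n$ and a tuple $a_0 \in \K(M)^n$; then $X := [a_0]_E$ is $A$-definable with $e = \code{X}$. Fix a sufficiently saturated and homogeneous $N \supsel M$. Applying \Dens{} to $X$, obtain $p \in \TP^0_x(N)$ consistent with $X$ and $\aut(N/\Geom(A))$-invariant, and fix a realisation $a \models p$ with $a \in X(N)$ (enlarging $N$ if necessary).

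Let $T_1$ be the $\cL_1$-reduct of $T$; by construction, $T_1$ is an $\RV$-enrichment of $\Hen[0]$. Applying \cref{inv} in $T_1$ (using \Infres{}), the $\cL_1$-type $\tp_1(a/N)$ is invariant under $\aut_{\cL_1}(N/B^\star)$, where
\[B^\star := \Geom(A) \cup \RV(N) \cup \Lin_{\Geom(A)}(N).\]
Since \EQ{} implies that every $\cL_1$-elementary map is $\cL$-elementary, the full $\cL$-type $\tp(a/N)$ is $\aut(N/B^\star)$-invariant. As $aEa_0 \in \tp(a/N)$, any $\sigma \in \aut(N/B^\star)$ satisfies $aE\sigma(a_0) \in \tp(a/N)$; hence $a_0 E \sigma(a_0)$ by transitivity, i.e., $\sigma(e)=e$. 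This proves $e \in \dcleq(B^\star)$.

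For the descent, compactness yields finite tuples $g \in \Geom(A)$, $r \in \RV(N)^m$, $\ell \in \Lin_{\Geom(A)}(N)^k$ and a $\emptyset$-definable function $\phi$ with $e = \phi(g,r,\ell)$. The set
\[Y := \{(r',\ell') \in \RV^m \times \Lin_{\Geom(A)}^k : \phi(g,r',\ell') = e\}\]
is $A$-definable. Hypothesis \SE{} together with \cref{L:InternallyInternal} (applied after fixing bases, using internality of $\Lin_{\Geom(A)}$ to $\res$) implies that $\RV \cup \Lin_{\Geom(A)}$ is stably embedded, so $\code{Y} \in \eq{(\RV\cup\Lin_{\Geom(A)})}(A) \subseteq B$. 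Since $Y$ is nonempty and $\phi(g,\cdot,\cdot)$ is constant with value $e$ on $Y$, we obtain $e \in \dcleq(g,\code{Y})$. It remains to absorb $g$: its $\K$-part lies in $\K(A) \subseteq B$, its $\Tor$-part in $\Lin_{\Geom(A)}(A) \subseteq B$, and each lattice $s \in \Lat(A)$ is determined (in $\dcl_0$) by the zero coset $\Mid s \in s/\Mid s \subseteq \Lin_{\Geom(A)}(A)$. Therefore $g \in \dcleq(B)$, and consequently $e \in \dcleq(B)$.

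The main technical obstacle is the descent step: the arbitrary $\RV(N)$- and $\Lin_{\Geom(A)}(N)$-parameters defining $e$ over $B^\star$ must be absorbed into $A$-level imaginaries of $\RV \cup \Lin_{\Geom(A)}$. This absorption crucially uses stable embeddedness, the observation that $e$ itself lies in $A$, and the somewhat subtle point that lattices in $\Lat(A)$ are recovered from their distinguished zero cosets in $\Lin_{\Geom(A)}$.
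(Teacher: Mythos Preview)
Your overall architecture matches the paper's, but the step where you invoke \EQ{} contains a genuine gap. You claim that ``\EQ{} implies that every $\cL_1$-elementary map is $\cL$-elementary'', and use this to pass from invariance of $\tp_1(a/N)$ to invariance of $\tp(a/N)$. This inference is not justified: \EQ{} asserts that $\tp_1(f(a)) \vdash \tp(a)$, \emph{not} that $\tp_1(a) \vdash \tp(a)$. The map $f$ is only pro-$\cL$-definable, not $\cL_1$-definable, so an $\cL_1$-automorphism need not commute with $f$, and knowing the $\cL_1$-type of $a$ does not determine the $\cL_1$-type of $f(a)$. Concretely, in the $\VFA$ application one has $\cL_1 = \LRV \cup \{\sigma_{\RV}\}$ and $f(a) = (\sigma_{\K}^n(a))_{n\geq 0}$; an $\cL_1$-automorphism of $N$ commutes with $\sigma_{\RV}$ but need not commute with $\sigma_{\K}$, so your implication fails.

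The paper avoids this by applying \Dens{} not to $X = g^{-1}(e)$ but to the strict pro-$\cL(A)$-definable set $f(g^{-1}(e))$ --- this is precisely why \Dens{} is formulated for strict pro-definable sets. One then arranges $f(a) \models p$, so that $\qftp(f(a)/M)$ is invariant, and applies \cref{inv} to the (possibly infinite) tuple $f(a)$ to obtain invariance of $\tp_1(f(a)/M)$; only then does \EQ{} yield invariance of $\tp(a/M)$. A secondary point: your passage from ``$e$ is fixed by $\aut(N/B^\star)$'' to ``$e \in \dcleq(B^\star)$'' is not just compactness, since $B^\star$ has the same cardinality as $N$; you need stable embeddedness of $\RV \cup \Lin_{\Geom(A)}$ together with \cref{ste aut} already at this step, not only afterwards.
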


\begin{proof}
Let \(M\models T\) be saturated and sufficiently large, \(e\in\eq{M}\) and \(A = \acleq(e)\). Then \(e =g(a)\) for some \(\cL\)-definable map \(g\) and tuple \(a\in\K(M)\). Let \(Y = g^{-1}(e)\) and \(X = f(Y)\), which is a strict pro-\(\cL(A)\)-definable set. By \Dens{}, there exists an \(\aut(M/\Geom(A))\)-invariant \(p\in\TP^0_x(M)\) consistent with \(X\). We may assume that \(f(a)\models p\). By \cref{inv}, \(\tp_1(f(a)/M)\) is \(\aut(M/\Geom(A)\RV(M)\Lin_{\Geom(A)}(M))\)-invariant. By \EQ, \(\tp(a/M)\) --- and hence \(e \in \eq{M}\) --- is also \(\aut(M/\Geom(A)\RV(M)\Lin_{\Geom(A)}(M))\)-invariant.

Since \(\Lin_{\Geom(A)}\) is a collection of free \(\Val/\ell\Mid\)-modules, \(\Lin_{\Geom(A)}\) and, in fact \(\Lin_{\Geom(A)} \cup \RV\), is stably embedded. By \cref{ste aut}, \(e\in\dcleq(\Geom(A)\cup \RV(M)\cup\Lin_{\Geom(A)}(M))\) --- \emph{i.e.}, \(e = h(c)\) for some \(\cL(\Geom(A))\)-definable map \(h\) and tuple \(c\in\RV^m(M)\times\Lin_{\Geom(A)}^n(M)\). Let \(Z = h^{-1}(e)\). Then \(\code{Z}\in \eq{(\RV\cup\Lin_{\Geom(A)})}(A)\) and
\begin{eqnarray*}e\in\dcleq(\Geom(A)\code{Z}) &\subseteq& \dcleq(\Geom(A)\cup \eq{(\RV\cup\Lin_{\Geom(A)})}(A))\\
&\subseteq& \dcleq(\K(A)\cup\eq{(\RV\cup\Lin_{\Geom(A)})}(A)),
\end{eqnarray*}
since $\Geom(A)\setminus\K(A)\subseteq\eq{\Lin_{\Geom(A)}}(A)$.
\end{proof}


\section{Imaginaries in short exact sequences}\label{sequence}

In this section we will establish results which yield a relative understanding of imaginaries in certain pure short exact sequences of modules.

\subsection{The core case}
We start with a well known lemma. We include a proof for convenience.

\begin{lemma}\label{L:WEI-Orth}
Let $D$ and $C$ be stably embedded (ind-)definable sets in $D\cup C$, such that $D\perp C$ (i.e., $D$ and $C$ are orthogonal). 
\begin{enumerate} 
\item Assume that both $D$ and $C$ (considered with the full induced structures) weakly eliminate imaginaries. Then $D\cup C$ weakly eliminates imaginaries.
\item Assume that both $D$ and $C$ eliminate imaginaries and that in $C$ one has $\dcl=\acl$. Then  $D\cup C$ eliminates imaginaries.
\end{enumerate}
\end{lemma}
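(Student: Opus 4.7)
For both items, take $e\in\eq{(D\cup C)}$ and realise it as the $E$-class of some $(a,b)\in D^m\times C^n$ for a $\emptyset$-definable equivalence relation $E$. Let $X_e$ be the $e$-definable set of all $(a',b')$ with $(a',b')\,E\,(a,b)$. Orthogonality $D\perp C$, combined with stable embeddedness of both $D$ and $C$, implies that every definable subset of $D^m\times C^n$ is a finite union of rectangles $A\times B$ with $A\subseteq D^m$ and $B\subseteq C^n$ definable in their respective induced structures. Refining such a decomposition through the $e$-definable equivalence $x\sim_D x'\iff(X_e)_x=(X_e)_{x'}$ produces a canonical form
$$X_e=\bigsqcup_{i=1}^{k}A_i\times B_i,$$
with the $A_i$ pairwise disjoint (as $\sim_D$-classes on which the fibre is non-empty) and the $B_i$ pairwise distinct and non-empty (as the corresponding fibres). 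Hence $\{(A_i,B_i):i\leq k\}$ is $e$-interdefinable with $e$, and in particular $A_i,B_i\in\acleq(e)$ for each $i$.

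\textbf{Proof of (1).} Weak elimination of imaginaries in $D$ provides, for each $i$, a finite real tuple $\mathbf{p}_i$ from $D$ with $\mathbf{p}_i\in\acleq(A_i)$ and $A_i\in\dcleq(\mathbf{p}_i)$; symmetrically one extracts $\mathbf{q}_i$ from $C$ weakly coding $B_i$. The concatenation $(\mathbf{p}_i,\mathbf{q}_i)$ weakly codes $(A_i,B_i)$ and lies in $\acleq(e)$. Enumerating the finite $e$-definable set $\{(\mathbf{p}_i,\mathbf{q}_i):i\leq k\}$ in any order yields a single real tuple $\mathbf{c}\in(D\cup C)^{<\omega}$; its orbit over $e$ is contained in the set of enumerations of this finite set, hence is finite, so $\mathbf{c}\in\acleq(e)$. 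Forgetting the order of $\mathbf{c}$ recovers the $(A_i,B_i)$, then $X_e$, then $e$, so $e\in\dcleq(\mathbf{c})$. Thus $\mathbf{c}$ is a real weak code of $e$.

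\textbf{Proof of (2).} The extra hypothesis upgrades "$B_i\in\acleq(e)$" to "$B_i\in\dcleq(e)$": elimination of imaginaries in $C$ lifts $\dcl=\acl$ from $C$ to $\dcleq=\acleq$ inside $\eq{C}$, and stable embeddedness of $C$ then gives $\acleq(e)\cap\eq{C}\subseteq\dcleq(e)$. Hence each $B_i$ is individually $e$-definable, and canonicity of the decomposition forces the corresponding $A_i$ to be $e$-definable as well (as the unique $\sim_D$-class whose fibre equals $B_i$). Applying elimination of imaginaries in $D$ and $C$ we obtain real codes $a_i\in D$ of $A_i$ and $b_i\in C$ of $B_i$, which already belong to $\dcleq(e)$. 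Any enumeration $\mathbf{c}=(a_1,b_1,\ldots,a_k,b_k)$ is therefore coordinatewise $e$-definable, so $\mathbf{c}\in\dcleq(e)$, and $e\in\dcleq(\mathbf{c})$ as in (1). Thus $\mathbf{c}$ is a full real code of $e$. The only genuinely delicate step is the transfer $\dcl=\acl\leadsto\dcleq=\acleq$ in $\eq{C}$, which is standard once one uses elimination of imaginaries in $C$ essentially.
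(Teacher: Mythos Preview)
Your argument is correct and follows essentially the same route as the paper's proof: decompose $X_e$ into a finite union of rectangles via the finitely many fibre-classes (the paper fibres over $C$, you fibre over $D$, but this is symmetric), observe the rectangles lie in $\acleq(e)$, and use (weak) elimination of imaginaries on each factor.

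One small slip in part (1): the set $\{(\mathbf{p}_i,\mathbf{q}_i):i\leq k\}$ need \emph{not} be $e$-definable, since weak codes are not canonical --- an automorphism fixing $e$ permutes the $(A_i,B_i)$ but may send $\mathbf{p}_i$ to a different weak code of $A_{\sigma(i)}$, not to $\mathbf{p}_{\sigma(i)}$. So your orbit bound is wrong. This does not matter, however: each $\mathbf{p}_i\in\acleq(\code{A_i})\subseteq\acleq(e)$ and likewise for $\mathbf{q}_i$, so any concatenation $\mathbf{c}$ lies in $\acleq(e)$ coordinatewise, which is all you need. The rest (recovering $e$ from $\mathbf{c}$, and the $\dcl=\acl$ transfer in part (2)) is fine.
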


\begin{proof}
(1) Let $X\subseteq D^m\times C^n$ be a definable subset. Since $D\perp C$, the 

equivalence relation $\sim$ on $C^n$, given by $c\sim c':\Leftrightarrow X_c=X_{c'}$, has finitely many equivalence classes $Z_1=c_1/\sim, \ldots, Z_k=c_k/\sim$. As $\sim$ is $\code{X}$-definable and $C$ is stably embedded, the $Z_i$ are all $\eq{C}(\acleq(\code{X}))$- definable. 

For $i=1,\ldots,k$, set $Y_i= X_{c_i}\subseteq D^m$, which is $\eq{D}(\acleq(\code{X}))$-definable since $D$ is stably embedded. As $D$ and $C$ weakly eliminate imaginaries, there are finite tuples $d\in D(\acleq(\code{X}))$ and $c\in C(\acleq(\code{X}))	$ 
such that the $Y_i$ are all $d$-definable and the $Z_i$ are all $c$-definable. Thus $X=\bigcup_{i=1}^k(Y_i\times Z_i)$ is $dc$-definable, so in particular $D(\acleq(\code{X}))C(\acleq(\code{X})$-definable. 

\smallskip
(2) The assumptions on $C$ yield $\eq{C}(\acleq(\code{X})) \subseteq \dcleq(C(\code{X}))$, and so the sets $Z_1,\ldots Z_k$ are $c$-definable for some $c\in C(\code{X})$. In particular 
the $Y_i$ are then all $\code{X}$-definable, thus $\eq{D}(\code{X})$-definable. By elimination of imaginaries in $D$, we find $d\in D(\code{X})$ such that all 
$Y_i$ are $d$-definable. We now finish as in (1).
\end{proof}

\begin{fact}\label{F:BHNeumann}
Let $G$ be a group, and let $H_1,\ldots,H_N$ be subgroups of $G$. Then the left cosets of the $H_i$ form a pre-basis of 
closed sets for a noetherian topology on $G$. Moreover, setting $H_I:=\bigcap_{i\in I}H_i$ for $I\subseteq\{1,\ldots, N\}$, the irreducible 
closed sets for this topology are precisely the left cosets of those $H_I$ with the property that any proper subgroup of $H_I$ of the 
form $H_J$ is of infinite index in $H_I$.
\end{fact}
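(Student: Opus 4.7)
The plan is to identify the closed sets explicitly as finite unions of cosets of the $H_I$, characterize irreducibility via a direct application of B.H. Neumann's theorem on finite covers of a group by cosets, and deduce Noetherianity by induction on the finite sub-poset $\{H_I:I\subseteq\{1,\ldots,N\}\}$ of subgroups. Throughout, the key calculation is that $aH_I\cap bH_J$ is either empty or a coset of $H_{I\cup J}$, so the family $\mathcal{F}$ of finite unions of cosets of the $H_I$ is closed under finite unions and finite intersections and will turn out to be the family of closed sets of the topology.

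The first substantial step is the following absorption lemma, which I view as a direct avatar of B.H. Neumann's theorem: if $H_J$ satisfies the hypothesis ``every proper $H_K\subsetneq H_J$ has infinite index in $H_J$'' and $cH_J\subseteq\bigcup_k a_kH_{I_k}$, then some $a_kH_{I_k}$ already contains $cH_J$. After translating to $c=e$, the intersections $H_J\cap a_kH_{I_k}$ cover $H_J$ and each non-empty such intersection is a coset of $H_J\cap H_{I_k}=H_{J\cup I_k}$ in $H_J$. B.H. Neumann's theorem supplies some $k$ with $[H_J:H_{J\cup I_k}]<\infty$; the irreducibility hypothesis on $H_J$ then forces $H_{J\cup I_k}=H_J$, whence $H_J\subseteq H_{I_k}$ and the whole coset is absorbed. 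This lemma at once implies that cosets of such $H_J$ are irreducible: any decomposition $cH_J=F_1\cup F_2$ into proper closed subsets would, by absorption, force $cH_J\subseteq F_i$ for some $i$. Conversely, if some proper $H_K\subsetneq H_J$ has finite index $n\geq 2$, the partition of $cH_J$ into $n$ cosets of $H_K$ exhibits it as reducible, proving the ``precisely'' part of the statement.

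Next, the same tower-of-finite-index argument (chains $H_J\supsetneq H_{J'}\supsetneq\cdots$ of finite-index inclusions inside the finite poset terminate) shows that every coset $cH_I$ is a finite union of cosets of irreducible $H_J$'s with $H_J\subseteq H_I$, and therefore every element of $\mathcal{F}$ is a finite union of irreducible closed sets.

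Finally, Noetherianity would follow by induction on $|\{1,\ldots,N\}\setminus J|$. The induced topology on an irreducible coset $cH_J$ admits as sub-basis (of closed sets) the intersections $cH_J\cap aH_i$, each of which is empty or a coset of $H_{J\cup\{i\}}$ in $cH_J$; this is again a setup of the same type but based on the strictly smaller sub-poset $\{H_K:K\supseteq J\}$. The base case, when this sub-poset is a singleton, yields the trivial topology $\{\emptyset, cH_J\}$, which is Noetherian. For the inductive step, given a descending chain $F_1\supseteq F_2\supseteq\cdots$, I decompose $F_1$ into its finitely many irreducible components $c_1H_{J_1},\ldots,c_mH_{J_m}$; by the absorption lemma applied to each irreducible component of each $F_j$ (for $j\geq 2$), every such component is contained in one of the $c_lH_{J_l}$. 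The chain $\{F_j\cap c_lH_{J_l}\}_j$ is then a descending chain in the subspace topology on $c_lH_{J_l}$, which stabilizes by induction, and so does the original chain. Once the chain stabilizes the family $\mathcal{F}$ is also seen to be closed under arbitrary intersections, completing the identification of the closed sets.

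The main obstacle is pinning down the right scope of B.H. Neumann's theorem in the absorption lemma: one must apply it inside the subgroup $H_J$ (rather than in $G$), so that the hypothesis on $H_J$ forces the finite-index intersection to fill $H_J$ entirely. Once this is done carefully, the reduction to a smaller sub-poset for the subspace topology is routine and the induction concludes the proof.
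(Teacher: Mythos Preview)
Your proof is correct and follows exactly the route the paper intends: the paper's own proof is the single line ``This is an easy consequence of Neumann's Lemma,'' and what you have written is a careful unpacking of that consequence. The absorption lemma is indeed the precise point where Neumann's theorem enters, and your derivation of irreducibility, the decomposition into irreducible cosets, and DCC from it are all sound.

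Two small expository points worth tightening. First, the induction variable ``$|\{1,\ldots,N\}\setminus J|$'' is not literally well-defined, since different index sets can give the same subgroup $H_J$; what actually decreases is the cardinality of the finite set $\{H_K:K\supseteq J\}$ of distinct subgroups, which is what you invoke a sentence later and which does the job. Second, in the inductive step you should explicitly dispose of the case where the chain begins with $F_1=G$: either every $F_j=G$ and the chain is constant, or you may pass to the first $j$ with $F_j\subsetneq G$, after which each irreducible component $c_lH_{J_l}$ of $F_j$ has $H_{J_l}\subsetneq G$ and the sub-poset is genuinely smaller. With these clarifications your argument is complete.
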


\begin{proof}
This is an easy consequence of Neumann's Lemma.
\end{proof}

Let $R$ be an integral domain, $\cL\supseteq \cL_{R-\mathrm{mod}}$ and $M$ an $\cL$-expansion of an infinite torsion free $R$-module. Let  $Z\subseteq M^n$ be an $\cL(M)$-definable set. We set $\dim_R(Z):=\max\{\dim_R(c/M)\mid c\in Z(N)\}$, where $N\succcurlyeq M$ is sufficiently saturated and $\dim_R(a/B)$ denotes the $Q(R)$-linear dimension of $a$ over $B$, for $Q(R)$ the field of fractions of $R$. 

\begin{lemma}\label{L:LinearLocus}
In the above situation, assume $\dim_R(Z)\leq r$. Then there are definable sets $C_1,\ldots,C_s\subseteq M^n$ with the following properties:
\begin{enumerate}
\item $\bigcup_{i=1}^s C_i$ is $\code{Z}$-definable.
\item Each $C_i$ is $\acleq(\code{Z})$-definable.
\item\label{good-coset} Each $C_i$ is of the form $\gamma_i+H_i$, where $H_i$ is
a definable $R$-submodule of $M^n$ given by a condition $L_ix'=mx''$, for some
matrix $L_i\in R^{(n-r)\times r}$ and $m\in R\setminus\{0\}$, where \(x\) is the
tuple \(x'x''\) up to permutation and $|x'|=r$. (In particular, $\dim_R(C_i)=r$
for all $i$.)
\item $Z\subseteq\bigcup_{i=1}^sC_i$
\end{enumerate}
Moreover, if $\mathcal{Z}$ is a definable family such that
$\dim_R(\mathcal{Z}_b)\leq r$ for all $b$, there are finitely many
$R$-submodules $H_1,\ldots, H_N$ as in the statement such that for any $b$ the
$C_i$ may be chosen among the cosets of the $H_k$.
\end{lemma}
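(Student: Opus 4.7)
The argument naturally splits into three stages: produce, for every realisation $a$ of a complete type concentrating on $Z$, a coset over $M$ of the required form containing $a$; invoke compactness to cover $Z$ by finitely many such cosets; and then use the Neumann topology (Fact~\ref{F:BHNeumann}) to extract a canonical decomposition satisfying the required definability conditions. The moreover part will follow by a further compactness argument on the parameter space.

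For the first stage, I pick a sufficiently saturated $N \succcurlyeq M$ and $a \in Z(N)$ with $d := \dim_R(a/M) \leq r$. After permuting coordinates, and possibly padding $a'$ with additional coordinates that are $R$-linearly independent over $M$ (this only enlarges $L$ and $m$ and preserves the required shape), one may assume $d = r$ with $a' = (a_1, \ldots, a_r)$ linearly independent over $Q(R) \cdot M$ inside $Q(R) \otimes_R N$ and each $a''_j := a_{r+j}$ in its $Q(R)$-span. Clearing a common denominator yields $L \in R^{(n-r) \times r}$, $m \in R \setminus \{0\}$ and $\nu \in M^{n-r}$ with $m a'' = L a' + \nu$. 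Since $M \preccurlyeq N$, the existential formula $\exists (y',y'') \in M^n\ (m y'' = L y' + \nu)$, witnessed by $a$ in $N$, is also witnessed in $M$ by some $\gamma = (\gamma', \gamma'') \in M^n$. Then $a - \gamma \in H_{L,m} := \{x \in M^n : L x' = m x''\}$, so ``$x \in \gamma + H_{L,m}$'' lies in $\tp(a/M)$ and has the prescribed shape. By compactness, finitely many such cosets $\gamma_j^0 + H_{L_j, m_j}$, $j=1,\ldots,s_0$, cover $Z$.

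For the definability, I consider the noetherian topology on $M^n$ generated, in the sense of Fact~\ref{F:BHNeumann}, by the left cosets of $\mathcal{H} := \{H_{L_j, m_j}\}_{j=1}^{s_0}$. The closure $W$ of $Z$ in this topology is $\code{Z}$-definable, contains $Z$, and decomposes uniquely as a finite union of irreducible closed sets, each a coset of some intersection $H_I := \bigcap_{j \in I} H_{L_j, m_j}$. Every such $H_I$ is an $R$-submodule cut out by a $Q(R)$-linear system and hence, after choosing a maximal $Q(R)$-independent subtuple of $r' \leq r$ coordinates and clearing denominators, has the form $\{L' x' = m' x''\}$ with $|x'| = r'$; when $r' < r$, I embed the component into a canonical super-coset of dimension exactly $r$ of the same shape by adjoining $r - r'$ further free coordinates from a $\code{Z}$-definable choice (which can be arranged by enlarging $\mathcal{H}$ from the outset to absorb such lower-dimensional pieces). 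The resulting family $C_1, \ldots, C_s$ then satisfies (1)--(4): its union is $\code{Z}$-definable and contains $Z$, each $C_i$ is $\acleq(\code{Z})$-definable by uniqueness of the irreducible decomposition, and each has the prescribed form with $\dim_R = r$.

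For the moreover clause, applying the single-realisation coset production of the first stage uniformly in the family yields a cover of each $\mathcal{Z}_b$; a compactness argument over the parameter space then gives a single finite list $H_1, \ldots, H_N$ of subgroups of the required form such that every $\mathcal{Z}_b$ is covered by cosets of the $H_k$'s. The main technical difficulty will lie in the canonical embedding of lower-dimensional irreducible components of $W$ into cosets of dimension exactly $r$ of the prescribed shape without introducing extra parameters; this will be handled by enlarging the initial Neumann family $\mathcal{H}$ so that all such embeddings are $\code{Z}$-definable from the outset.
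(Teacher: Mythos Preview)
Your outline matches the paper's: cover $Z$ by cosets via compactness, then pass to the Neumann topology generated by the resulting finite list of $\emptyset$-definable subgroups $H_i$ and take the closure $W$ of $Z$. The first stage is essentially right, though your phrase ``padding $a'$ with additional coordinates that are $R$-linearly independent over $M$'' is not what you mean: if $\dim_R(a/M)=d<r$ there are no further independent coordinates of $a$ to adjoin. What actually works is to take any $r$ of the $n$ coordinates containing a maximal independent subtuple as $x'$ and allow zero columns in $L$; then $m a''=La'+\nu$ still holds with $|x'|=r$.

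The genuine gap is in the last stage. An irreducible component $W_j=\gamma_j+H_{I_j}$ may have $\dim_R(H_{I_j})=r'<r$, and you propose to place it inside \emph{one} canonical super-coset of dimension $r$ of the prescribed shape. There is in general no such canonical choice compatible with condition (1). Concretely, take $n=2$, $r=1$, and $Z=\{p,q\}$ two points exchanged by an $\cL$-automorphism $\sigma$ that also swaps the two coordinate axes. With $\mathcal{H}=\{\{x_1=0\},\{x_2=0\}\}$ the Neumann closure has components $\{p\}$ and $\{q\}$, each of dimension $0$. Choosing the lines $\{p_1\}\times M$ and $\{q_1\}\times M$ yields $C_1,C_2$ that are individually $\acleq(\code{Z})$-definable, but $\sigma$ sends $C_1\cup C_2$ to the \emph{other} pair of coordinate lines through $p$ and $q$, so (1) fails. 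No enlargement of $\mathcal{H}$ by $\emptyset$-definable subgroups of dimension $r$ resolves this: the symmetry between the two coordinate directions survives.

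The paper's device is not to pick one super-coset but to take them all: replace each component $W_j=\gamma_j+H_{I_j}$ by $W_j':=\bigcup_{i\in I_j}(\gamma_j+H_i)$, where $I_j=\{i:H_i\supseteq H_{I_j}\}$. Every $\gamma_j+H_i$ is a coset of one of the original $H_i$, hence already of the form in (3); it is determined by $W_j$ together with the fixed $\emptyset$-definable $H_i$, so is $\acleq(\code{Z})$-definable; and since $W_j\mapsto W_j'$ is a canonical construction on the $\code{Z}$-definable set of components, $\bigcup_j W_j'$ is $\code{Z}$-definable. In the example above this produces the union of all four coordinate lines through $p$ and $q$, which is visibly $\sigma$-invariant.
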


\begin{proof}
Since \(\dim_R(Z)\leq r\), any \(c \in Z\) is in a set of the form $\gamma+H$ as
in (\ref{good-coset}). By compactness there are sets $C_1,\ldots, C_N$ with
$C_i=\delta_i+H_i$ as in (\ref{good-coset}) such that
$Z\subseteq\bigcup_{i=1}^NC_i$. Note that all $H_i$ are $\emptyset$-definable
subgroups.

By Fact \ref{F:BHNeumann}, the cosets of the $H_i$ form a pre-basis of closed sets for a noetherian topology on $M^n$. In particular, in this topology there exists a smallest closed subset 
$W$ of $M^n$ containing $Z$, and this $W$ is clearly $\code{Z}$-definable. The (finitely many) irreducible components of $W$ are then all $\acleq(\code{Z})$-definable. If $W_j$ is such an irreducible component, it is of the form 
$W_j=\gamma_j+\bigcap_{i\in I_j} H_{i}$ (where $I_j\neq\emptyset$ in case $r<n$, since $Z\subseteq\bigcup_{i=1}^N\delta_i+H_i$ by assumption). As the $H_i$ are $\emptyset$-definable, it is easy to see that if we replace each component $W_j=\gamma_j+\bigcap_{i\in I_j} H_{i}$ by $W_j'=\bigcup_{i\in I_j}\gamma_j+H_i$, then the union of all $W_j'$ is $\code{Z}$-definable 
and each coset $\gamma_j+H_i$ occurring in this union is $\acleq(\code{Z})$-definable.

The moreover part follows by compactness.
\end{proof}

\begin{theorem}\label{T:EI-ShortExSequ}
Let $R$ be an integral domain and \(M\) be
\[0\rightarrow \bA\rightarrow \bB\rightarrow \bC\rightarrow 0\]
a short exact sequence of $R$-modules, in an $\bA$-$\bC$-enrichment $\cL$ of the pure (in the sense of model theory) three sorted sequence of $R$-modules. Assume the following properties hold:
\begin{enumerate}
\item $\bA$ is a pure submodule of $\bB$ (in the sense of module theory), \label{As:pure}
\item $\bC$ is torsion free,
\item For any $l\in R\setminus\{0\}$, the quotient $\bC/l\bC$ is finite and the preimage in $\bB$ of any coset $c+l\bC$ contains an element which is algebraic over $\emptyset$.\label{As:represented}
\end{enumerate}
Let $e\in \eq{M}$. Then, setting $E:=\acleq(e)$ and $\Delta:=\bC(E)$, we have \[e\in\dcleq(\eq{\bC}(E)\eq{\bB_\Delta}(E)),\] where $\bB_\Delta$ denotes the union of all fibers
$\bB_\delta$ with $\delta\in\Delta$.
\end{theorem}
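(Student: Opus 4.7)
I would write $e=f(b)$ for some $\emptyset$-definable map $f$ and a tuple $b\in\bB^n$, and set $Z:=f^{-1}(e)$, so that $e$ is interdefinable with $\code{Z}$ and it suffices to show $\code{Z}\in\dcleq(\eq{\bC}(E)\cup\eq{\bB_\Delta}(E))$. The first step is to handle the projection $\pi(Z)\subseteq\bC^n$ (where $\pi\colon\bB\to\bC$): it is $E$-definable, and since the enrichment is $\bA$-$\bC$ and $\bA$ is pure in $\bB$, the quotient sort $\bC$ is purely stably embedded (a Baur-Monk type argument for enriched pure short exact sequences of modules). Therefore $\pi(Z)$ has a code in $\eq{\bC}(E)$, which I may absorb into the parameters.

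\textbf{Covering $Z$ by cosets and computing their codes.} Set $r:=\dim_R(Z)$ and apply \cref{L:LinearLocus} to $Z$: I obtain cosets $C_i=\gamma_i+H_i$ for $i=1,\dots,s$, where each $H_i$ is an $\emptyset$-definable submodule of $\bB^n$ of the form $L_ix'=m_ix''$, each $C_i$ is $\acleq(E)$-definable, and $\bigcup_iC_i$ is $E$-definable and contains $Z$. Next I code each $C_i$ in $\eq{\bB_\Delta}(E)$. The projection $\pi(C_i)$ is a coset of $\pi(H_i)$, and the quotient $\bC^n/\pi(H_i)$ embeds (via $(x',x'')\mapsto m_ix''-L_ix'$) into $\bC^{n-r}/m_i\bC^{n-r}$, which is finite by assumption~(3). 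Thus $\pi(C_i)$ lies in a finite set of $\emptyset$-algebraic elements, and using~(3) componentwise I find an $\emptyset$-algebraic lift $\tilde\gamma_i\in\bB^n$ in the appropriate coset, necessarily in $\bB_\Delta^n$. The translate $C_i-\tilde\gamma_i$ is then a coset of $H_i$ inside $H_i+\bA^n$, parametrised by an element $\delta_i\in\bA^n/(H_i\cap\bA^n)\subseteq\eq{\bA}$; since $C_i$ is $\acleq(E)$-definable and $\tilde\gamma_i$ is $\emptyset$-algebraic, $\delta_i\in\eq{\bA}(E)\subseteq\eq{\bB_\Delta}(E)$. Hence $\code{C_i}$ and $\code{\bigcup_iC_i}$ both lie in $\dcleq(\eq{\bB_\Delta}(E))$.

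\textbf{Iteration and main obstacle.} If $Z=\bigcup_iC_i$ the proof is complete. Otherwise I apply the same procedure to $W:=\bigcup_iC_i\setminus Z$, which is $E$-definable of $R$-dimension at most $r$. The ``moreover'' clause of \cref{L:LinearLocus} (applied to the uniform family swept out by the iteration) ensures that all cosets occurring at every step belong to the collection of cosets of a single finite family $H_1,\dots,H_N$ of $\emptyset$-definable subgroups; by \cref{F:BHNeumann} the topology these generate on $\bB^n$ is noetherian, so after finitely many iterations the process terminates and exhibits $Z$ as a finite Boolean combination of cosets coded in $\eq{\bC}(E)\cup\eq{\bB_\Delta}(E)$. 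The main obstacle is the second step: the coset-lifting requires a matrix version of assumption~(3) compatible with the specific form $L_ix'=m_ix''$ of $H_i$, together with the separation of ``$\bC$-data'' (captured by $\tilde\gamma_i\in\bB_\Delta^n$) from residual ``$\bA$-data'' (captured by $\delta_i\in\eq{\bA}(E)$) in a way that respects the $\acleq(E)$-definability of $C_i$.
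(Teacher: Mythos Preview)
There is a genuine gap in your iteration step. You claim that repeatedly taking the coset-closure and passing to the complement terminates by noetherianity of the coset topology, exhibiting $Z$ as a finite Boolean combination of cosets. But definable subsets of $\bB^n$ are in general \emph{not} Boolean combinations of cosets of $R$-submodules: the $\bA$-$\bC$-enrichment is arbitrary, so for instance if $\bA$ is an algebraically closed field and $\bC=0$ (so $\bB=\bA$), the parabola $Z=\{(x,y)\in\bA^2:y=x^2\}$ has $\dim_R(Z)=2$, hence coset-closure $\overline{Z}=\bA^2$, and $W=\bA^2\setminus Z$ again has closure $\bA^2$; the descending chain of closures stabilises at once without the process ever terminating. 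Noetherianity only gives DCC on \emph{closed} sets, not termination of your alternating closure/complement procedure on arbitrary definable sets. (A secondary issue: you apply \cref{L:LinearLocus} directly to $Z\subseteq\bB^n$, but that lemma requires a torsion-free module, and only $\bC$ is assumed torsion free.)

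The paper's argument is quite different and supplies exactly the missing idea. One applies \cref{L:LinearLocus} to the \emph{image} $v(X)\subseteq\bC^n$, reducing to the case where $v(X)$ sits in a single affine $R$-submodule of shape $mv(x'')=Lv(x')+\delta''$. The crucial step is then an automorphism construction: for $c\in v(X)$ of full $R$-dimension and $a\in\bigcap_l l\bA^r$, one chooses an $R$-linear $\theta:\bC(N)\to\bA(N)$ trivial on $\bC(M)$ with $\theta(c')=ma$, and checks that $b\mapsto b+\theta(v(b))$ is an $\cL$-automorphism over $M$ (this uses that the enrichment lives only on $\bA$ and $\bC$). This forces generic fibres $X_c$ to be invariant under a fixed $l\bA^r$-action $a\cdot(x',x'')=(x'+ma,x''+La)$; after induction on $r$ and a translation by the $\emptyset$-algebraic lifts from assumption~(\ref{As:represented}), $X$ is recoded by a set $Y\subseteq(\bB_{\delta''/m}\times\bA^r)\times\bC^n$ with $\code{X}$ and $\code{Y}$ interdefinable, and one finishes via $\bB_\Delta\perp\bC$ and \cref{L:WEI-Orth}. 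The automorphism/invariance argument is precisely what handles the non-linear $\bA$-structure in the fibres, and it has no counterpart in your proposal.
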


We will prove a a slight generalization of Theorem~\ref{T:EI-ShortExSequ}, namely the following Theorem~\ref{T:EI-ShortExSequ-Var}.

\begin{theorem}\label{T:EI-ShortExSequ-Var}
Let $\tilde{R}$ be a ring and $R=\tilde{R}/I$ an integral domain, with $I$ a finitely generated ideal.
Let \(M\) be
\[0\rightarrow \tilde{\bA}\rightarrow \tilde{\bB}\rightarrow \tilde{\bC}\rightarrow 0\]
a short exact sequence of $\tilde{R}$-modules, in an $\tilde{\bA}$-$\tilde{\bC}$-enrichment $\cL$ of the pure (in the sense of model theory) three sorted sequence of $\tilde{R}$-modules. Let $\bA=\{a\in\tilde{\bA}\mid Ia=(0)\}$, and let $\bB$ and $\bC$ be the $\tilde{R}$-submodules of $\tilde{\bB}$ and $\tilde{\bC}$ defined similarly. Consider $\bA,\bB,\bC$ as $R$-modules in the natural way.

Assume the following properties hold:
\begin{enumerate}
\item $\tilde{\bA}$ is a pure $\tilde{R}$-submodule of $\tilde{\bB}$ (in the sense of module theory), \label{As:pure-Var}
\item $\bC=\tilde{\bC}$,
\item $\bC$ is a torsion free $R$-module,
\item For any $l\in R\setminus\{0\}$, the quotient $\bC/l\bC$ is finite and the preimage in $\tilde{\bB}$ of any coset $c+l\bC$ contains an element which is algebraic over $\emptyset$.\label{As:represented-Var}
\end{enumerate}

Let $e\in \eq{M}$. Then, setting $E:=\acleq(e)$ and $\Delta:=\bC(E)$, we have \[e\in\dcleq(\eq{\bC}(E)\eq{\tilde{\bB}_\Delta}(E)),\] where $\tilde{\bB}_\Delta$ denotes the union of all 
$\tilde{\bB}_\delta$ for $\delta\in\Delta$.
\end{theorem}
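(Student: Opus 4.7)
I would follow the strategy of the proof of Theorem~\ref{T:EI-ShortExSequ}, adapting it to handle the twist introduced by the ideal $I$. After standard reductions, it suffices to show that any $\cL(e)$-definable subset $X \subseteq \tilde{\bB}^n$ is $\eq{\bC}(E) \cup \eq{\tilde{\bB}_\Delta}(E)$-definable. The plan is to project $X$ via the quotient map $\pi : \tilde{\bB}^n \to \bC^n$ (where $\bC = \tilde{\bC}$ by hypothesis~(2)), code the resulting $\pi(X) \subseteq \bC^n$ in $\eq{\bC}(E)$ using stable embeddedness of $\tilde{\bC}$ --- which holds since the theory is a $\tilde{\bC}$-enrichment of the pure three-sorted sequence with $\tilde{\bA}$ pure in $\tilde{\bB}$ --- and then to reduce to handling the fibers $X_c := X \cap \pi^{-1}(c)$ for $c \in \pi(X)$.

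For the fiber analysis, I would fix a lift $\tilde{b}_c \in \tilde{\bB}^n$ of $c$, so that $X_c \subseteq \tilde{b}_c + \tilde{\bA}^n$. By purity of $\tilde{\bA}$ in $\tilde{\bB}$ (assumption~(\ref{As:pure-Var})) together with Baur--Monk, the shifted fiber $X_c - \tilde{b}_c \subseteq \tilde{\bA}^n$ is a Boolean combination of cosets of pp-definable $\tilde{R}$-subgroups of $\tilde{\bA}^n$. An $\tilde{R}$-module variant of Lemma~\ref{L:LinearLocus}, resting only on Fact~\ref{F:BHNeumann} (which requires nothing more than the group structure, so extends beyond the integral-domain setting), then yields uniformly in $c$ a finite list of $\emptyset$-definable pp $\tilde{R}$-subgroups $H_1, \ldots, H_N \leq \tilde{\bB}^n$ and $\acleq(\code{X})$-definable cosets $C_i(c) = \gamma_i(c) + H_i$, whose union covers $X_c$ and is $\code{X}$-definable.

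The main technical obstacle is the last step: showing that each coset $C_i(c)$ is realisable as an imaginary in $\eq{\tilde{\bB}_\Delta}(E)$, not merely in $\eq{\tilde{\bB}}(E)$. The projection $\pi(C_i(c)) \subseteq \bC^n$ is a coset of the pp-definable subgroup $\pi(H_i) \leq \bC^n$, which by the shape of $H_i$ reduces to a coset condition involving $l\bC$ for some $l \in R$, and this projected coset is $\acleq(\code{X})$-definable, hence in $\eq{\bC}(E)$. Assumption~(\ref{As:represented-Var}) then provides an $\emptyset$-algebraic representative in $\tilde{\bB}^n$ of this projected coset, which, read modulo $H_i$, exhibits $C_i(c)$ as an imaginary in $\tilde{\bB}_\Delta^n/H_i \subseteq \eq{\tilde{\bB}_\Delta}(E)$. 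Combining the code of $\pi(X)$ in $\eq{\bC}(E)$ with these cosets in $\eq{\tilde{\bB}_\Delta}(E)$ recovers $X$, and therefore $e$.
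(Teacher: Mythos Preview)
Your fiber analysis breaks down at the Baur--Monk step. The theory is an $\tilde{\bA}$-$\tilde{\bC}$-\emph{enrichment} of the pure sequence: $\tilde{\bA}$ carries arbitrary additional structure, so Baur--Monk does not apply, and the shifted fiber $X_c-\tilde{b}_c\subseteq\tilde{\bA}^n$ need not be a Boolean combination of cosets of pp-definable subgroups. (In the intended application $\tilde{\bA}=\Res_n^\times$ with its full induced structure from a henselian field, this is visibly false.) Your ``$\tilde{R}$-module variant of Lemma~\ref{L:LinearLocus}'' therefore has no input to work with, and the remainder of the argument collapses.

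The paper's proof is organised quite differently. Lemma~\ref{L:LinearLocus} is applied not to the fibers but to the \emph{projection} $v(X)\subseteq\bC^n$ --- it is $\bC$, not $\tilde{\bA}$, that is the torsion-free module over an integral domain, so $r:=\dim_R(v(X))$ makes sense. After reducing over $E$ to the case where $v(X)$ lies in a single affine slice $mv(x'')=Lv(x')+\delta''$, the heart of the argument is a rotation trick: for generic $c\in v(X)$ and any sufficiently divisible $a\in\bA^r$, one builds an $\cL$-automorphism $\rho(b)=b+\theta(v(b))$ over $M$ (using only the model-theoretic purity of the sequence, no description of the fibers) that moves $X_c$ by $(x',x'')\mapsto(x'+ma,x''+La)$. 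Compactness then gives $l\in R\setminus\{0\}$ with all but a lower-dimensional set of fibers $l\bA^r$-invariant, and induction on $r$ handles the exceptional locus. This invariance is precisely what permits replacing $X$ by an interdefinable $Y$ supported on $\tilde{\bB}_{\delta''/m}\times\tilde{\bA}^r\times\bC^n$ with $\delta''/m\in\Delta$, after which orthogonality $\tilde{\bB}_\Delta\perp\bC$ and Lemma~\ref{L:WEI-Orth} finish. Your proposal has no substitute for this automorphism step, which is the actual mechanism forcing the fibers into $\tilde{\bB}_\Delta$.
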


\begin{proof}
Denote $\iota:\tilde{\bA}\rightarrow \tilde{\bB}$ and $v:\tilde{\bB}\rightarrow
\tilde{\bC}$ the structural  maps.

Note that in particular $\tilde{\bA}=\tilde{\bB}_0\subseteq \tilde{\bB}_\Delta$.
By (\ref{As:pure}),  $\tilde{\bA}$ and $\tilde{\bC}$ are (purely) stably
embedded in $T$, with $\tilde{\bA}\perp \tilde{\bC}$. Indeed, since sufficiently
saturated \(\tilde{R}\)-modules are pure-injective
(\cite[Corollary~2.9]{Pre-Modules}), replacing $M$ by a sufficiently saturated
extension, the purity assumption (\ref{As:pure}) entails that the short exact
sequence of $\tilde{R}$-modules is split. Adding a splitting to the structure
yields an expansion in which $\tilde{\bB}$ is just the product structure of
$\tilde{\bA}$ and $\tilde{\bC}$. Thus, with a splitting, $\tilde{\bA}$ and
$\tilde{\bC}$ are stably embedded and orthogonal. They remain so without the
splitting. For any subgroup \(\Delta\leq \bC\), as $\tilde{\bB}_\Delta$ is internally
$\tilde{\bA}$-internal, it follows from Lemma~\ref{L:InternallyInternal} that
$\tilde{\bB}_\Delta$ is stably embedded in $T$ (over $\Delta$), and one has
$\tilde{\bB}_\Delta\perp \tilde{\bC}$.

Since $I$ is finitely generated, $\bA,\bB$ and $\bC$ are definable and the
induced sequence of $R$-modules 
\[0\rightarrow \bA\rightarrow \bB\rightarrow \bC\rightarrow 0\] is also exact.
Indeed, this is first order expressible and holds in any sufficiently saturated
elementary extension which is split.

We now fix some \(e\in \eq{M}\) and let \(X\) be $\cL(M)$-definable with
$\code{X} = \dcleq(e)$. Lifting it to \(\tilde{\bB}\), we may assume $X\subseteq
\tilde{\bB}^n$. Assume $\dim_R(v(X))=r$. Up to passing to some subset of $X$
which is definable over $\acleq(\code{X})=E$, we may assume, using
Lemma~\ref{L:LinearLocus}, that there are $L\in R^{(n-r)\times r}$, $m\in
R\setminus\{0\}$ and $\delta''\in \bC^{n-r}$ such that for $x=x'x''$ with
$|x'|=r$, we have $mv(x'')=Lv(x')+\delta''$ for any $x=x'x''\in X$. We now
consider the (fiberwise) action of $(\bA^r,+)$ on $\tilde{\bB}^n$ given by 
\[a\cdot(x',x''):=(x'+ma,x''+La).\] For every \(c\in\bC^n\), let \(X_c\) denote
the fiber above \(c\), \emph{i.e.} \(X_c = X\cap v^{-1}(c)\).

\begin{claim}\label{C:rotation-Var}
There is $l\in R\setminus\{0\}$ such that
\[\dim_R(\{z\in v(X)\mid (l\bA)^r\cdot X_z\neq X_z\})<r.\]
\end{claim}

\begin{proof}
Let $\pi(z)$ be the partial type expressing that $z\in v(X)$ and that
$\dim_R(z/M)=r$, and let $\tau(y)$ be the partial type expressing that $y\in
(l\bA)^r$ for any $l\in R\setminus\{0\}$. Fix a realization
$(c,a)\models\pi(z)\cup\tau(y)$ in $N\supsel M$, with $c=c'c''$. Then there is
an $R$-linear map $\theta:\bC(N)\rightarrow \bA(N)$ which is trivial on $\bC(M)$
and such that $\theta(c')=ma$. Indeed, such a map $\theta$ may be found as the
restriction of an $R$-linear map from $\bC(N)\otimes_RQ(R)$ to $\bA(N)$ which is
the identity on a supplement of \(Q(R)c'\) and such that, for every non zero
\(l\in R\), \(\theta(l^{-1} c') = a_l\) where the sequence \((a_l)_{l\in
R\sminus\{0\}}\) is a coherent sequence of roots: we have \(a_1 = m
a\) and every non zero \(l,s \in R\), \(sa_{sl} = a_l\).

For $b\in \tilde{\bB}(N)$, let $\rho(b):=b+\theta(v(b))$. Then $\rho$ is an
automorphism the $\tilde{R}$-module $\tilde{\bB}(N)$ whose inverse is \(b
\mapsto b - \theta(v(b))\), since \(v(\rho(b)) = \rho(b)\). It is the identity
on \(\tilde{\bA}(N)\) and on \(\tilde{\bB}(M)\), since \(\theta\) is trivial on
\(v(\bC(M))\). Moreover, it induces the identity on \(\bC = \tilde{\bB} /
\tilde{\bA}\). Since \(M\) is an $\tilde{\bA}$-$\tilde{\bC}$-enrichment of the
pure short exact sequence, $\rho$ preserves all the structure,
\emph{i.e.}~$\rho\in\aut[\cL](N/M)$.

In particular, for any $b\in X_c$, we have $\tp(b/M)=\tp(\rho(b)/M)$ and so
$\rho(b)\in X_c$. On the other hand, as $c''=(Lc'+\delta'')/m=Lc'/m+\delta''/m$
and $v(b)=(c',c'')$, using $L\circ\theta=\theta\circ L$ we compute 
\[\theta(v(b))=(\theta(c'),\theta(c''))=(\theta(c'),\theta(L c'/m)+\theta(\delta''/m))=(ma,La),\]
from which it follows that \[\rho(b)=b+\theta(v(b))=(b'+\theta(c'),b''+\theta(c''))=(b'+ma, b''+La)=a\cdot(b',b'').\]

Thus $X_c$ is stabilized by $\bigcap_{l\in R\setminus\{0\}}(l\bA)^r$.
\cref{C:rotation-Var} now follows by compactness.
\end{proof}

Fix $l\in R\setminus \{0\}$ as in the claim. Let \(X_0 = \{x\in X\mid (l\bA)^r
\cdot x \subseteq X\}\). Then \(\dim_R(v(X\sminus X_0)) < r\) and \(X\sminus
X_0\) is coded by induction. So we can assume that \(X = X_0\) is globally
stabilized by \((l\bA)^r\). In addition, using
assumption~(\ref{As:represented-Var}) and cutting \(X\) in finitely many pieces,
we may suppose that $v(X)\subseteq ml\bC^n$. Indeed, there are only finitely
many cosets of $ml\bC^n$, all $\acleq(\emptyset)$-definable, so we may assume
$X\subseteq v^{-1}(W)$ for a coset $W$ of $ml\bC^n$. Replacing $X$ by $X-h$ for
some $h\in W\cap\acl(\emptyset)$ if necessary, we may assume $W=ml\bC^n$. Let
$a\in \tilde{\bA}^r$ and $c\in \bC^n$. If there exist $b_1=b_1'b_1''\in X_c$ and
$b_0'\in \bB^r$ such that $b_1'=a+mlb_0'$, we set

\[Y_{a,c}:=\{b''-lLb_0'\mid(b_1',b'')\in X\}=X_{(b_1')}-lLb_0',\]

where $X_{(b_1')}$ denotes the fiber $\{b''\in
\tilde{\bB}^{n-r}\mid(b_1',b'')\in X\}$. Else we set  $Y_{a,c}:=\emptyset$. Let
us first show that in the first case, $Y_{a,c}$ does not depend on the choice of
$b_1$ and $b_0'$. Indeed, if $d_1=d_1'd_1''\in X_c$ and $d_0'\in \bB^r$ are such
that $d_1'=a+mld_0'$, then $b_1'-d_1'=ml(b_0'-d_0')$, so $mlv(b_0'-d_0')=0$,
thus $v(b_0'-d_0')=0$, \emph{i.e.}, $b_0'-d_0'\in \bA^r$. Set
$a_0':=l(b_0'-d_0')$. For $d''\in X_{(d_1')}$, since \((l\bA)^r\cdot X_c =
X_c\), we have
\[a_0'\cdot(d_1',d'')=(d_1'+ma_0',d''+La_0')=(b_1',d''+La_0')\in X_c,\]
so $d''+lL(b_0'-d_0')=d''+La_0'\in X_{(b_1')}$, showing that $X_{(d_1')}-lLd_0'\subseteq X_{(b_1')}-lLb_0'$. By symmetry, we get the other inclusion $X_{(b_1')}-lLb_0'\subseteq X_{(d_1')}-lLd_0'$, thus $X_{(b_1')}-lLb_0'=X_{(d_1')}-lLd_0'$.

Let $\delta''=(\delta_i'')_{1\leq i\leq n-r}$ and let $b'=a+mlb_0'$ be as in the definition of $Y_{a,c}$. Then for $y=b''-lLb_0'\in Y_{a,c}$, we compute 
\[mv(y)=mv(b'')-mlLv(b_0')=(Lv(b')+\delta'')-Lv(b')=\delta'',\]

yielding $Y_{a,c}\subseteq \tilde{\bB}_{\delta''/m}=\prod_{i=1}^{n-r}\tilde{\bB}_{\delta_i''/m}$. It follows that 
\[Y\subseteq(\tilde{\bB}_{\delta''/m}\times \tilde{\bA}^r)\times \bC^n.\] As
$\delta''/m\in \bC(E)=\Delta$ and $\tilde{\bB}_\Delta\perp \bC$, by
Lemma~\ref{L:WEI-Orth}(1), \(Y\) is coded in \(\eq{\tilde{\bB}_\Delta}\cup
\eq{\bC}\). So \(X\) is coded in the same sorts once the following claim is
established.

\begin{claim}
$\code{X}$ and $\code{Y}$ are interdefinable.
\end{claim}

It is clear by construction that $Y$ is $\code{X}$-definable. For the converse, we will use that we have reduced to the case where $v(X)\subseteq ml\bC^n$. We may thus reconstruct $X$ from $Y$ as follows:
\[d=d'd''\in X\Leftrightarrow \exists a\in \tilde{E}^r\,\exists d_0'\in \bB^r:\,d'=a+mld_0'
\text{ and }
d''\in Y_{a,v(d)}+lLd_0'\]
This yields the claim.
\end{proof}

\subsection{Some variants}
We will now state two variants of Theorem~\ref{T:EI-ShortExSequ-Var},  tailor made for our applications to (enriched) henselian valued fields.

\begin{variant}\label{V:EI-ShortExSequ-Sorts}
Let $\cL$ be a multisorted language, $\mathcal{A}\sqcup\{\tilde{\bB}\}\sqcup\mathcal{C}$ a partition of the sorts of $\cL$ and let 
$\tilde{\bA}\in \mathcal{A}$ and $\tilde{\bC}\in \mathcal{C}$. Let $\tilde{R}$ be a ring and $R=\tilde{R}/I$ an integral domain, with $I$ a finitely generated ideal.
Let 
\begin{equation}
0\rightarrow \tilde{\bA}\rightarrow \tilde{\bB}\rightarrow \tilde{\bC}\rightarrow 0\label{Eq:SES}
\end{equation}
be a short exact sequence of $\tilde{R}$-modules. Let $M$ be an $\cL$-structure which is an $\mathcal{A}$-$\mathcal{C}$-enrichment of the pure (in the sense of model theory) sequence of $\tilde{R}$-modules \eqref{Eq:SES}. 
Assume that the properties (\ref{As:pure-Var})-(\ref{As:represented-Var}) from the statement of Theorem~\ref{T:EI-ShortExSequ-Var} hold.

Let $e\in \eq{M}$. Then, setting $E:=\acleq(e)$ and $\Delta:=\bC(E)$, we have \[e\in\dcleq(\eq{\mathcal{\bC}}(E)\cup\eq{(\mathcal{A}\cup\tilde{\bB}_\Delta)}(E)),\] where $\tilde{\bB}_\Delta$ denotes the union of all 
$\tilde{\bB}_\delta$ for $\delta\in\Delta$.
\end{variant}

\begin{proof}
The proof is a slight variation of the proof of Theorem~\ref{T:EI-ShortExSequ-Var}. Let us indicate the necessary adaptations. 

Firstly, it follows from the assumptions that $\mathcal{A}$ and $\mathcal{C}$ are (purely) stably embedded with $\mathcal{A}\perp\mathcal{C}$. Thus, by Lemma~\ref{L:InternallyInternal}, $(\mathcal{A}\cup\tilde{\bB}_\Delta)\perp\mathcal{C}$ and $\mathcal{A}\cup\tilde{\bB}_\Delta$ is stably embedded. 

Given $e\in \eq{M}$, we choose an $\cL(M)$-definable set $X\subseteq \bA'\times \bC'\times\tilde{\bB}^n$  with $e=\code{X}$, where $\bA'$ is a finite product of sorts from $\mathcal{A}$ and $\bC'$ is a finite product of sorts from $\mathcal{C}$. For $(a',c')\in \bA'\times \bC'$, let \[\leftidx{_{(a',c')}}{X}{}\subseteq \tilde{\bB}^n\] be the fiber over $(a',c')$. Performing the same reductions as in the proof of Theorem~\ref{T:EI-ShortExSequ-Var}, by compactness, we may assume that there is an $\emptyset$-definable set 
\[Y\subseteq \bA'\times \bC'\times \tilde{\bB}_{\delta}\times 
\bC^n\] for some finite tuple $\delta\in\Delta$ such that for any $(a',c')\in \bA'\times \bC'$, $\code{\leftidx{_{(a',c')}}{X}{}}$ and $\code{\leftidx{_{(a',c')}}{Y}{}}$ are interdefinable, so in particular, $\code{X}$ and $\code{Y}$ are interdefinable. The result then follows 
from Lemma~\ref{L:WEI-Orth}, 
since $(\mathcal{A}\cup\tilde{\bB}_\Delta)\perp\mathcal{C}$.
\end{proof}

The second variant is designed for applications to henselian valued fields in mixed characteristic.

\begin{variant}\label{V:EI-ShortExSequ-Complex-C}
Let $\cL$ be a multisorted language, $\mathcal{A}\sqcup\{\tilde{\bB}_n\mid n\in\Nn\}\sqcup\mathcal{C}$ a partition of the sorts of $\cL$. For any $n\in\Nn$, let 
$\tilde{\bA}_n\in \mathcal{A}$, and let $\tilde{C}\in \mathcal{C}$. Let $\tilde{R}$ be a ring and $R=\tilde{R}/I$ an integral domain, with $I$ a finitely generated ideal.
Let $\tilde{\bA}=(\tilde{\bA}_n)_{n\in\Nn}$, $\tilde{\bB}=(\tilde{\bB}_n)_{n\in\Nn}$ be projective systems of $\tilde{R}$-modules with surjective transition functions, let  $\bC =(\tilde{\bC}_n)_{n\in\Nn}$ be the projective system with 
$\tilde{\bC_n}=\tilde{\bC}$ for all $n$ and identical transition functions. Let
\begin{equation}\label{Eq:SESC-C}
0\rightarrow \tilde{\bA}\rightarrow \tilde{\bB}\rightarrow \tilde{\bC}\rightarrow 0
\end{equation}
be a short exact sequence of projective systems of $\tilde{R}$-modules.

Let $M$ be an $\cL$-structure which is an $\mathcal{A}$-$\mathcal{C}$-enrichment of the pure (in the sense of model theory) sequence of projective systems of $\tilde{R}$-modules \eqref{Eq:SESC-C}.

Assume that for very $n\in\Nn$ the exact sequence $0\rightarrow \tilde{\bA}_n\rightarrow \tilde{\bB}_n\rightarrow \tilde{\bC}\rightarrow 0$ satisfies the properties (\ref{As:pure-Var})-(\ref{As:represented-Var}) from the statement of Theorem~\ref{T:EI-ShortExSequ-Var}.

Let $e\in \eq{M}$. Then, setting $E:=\acleq(e)$ and $\Delta:=\bC(E)$, we have \[e\in\dcleq(\eq{\tilde{\mathcal{C}}}(E)\eq{(\mathcal{A}\cup\tilde{\bB}_\Delta)}(E)),\] where $\tilde{\bB}_\Delta$ denotes the union of all 
$(\tilde{\bB_n})_\delta$ for $\delta\in\Delta$ and $n\in \Nn$.
\end{variant}

\begin{proof}
Let us first show that $\mathcal{A}$ and $\mathcal{C}$ are (purely) stably embedded in $M$ such that $\mathcal{A}\perp\mathcal{C}$. For this, given $N\in\Nn$, we consider the structure $M_N$ given by restricting $M$ to the sorts $\mathcal{A}\sqcup\{\tilde{\bB}_m\mid m\leq N\}\sqcup\mathcal{C}$. For $m\leq N$ we denote by $p_{N,m}$ the structural map from $\tilde{\bB}_N$ to $\tilde{\bB}_m$ and by $q_{N,m}$ the one from $\tilde{\bA}_N$ to $\tilde{\bA}_m$. For any $m\leq N$, the sequence $\tilde{S}_m$ of $\tilde{R}$-modules
\[0\rightarrow \tilde{\bA}_m\rightarrow\tilde{\bB}_m\rightarrow \tilde{\bC}\rightarrow0\]
is interpretable in the sequence $\tilde{S}_N$ once a predicate for $\ker(q_{N,m})\leq \tilde{\bA}_N$ is added. Thus, $M_N$ may be seen as an $\mathcal{A}$-$\mathcal{C}$-enrichment of $\tilde{S}_N$

As in the previous proofs, it follows that $\mathcal{A}\cup\tilde{\bB}_\Delta$ is stably embedded in $M$, with $(\mathcal{A}\cup\tilde{\bB}_\Delta)\perp \tilde{\bC}$.
Given $e\in \eq{M}$, we choose $X\subseteq \bA'\times \bC'\times\tilde{\bB}_k^n$ $\cL(M)$-definable with $e=\code{X}$, where $\bA'$ is a finite product of sorts from $\mathcal{A}$, $\bC'$ is a finite product of sorts from $\mathcal{C}$ and $k\in\Nn$. Let $N\geq k$ be such that 
$X$ may be defined using formulas involving only variables from sorts in $\mathcal{A}\cup\mathcal{C}\cup\{\tilde{\bB}_i\mid i\leq N\}$. Since, for $N\geq m$, \(\tilde{S}_m\) is interpretable in an $\mathcal{A}$-$\mathcal{C}$-enrichment of \(\tilde{S}_N\), we may conclude with \cref{V:EI-ShortExSequ-Sorts}.
\end{proof}

\subsection{Imaginaries in RV} Recall that in a finitely ramified henselian valued field, the projective system of short exact sequences:
\[1\to \Res_n^\times  \to \RV_n^\times \to \vg^\times\to 0\]
is stably embedded with the induced structure a \(\vg\)-\(\Res\)-enrichment of the pure short exact sequence of abelian groups. Thus \cref{V:EI-ShortExSequ-Complex-C} applies and yields the following elimination of imaginaries:

\begin{proposition}\label{EI RV Hen0}
Let \(M\) be a \(\vg\)-\(\Res\)-enriched finitely ramified henselian field, \(A \subseteq \Geom(M)\), \(e \in\eq{(\RV\cup\Lin_{A})}(M)\) and \(E = \acleq(e)\). Assume that:
\begin{itemize}
\item For every \(n,\ell\in\Zz_{>0}\), \(\vg/\ell\vg\) is finite and the preimage in $\RV_n$ of any coset of $\ell \vg$ contains an element which is algebraic over $\emptyset$.
\end{itemize}
Then \(e\in\dcleq(\eq{\vg}(E) \cup \eq{(\Lin_{A}\cup\RV_{\vg(E)})}(E))\).
\end{proposition}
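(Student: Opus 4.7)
The plan is to apply \cref{V:EI-ShortExSequ-Complex-C} to the projective system of short exact sequences of $\Zz$-modules (writing operations multiplicatively)
\[1 \to \Res_n^\times \to \RV_n^\times \to \vg^\times \to 0,\]
taking $\tilde{R} = R = \Zz$ (so $I = 0$), $\mathcal{A}$ consisting of the sorts $(\Res_n)_{n\geq 1}$ together with the sorts of $\Lin_A$ (after naming $A$ as constants), $\tilde{\bB}_n = \RV_n^\times$, and $\mathcal{C} = \{\vg\}$, with $\tilde{\bA}_n = \Res_n^\times$ and $\tilde{\bC} = \vg^\times$.

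The four module-theoretic hypotheses of the variant are easy to check. Purity of $\Res_n^\times$ in $\RV_n^\times$ follows from torsion freeness of $\vg$: if $x \in \RV_n^\times$ and $x^l \in \Res_n^\times$, then $l\cdot\val(x) = 0$ in $\vg$ forces $\val(x) = 0$, hence $x \in \Res_n^\times$. The conditions $\bC = \tilde{\bC}$ and torsion freeness of $\bC$ are trivial since $\vg$ is torsion free. Finally, finiteness of $\vg/\ell\vg$ together with the existence of algebraic representatives in the preimage in $\RV_n$ is exactly the stated hypothesis.

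The main obstacle is verifying that $M$, restricted to these sorts, is indeed an $\mathcal{A}$-$\mathcal{C}$-enrichment of the pure multisorted system: a priori, the partial operation $\oplus$ on $\RV_n$ is additional structure on the $\tilde{\bB}_n$-sorts. Arguing as in the proof of \cref{compatible-section}, once a coherent system of sections (equivalently, compatible angular components) of the short exact sequences is chosen --- which is possible in an $\aleph_1$-saturated elementary extension --- the operation $\oplus$ becomes definable from the ring structure on the $\Res_n$ and is thereby absorbed into $\mathcal{A}$-structure. The $\Lin_A$-sorts, being internally $\Res$-internal over $A$ (bases may be chosen in $A$), belong to the $\mathcal{A}$-side, and $\Lin_A \cup \Res$ remains stably embedded by \cref{L:InternallyInternal}. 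Since passing to an $\aleph_1$-saturated extension does not alter $\dcleq$-relations among elements of $M$, this suffices.

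The variant then yields, with $\Delta := \vg(E)$,
\[e\in\dcleq(\eq{\vg}(E)\cup\eq{(\mathcal{A}\cup\tilde{\bB}_\Delta)}(E)),\]
where $\tilde{\bB}_\Delta = \bigcup_{n,\delta\in\Delta}(\RV_n^\times)_\delta = \RV_{\vg(E)}^\times$. Since $\Res_n^\times = (\RV_n^\times)_0 \subseteq \RV_{\vg(E)}^\times$ (as $0\in\Delta$), we have $\mathcal{A}\cup\tilde{\bB}_\Delta = \Lin_A\cup\RV_{\vg(E)}$, yielding the desired conclusion $e\in\dcleq(\eq{\vg}(E)\cup\eq{(\Lin_A\cup\RV_{\vg(E)})}(E))$.
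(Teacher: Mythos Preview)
Your proof follows the same route as the paper --- apply \cref{V:EI-ShortExSequ-Complex-C} with $\tilde{R}=R=\Zz$ --- and your verification of hypotheses (1)--(4) is correct and more explicit than the paper's one-line check. You also correctly isolate the one subtle point, which the paper's proof takes for granted in its preamble: $\oplus$ is a priori extra structure on the $\tilde{\bB}_n$-sorts, seemingly in tension with the variant's ``$\mathcal{A}$-$\mathcal{C}$-enrichment'' hypothesis.

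Your resolution via sections, however, has a gap. Passing to a saturated elementary extension is harmless for $\dcleq$-relations, but \emph{adding} sections is a genuine expansion of the language: it enlarges both $\dcleq$ and $\acleq$ (for instance $s_n(\gamma)$, for $\gamma\in\vg$, becomes definable over $\gamma$ in the expansion), so the variant's conclusion in the expanded structure --- phrased in terms of the expanded $\acleq(e)$ --- does not directly transfer back to the original $E=\acleq(e)$. A cleaner route is to observe that the \emph{proof} of the variant (through \cref{T:EI-ShortExSequ-Var}) only needs (i) $\mathcal{A}\perp\mathcal{C}$ with both purely stably embedded, which is standard here, and (ii) the rotation $\rho(\xi)=\xi\cdot\iota(\theta(v(\xi)))$ from the proof of \cref{C:rotation-Var} is an $\cL$-automorphism. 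One verifies directly that $\rho$ preserves $\oplus$: by compatibility of $\oplus$ with multiplication, it suffices that for $\chi\in\xi\oplus\zeta$ with $v(\xi)=v(\zeta)$, one has $\theta(v(\chi))=\theta(v(\xi))$ whenever the $\Res_n^\times$-coset of $\chi$ is not entirely contained in $\xi\oplus\zeta$; but then $v(\chi)-v(\xi)\in[0,v(n)]$, a finite $\emptyset$-definable interval by finite ramification, hence contained in $\vg(\acl(\emptyset))\subseteq\ker\theta$.
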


In particular, for \(A = \acleq(A) \subseteq \eq{M}\), \(\eq{(\RV\cup\Lin_{\Geom(A)})}(A) \subseteq \dcleq(\eq{\vg}(A)\cup \eq{\Lin_{\Geom(A)}}(A))\).

\begin{proof}
We apply \cref{V:EI-ShortExSequ-Complex-C} with \(R = \tilde{R} = \Zz\). Since \(\vg\) is ordered is it a torsion free \(\Zz\)-module, so (1) - (3) hold and (4) holds by hypothesis.
\end{proof}

These result also apply with an automorphism:

\begin{proposition}\label{EI RV VFA}
Let \(M\models \VFA\), \(A\subseteq\Geom(M)\), \(e \in\eq{(\RV\cup\Lin_{A})}(M)\) and \(E = \acleq(e)\). Then \(e\in\dcleq(A\vg(E) \RV_{\vg(E)}(E) \Lin_{A}(E))\).
\end{proposition}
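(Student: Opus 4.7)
The plan is to adapt the proof of \cref{EI RV Hen0} to the difference-field setting and then refine the conclusion using the stronger EI properties available in $\VFA$. Concretely, I will apply \cref{V:EI-ShortExSequ-Sorts} to the short exact sequence of $\Zz[\sigma]$-modules
\[ 0 \to \res^\times \to \RV^\times \to \vg^\times \to 0, \]
taking $\tilde{R} = \Zz[\sigma]$, $\mathcal{A} = \{\res\}\cup\Lin_A$, $\tilde{\bB} = \RV$, and $\mathcal{C} = \{\vg\}$. By field quantifier elimination (\cref{EQ sHen}) together with \cref{EQ VFA}, the induced structure on $\RV \cup \res \cup \Lin_A \cup \vg$ is an $\mathcal{A}$-$\mathcal{C}$-enrichment of this sequence. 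Let $I := \{P \in \Zz[\sigma] : P(\vg) = 0\}$ and $R := \tilde{R}/I$. Axiom~(1) of \cref{VFA} says each $P \in \Zz[\sigma]\setminus I$ is surjective on $\vg$; injectivity follows as $\ker(P)$ is a definable subgroup of the o-minimal group $\vg$. This yields that $I$ is prime, $R$ an integral domain, $\vg$ torsion-free as an $R$-module with $\bC = \tilde{\bC}$, and $\vg/\bar{l}\vg = 0$ for all $\bar{l} \neq 0$ in $R$, so hypothesis~(4) is vacuous. Purity of $\res^\times$ in $\RV^\times$ is axiom~(3) of \cref{VFA}.

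Applying \cref{V:EI-ShortExSequ-Sorts} then gives
\[ e \in \dcleq\bigl(\eq{\vg}(E) \cup \eq{(\res \cup \Lin_A \cup \RV_{\vg(E)})}(E)\bigr). \]
I then refine both factors. Since by \cref{EQ VFA} the value group $\vg$ is a divisible o-minimal pure ordered $\Zz[\sigma]$-module, a direct adaptation of the DOAG argument yields $\eq{\vg}(E) \subseteq \dcleq(\vg(E))$. For the other factor, divisibility of $\vg$ by integers (a consequence of axiom~(1) of \cref{VFA} applied to $P = n \in \Zz$) provides, for each $\gamma \in \vg(E)$, a canonical $\gamma$-definable identification of $\RV_\gamma \cup \{0\}$ with the one-dimensional $\res$-vector space $s_\gamma/\Mid s_\gamma$, where $s_\gamma \in \Lat_1$ is the $\gamma$-definable lattice $\{x \in \K : \val(x) \geq \gamma\}$. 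Hence $\Lin_A \cup \RV_{\vg(E)}$ sits inside $\Lin_{A\vg(E)}$, which by \cref{F:lin-St} is an $\ACF_0$-linear structure with flags and roots, equipped with the twisted automorphism induced by $\sigma$. Re-running the argument of \cref{EI Lin} --- relying on \cref{cor:StA-3unique} and \cref{F:ChPi-TA}(4) --- then shows that its $A\vg(E)$-induced structure eliminates imaginaries, giving $\eq{(\res \cup \Lin_A \cup \RV_{\vg(E)})}(E) \subseteq \dcleq\bigl(A\vg(E)\Lin_A(E)\RV_{\vg(E)}(E)\bigr)$. Combining yields the desired conclusion.

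The main obstacle will be rigorously justifying the final refinement step: verifying that the enlarged linear structure $\Lin_{A\vg(E)}$, incorporating the $\RV_\gamma$ sorts as one-dimensional $\res$-vector spaces, still satisfies 3-uniqueness when equipped with its twisted $\sigma$-action, so that \cref{F:ChPi-TA}(4) applies as in the proof of \cref{EI Lin}. Elimination of imaginaries for $\vg$ in the full generality of \cref{VFA}, though expected, also deserves a uniform treatment covering the multiplicative behaviours permitted by axiom~(1).
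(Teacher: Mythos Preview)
Your proposal is correct and follows the same line as the paper: apply \cref{V:EI-ShortExSequ-Sorts} with $\tilde R=\Zz[\sigma]$ and $I=\{P:P(\vg)=0\}$, then use EI for $\vg$ (as an ordered vector space over the fraction field of $\Zz[\sigma]/I$) and \cref{EI Lin} for the $\Lin$-part.

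Two remarks. First, your self-identified ``obstacle'' is not one: once you observe that $\vg$ sits inside $\Geom$ via $\gamma\mapsto s_\gamma\in\Lat_1$, the set $A\vg(E)$ is a subset of $\Geom(M)$, so \cref{EI Lin} applies verbatim to $\Lin_{A\vg(E)}$; there is no need to re-run the 3-uniqueness argument. The paper simply invokes \cref{EI Lin} for $\Lin_A\cup\RV_{\vg(E)}$ without spelling out this embedding, but your explicit identification $\RV_\gamma\cup\{0\}\cong s_\gamma/\Mid s_\gamma$ is exactly what justifies that citation. Second, you should note (as the paper implicitly does) that $I$ is finitely generated because $\Zz[\sigma]$ is Noetherian, which is needed to invoke \cref{V:EI-ShortExSequ-Sorts}.
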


In particular, for \(A = \acleq(A) \subseteq \eq{M}\), \(\eq{(\RV\cup\Lin_{\Geom(A)})}(A) \subseteq \dcleq(\Geom(A))\).

\begin{proof}
We apply \cref{V:EI-ShortExSequ-Sorts} with \(R = \Zz[\sigma]\) and \(I := \{P\in\Zz[\sigma]\mid P(\vg) = 0\}\), which is finitely generated since $R$ is noetherian. Hypothesis (1) holds by assumption. Hypothesis (2) and (3) hold by multiplicativity: if \(c\in\vg_{>0}\) and \(P\in\Zz[\sigma]\) are such that \(P(c) = 0\), then for all \(c\in\vg\), \(P(c) = 0\) and \(P\in I\). Finally hypothesis (4) holds by divisibility.

So \(e \in \dcleq(\eq{\vg})(E)\cup\eq{(\Lin_A\cup\RV_{\vg(E)})}(E)\). But \(\vg\) is an ordered vector field over (the field of fraction of) \(\Zz[\sigma]/I\), so it eliminates imaginaries. Also, by \cref{EI Lin}, \(\Lin_{A}\cup\RV_{\vg(E)}\) weakly eliminate imaginaries. So \(\eq{\vg}(E) \subseteq \dcleq(\vg(E))\) and \[\eq{(\Lin_{A}\cup\RV_{\vg(E)})}(E) \subseteq \dcleq(\Lin_{A}(E)\RV_{\vg(E)}(E)).\]
The result follows.
\end{proof}

\section{Imaginaries in valued fields}

\subsection{The henselian case}
Let \(\Hen[0]^\ac\) denote the \(\RV\)-enrichment of \(\Hen[0]\) with a compatible system of angular components \(\ac_n : \K\to\Res_n^\times\) --- we denote this language by \(\Lac\). We fix \(T\) a \(\vg\)-\(\res\)-enrichment of either \(\Hen[0]\) or \(\Hen[0]^\ac\).

Recall that the two sorted language \(\Lvs\) is given by a sort \(\Ann\) endowed with the language of rings, a sort \(\V\) endowed with the language of abelian groups and a function symbol \(\mu:\Ann\times\V\rightarrow\V\) for scalar multiplication. Let \(t_\ell\) denote the \(\emptyset\)-induced theory on \(\Res_\ell\). For every \(X\subseteq\V^2\) definable in the \(\Ann\)-enriched theory of free rank \(n\in\Zz_{> 0}\) modules over models of \(t_\ell\), we define the equivalence relation \(E_X\) on \(\V\) by \(v E_X w\) if \(X_{v} = X_w\) and \(\Tor_{n,\ell,X} := \bigsqcup_{s\in\Lat_n} (\V/E_X)^{(\Res_\ell,s/\ell\Mid s)}\). Let \(\lin{\Res}\) denote \(\bigsqcup_{n,\ell,X} \Tor_{n,\ell,X}\), the \(\Res\)-linear imaginaries. We can now prove our imaginary Ax-Kochen-Ershov principle:

\begin{theorem}[Theorem~A]\label{AKE EI}
Let \(T\) be a \(\vg\)-\(\res\)-enrichment of either \(\Hen[0]\) or \(\Hen[0]^\ac\), such that:
\begin{itemize}[leftmargin=50pt]
\item[\Cvg]  \(T\) has definably complete value group;
\item[\Ram] For every \(\ell\in\Zz_{>0}\), the interval \([0,\val(\ell)]\) is finite and \(\res\) is perfect;
\item[\Infres] The residue field \(\res\) is infinite;
\item[\UFres] The induced theory on \(\res\) eliminates \(\exists^\infty\).
\end{itemize}
Then \(T\) weakly eliminates imaginaries in \(\K\cup\eq{\vg}\cup\lin{\Res}\).
\end{theorem}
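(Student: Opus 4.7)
The plan is to combine the three main ingredients prepared in the previous sections: the density of invariant quantifier-free types (\cref{inv dens H}), the fact that such an invariant quantifier-free type induces a complete type invariant over $\RV \cup \Lin_{\Geom(A)}$ (\cref{inv}), and the classification of imaginaries in the short exact sequence $1 \to \Res_n^\times \to \RV_n^\times \to \vg^\times \to 0$ (\cref{EI RV Hen0}). These ingredients have been packaged in \cref{EI to RV}, so the argument reduces to checking the hypotheses of that proposition and then applying \cref{EI RV Hen0}.

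Concretely, I would first verify the hypotheses \Dens, \EQ, \Infres{} and \SE{} of \cref{EI to RV}. \Dens{} is exactly \cref{inv dens H}, whose hypotheses \Cvg, \Ram, \Infres, \UFres{} coincide with ours; one notes that the preparation hypothesis \textnormal{\textbf{(\(\bP_{\pi(X)}\))}} is automatic in finitely ramified henselian fields by \cref{Hen0 prep}, and that maximally complete models of the pair $(\alg{M},M)$ exist, securing \CV{} in both $M$ and the pair. \EQ{} is the relative field quantifier elimination \cref{EQ rv} with $\cL_1 = \LRV$ (or $\Lac$ in the $\ac$-variant). \Infres{} is assumed, and \SE{} follows from the same quantifier elimination. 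Thus, for any $e \in \eq{M}$ and $A := \acleq(e)$, we obtain
\[e \in \dcleq\bigl(\K(A) \cup \eq{(\RV \cup \Lin_{\Geom(A)})}(A)\bigr).\]

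Next I would apply \cref{EI RV Hen0} to describe $\eq{(\RV \cup \Lin_{\Geom(A)})}(A)$. For this I need to check that, for every $n,\ell \in \Zz_{>0}$, the quotient $\vg/\ell\vg$ is finite and that each of its cosets admits a preimage in $\RV_n$ which is algebraic over $\emptyset$. By \Cvg, $\vg$ is elementarily equivalent to $\Qq$ or $\Zz$; in the divisible case the condition is vacuous, and in the discrete case the distinguished elements of the language (the prime $p$ in mixed characteristic, the angular component maps when present, or otherwise the $\vg$-enrichment coding a generator) supply an algebraic element of minimal positive valuation, whose powers yield the required representatives. Combined with the previous inclusion, this gives
\[e \in \dcleq\bigl(\K(A) \cup \eq{\vg}(A) \cup \eq{\Lin_{\Geom(A)}}(A)\bigr).\]

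Finally, each sort appearing in $\Lin_{\Geom(A)}$ is of the form $s/\ell\Mid s$ for some $s \in \Lat_n(\dcl_0(\Geom(A)))$, and is therefore a fibre of $\Tor_{n,\ell,\V}$; the finer $\Lvs$-imaginaries of these fibres are by definition the elements of $\Tor_{n,\ell,X}$ for the appropriate $\Lvs$-definable $X$. Hence $\eq{\Lin_{\Geom(A)}}(A) \subseteq \dcleq(\lin{\Res}(A))$, and the three inclusions together prove that $e \in \dcleq\bigl((\K \cup \eq{\vg} \cup \lin{\Res})(A)\bigr)$, which is weak elimination of imaginaries in the announced sorts. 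I expect the only real obstacle to be the verification of the algebraic-preimage condition in Step 2 in the non-$\ac$, equal-to-$\Zz$ value group situation, which is precisely why the assumptions of Theorem~A permit the $\vg$-enrichment to include the data needed to produce such a representative.
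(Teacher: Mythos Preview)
Your proposal is correct and follows the paper's proof essentially verbatim: verify the hypotheses of \cref{EI to RV}, then apply \cref{EI RV Hen0} to pass from $\eq{(\RV\cup\Lin_{\Geom(A)})}$ to $\eq{\vg}\cup\eq{\Lin_{\Geom(A)}}$, and finally absorb $\eq{\Lin_{\Geom(A)}}$ into $\lin{\Res}$. The only differences are cosmetic: the paper takes $\cL_1=\cL$ and $f=\id$ for \EQ{} (trivially), and in the angular-component case it bypasses \cref{EI RV Hen0} by using the explicit splitting $\RV_n\cong\Res_n^\times\times\vg$; as for the algebraic-preimage condition you flag, the paper's own verification is just as terse (it only records representatives in $\vg(\dcleq(\emptyset))$, not in $\RV_n$), so your caution there is well placed.
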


\begin{proof}
We will use  \cref{EI to RV}. By \cref{inv dens}, hypothesis \Dens{} holds. Hypothesis \EQ{} holds trivially for \(\cL _1= \cL\) --- and \(f = \id\). Also, \(\RV\) and \(\Res\) are stably embedded in characteristic zero henselian fields. 
Let \(M\models T\), \(e\in\eq{M}\) and \(A = \acleq(e)\). By \cref{EI to RV}, \(e\in\dcleq(\K(A)\cup\eq{(\RV\cup\Lin_{\Geom(A)})}(A))\). 

\begin{claim}
\(\eq{(\RV\cup\Lin_{\Geom(A)})}(A) \subseteq \dcleq(\eq{\vg}(A)\cup \eq{\Lin_{\Geom(A)}}(A))\)
\end{claim}

\begin{proof}
If \(T\supseteq \Hen[0]^\ac\), then \(\RV_n\) is \(\Lac\) isomorphic to \(\Res_n^\times\times\vg\) and the isomorphisms are compatible as \(n\) varies. It follows that \(\eq{(\RV\cup\Lin_{\Geom(A)})} \subseteq \eq{(\vg\cup\Lin_{\Geom(A)})}\), and since \(\vg\) and \(\Lin_{\Geom(A)}\) are orthogonal, the claim follows.

If \(T\) is a \(\vg\)-\(\res\)-enrichment of \(\Hen[0]\), the claim follows from \cref{EI RV Hen0}. Note that by \Cvg, \(\vg \equiv \Qq\) or \(\vg\equiv \Zz\) and hence \(\vg / n\vg \) is finite and every coset is represented in \(\vg(\dcleq(\emptyset))\). 
\end{proof}

\begin{claim}
\(\eq{\Lin}_{\Geom(A)}(A)\subseteq \dcleq(\lin{\Res}(A))\)
\end{claim}

\begin{proof}
Recall that \(\Lin_{\Geom(A)}\) is stably embedded. It follows that, for every \(e\in \eq{\Lin}_{\Geom(A)}(A)\), taking tensor products of lattices, we may assume that there exists  \(n,\ell\in\Zz_{>0}\) and \(s\in\Lat_n(A)\) such that  \(e\) codes some subset \(X_a\) of \(s/\ell\Mid s\) and \(a\) a single parameter in \(s/\ell\Mid s\). Since \(s/\ell\Mid s\) is definably isomorphic to \(\Res_\ell^n\) once we name a basis, it follows that \(X\) is definable with parameters in the \(\Lvs\)-structure \((\Res_\ell,s/\ell\Mid s)\) --- so \(e\in\dcleq(\Tor_{n,\ell,X}(A))\).
\end{proof}

It follows that \(e\in \dcleq(\K(A)\cup\eq{\vg}(A)\cup\lin{\Res}(A))\), which concludes the proof.
\end{proof}

Let \(\lin{\res} := \bigsqcup_{s,n,X} (\V/E_X)^{(\res,s/\Mid s)}\).

\begin{corollary}\label{absolute EI Hen}
Let \(F\) be a characteristic zero field that eliminates \(\exists^\infty\). Then any valued field elementarily equivalent to \(F((t))\) or \(F((t^\Qq))\) (with or without angular components) weakly eliminate imaginaries in \(\K\cup\lin{\res}\).\qed
\end{corollary}

In certain cases, the elimination of imaginaries in \(\Th(\res)\)-linear structures allows to further reduce this result to the geometric sorts. We can then also code finite sets, by adapting an argument of Johnson \cite[Section~5.3]{Joh-EIACVF}:

\begin{proposition}\label{fin set}
Let \(M\) be a henselian valued field such that:
\begin{itemize}[leftmargin=50pt]
\item[\Infres] The residue field \(\res\) is infinite;
\item[\Cod] For any \(A = \dcleq(A) \subseteq \eq{M}\), any \(\cL(A)\)-definable type \(p\in\TP^0_{\K^n}(M)\) finitely satisfiable in \(M\) is \(\cL_0(\Geom(A))\)-definable.
\end{itemize}
Then every finite set in \(\Geom\) is coded in \(\Geom\).
\end{proposition}

\begin{proof}
The proposition follows from:

\begin{claim}
Let \(C\subseteq \Geom\) be a finite set. There exists an \(\cL(\code{C})\)-definable type \(p_C \in\TP_{\K^n}^0(M)\) finitely satisfiable in \(M\) such that \(C\) is \(\cL(a)\)-definable for any \(a \models p_C\).
\end{claim}

Indeed taking \(p_C\) as in the claim, by \Cod, \(p_C\) is \(\Geom(\dcleq(\code{C}))\)-definable and hence, so is \(C\).

We now prove the claim. We start by considering \(C = \{s\}\subseteq \Lat_n\).
By \Infres, the generic type \(q_n\) of \(\GL_n(\Val)\) (that is, the quantifier
free type that reduces to the generic of \(\GL_n(\res)\) modulo \(\Mid\)) is
finitely satisfiable in \(M\). Note also that it is \(\cL_0\)-definable and
symmetric (\emph{cf.}, \cite[Section~3.3]{Joh-EIACVF}): for every definable quantifier free type \(r\), \(q_n\tensor r =
r\tensor q_n\). Let \(B\) be the matrix associated to some basis of \(s\) in
\(M\) and let \(p_s = B\cdot q_n\). This type does not depend on \(B\), is
\(\cL_0(s)\)-definable, finitely satisfiable in \(M\) and symmetric.

If \(C\subseteq \Lat_n\), let \(a\models\bigotimes_{s\in C} p_s\) and \(A\) be
the set of the \(a_{s}\), for \(s\in S\). Since finite subsets of \(\K\) are
coded in \(\K\), \(\code{A}\) can be identified with a tuple in \(\K\). Then
\(p_C = \tp_0(\code{A}/M)\) does not depend on a choice of enumeration of \(C\)
and thus it is  \(\cL(\code{C})\)-definable (and finitely satisfiable in \(M\)).
Note that if \(\vg(M)\) has a smallest positive element, we are done since, for
any lattice \(s\), \(\Mid s\) is (\(\cL\)-definably isomorphic to) a lattice and
hence \(\Tor_n\) \(\cL\)-definably embeds in \(\Lat_{n+1}\) by the usual
identification of translates of linear spaces with higher dimensional linear
spaces.

Let us now assume that \(\vg(M)\) does not admit a smallest positive element. We
first consider the case where \(C\subseteq \K^i\times \res^j\). Let \(E
\subseteq \res\) be the set of elements of \(\res\) appearing as coordinates of
elements in \(C\), let \(b\) be the tuple of coefficients of the polynomial \(\prod_{e\in
E} (x - e)\) and let \(p_E\in\TP^0(M)\) be the type of generic lifts of
\(b\in\res\) to \(\Val\). This type is \(\cL(\code{C})\)-definable and finitely satisfiable
in \(M\) since the value group does not admit a smallest positive element. Then
for any \(a\models p_E\), \(\resf\) induces a bijection between the \(\cL_0(a)\)-definable set of roots of the polynomial \(\sum_\ell a_\ell x^\ell\) and \(E\). It follows that \(C\) is in \(\cL_0(a)\)-definable bijection with a subset \(D\) of
\(\K^{i+j}\), which is coded in some cartesian power of \(\K\). Then \(p_C =
\tp_0(a\code{D}/M)\) has the required properties.

Let us finally consider \(e\in \Tor_n(M)\) and let \(s =\tau_n(e)\) (see page~\pageref{tau}). If \(a\models p_s\), then \(e\) is \(\cL_0(a)\)-definably isomorphic to a tuple \(b \in\res\). The type \(p_e = \tp_0(ab/M)\) is \(\cL_0(e)\)-definable, finitely satisfiable in \(M\) and symmetric --- since \(\res\) is quantifier free stable. If \(C\subseteq \Tor_n\), let \(a\models \bigotimes_{e\in C} p_e\) and \(A \subseteq \K^i\times \res^j\) be the set of the \(a_e\), for \(e\in C\). Then, applying the previous paragraph to \(A\), we find \(p_C\) as required.
\end{proof}

The authors would like to thank Ehud Hrushovski for his insights on the correct statement of the following corollary.

\begin{corollary}\label{ACF-RCF-Psf}
Let \(F\) be a characteristic zero field that is either algebraically closed, pseudofinite or real closed. Then any valued field \(M\) elementarily equivalent to \(F((t))\) or \(F((t^\Qq))\) (with or without angular components), eliminates imaginaries in  \(\Geom\) provided we add the following (imaginary) constants:
\begin{itemize}
\item if \(F\) is real closed and the value group is a \(\Zz\)-group with minimal positive element \(\gamma_0\), a constant for a half line of \(\RV_{1,\gamma_0}\);
\item if \(F\) is pseudofinite, constants for a generator of Galois of \(F\) --- see \cite[Section~5.9]{Hru-GpIm};
\item if \(F\) is pseudofinite and the value group is a \(\Zz\)-group with minimal positive element \(\gamma_0\), a constant for a  \((\res^\star)^n\)-orbit in \(\RV_{1,\gamma_0}\), for every \(n\geq 1\).
\end{itemize}
\end{corollary}

\begin{proof}
Let \(A = \dcleq(A)\subseteq \eq{M}\). By \ref{F:lin-St}, \(\Lin_A(M)\) is a
\(\Th(\res(M))\)-linear structure with flags. If \(\res(M)\) is algebraically
closed, by \cite[Lemma~5.6]{Hru-GpIm}, it eliminates imaginaries. If \(\res(M)\)
is real closed, \(\Lin_A\) also eliminates imaginaries by \cref{RCF lin
EI,oriented,def order}. If \(\res(M)\) is pseudofinite and \(\vg(M)\) is
divisible, \(\Lin_A(M)\) has roots and we conclude with
\cite[Theorem~5.10]{Hru-GpIm}. If \(\vg(M)\) is a \(\Zz\)-group, \(\Lin_A\) does
not have roots, but by \cite[Remark~5.8]{Hru-GpIm} it suffices to insure the
existence of an \(\emptyset\)-definable \((\res^\star)^n\)-orbit inside each one
dimensional vector space of \(\Lin_A\), \emph{i.e.}, each \(\RV_{1,\gamma}\)
with \(\gamma\in\vg(A)\). For every \(n\), write \(\gamma\) as \(i \gamma_0 +
n\delta\). By assumption, there exists \(\emptyset\)-definable
\((\res^\star)^n\cdot \xi \subseteq \RV_{1,\gamma_0}\). Then
\((\res^\star)^n\cdot \xi^i\zeta^n\subseteq \RV_{1,\gamma}\) is independent of
the choice of \(\zeta\in\RV_{1,\delta}\) and thus \(\emptyset\)-definable.

It now follows from \cref{absolute EI Hen} that \(M\) weakly eliminates imaginaries in \(\Geom\) and we conclude with \cref{fin set}.
\end{proof}

Not all of these results are new, although all of the statements with angular components are. The case of \(\Cc((t^\Qq))\) just amounts to Haskel-Hrushovski-Macpherson's result \cite{HasHruMac-ACVF} for \(\ACVF\). The case of \(\Rr((t^\Qq))\) is Mellor's result \cite{Mel-RCVF} for \(\RCVF\) and the case of \(F((t))\), with \(F\) pseudofinite, is Hrushovski-Martin-Rideau's result \cite{HruMarRid} for pseudolocal fields --- slightly improved since we only require algebraic constants in \(\eq{\RV_1}\) and not in \(\K\).

\begin{corollary} Let \(F\) be a positive characteristic perfect field that eliminates \(\exists^\infty\). Then \(\W(F)\) (with or without compatible angular components) weakly eliminate imaginaries in \(\K\cup\lin{\Res}\).\qed
\end{corollary}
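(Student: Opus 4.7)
My plan is to apply Theorem~A to $T := \Th(\W(F))$, viewed as a theory containing $\Hen[0]$, or $\Hen[0]^{\ac}$ if a compatible system of angular components is named. The bulk of the argument is verifying the four hypotheses, after which the result follows by light bookkeeping; the finite-residue-field case is dealt with separately, by appeal to earlier work.

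For \Cvg{}, the value group of $\W(F)$ is elementarily equivalent to $\Zz\models\mathrm{PRES}$, which is definably complete. For \Ram{}, since $\W(F)$ is an absolutely unramified complete discrete valuation ring of mixed characteristic, the interval $[0,\val(\ell)]\subseteq\vg\equiv\Zz$ is a finite initial segment of $\Zz_{\geq 0}$ for every $\ell\in\Zz_{>0}$, and $F=\res$ is perfect by hypothesis. Hypothesis \UFres{} is precisely the assumption that $F$ eliminates $\exists^\infty$. Hypothesis \Infres{} holds whenever $F$ is infinite.

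When $F$ is infinite, Theorem~A thus yields weak elimination of imaginaries in $\K\cup\eq{\vg}\cup\lin{\Res}$, and it remains to absorb $\eq{\vg}$. Since $\vg\equiv\Zz\models\mathrm{PRES}$ eliminates imaginaries, $\eq{\vg}=\vg$; and $\vg\cong\bS_1=\Lat_1$ is naturally identified with a sort of $\lin{\Res}$, as pointed out in the paragraph following the statement of Theorem~A (concretely, taking the interpretable set $X=\V\times\V$, whose quotient $\V/E_X$ is a singleton, one recovers $\Lat_1$ as $\bigsqcup_{s\in\Lat_1}\{*\}$).

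When $F$ is finite, $\W(F)$ is a finite unramified extension of $\Qq_p$, and weak elimination of imaginaries in $\K\cup\lin{\Res}$ is an immediate consequence of the elimination of imaginaries in the geometric sorts established by Hrushovski, Martin, and Rideau \cite{HruMarRid}. I do not anticipate any genuine obstacle: the entire argument is a direct specialisation of Theorem~A, the only point of care being the case distinction on the cardinality of the residue field.
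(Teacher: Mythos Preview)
Your proposal is correct and follows the same approach as the paper, which simply writes \qed: the corollary is intended as an immediate application of Theorem~A (\cref{AKE EI}), together with the observation from the introduction that $\eq{\vg}=\vg\cong\Lat_1$ lies in $\lin{\Res}$. Your additional case distinction for finite $F$ is a nice touch the paper glosses over; strictly speaking Theorem~A requires \Infres, so either the authors are tacitly restricting to infinite $F$ or relying on the reader to supply the easy local-field case via \cite{HruMarRid}, exactly as you do (and your remark that angular components are definable in an unramified extension of $\Qq_p$ is correct).
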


It seems plausible that, if \(F\models\ACF\), the \(\Res\)-linear imaginaries can also be eliminated, yielding elimination of imaginaries in \(\Geom\) for \(\W(\alg{\Ff_p})\). However, this remains an open problem.

\subsection{The \texorpdfstring{\(\sigma\)}{sigma}-henselian case} Let us conclude with the description of the imaginaries in \(\VFA\):

\begin{theorem}[Theorem~B]\label{VFA-EI}
The theory \(\VFA\) (with or without equivariant angular components) eliminates imaginaries in \(\Geom\).
\end{theorem}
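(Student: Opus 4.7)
The proof mirrors that of Theorem~A (\cref{AKE EI}), with the henselian appeal to \cref{EI RV Hen0} replaced by the $\sigma$-equivariant \cref{EI RV VFA}, which yields full (rather than merely weak) elimination into the geometric sorts. Fix $M \models \VFA$, let $e \in \eq{M}$, and set $A := \acleq(e)$; the goal is $e \in \dcleq(\Geom(A))$.

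The first step is to show that $e \in \dcleq(\K(A) \cup \eq{(\RV \cup \Lin_{\Geom(A)})}(A))$ via a $\sigma$-equivariant version of \cref{EI to RV} applied with $\cL_1 := \LRVsig$. The hypotheses \EQ{} (by \cref{EQ sHen}), \Infres{} (since $\res \models \ACFA_0$ is infinite), and \SE{} (since $\RV$ is stably embedded by \cref{EQ sHen}) are immediate. Density of $\aut(M/\Geom(A))$-invariant $\cL_0$-types (hypothesis \Dens{}) follows by running \cref{inv dens}: its hypotheses \Cvg, \Ram, \Infres, \UFres{} are all satisfied because $\vg$ in $\VFA$ is $o$-minimal (hence divisible and definably complete), we are in equicharacteristic zero (so $[0,\val(\ell)]=\{0\}$ for every $\ell$), and $\res \models \ACFA_0$ is infinite, perfect and supersimple (hence eliminates $\exists^\infty$). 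Since \cref{inv dens} produces $\cL_0$-definable data on any $\Ls$-definable input set $X$, a $\sigma$-invariant choice of invariant $\cL_0$-type is obtained by intersecting the resulting closed non-empty set of $\aut(M/\Geom(A))$-invariant $\cL_0$-types with its $\sigma$-translates, using compactness and the fact that $X$ itself is $\sigma$-invariant.

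The invariant-completions theorem \cref{inv} extends to $\VFA$ essentially verbatim: its proof (Section~4) relies on \cref{EQ rv}, which in the $\sigma$-setting becomes \cref{EQ sHen}, and on the leading-term computations of \cref{calcul,red irr,param finite,pts balls}, all of which are statements about the underlying $\ACVF_0$ reduct and are therefore $\sigma$-compatible (since $\sigma$ commutes with the residue map, the valuation, $\acl_0$, and the various canonical constructions on finite sets of balls). Thus, for an $\LRVsig$-type $\tp(a/M)$ whose $\cL_0$-reduct is $\aut(M/A)$-invariant, the full type is $\aut(M/A \cup \RV(M) \cup \Lin_A(M))$-invariant. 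Running the argument of \cref{EI to RV} with these ingredients gives $e \in \dcleq(\K(A) \cup \eq{(\RV \cup \Lin_{\Geom(A)})}(A))$. Applying \cref{EI RV VFA} now yields $\eq{(\RV \cup \Lin_{\Geom(A)})}(A) \subseteq \dcleq(\Geom(A))$, whence $e \in \dcleq(\Geom(A))$, as desired.

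The main obstacle is the careful verification that the delicate inductive constructions of Sections~3 and~4 transfer to the $\sigma$-setting. This is in essence automatic because each construction produces canonical data (balls, residue germs, lattices, leading terms) in sorts on which $\sigma$ acts equivariantly, so any $\sigma$-invariant input yields a $\sigma$-invariant output. The case of compatible angular components requires no further argument since they are incorporated into the $\RV$-structure via \cref{EQ VFA} and handled uniformly by \cref{EI RV VFA}.
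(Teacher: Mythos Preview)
Your overall architecture is right (apply \cref{EI to RV}, then \cref{EI RV VFA}), but the density step has a genuine gap. You invoke \cref{inv dens}, yet that result is stated only for $\res$-$\vg$-enrichments of $\Hen[0]$; the theory $\VFA$ is not such an enrichment, since $\sigma$ acts on $\K$. The proof of \cref{inv dens} (via \cref{fin dens H} and \cref{inv dens H}) relies essentially on the preparation hypothesis \Prep{\pi(X)}{n}, which for $\Hen[0]$ comes from field quantifier elimination (\cref{Hen0 prep}). For a $\VFA$-definable set $X$ one would need $\sigma$-polynomial preparation, which you have not supplied, and your ``intersecting with $\sigma$-translates'' manoeuvre does not produce it.

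The paper avoids this entirely by using \cref{def dens} instead. Since $\vg$ is $o$-minimal in $\VFA$ (\cref{EQ VFA}), \UFvg{} holds; together with \Cball{} (maximally complete models), \Cvg{} and \UFres{}, \cref{def dens} applies directly to any $\cL$-definable $X$, with no restriction on $\cL$ and no $\sigma$-adaptation needed. The passage from the $\cL_0$-type to the full $\Ls$-type is then handled by hypothesis \EQ{} of \cref{EI to RV}, taking $\cL_1 = \LRV\cup\{\sigma_{\RV}\}$ and the prolongation map $f(x) = (\sigma^n(x))_{n\geq 0}$: by \cref{EQ sHen}, $\tp_1(f(a))\vdash\tp(a)$. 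Crucially, $\cL_1$ is an $\RV$-enrichment of $\cL_0$ (the automorphism on $\K$ has been absorbed into $f$), so \cref{inv} applies to the $\cL_1$-reduct of $M$ without modification. Your worries about transporting Sections~3 and~4 to the $\sigma$-setting are therefore unnecessary: the framework of \cref{EI to RV} is designed precisely so that no such transport is needed.
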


\begin{proof}
Any model of \(\VFA\) is elementarily equivalent to a maximally complete one (\emph{cf.} \cref{max complete}) and
hence \Cball{} holds. By \cref{EQ VFA}, the structure induced on \(\vg\) is
\(o\)-minimal. So \Cvg{} and \UFvg{} hold. Finally, \UFres{} holds since
\(\ACFA\) eliminates \(\exists^\infty\). By \cref{def dens}, \Dens{} holds.
Hypothesis \EQ{} follows from \cref{EQ sHen}, with \(\cL_1 :=
\LRV\cup\{\sigma_\RV\}\) and \(f(x) := (\sigma^n(c))_{n\in\Zz_{\geq 0}}\). So,
by \cref{EI to RV}, for every \(M\models\VFA\), \(e\in\eq{M}\) and \(A =
\acleq(e)\), we have \(e\in\dcleq(\K(A)\cup\eq{(\RV\cup\Lin_{\Geom(A)})}(A))\).
By \cref{EI RV VFA} --- or using the angular components --- we have
\(\eq{(\RV\cup\Lin_{\Geom(A)})}(A) \subseteq \dcleq(\Geom(A))\).
\end{proof}

Similar results hold in differential valued fields.

\begin{corollary}
The following two families of difference valued fields, indexed by integer primes \(p\):
\begin{enumerate}
\item \(K_p := (\alg{\Ff_p(t)},\val_t,\Frob_p)\), where \(\Frob_p\) is the Frobenius automorphism;
\item \(K_p := (\Cc_p,\val_p,\sigma_p)\), where \(\sigma_p\) is an isometric lift of the Frobenius automorphism on \(\res(\Cc_p) = \alg{\Ff_p}\).
\end{enumerate}
uniformly eliminate imaginaries in \(\Geom\) for large \(p\): for every \(\LRVsig\)-definable sets \(X\subseteq Y\times Z\), there exists an \(\LRVsig\)-definable map \(f : Z \to W\), where \(W\) is a product of sorts in \(\Geom\) and some \(N\in\Zz_{\geq 0}\) such that, for every prime \(p > N\) and \(z_1, z_2\in Z(K_p)\), \(f(z_1) = f(z_2)\) iff and only if \(X_{z_1}(K_p) = X_{z_1}(K_p)\).  \qed
\end{corollary}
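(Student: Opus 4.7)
The plan is to combine \cref{VFA-EI} with a standard compactness/Łoś argument, using that both families $(K_p)_p$ have asymptotic theory $\VFA$. The hard part — elimination of imaginaries for the limit theory — is already done; what remains is a routine transfer, whose main content is identifying the asymptotic theory.

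First, I would recall, as in the remark following \cref{VFA}, that the ultraproduct of the $K_p$ along any non-principal ultrafilter on the primes is a model of $\VFA$. For family (1), this puts us in the $\omega$-increasing multiplicative setting; for family (2), in the isometric setting. In both cases, the fact that these ultraproducts model $\VFA$ follows from the Ax-Kochen-Ershov principle for $\sigma$-henselian valued fields combined with Hrushovski's theorem \cite{Hru-Frob} that $\ACFA_0$ is the asymptotic theory of $(\alg{\Ff_p},\Frob_p)$, which in particular ensures that condition (2) of \cref{VFA} (residue field $\models \ACFA_0$) holds in the ultraproduct. Condition (1) on the value group is immediate in either case from the $\omega$-increasing/isometric behaviour, and condition (3) on purity of $\res^\times \hookrightarrow \RV$ follows as well since the short exact sequence is split in the relevant asymptotic theory.

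Second, I would apply \cref{VFA-EI}: given the $\LRVsig$-definable family $X \subseteq Y \times Z$, elimination of imaginaries in $\Geom$ yields an $\LRVsig$-definable function $f : Z \to W$, with $W$ a finite product of sorts from $\Geom$, such that
\[
\VFA \vdash \forall z_1,z_2\in Z\ \bigl[f(z_1) = f(z_2) \leftrightarrow X_{z_1} = X_{z_2}\bigr].
\]
(Here $f$ is extracted from the canonical parameter map, which is definable by elimination of imaginaries.)

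Finally, since the above is a first-order $\LRVsig$-sentence in $\VFA$, and since every non-principal ultraproduct of $(K_p)_p$ satisfies $\VFA$ by the first step, an application of Łoś's theorem yields $N \in \Zz_{\geq 0}$ such that for all primes $p > N$, the sentence holds in $K_p$. This gives the asserted uniform elimination. The only step requiring any thought is the identification of the asymptotic theory in the mixed characteristic case (family (2)), where one must verify that the isometric lift of the Frobenius on $\Cc_p$ yields, in the ultraproduct, precisely a model of $\VFA$; but this is exactly the content of the Belair–Macintyre–Scanlon analysis \cite{BelMacSca}, combined with Hrushovski's theorem.
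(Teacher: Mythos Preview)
Your proposal is correct and is precisely the argument the paper has in mind: the corollary carries a bare \qed, relying implicitly on \cref{VFA-EI} together with the remark after \cref{VFA} identifying $\VFA$ as the asymptotic theory of both families, followed by the standard \L o\'s/compactness transfer you describe. Your write-up simply makes explicit the steps the authors deemed immediate.
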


As noted earlier, in case (1), since \(K_p\) is a definable expansion of a model of \(\ACVF\) which also eliminates imaginaries in the geometric sorts, the result is even uniform in all \(p\).



\sloppy
\printbibliography

\end{document}